\pgfplotsset{compat=1.15}
\theoremstyle{plain}
\newtheorem{theorem}{Theorem}[section]
\newtheorem{prop}[theorem]{Proposition}
\newtheorem{lemma}[theorem]{Lemma}
\newtheorem{coro}[theorem]{Corollary}
\theoremstyle{definition}
\newtheorem{definition}[theorem]{Definition}
\newtheorem{remark}[theorem]{Remark}
\newcommand{\birk}{b}
\newcommand{\Z}{{\mathbb Z}}
\newcommand{\Q}{{\mathbb Q}}
\newcommand{\R}{{\mathbb R}}
\newcommand{\N}{{\mathbb N}}
\newcommand{\C}{{\mathbb C}}
\newcommand{\X}{{\mathbb X}}
\newcommand{\T}{{\mathbb T}}
\newcommand{\cP}{\mathcal{P}}
\newcommand{\mc}{\mathcal}
\newcommand{\im}{{\mathrm{i}}}
\newcommand{\dom}{\operatorname{dom}}
\newcommand{\card}{\operatorname{card}}
\newcommand{\me}{\mathrm{e}}
\newcommand{\dd}{\,\mathrm{d}}
\newcommand{\ts}{\hspace{0.5pt}}
\newcommand{\pa}{\phantom{a}}
\newcommand{\sinc}{\operatorname{sinc}}
\renewcommand{\hat}{\widehat}
\renewcommand{\emptyset}{\varnothing}
\renewcommand{\c}{c}
\newcommand{\supp}{\mathrm{supp}}
\newcommand{\TT}{\mathbb{T}}
\newcommand{\myfrac}[2]{\frac{\raisebox{-2pt}{$#1$}}
      {\raisebox{0.5pt}{$#2$}}}
\DeclarePairedDelimiter{\ceil}{\lceil}{\rceil}
\DeclarePairedDelimiter{\floor}{\lfloor}{\rfloor}
\newcommand\reallywidecheck[1]{%
\savestack{\tmpbox}{\stretchto{%
   \scaleto{%
\scalerel*[\widthof{\ensuremath{#1}}]{\kern-.6pt\bigwedge\kern-.6pt}%
     {\rule[-\textheight/2]{1ex}{\textheight}}
   }{\textheight}%
}{0.5ex}}%
\stackon[1pt]{#1}{\scalebox{-1}{\tmpbox}}%
}
\begin{document}

\title[Generalized Thue{\ts}--Morse measures]{Generalized Thue{\ts}--Morse measures:  \\spectral and fractal analysis
}
 
\author{Philipp Gohlke}
\address{Faculty of Mathematics and Computer Science, Friedrich Schiller University,
\newline 
\hspace*{\parindent}Ernst-Abbe-Platz 2, 07745 Jena, Germany}
\email{philipp.gohlke@uni-jena.de}

\author{Marc Kesseb\"ohmer}
\address{Institute for Dynamical Systems, Faculty 3 – Mathematics und Computer Science,\newline 
\hspace*{\parindent}University of Bremen,\newline 
\hspace*{\parindent}Bibliothekstr.\ 5, 28359 Bremen, Germany}
\email{mhk@uni-bremen.de}

\author{Tanja I.\ Schindler}
\address{Faculty of Mathematics and Computer Science, Jagiellonian University\newline 
\hspace*{\parindent} Ul.\ Łojasiewicza 6, 30-348 Krakow, Poland\newline 
Department of Mathematics and Statistics,
University of Exeter\newline 
\hspace*{\parindent} Harrison Building, Exeter EX4 4QF,
United Kingdom}
\email{tanja.schindler@uj.edu.pl, t.schindler@exeter.ac.uk}
%

\begin{abstract}
We investigate a family of Riesz products  and show that they can be regarded as diffraction measures of generalized Thue{\ts}--Morse sequences, possibly over an infinite alphabet. 
These measures are closely related to the dynamical system arising from the doubling map together with an observable exhibiting a logarithmic singularity.  For this system, we develop a generalized thermodynamic formalism beyond the standard setting, which yields explicit formulas for  Birkhoff and dimension spectra.  
A further novel aspect is the identification of a precise connection between these spectra and the 
$L^q$-spectrum of the underlying Riesz product. This new link allows us to determine, explicitly, the Fourier and quantization dimension, and to describe the spectral asymptotics of the associated Kreĭn--Feller operator, providing new insights into the interplay between diffraction, fractal geometry, and spectral theory in the Thue{\ts}--Morse context. \end{abstract}

\keywords{Thue{\ts}--Morse sequence, spectral measure, Riesz product, multifractal analysis, $L^q$-spectrum, quantization, spectral asymptotics.}

\subjclass[2020]{primary: 37D35,   37B10, 52C23;    secondary: 37A46, 62H30}

\maketitle

\section{Introduction}
Both the thermodynamic formalism and the concept of multifractal analysis provide powerful tools to analyze in detail the statistical properties of a given dynamical system and corresponding invariant measures. In the most classical setup, this provides a good understanding of the Birkhoff averages of H\"{o}lder continuous functions that are sampled along orbits of expanding (compact) dynamical systems and the scaling properties of associated equilibrium measures; see for example \cite{Bow,Falconer,Pesin} for comprehensive introductions to the topic. 
The last few decades have seen generalizations of this classical situation into many different directions, relaxing many of the strong regularity assumptions. This includes the consideration of non-uniformly hyperbolic and parabolic dynamical systems (see for example \cite{CFT19,IT11,JJOP09,KessStrat04,N00}) as well as progress under less restrictive assumptions on the continuity of the sampling function  (referred to as a potential in the following) \cite{KKS16,FO03,Olivier,BCJOe, ITZ}. We also refer to \cite{Cli14} for an overview.

In this work we want to contribute to the understanding of unbounded potentials, while maintaining the most simple setup of an expanding dynamical system (the doubling map). The multifractal study of unbounded potentials appears quite naturally, for example in the context of number theory, in an attempt to get a detailed understanding of the statistics of the coefficients in continued fraction expansions \cite{FLWW09}. Other instances include continuous potentials over non-compact expanding dynamical systems like shift-spaces over countable alphabets. In this setup, a systematic framework for the thermodynamic formalism was provided by Sarig \cite{Sarig99} and results about the multifractal analysis of (unbounded) continuous functions can be found for example in \cite{IJ15}.
Moreover, Baladi and Castorrini provide results concerning the thermodynamic formalism for unbounded potentials over piecewise expanding interval maps under some condition on the behaviour of the potential at the boundary of the partition the interval map is generating \cite{BalCas}. Kautzsch et al.\ showed in \cite{KKS16} that for intermittent maps and unbounded potentials with one singularity,  the convergence of the associated transfer operator depends on the   structure of  the $\omega$-limit set of the singularity.
Countable alphabet shifts can also be used via an appropriate encoding in the context of singular potentials over the doubling map, as was shown in \cite{KLRW19} at the example of the so-called Saint-Petersburg potential.

Despite the aforementioned progress, the understanding of singular potentials is far from what can be considered complete, even over such a simple system as the doubling map and under the restriction of having only one (one-sided) singularity.
Fan, Schmeling and Shen recently studied the (rescaled) singular potential $\psi^0(x) =2 \log |\cos(\pi x)|$ over the doubling map on the torus $ \mathbb{T}$ and a family of rigid translations $\psi^c(x) = \psi^0(x-c)$ and showed that the appearance of certain pathological features in the multifractal spectrum depends critically (and in a discontinuous manner) on the translation parameter $c\in  \mathbb{T}$ \cite{fan_generalized_2020, fan_generalized_2019}. We will take this family as the starting point for our analysis.
The Birkhoff averages of the potential $\psi^{1/2}$ are also of independent interest in aperiodic order because they are closely related to the scaling properties of the classic Thue{\ts}--Morse measure, which can be interpreted as the corresponding equilibrium measure \cite{BGKS, GLS}. 
In fact, each element of the family $(\psi^c)_{c \in \mathbb{T}}$ can be related to a generalized Thue{\ts}--Morse sequence $t^c$ as defined later in \eqref{eq: tnc} \cite{fan_generalized_2020}. Indeed, as we will establish below, the equilibrium measure corresponding to $\psi^c$ is given by the diffraction measure $\mu_c$ of the sequence $t^c$. Here, we rely on the fact that $t^c$ emerges as a limiting object of a substitution procedure over a compact alphabet.
While the theory of substitutions on finite alphabets is well-established \cite{baake,queffelec}, their counterparts on infinite alphabets have been studied much more sporadically \cite{DOP18,Fer06}.
A systematic study of substitution systems over compact alphabets was recently undertaken in \cite{MRW23,MRW} and we will make use of this framework for an appropriate interpretation of the diffraction measures.
Hence, the present work can be seen both as a case study of Birkhoff averages of singular potentials and as an in-depth analysis of spectral measures that arise naturally in the realm of aperiodic order. 
In fact, we will see that both frameworks complement each other in this study and that each of them offers additional insight.

With regards to the thermodynamic formalism, we show that the classical variational principle for the pressure function $p_c$ (with $p_c(t)$ denoting  the topological pressure of $t\psi^c$) extends to the present setting using an appropriate notion of topological pressure. Also, we verify that the pressure function coincides (up to a multiplicative constant) with the $L^q$-spectrum of the associated equilibrium measure. Combining this with the multifractal analysis performed in \cite{fan_generalized_2020}, this yields several ways to compute the dimension spectrum of both the Birkhoff averages of $\psi^c$ and the local dimensions of $\mu_c$, at least some of them numerically accessible \cite{BGKS}.
The equality between $L^q$-spectrum and pressure function also allows us to further address the question about the regularity of the parameter dependence. As was shown in \cite{fan_generalized_2020}, the dependence of the pressure function $p_c(t)$ on $c$ is highly discontinuous for negative $t$ and there is recent work indicating that it is in fact continuous for positive $t$ \cite{GLSprep}. 
Here, we show that $p_c(2)$ is an analytic function in $c$. In fact, we prove that the exponential of $p_c(2)$ coincides with the spectral radius of an explicit matrix $M_c$ whose entries are analytic in $c$. This observation relies on the interpretation of $p_c(2)$ as a Fourier dimension and makes decisive use of the autocorrelation structure of the generalized Thue{\ts}--Morse sequence $t^c$.
This extends previous work on the Thue{\ts}--Morse measure \cite{Zaks}, using the characterization of Fourier dimension as provided in \cite{Strichartz}.
Continuing along the path of dimensional analysis as for the Fourier dimension,   we use the relation between the pressure function  and  the $L^q$-spectrum of $\mu_c$ and prove that this spectrum has crucial regularity properties, which in turn affect further regularity properties of $\mu_c$.
In particular, the spectral dimension for Kre\u{\i}n--Feller operators associated to the measure $\mu_c$ and the quantization dimension for $\mu_c$ of order $r\geqslant 0$ both exist and can be read off entirely from the $L^q$-spectrum.

\section{Notation and main results}

The base dynamics throughout this work is given by the doubling map $(\TT,T)$, with $\TT\coloneqq \R/\Z$ the one-dimensional torus, and 
\[
T \colon \TT \to \TT, \quad x \mapsto 2 x \mod 1 .
\]
It is often useful to think of this action in terms of a binary representation of $x = \sum_{i=1}^{\infty} x_i 2^{-i}$, where each $x_i \in \Sigma = \{0,1\}$. The representation $(x_i)_{i \in \N}$ is unique for all $x$ except for the countable set of dyadic points, and the doubling map corresponds to the shift action 
\[
S \colon \Sigma^{\N} \to \Sigma^{\N}, \quad
(x_i)_{i \in \N} \mapsto (x_{i+1})_{i \in \N}.
\]
More precisely, if $\tilde{x}$ is a binary representation of $x$, then $S(\tilde{x})$ is a binary representation of $T(x)$.  
Stated formally, $\pi \colon (x_i)_{i \in \N} \mapsto \sum_{i=1}^{\infty}x_i 2^{-i}$ is a factor map from the dynamical system $(\Sigma^{\N},S)$ to $(\TT,T)$, which is invertible on the complement of a countable set. Indeed, equipping $\Sigma$ with the discrete topology and $\Sigma^{\N}$ with the corresponding product topology, it is straightforward to verify that $\pi$ is continuous. With some abuse of notation, we sometimes identify $x \in \TT$ with (any of) its binary representation(s) $(x_i)_{i \in \N}$ in $\Sigma^{\N}$.

Staying on the symbolical side for a moment, we refer to the elements of $\Sigma^n$ as \emph{words of length $n$} (for $n \in \N_{0}$) and define for $\omega = \omega_1 \cdots \omega_n \in \Sigma^n$ the corresponding \emph{cylinder set} by
\[
\langle \omega \rangle \coloneqq \left\{(x_i)_{i \in \N} \in \Sigma^\N : x_1 \cdots x_n = \omega_1 \cdots \omega_n \right\},
\]
in particular, if $\omega\in \Sigma^0$, we have the empty word and $\langle \omega \rangle=\Sigma^\N$.
The image of $\langle \omega \rangle$ under $\pi$ with $\omega\in \Sigma^n$ is a closed interval of length $2^{-n}$ in $\TT$, specified by the first $n$ digits in the binary representation and with the same abuse of notation as above, we also denote this interval by $\langle \omega \rangle$.

\subsection{A family of singular potentials}
Our main interest is centered around the family $(\psi^c)_{c \in \TT}$ with
\[
\psi^0 \colon \TT \to \TT, 
\quad x \mapsto 2 \log|\cos(\pi x)|,
\]
and $\psi^c(x) = \psi^0(x-c)$ for all $x,c \in \TT$. Note that the function $\psi^c$ obtains its maximal value $0$ at $x=c$ and has a singularity at the position $\breve{c} := c + 1/2$. 
Sampling the function $\psi^c$ along orbits of the doubling map, we obtain the corresponding \emph{Birkhoff sums}, given by
\[
\psi_n^{c}(x)\, \coloneqq\, \sum_{k=0}^{n-1} \psi^c(T^k x),
\]
for all $n \in \N$ and $x \in \TT$. The \emph{Birkhoff average} is given by
\[
\overline{\psi}^{c}(x) \,\coloneqq\, \lim_{n \to \infty} \frac{1}{n} \psi^{c}_n(x),
\]
whenever the limit exists. A classical object of interest is the corresponding \emph{dimension spectrum}
\[
\birk_c(\alpha)\, \coloneqq \,\dim_H \left\{ x \in \TT : \overline{\psi}^c(x) = \alpha \right\},
\]
where $\dim_H$ denotes Hausdorff dimension. 
As it is standard, we always take $\dim_H(\emptyset)=0$.

For each $t \in \R$, the \emph{variational pressure} of $t \psi$ is given by
\[
\mathcal{P}_{\operatorname{var}}\left(t \psi^{c} \right)
\,\coloneqq \,\sup_{\nu\in\mathcal{M}_{T}} h\left( \nu\right)+ \nu\left(t\psi^c\right),
\]
where $\mathcal{M}_T$ is the set of all $T$-invariant Borel probability measures on $\TT$, we denote $\nu\left(t\psi^c\right)=\int t\psi^c\,\mathrm{d}\nu$ and we call $\nu$ an \emph{equilibrium measure} of $t \psi^{c}$ if it achieves the above supremum.
A variant of this notion that was considered in \cite{fan_generalized_2020} is the restricted variational pressure, defined as
\[
 p_c(t)\,\coloneqq\,\mathcal{P}_{\operatorname{var},c}\left(t\psi^c\right)\,\coloneqq\,\sup_{\nu\in\mathcal{M}_{T,c}} h\left(\nu\right)+ \nu\left(t\psi^c\right),
\]
where $\mathcal{M}_{T,c}\coloneqq\{\nu\in\mathcal{M}_T\colon \breve{c} \notin \supp\left(\nu\right)\}$ and $\supp( \nu)$  denotes the topological support of $\nu$. 
The interest in this quantity is partly justified by the fact that it is intimately linked to the \emph{Birkhoff spectrum} $\birk_{c}(\alpha)$ via its Legendre transform, given by
\begin{equation*}
  p_c^{*} (a) \, \coloneqq \, \sup_{q\in\R}
  \bigl( q \ts a - p_{c} (q) \bigr),
\end{equation*}
for all $a \in \R$. By construction, both $p_c$ and $p_c^{\ast}$ are convex functions from $\R$ to $\R \cup \{\infty\}$. The domain of $p_c^{\ast}$ is $\operatorname{dom}(p_c^{\ast}) = \left\{\alpha \in \R : p_c^{\ast}(\alpha) < \infty \right\}$, and the interior of this domain is always given by an open interval $(\underline{\alpha}_c,\overline{\alpha}_c)$. In the present setting, $\underline{\alpha}_c \in \R \cup \{ - \infty\}$ and $\overline{\alpha}_c \in \R$ \cite{fan_generalized_2020}. With this notation the statement reads as follows.

\begin{theorem}[{\cite[Thm.~B]{fan_generalized_2020}}]\label{cor: dim}
   For every $c \neq 0$ and $\alpha \in (\underline{\alpha}_c,\overline{\alpha}_c)$, the Birkhoff spectrum of the function\/ $\psi^c$ is given by
   \begin{equation*}
  \birk_c (\alpha ) \, = \, \frac{-p_c^{*}\ts (\alpha)}{\log 2},
\end{equation*}
and $b_c(\alpha) = 0$ if $\alpha < \underline{\alpha}_c$ or $\alpha \geqslant \overline{\alpha}_c$.
\end{theorem}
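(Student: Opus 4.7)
The plan is to prove Theorem~\ref{cor: dim} through the standard multifractal formalism, matching an upper bound from covering arguments with a lower bound from explicit equilibrium measures, both adapted to accommodate the logarithmic singularity of $\psi^c$ at $\breve c$. A convenient structural feature is that for $\alpha$ in the open Legendre domain, orbits realizing $\overline{\psi}^c(x) = \alpha$ cannot spend too much time near $\breve c$, so everything relevant is captured by the restriction $\mathcal{M}_{T,c}$, which is precisely why $p_c$ rather than the full variational pressure appears on the right-hand side.

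For the upper bound, fix $q \in \R$ and $\epsilon > 0$. If $\overline{\psi}^c(x) = \alpha$, then for all sufficiently large $n$ the dyadic cylinder $\langle x_1 \cdots x_n \rangle$ of diameter $2^{-n}$ satisfies $\me^{q \psi_n^c(x)} \geqslant \me^{n(q\alpha - \epsilon |q|)}$. Summing over a natural cover of the level set by such cylinders, restricted to those that stay a positive distance $\delta$ from $\breve c$, and comparing to a restricted partition-function sum which yields $p_c(q)$ in the thermodynamic limit, one obtains that the $s$-dimensional Hausdorff measure vanishes whenever $s \log 2 > q\alpha - p_c(q)$. An exhaustion as $\delta \to 0$ and optimization over $q$ delivers $b_c(\alpha) \leqslant -p_c^{*}(\alpha)/\log 2$.

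For the lower bound, at each $q$ in the interior of the domain where $p_c$ is differentiable with $\alpha = -p_c'(q)$, the goal is to construct an equilibrium measure $\mu_q \in \mathcal{M}_{T,c}$ for $q \psi^c$ satisfying a quasi-Gibbs estimate
\begin{equation*}
\log \mu_q\bigl( \langle x_1 \cdots x_n \rangle \bigr) \, = \, q \psi_n^c(x) - n \ts p_c(q) + o(n)
\end{equation*}
on cylinders well-separated from $\breve c$. A natural route is to truncate $\psi^c$ at depth $N$ to a H\"{o}lder potential $\psi^{c,N}$, apply Ruelle--Perron--Frobenius theory to obtain Gibbs equilibria $\mu_q^N$, and extract a weak-$*$ limit, using the variational characterisation together with a priori entropy bounds to ensure the limit remains in $\mathcal{M}_{T,c}$ and is still an equilibrium for $q \psi^c$. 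Once the quasi-Gibbs bound is in place, Birkhoff's ergodic theorem applied to $\psi^c \in L^1(\mu_q)$ shows that $\mu_q$-almost every $x$ has $\overline{\psi}^c(x) = \alpha$ with local dimension $-p_c^{*}(\alpha)/\log 2$, and a Billingsley-type lemma converts this into the matching Hausdorff dimension lower bound. Values of $\alpha$ captured only by the concave envelope across a plateau of $p_c$ are treated by mixing two equilibrium measures, while the open endpoints $\underline{\alpha}_c$ and $\overline{\alpha}_c$ follow by taking $q \to \pm \infty$.

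The main obstacle is controlling the truncation scheme near the singularity: cylinders meeting a neighborhood of $\breve c$ contribute terms in $\me^{q \psi_n^c}$ that diverge as $N \to \infty$. The hypothesis $c \neq 0$ is decisive here, since for $c = 0$ the singularity $\breve c = 1/2$ is a pre-fixed point of $T$ and orbits can linger there arbitrarily long, collapsing standard Gibbs estimates. For $c \neq 0$, a symbolic recurrence estimate quantifying how rarely orbits revisit a neighborhood of $\breve c$ can be balanced against the growth rate of $\psi^{c,N}$ to give uniform control. Making this quantitative recurrence precise, and coupling the estimate to the choice of truncation depth, is the technical heart of the argument.
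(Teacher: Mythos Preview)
This theorem is not proved in the present paper; it is quoted from \cite[Thm.~B]{fan_generalized_2020}, and the paper only comments on the structure of that proof in Remark~\ref{rem: proof of FSSthm}. The approach there is not potential truncation but \emph{phase-space restriction}: one works on the compact $T$-invariant sets $K_\delta = \{x: \rho(T^n x,\breve c)\geqslant \delta \text{ for all }n\}$, on which $\psi^c$ is H\"older, applies the classical multifractal formalism there to obtain $-p^*(\alpha;\delta)/\log 2$, and then passes to the limit $\delta\searrow 0$ via the exhaustion identity $p_c^*(\alpha)=\lim_{\delta\searrow 0}p^*(\alpha;\delta)$. This last step is the technical core, and the present paper offers an alternative route to it through Proposition~\ref{prop: Xm Kdelta} together with \cite[Prop.~4.1]{MR2719683}.

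Your sketch has a genuine gap in the upper bound. To bound $\dim_H\{x:\overline\psi^c(x)=\alpha\}$ from above you must cover the \emph{entire} level set; restricting to cylinders that stay a distance $\delta$ from $\breve c$ is not a cover, since points with $\overline\psi^c(x)=\alpha$ can (and generically will) have orbits entering every neighbourhood of $\breve c$ infinitely often. The correct upper bound uses the full partition sum $\sum_{\omega\in\Sigma^n}\exp(q\sup_{\langle\omega\rangle}\psi_n^c)$, which by Theorem~\ref{thm: Ptop=Pvar} already equals $\exp(n\,p_c(q)+o(n))$; no restriction is needed or permitted at that stage. Your lower-bound scheme via truncated potentials $\psi^{c,N}$ is a legitimate alternative to the $K_\delta$-restriction, but you have not explained why the weak-$*$ limit of the $\mu_q^N$ lands in $\mathcal M_{T,c}$; since the truncated equilibria live on the full shift and may well charge neighbourhoods of $\breve c$, this is not automatic and is exactly where the $K_\delta$ approach is cleaner. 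Finally, your diagnosis of the $c=0$ failure is off: $\breve c=1/2$ being eventually fixed is not the mechanism. What breaks is that $\psi^0$ is a coboundary (see \eqref{EQ:coboundary}), so $\overline\psi^0$ takes only the values $0$, $-\log 2$, and $-\infty$, and the Legendre duality with $p_0^*$ degenerates; compare Section~\ref{subsec: c=0}.
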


\begin{remark}
It is worth pointing out that the statement of Theorem~\ref{cor: dim} fails in the special case $c=0$; see \cite[Thm.~5.1]{fan_generalized_2020} for details. We will come back to this particular case at a later point.
\end{remark}

As we will see below, the distinction between restricted and unrestricted pressure is unnecessary on the positive halfline. In fact, both expressions can be related to the \emph{topological pressure}. We define the \emph{upped} and \emph{lower topological pressure} as
\begin{align}
  \overline{\cP}_{\operatorname{top}} (t\ts \psi^{c}) \, 
  &\coloneqq \,\limsup_{n\to\infty}\myfrac{1}{n}\log\sum_{\omega\in\Sigma^n}
  \exp \bigl(t\ts \sup_{x\in \langle\omega\rangle}\psi^{c}_{n}(x)\bigr)\notag\\
  &\,= \,\,\begin{cases}\limsup_{n\to\infty} \myfrac{1}{n}\log\sum_{\omega\in\Sigma^n}
  \sup_{x\in \langle\omega\rangle}\exp \bigl(t\ts \psi^{c}_{n}(x)\bigr)& t\geqslant 0\\
  \limsup_{n\to\infty} \myfrac{1}{n}\log\sum_{\omega\in\Sigma^n}
  \inf_{x\in \langle\omega\rangle}\exp \bigl(t\ts \psi^{c}_{n}(x)\bigr)&t<0,  \end{cases}\label{eq: def top pressure}\end{align}
  and
  \begin{align*}
    \underline{\cP}_{\operatorname{top}} (t\ts \psi^{c})
  &\coloneqq \,\liminf_{n\to\infty}\myfrac{1}{n}\log\sum_{\omega\in\Sigma^n}
  \exp \bigl(t\ts \sup_{x\in \langle\omega\rangle}\psi^{c}_{n}(x)\bigr).
\end{align*}
We talk about the \emph{topological pressure} $\cP_{\operatorname{top}}$ if $\overline{\cP}_{\operatorname{top}}=\underline{\cP}_{\operatorname{top}}$.
 Our first main result is that a version of the classical variational principle persists in the present setting.

\begin{theorem}\label{thm: Ptop=Pvar}
Fix  $c\in \T$. Then for all $t\in\mathbb{R}$   we have 
 \begin{align}
  \mathcal{P}_{\operatorname{top}}\left(t\psi^c\right)=\mathcal{P}_{ \operatorname{var},c}\left(t\psi^c\right)\eqqcolon \mathcal{P}\left(t\psi^c\right)\label{eq: pressure 1}
 \end{align}
 and for $t\geqslant 0$,
\begin{align}
\mathcal{P}_{\operatorname{var}}\left(t\psi^c\right)= \mathcal{P}\left(t\psi^c\right).
\label{eq: pressure 2}
\end{align}
Moreover, the pressure function $t\mapsto \mathcal{P}\left(t\psi^c\right)$ is convex on $\R _{>0}$.
\end{theorem}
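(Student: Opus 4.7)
The plan is to treat the three claims in sequence: first \eqref{eq: pressure 1}, then \eqref{eq: pressure 2} as a corollary on the positive half-line, and finally convexity via a H\"older-type bound. The source of the technical difficulty throughout is the logarithmic singularity of $\psi^c$ at $\breve{c}$, which prevents direct use of the classical Bowen--Walters variational principle. For the easy direction of \eqref{eq: pressure 1}, namely $\mathcal{P}_{\operatorname{var},c}(t\psi^c) \leq \overline{\cP}_{\operatorname{top}}(t\psi^c)$, I would fix $\nu \in \mathcal{M}_{T,c}$; since $\breve{c} \notin \supp(\nu)$, $\psi^c$ is bounded on $\supp(\nu)$ and $\nu(\psi^c)$ is finite. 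Applied to $t\psi^c_n$ and the generating cylinder partition $\mathcal{C}_n = \{\langle\omega\rangle : \omega \in \Sigma^n\}$, the classical Jensen--Misiurewicz estimate
\begin{equation*}
H_\nu(\mathcal{C}_n) + t\ts n\, \nu(\psi^c) \,\leq\, \log \sum_{\omega \in \Sigma^n} \exp\bigl(t \sup\nolimits_{x\in\langle\omega\rangle}\psi^c_n(x)\bigr)
\end{equation*}
holds for every $t \in \R$, requiring only the trivial bound $\psi^c \leq 0$. Dividing by $n$, invoking Kolmogorov--Sinai, and taking the supremum over $\nu$ yields one half of \eqref{eq: pressure 1}.

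For the reverse direction $\overline{\cP}_{\operatorname{top}}(t\psi^c) \leq \mathcal{P}_{\operatorname{var},c}(t\psi^c)$, I would use a Gibbs-style construction along a subsequence $(n_k)$ realizing the $\limsup$: pick representatives $x_\omega \in \langle\omega\rangle$ approximately maximizing $\psi^c_{n_k}$, define the weighted atomic measure $\mu_{n_k} \coloneqq Z_{n_k}^{-1}\sum_{\omega \in \Sigma^{n_k}} \mathrm{e}^{t\psi^c_{n_k}(x_\omega)}\delta_{x_\omega}$ together with its Ces\`aro time-average $\tilde{\mu}_{n_k}$, and extract a weak-$\ast$ accumulation point $\nu$. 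The standard concentration estimate gives $\tfrac{1}{n_k}\log Z_{n_k} \leq h(\tilde{\mu}_{n_k}) + t\,\tilde{\mu}_{n_k}(\psi^c) + o(1)$, and upper semicontinuity of the entropy map on the finite-state shift together with that of $\psi^c$ (bounded above by $0$) transfers this inequality to $\nu$ in the limit, yielding $\overline{\cP}_{\operatorname{top}}(t\psi^c) \leq h(\nu) + t\nu(\psi^c)$. Equality of upper and lower topological pressures, which completes \eqref{eq: pressure 1}, I would then derive from Fekete's lemma applied to the sums $Z_n = \sum_\omega \exp(t\sup_{\langle\omega\rangle}\psi^c_n)$, which are submultiplicative for $t \geq 0$ and supermultiplicative for $t < 0$ as a consequence of $\sup(\psi^c_n + \psi^c_m \circ T^n) \leq \sup \psi^c_n + \sup \psi^c_m$ on product cylinders.

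The identity \eqref{eq: pressure 2} is then immediate: for $t \geq 0$ the potential $t\psi^c$ is upper semicontinuous and bounded above by $0$, so the Jensen--Misiurewicz estimate above holds for every $\nu \in \mathcal{M}_T$, yielding $\mathcal{P}_{\operatorname{var}}(t\psi^c) \leq \mathcal{P}_{\operatorname{top}}(t\psi^c) = \mathcal{P}_{\operatorname{var},c}(t\psi^c)$, while the reverse inequality is trivial. Convexity on $\R_{>0}$ follows from H\"older's inequality applied to the sums defining $\mathcal{P}_{\operatorname{top}}$: for $t_1, t_2 > 0$ and $\lambda \in (0,1)$,
\begin{equation*}
\sum_\omega \mathrm{e}^{(\lambda t_1 + (1-\lambda)t_2)\sup\psi^c_n}
\,\leq\,
\Bigl(\sum_\omega \mathrm{e}^{t_1 \sup\psi^c_n}\Bigr)^\lambda
\Bigl(\sum_\omega \mathrm{e}^{t_2 \sup\psi^c_n}\Bigr)^{1-\lambda},
\end{equation*}
after which taking $\tfrac{1}{n}\log$ and passing to the limit gives convexity of $\mathcal{P}_{\operatorname{top}}$, hence of $\mathcal{P}$. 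The main obstacle is in the reverse direction of \eqref{eq: pressure 1}: one must ensure that the accumulation measure $\nu$ actually lies in $\mathcal{M}_{T,c}$, i.e.\ that mass does not accumulate at the singularity $\breve{c}$. For $t > 0$ this is essentially automatic because the weights $\mathrm{e}^{t\psi^c_n(x_\omega)}$ decay rapidly near $\breve{c}$, but for $t < 0$ the sum is a priori boosted by cylinders near $\breve{c}$, and one must work with the equivalent \emph{inf}-form of \eqref{eq: def top pressure} together with a careful choice of the $x_\omega$ to discipline the construction.
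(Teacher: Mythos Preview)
Your approach is fundamentally different from the paper's. The paper does \emph{not} attempt a direct Misiurewicz-style construction; instead it works by exhaustion through the Markov subshifts $\mathbb{X}_m^c$ (where $\psi^c$ is H\"older and the classical variational principle applies), and then proves $\lim_m \mathcal{P}_{\operatorname{top}}(t\psi^c\mid\mathbb{X}_m^c)=\mathcal{P}_{\operatorname{top}}(t\psi^c)$ via substantial combinatorial work (Propositions~\ref{prop: pm to p} and~\ref{prop: PtopmtoPtop}, relying on Proposition~\ref{PROP:word-regularisation} and the analysis of Section~\ref{sec:Technical}). Your Fekete argument for existence of $\mathcal{P}_{\operatorname{top}}$ as a limit is correct and simpler than what the paper does; your H\"older argument for convexity is also valid (the paper instead observes that $\mathcal{P}_{\operatorname{var}}$ is a supremum of affine functions); and your treatment of \eqref{eq: pressure 2} matches the paper's.

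The genuine gap is for $t<0$, and it affects \emph{both} directions of your argument. First, your ``easy direction'' does not work: by the definition \eqref{eq: def top pressure} one has $t\sup_{\langle\omega\rangle}\psi^c_n=\inf_{\langle\omega\rangle}(t\psi^c_n)$ when $t<0$, so the Jensen--Misiurewicz estimate requires $\nu(t\psi^c_n)\leqslant\sum_\omega\nu(\langle\omega\rangle)\inf_{\langle\omega\rangle}(t\psi^c_n)$, which is the \emph{reverse} of the trivial inequality. The bound $\psi^c\leqslant 0$ does not help here. Second, in your ``reverse direction'' the semicontinuity runs the wrong way: for $t<0$ the function $t\psi^c$ is lower semicontinuous, so weak convergence $\tilde\mu_{n_k}\to\nu$ yields only $\liminf\tilde\mu_{n_k}(t\psi^c)\geqslant\nu(t\psi^c)$, whereas you need $\limsup\leqslant$. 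Thus even if you could force $\nu\in\mathcal{M}_{T,c}$ (which you acknowledge as unresolved), the inequality $\overline{\mathcal{P}}_{\operatorname{top}}\leqslant h(\nu)+t\nu(\psi^c)$ would not follow. The paper circumvents both issues by never leaving the H\"older regime: the entire difficulty is transferred to the purely combinatorial question of whether the restricted topological pressures on $\mathbb{X}_m^c$ approximate the full one, which for $t<0$ is handled by the ``sliding-out'' lemmas (Lemmas~\ref{LEM:sliding-out-no-prefix}--\ref{LEM:sliding-out-block-II}) that control how suffixes of words can be extended to avoid long prefixes of $\breve{c}$.
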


In fact, the topological pressure is helpful to obtain numerical calculations of the pressure itself or the dimension and the Birkhoff spectrum as was illustrated for the case $c =1/2$ in \cite{BGKS}.

\subsection{Equilibrium measures via diffraction}
Each of the functions $\psi^c$ has a very particular structure as it is the logarithm of a $g$-function $g^c$ in the sense of Keane \cite{Kea72}. More precisely, we call $g\colon \TT \to \TT$ a $g$-function if 
\[
\sum_{y \in T^{-1}x} g(y) = 1,
\]
for all $x \in \TT$. This relation is easily checked for the function $g^0(x) = \cos^2(\pi x)$ and naturally extends to all of its rigid translations $g^c(x) = g^0(x-c)$. Due to a well-known result by Ledrappier, the logarithm of a $g$-function has vanishing pressure and  admits an equilibrium measure \cite{Ledrappier}. Each such equilibrium measure is called a corresponding $g$-measure (or Doeblin measure).
That is,
\[
0 = \mathcal{P}_{\operatorname{var}}(\psi^c) = h_{\mu_c} + \int \psi^c \dd \mu_c,
\]
for some unique choice of $\mu_c \in \mathcal{M}_T$. 
If -- as in our case -- the $g$-function has at most one zero, then additionally the equilibrium measure is also unique, \cite{Kea72}.
Since $g^c$ is continuous and has a single $0$ for each $c$ the following product representation is a direct corollary of a result by Keane \cite{Kea72} (compare  Subsection~\ref{subsec:Gibbs-type} for details)
\begin{equation}
\label{eq:mu_c_product}
\mu_c = \prod_{n=0}^{\infty} 2 g^c(T^n x)
= \prod_{n=0}^{\infty} \bigl(1 + \cos(2\pi (2^n x - c) ) \bigr),
\end{equation}
where the right hand side is understood as the weak limit of densities with respect to Lebesgue measure. It should be noted that $\mu_0 = \delta_0$ coincides with the point mass at the origin and that $\mu_c$ is purely singular continuous with full topological support, for all $c\neq 0$ \cite{BCEG}.

In the special case $c=1/2$, it is well established that the Riesz product given in \eqref{eq:mu_c_product} describes the diffraction of the Thue{\ts}--Morse sequence. In order to extend this relation to arbitrary values of $c \in \TT$, 
we consider a {\em generalized Thue{\ts}--Morse sequence} $t^{c} = (t_n^{c})_{n \in \N_0}$, taking values in the unit circle $\mc A \coloneqq \left\{ \me^{2 \pi \im x} \mid x \in [0,1) \right\}$ in $\C$. 
More precisely, we set
\begin{equation}
t^{c}_n\, \coloneqq \, \me^{2 \pi \im c \mathsf{S}_2(n)},\label{eq: tnc}
\end{equation}
where $c\in \T$ and $\mathsf{S}_2(n)$ denotes the sum of digits in the binary expansion of $n$ (e.g.\@ $\mathsf{S}_{2}(7)=3$). A connection to the study of $\psi^c$ was already noted in \cite{fan_generalized_2020}. Here, we extend $t^c$ arbitrarily to the left, yielding a two-sided sequence $\widetilde{t^c}$ in $\mc A^{\Z}$ (equipped with the product topology), and let $X_c$ be the set of all accumulation points of $(S^n \widetilde{t^c})_{n \in \N_0}$. As before, we denote by $S$ the left shift on $X_c$, defined by $S(x_n) \coloneqq x_{n+1}$. Since $S X_c = X_c$, this gives a well-defined dynamical system $(X_c, S)$. In fact, we will see in Section~\ref{SEC:diffraction} that $(X_c,S)$ is strictly ergodic. 

To each $x = (x_n)_{n \in \Z} \in X_c$, we can assign a diffraction measure in the following manner. 
First, (denoting complex conjugation with a bar) we define the autocorrelation coefficients via
\begin{equation}\label{eq: autocorr coeff}
 \eta^c_n\, \coloneqq\, \lim_{k \to \infty} \frac{1}{k} \sum_{m = 0}^{k-1} \overline{x_m} x_{m+n},
\end{equation}
which is well-defined and independent of $x \in X_c$ for each $n \in \Z$ due to unique ergodicity. We therefore have a well-defined autocorrelation measure
\begin{equation*}
 \gamma_c\, \coloneqq\, \sum_{n \in \Z} \eta^c_n \delta_n,
\end{equation*}
where $\delta_x$ denotes the Dirac measure supported on $x\in \R$. 
The diffraction measure associated to $(X_c,S)$ is given by $\widehat{\gamma}_c$ as the Fourier transformation of $\gamma_c$. The Fourier transform $\widehat{\gamma}_c$ is a well-defined positive measure because $\gamma_c$ is a positive definite measure.
We refer to \cite{baake} and Section~\ref{sec: autocorr} for general background on autocorrelation measures and diffraction. 
The following result establishes the desired link to our earlier discussion.

\begin{prop}
For every $c \in \TT$, we have $\mu_c = \widehat{\gamma}_c\vert_{[0,1)}$. 
\end{prop}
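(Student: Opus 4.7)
The plan is to show that $\mu_c$ and $\widehat{\gamma}_c|_{[0,1)}$ have identical Fourier--Stieltjes coefficients and then conclude by uniqueness. Since $\gamma_c$ is supported on $\Z$, the Fourier transform $\widehat{\gamma}_c$ is a $\Z$-periodic positive measure on $\R$, and the Fourier coefficients of its restriction to the fundamental domain $[0,1)\simeq \TT$ are exactly the autocorrelation coefficients $(\eta^c_n)_{n\in\Z}$ (this is standard diffraction theory, or equivalently the Herglotz--Bochner theorem applied to the positive definite sequence $(\eta^c_n)$). Writing $c_n\coloneqq \int_\TT \me^{2\pi\im n x}\,\dd\mu_c(x)$, it therefore suffices to establish $c_n=\eta^c_n$ for every $n\in\Z$, which we propose to do by matching recursions.

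On the symbolic side, the identities $\mathsf{S}_2(2n)=\mathsf{S}_2(n)$ and $\mathsf{S}_2(2n+1)=\mathsf{S}_2(n)+1$ yield the substitutive rule $t^c_{2n}=t^c_n$ and $t^c_{2n+1}=\me^{2\pi\im c}t^c_n$. Splitting the Cesàro sum defining $\eta^c_n$ according to the parity of the summation index $m$ and applying this rule to each half gives, after a short calculation,
\begin{equation*}
\eta^c_{2j}\,=\,\eta^c_j,\qquad
\eta^c_{2j+1}\,=\,\tfrac12\bigl(\me^{2\pi\im c}\ts\eta^c_j+\me^{-2\pi\im c}\ts\eta^c_{j+1}\bigr),
\end{equation*}
together with the normalization $\eta^c_0=1$ and the Hermitian symmetry $\eta^c_{-n}=\overline{\eta^c_n}$. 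On the measure-theoretic side, we exploit the characterization of $\mu_c$ as the $g$-measure associated with $g^c(y)=\cos^2(\pi(y-c))$, which by Ledrappier's theorem provides the transfer-operator identity $\int f\,\dd\mu_c=\int \mathcal{L}_{g^c}f\,\dd\mu_c$ for every continuous $f$, where $(\mathcal{L}_{g^c}f)(x)\coloneqq\sum_{y\in T^{-1}x}g^c(y)f(y)$. Testing against $f_n(x)=\me^{2\pi\im n x}$ and using the elementary identities $g^c(x/2)+g^c((x+1)/2)=1$ (the $g$-function property) and $g^c(x/2)-g^c((x+1)/2)=\cos(\pi(x-2c))$, one obtains
\begin{equation*}
(\mathcal{L}_{g^c}f_n)(x)\,=\,\me^{\pi\im n x}\bigl(g^c(x/2)+\me^{\pi\im n}\ts g^c((x+1)/2)\bigr),
\end{equation*}
which simplifies to $\me^{2\pi\im(n/2)x}$ for $n$ even and to $\me^{\pi\im n x}\cos(\pi(x-2c))$ for $n$ odd. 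Expanding the cosine in the odd case as $\tfrac12(\me^{\pi\im(x-2c)}+\me^{-\pi\im(x-2c)})$ and integrating both sides against $\mu_c$ yields for $(c_n)$ the very same recursion and initial condition as for $(\eta^c_n)$.

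Finally, we verify that this recursion, together with $a_0=1$ and the Hermitian symmetry $a_{-n}=\overline{a_n}$, has a unique bounded solution on $\Z$. Even indices are immediate. For odd indices $n=2j+1$ with $j\geqslant 1$, the values $a_j,a_{j+1}$ occur at strictly smaller absolute index, so a standard induction works. The single self-referential instance $j=0$ reduces to the non-degenerate linear equation $a_1\bigl(1-\tfrac12\me^{-2\pi\im c}\bigr)=\tfrac12\me^{2\pi\im c}$, which has a unique solution since $\lvert\tfrac12\me^{-2\pi\im c}\rvert=\tfrac12<1$. Hermitian symmetry then propagates the equality $c_n=\eta^c_n$ to negative indices, and the proposition follows. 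The main obstacle is the careful bookkeeping in the odd-case computation of $\mathcal{L}_{g^c}f_n$ and ensuring that the Fourier-convention signs match when identifying the Fourier coefficients of $\widehat{\gamma}_c|_{[0,1)}$; once the two recursions are seen to be identical, the result is immediate.
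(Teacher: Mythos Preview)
Your proof is correct and takes a genuinely different route from the paper. The paper establishes the identity by computing the diffraction measure directly as a vague limit: using Corollary~\ref{COR:diffraction-limit}, it writes $\widehat{\gamma}_c$ as the limit of $|\widehat{\omega_{2^n}^c}|^2/2^n$, and the substitutive recursion $u_c^{n+1}=u_c^n\,R_c(u_c^n)$ translates into a multiplicative recursion on these densities, yielding the Riesz product $\prod_n(1+\cos(2\pi(2^n x-c)))$ straight away; this is then matched against the product representation \eqref{eq:mu_c_product} of $\mu_c$. Your approach instead compares Fourier--Stieltjes coefficients, deriving the same two-term recursion for $(\eta^c_n)$ (this is the paper's Proposition~\ref{PROP:eta-recursion}, proved later and for a different purpose) and for $(c_n)$ via the transfer-operator invariance of the $g$-measure, and then invoking uniqueness of the bounded solution. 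The paper's argument is shorter and simultaneously delivers the Riesz-product form of $\widehat{\gamma}_c$, but it relies on the vague continuity of the Fourier transform on the cone of positive-definite measures (Theorem~\ref{THM:FT-continuity}). Your argument avoids that machinery entirely, is more self-contained, and dovetails naturally with the coefficient analysis in Section~\ref{sec: autocorr coeff}; the only care needed is the sign bookkeeping you already flag, which is handled by Lemma~\ref{LEM:FS-coefficients}.
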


In order to describe the local structure of a measure $\nu$ on $\TT$, we use the notion of a local dimension, given by
\[
d_{\nu}(x)\, \coloneqq\, \lim_{r \searrow 0} \frac{\log(\nu( B_r(x)))}{\log r},
\]
 if the limit exists, and where we denote by $B_r(x)$ the closed ball around $x$ with radius $r$ with respect to the Euclidean distance $\rho$. 
Similarly to the investigation of Birkhoff averages, we approach this quantity for $\mu_c$ via the corresponding dimension spectrum
\[
f_c(\alpha)\,\coloneqq\, \dim_H \left\{ x \in \TT : d_{\mu_c}(x) = \alpha \right\}.
\]
Making use of the analysis in \cite{fan_generalized_2020} we obtain the following.

\begin{theorem}\label{thm: dim spectrum}
For each $c \neq 0$ and $\alpha \in \R \setminus \left\{ -\underline{\alpha}_c/\log 2 \right\}$, we have $f_c(\alpha) =  \birk_c( - \alpha \log 2)$.
\end{theorem}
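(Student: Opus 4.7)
The plan is to promote the Gibbs-type property of $\mu_c$ to a pointwise identity $d_{\mu_c}(x) = -\overline{\psi}^c(x)/\log 2$ on a set large enough that the Hausdorff dimensions of the level sets of $d_{\mu_c}$ and of $\overline{\psi}^c$ agree, and then to read the claim off directly. The starting point is the following observation. Since $\psi^c = \log g^c$ and $\mu_c$ is the unique $g$-measure of $g^c$ (compare \eqref{eq:mu_c_product}), iterating the transfer operator identity $\mu_c = L_{g^c}^{\ast}\mu_c$ yields, for every cylinder $\langle \omega \rangle$ with $\omega \in \Sigma^n$,
\[
\exp\!\bigl(\inf_{y \in \langle \omega \rangle} \psi^{c}_{n}(y)\bigr)
\,\leqslant\, \mu_c(\langle \omega \rangle)
\,\leqslant\, \exp\!\bigl(\sup_{y \in \langle \omega \rangle} \psi^{c}_{n}(y)\bigr).
\]
Equivalently, writing $\omega_n(x)$ for the length-$n$ prefix of the binary expansion of $x$,
\[
\log \mu_c(\langle \omega_n(x)\rangle) \,=\, \psi^{c}_{n}(x) + E_n(x),
\]
where $|E_n(x)|$ is bounded by the oscillation of $\psi^{c}_{n}$ on $\langle \omega_n(x)\rangle$.

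To pass from cylinders to balls, note that for any $r \in [2^{-n-1},2^{-n})$ the ball $B_r(x)$ is covered by at most three consecutive dyadic intervals of length $2^{-n}$ and contains at least one, so the above asymptotic transfers to $\log \mu_c(B_r(x))$. The decisive step is to show that $E_n(x) = o(n)$ on a set that is large enough for both spectra. The only mechanism producing a large oscillation is a visit $T^k x$ of the orbit of $x$ to an arbitrarily small neighbourhood of the singularity $\breve c$. A Borel--Cantelli / large-deviation argument, applied on the level set $\{x : \overline{\psi}^c(x) = \beta\}$ for $\beta > \underline{\alpha}_c$, rules out a sufficiently fast approach to $\breve c$ and yields $E_n(x)/n \to 0$. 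Dividing through by $\log r = -n\log 2\,(1+o(1))$ gives
\[
d_{\mu_c}(x) \,=\, -\,\overline{\psi}^c(x)/\log 2,
\]
where the two-sidedness of the estimate guarantees that either limit exists if and only if the other does. Consequently, up to a set of Hausdorff dimension zero,
\[
\{x \in \TT : d_{\mu_c}(x) = \alpha\} \,=\, \{x \in \TT : \overline{\psi}^c(x) = -\alpha\log 2\}
\]
for every $\alpha \neq -\underline{\alpha}_c/\log 2$, and the identity $\dims_c(\alpha) = \birk_c(-\alpha \log 2)$ follows by taking Hausdorff dimensions and invoking Theorem~\ref{cor: dim}.

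The main obstacle is the uniform control of $E_n(x)$ on level sets $\{\overline{\psi}^c(x) = \beta\}$ as $\beta \searrow \underline{\alpha}_c$: at the left endpoint the orbits are allowed to visit every neighbourhood of $\breve c$ at the slowest admissible rate, so the bound on $E_n(x)$ need no longer be $o(n)$, and the Gibbs-type comparison between $\mu_c(B_r(x))$ and $\exp(\psi^{c}_{n}(x))$ may genuinely fail. This is precisely why the boundary value $\alpha = -\underline{\alpha}_c/\log 2$ is excluded from the statement; settling the behaviour at this endpoint would require a separate and substantially more delicate analysis of the singular orbit.
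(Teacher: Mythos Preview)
Your proposal contains a genuine gap at the step where you claim $E_n(x)=o(n)$ on the level set $\{\overline\psi^c(x)=\beta\}$ via a ``Borel--Cantelli / large-deviation argument''. There is no natural measure on this level set to which Borel--Cantelli could be applied, and a large-deviation bound controls the \emph{measure} of bad sets, not the behaviour of individual points in a prescribed level set. In fact the oscillation of $\psi_n^c$ on $\langle\omega_n(x)\rangle$ is infinite whenever the cylinder meets the singular orbit, so your lower bound $\exp(\inf\psi_n^c)\leqslant\mu_c(\langle\omega_n(x)\rangle)$ is frequently vacuous. The paper does obtain a uniform Gibbs-type estimate of the shape you want (Proposition~\ref{PROP:Gibbs-like}), but only under the extra hypothesis that $\mc P_{\operatorname{top}}(t\psi^c)<\infty$ for all $t<0$, and this fails for a residual set of parameters $c$. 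Without that assumption your pointwise identity $d_{\mu_c}(x)=-\overline\psi^c(x)/\log 2$ is not established on the full level set, and the subsequent claim that the two level sets differ only by a set of Hausdorff dimension zero is unsupported.

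The paper circumvents this difficulty by treating the two inequalities asymmetrically. For the \emph{lower} bound it restricts to the compact invariant sets $K_\delta$ of points whose orbits stay $\delta$-away from $\breve c$: there $\psi^c$ is H\"older, Lemma~\ref{LEM:mu-on-Kdelta} gives honest Gibbs constants, and the pointwise identity holds; the exhaustion $\delta\searrow 0$ recovers the full Birkhoff spectrum via the convergence $p_c^*(\alpha;\delta)\to p_c^*(\alpha)$. For the \emph{upper} bound it avoids any pointwise comparison and instead uses the general covering inequality $f_c(\alpha)\leqslant\max\{-\beta_{\mu_c}^*(-\alpha),0\}$ (Lemma~\ref{LEM:f-alpha-lq-general}) together with the identification $p_c=(\log 2)\,\beta_{\mu_c}$ from Theorem~\ref{thm: pressure beta}; Theorem~\ref{cor: dim} then matches this with $b_c(-\alpha\log 2)$ for $\alpha\neq-\underline\alpha_c/\log 2$. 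Your diagnosis of why the endpoint is excluded is also off: it is not an artefact of slow approach to $\breve c$, but simply that Theorem~\ref{cor: dim} gives no formula for $b_c$ at the single value $\underline\alpha_c$.
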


Hence, the dimension spectrum can also be expressed in terms of the Legendre transform of the pressure via Theorem~\ref{cor: dim}.

Another convenient way to study the structure of a (fully supported) measure $\nu$ on $\TT$ is via its \emph{upper} and \emph{lower} $L^q$-\emph{spectrum}, given for all $q \in \R$ by
\begin{align}
 \overline{\beta}_{\nu}(q)\, &\coloneqq\,  \limsup_{n\to\infty} \frac{1}{n\log 2} \log \left(\sum_{\omega\in\Sigma^n} {\nu}(\langle \omega \rangle)^q\right),\label{eq: Lq-spectrum}\\
\underline{\beta}_{\nu}(q)\, &\coloneqq\, \liminf_{n\to\infty} \frac{1}{n\log 2} \log \left(\sum_{\omega\in\Sigma^n} {\nu}(\langle \omega \rangle)^q\right).\nonumber
\end{align}
Moreover, in case that the limit in \eqref{eq: Lq-spectrum} exists, we simply write $\beta_{\nu}(q)$ and denote it as the   $L^q$-\emph{spectrum}.
In the present setup, this is closely related to the pressure function as the following theorem shows. 

\begin{theorem}\label{thm: pressure beta}
For each $c \neq 0$, we have  
\[
p_c = \log (2)\,\, \beta_{\mu_c}.
\]
\end{theorem}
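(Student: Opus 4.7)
The plan is to exploit the $g$-measure structure of $\mu_c$ (with $g^c(x)=\cos^2(\pi(x-c))=\exp(\psi^c(x))$), which ties cylinder measures of $\mu_c$ directly to Birkhoff sums of $\psi^c$. Combined with the identification $p_c(q)=\mathcal{P}_{\operatorname{top}}(q\psi^c)$ from Theorem~\ref{thm: Ptop=Pvar}, this should force the two partition functions to coincide up to the $\log 2$ factor arising from the cylinder scale.

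As a first step I would iterate the defining identity of the $g$-measure (by induction on $n$) to obtain the representation
\[
\mu_c(\langle \omega \rangle) \,=\, \int_{\TT} \exp\bigl(\psi^c_n(\omega \cdot y)\bigr) \,d\mu_c(y) \qquad (\omega \in \Sigma^n),
\]
with $\omega \cdot y$ the binary concatenation of $\omega$ with $y$. Since $\omega \cdot y\in\langle \omega \rangle$ for every $y \in \TT$, this yields the two-sided sandwich
\[
\inf_{x \in \langle \omega \rangle} \exp\bigl(\psi^c_n(x)\bigr) \,\leqslant\, \mu_c(\langle \omega \rangle) \,\leqslant\, \sup_{x \in \langle \omega \rangle} \exp\bigl(\psi^c_n(x)\bigr).
\]
Raising to the $q$-th power, summing over $\omega \in \Sigma^n$, and invoking Theorem~\ref{thm: Ptop=Pvar}, the upper bound alone gives both one-sided inequalities at once: $\log(2)\,\overline{\beta}_{\mu_c}(q) \leqslant p_c(q)$ for $q\geqslant 0$ and $\log(2)\,\underline{\beta}_{\mu_c}(q) \geqslant p_c(q)$ for $q<0$.

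The substantive step is the quasi-Gibbs lower bound
\[
\mu_c(\langle \omega \rangle) \,\geqslant\, e^{-n\epsilon}\sup_{x \in \langle \omega \rangle}\exp\bigl(\psi^c_n(x)\bigr),
\]
valid for every $\epsilon > 0$, all sufficiently large $n$, and all $\omega \in \Sigma^n$. Once established, this closes the remaining direction and simultaneously forces $\beta_{\mu_c}$ to exist as a genuine limit. My strategy is to partition $\Sigma^n$ into \emph{good} cylinders, on which the orbit segments $(T^k x)_{k=0}^{n-1}$ for $x\in\langle\omega\rangle$ stay outside a suitably shrinking neighborhood of the singularity $\breve{c} = c + 1/2$, and \emph{bad} cylinders otherwise. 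On good cylinders the continuity of $g^c$ gives uniform bounded distortion of $\psi^c$, so the integrand in the representation is comparable to its supremum with absolute constants, and one recovers a Gibbs estimate. On bad cylinders the quadratic Taylor expansion $g^c(x)\asymp\pi^2(x-\breve{c})^2$ near $\breve{c}$, together with the Riesz-product structure~\eqref{eq:mu_c_product}, produces a quantitative lower estimate for the integral.

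The main obstacle is the singular behaviour at $\breve{c}$: the unboundedness of $\psi^c$ there precludes any global distortion argument, and one has to count the bad cylinders and verify that their contribution to both partition functions grows at the same exponential rate. A possible alternative, which bypasses pointwise distortion altogether, is variational: for each $q$, Theorem~\ref{thm: Ptop=Pvar} furnishes a near-optimizer $\nu_q \in \mathcal{M}_{T,c}$ for $p_c(q)$; an ergodic large-deviation estimate for $\nu_q$ then exhibits roughly $e^{n h(\nu_q)}$ cylinders on which $\mu_c(\langle \omega \rangle) \approx e^{n\nu_q(\psi^c)}$, yielding the required lower bound on $\sum_\omega \mu_c(\langle \omega \rangle)^q$ directly. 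Either route, combined with the sandwich above, establishes $\log(2)\,\beta_{\mu_c}=p_c$.
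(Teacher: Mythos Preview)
Your sandwich inequality and the resulting easy directions ($\overline{\beta}_{\mu_c}(q)\leqslant p_c(q)/\log 2$ for $q\geqslant 0$, $\underline{\beta}_{\mu_c}(q)\geqslant p_c(q)/\log 2$ for $q<0$) are exactly what the paper uses (Corollary~\ref{COR:mu-on-double-word}). The gap is in the hard direction.

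You assert the quasi-Gibbs lower bound $\mu_c(\langle\omega\rangle)\geqslant e^{-n\varepsilon}\sup_{x\in\langle\omega\rangle}\exp(\psi^c_n(x))$ for \emph{every} $c\neq 0$ and every $\omega$. The paper only establishes this (Proposition~\ref{PROP:Gibbs-like}) under the additional hypothesis that $p_c(t)<\infty$ for $t<0$, and this hypothesis genuinely fails for many $c$ --- in particular for the Thue--Morse case $c=1/2$. The paper's mechanism for the $o(n)$ error goes through the hitting-time count $\kappa_n$ (Lemma~\ref{LEM:fn-bound}); when $\breve{c}$ is approached too well by its own orbit (e.g.\ $\breve{c}=0$, where $\kappa_n=n$), this error term blows up and the argument breaks. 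Your good/bad cylinder sketch does not address this: the bad cylinders are precisely those where the orbit revisits a neighbourhood of $\breve{c}$ many times, and saying ``Taylor expansion plus Riesz-product structure'' is not yet a strategy for controlling $\sup\psi^c_n-\log\mu_c(\langle\omega\rangle)$ uniformly over such $\omega$. Whether the quasi-Gibbs bound actually holds for all $c\neq 0$ is, as far as the paper is concerned, open.

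The paper circumvents this by a case split you omit. For $q<0$: if $p_c(q)=\infty$ the inequality $\overline{\beta}_{\mu_c}(q)\leqslant p_c(q)/\log 2$ is vacuous, and if $p_c(q)<\infty$ then Proposition~\ref{PROP:Gibbs-like} applies. For $q\geqslant 0$: the paper does \emph{not} use the quasi-Gibbs bound at all, but instead restricts to cylinders meeting $K_\delta$ (where genuine Gibbs bounds hold, Lemma~\ref{LEM:mu-on-Kdelta}) and then invokes the exhaustion $\mathcal{P}_{\operatorname{top}}(q\psi^c|\mathbb{X}_m^c)\to\mathcal{P}_{\operatorname{top}}(q\psi^c)$ (Proposition~\ref{prop: PtopmtoPtop}). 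Your ``alternative route'' via near-optimisers $\nu_q\in\mathcal{M}_{T,c}$ is morally this same argument --- the support condition $\breve{c}\notin\operatorname{supp}(\nu_q)$ puts $\nu_q$-typical points into some $K_\delta$, and Lemma~\ref{LEM:mu-on-Kdelta} then gives the Gibbs comparison you need --- so that route is viable for $q\geqslant 0$. But it gives only a \emph{lower} bound on the partition function and therefore does not help for $q<0$, where you still need the case split.
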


This result can be regarded as a generalization of the corresponding statement for the Gibbs measures of H\"{o}lder continuous potentials. Indeed, although $\mu_c$ is not a (weak) Gibbs measure, our proof relies on a Gibbs type relation for $\mu_c$ given in Proposition~\ref{PROP:Gibbs-like}.

We remark here that the existence of the $L^q$-spectrum also has further implications concerning Kre\u{\i}n--Feller operators and the quantization dimension of the measure $\mu_c$. In particular, we have that the spectral dimension of the Kre\u{\i}n--Feller operator and the quantization dimension exist. 
This connection -- including the necessary notations and  concepts -- will be provided in detail in Section~\ref{sec: KreinFeller}.

A quantity that is particularly accessible is the Fourier dimension of the measure $\mu_c$ given by  $\beta_{\mu_c}(2)$. Using the autocorrelation structure of the generalized Thue{\ts}--Morse sequences, we will show the following.

\begin{theorem}\label{thm: beta(2) analytic}
The function  $c\mapsto \beta_{\mu_c}(2)$ is real-analytic on $\T$.
\end{theorem}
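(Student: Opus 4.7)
The plan is to reduce the statement to real-analyticity of an explicit finite-dimensional spectral radius. By Theorem~\ref{thm: pressure beta}, for $c \neq 0$ one has $\beta_{\mu_c}(2) = p_c(2)/\log 2$, so it suffices to express $\exp(p_c(2))$ as the spectral radius of a matrix $M_c$ whose entries are real-analytic in $c \in \TT$ and to invoke perturbation theory for isolated simple eigenvalues; the value at $c=0$ (where $\mu_0=\delta_0$ forces $\beta_{\mu_0}(2)=0$) then extends the analyticity by continuity.

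First, I would rewrite the sum at $q = 2$ in Fourier-theoretic terms. Using the partial Riesz product $P_N(x) = \prod_{n=0}^{N-1}(1+\cos(2\pi(2^n x - c)))$, which converges weakly to $\mu_c$, together with a Strichartz-type comparison between dyadic $L^2$-masses and truncated Fourier $L^2$-sums, one should establish
\begin{equation*}
\sum_{\omega \in \Sigma^N} \mu_c(\langle\omega\rangle)^2 \,\asymp\, 2^{-N} \sum_{|k| < 2^N} |\widehat{\mu_c}(k)|^2,
\end{equation*}
with constants independent of $N$ and $c$. Combined with the identification $\mu_c = \widehat{\gamma_c}\vert_{[0,1)}$ from the preceding proposition, this identifies $\exp(p_c(2))$ with the exponential growth rate of partial autocorrelation sums built from the coefficients $\eta^c_k$ of \eqref{eq: autocorr coeff}.

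Next, I would exploit the substitutive identities $t^c_{2n} = t^c_n$ and $t^c_{2n+1} = \me^{2 \pi \im c}\, t^c_n$ to derive renormalization equations for the autocorrelation coefficients $\eta^c_n$. The key computation expresses the pair $(\eta^c_{2m},\, \eta^c_{2m+1})$ linearly in $\eta^c_m,\, \eta^c_{m+1}$ and their complex conjugates, with coefficients that are polynomials in $\me^{\pm 2 \pi \im c}$. Packaging the relevant autocorrelation sums at scale $2^N$ into a finite vector $v_N \in \C^d$ produces a linear recursion $v_{N+1} = M_c v_N$ in which $M_c$ has entries that are trigonometric polynomials in $c$, hence real-analytic on $\TT$. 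Combining with the first step yields $\exp(p_c(2)) = \rho(M_c)$, where $\rho$ denotes the spectral radius.

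The final step is to show that $M_c$ admits a simple leading eigenvalue, uniformly separated from the rest of the spectrum, for every $c \in \TT$. Given this, the holomorphic perturbation theory of isolated simple eigenvalues of finite matrices --- invoked, for instance, via the implicit function theorem applied to the characteristic polynomial or via resolvent integrals --- yields real-analyticity of $c \mapsto \rho(M_c)$, and thus of $\beta_{\mu_c}(2) = \log \rho(M_c)/\log 2$, on $\TT$. The main obstacle is precisely this uniform spectral gap: since $M_c$ is not entrywise non-negative, Perron--Frobenius is not directly applicable, and the intended route is to pass to an auxiliary non-negative matrix (obtained by tracking squared moduli of autocorrelations) whose primitivity can be verified from the explicit substitution structure and whose leading eigenvalue coincides with $\rho(M_c)$; once primitivity is established uniformly in $c$, the spectral gap, and hence the analyticity on all of $\TT$, follows at once.
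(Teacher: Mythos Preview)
Your overall strategy coincides with the paper's: pass from $\beta_{\mu_c}(2)$ to the exponential growth of the autocorrelation square-sums $\sum_{m<n}|\eta_m^c|^2$ via a Strichartz-type identity (the paper's Proposition~\ref{PROP:FD-characterisation} together with Lemma~\ref{LEM:FS-coefficients}), exploit the substitution recursions $\eta_{2n}^c=\eta_n^c$, $\eta_{2n+1}^c=\tfrac12(\phi_c\eta_n^c+\overline{\phi_c}\eta_{n+1}^c)$ to obtain a finite-dimensional linear recursion with matrix $M_c$ whose entries are trigonometric polynomials in $c$, and then conclude by analytic perturbation theory for a simple leading eigenvalue. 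The paper carries this out with an explicit $3\times 3$ matrix acting on the vector $(Z_n^c,\Pi_n^c,\overline{\Pi_n^c})$, where $Z_n^c=\sum_{m=n}^{2n-1}|\eta_m^c|^2$ and $\Pi_n^c=\tfrac{\phi_c^2}{2}\sum_{m=n}^{2n-1}\eta_m^c\overline{\eta_{m+1}^c}$; see Propositions~\ref{PROP:eta-recursion} and~\ref{PROP:v-M-recursion}.

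The genuine gap is your proposed route to the spectral gap. You suggest passing to an auxiliary \emph{non-negative} matrix that tracks squared moduli and invoking Perron--Frobenius. This does not work: the recursion for $|\eta_{2m+1}^c|^2$ inevitably produces the cross term $\mathrm{Re}(\phi_c^2\,\eta_m^c\,\overline{\eta_{m+1}^c})$, so one cannot close the system using squared moduli alone. If one instead tracks $(Z_n,\mathrm{Re}\,\Pi_n,\mathrm{Im}\,\Pi_n)$ to obtain a real matrix, the resulting $3\times 3$ matrix has entries $\pm\sin(2\pi c)$ and is not entrywise non-negative, so Perron--Frobenius is unavailable. The paper resolves this differently (Lemma~\ref{LEM:M-eigenvalues}): it analyses the characteristic polynomial of $M_c$ explicitly, using the symmetric functions of the eigenvalues (in particular $\lambda_r^c\lambda_s^c\lambda_t^c=1$), to show that for every $c$ there is a unique real eigenvalue $\lambda_1^c>1$ while the remaining two lie in the closed unit disc, with equality on the unit circle only at $c\in\{0,1/2\}$. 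A continuity argument then shows $\lambda_1^c$ stays simple and separated on all of $\TT$, which is what gives real-analyticity. You should replace the Perron--Frobenius plan by such a direct spectral analysis.

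A minor point: your handling of $c=0$ is not quite right. One cannot ``extend analyticity by continuity'' at a single point. What works (and what the paper does) is to verify that the matrix formula $\beta_{\mu_c}(2)=1-\log\lambda_1^c/\log 2$ holds for \emph{all} $c\in\TT$, including $c=0$ (where $\lambda_1^0=2$ and $\beta_{\mu_0}(2)=0$), and that $\lambda_1^c$ is real-analytic on all of $\TT$; no separate patching is needed.
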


In fact, we will express $\beta_{\mu_c}(2)$ in terms of a \emph{correlation exponent} $D_2^{c}$, see \eqref{eq: def corr expo},  via the relation $\beta_{\mu_c}(2) = 1 - D_2^c$ (Corollary~\ref{COR:fourier-correlation}). The correlation exponent in turn can be written as the logarithm of the spectral radius of an explicit matrix (Theorem~\ref{PROP:D2-lambda}); see Figure~\ref{FIG:D2-illustration} for an illustration. 
This also shows that $p_c(2)$ depends analytically on $c$. The same is trivially true for $p_c(0) = 1$ and $p_c(1) = 0$, due to a common normalization, and certainly is wrong for $p_c(t)$ if $t < 0$ \cite{fan_generalized_2020}. For general $t>0$, continuity of $p_c(t)$ in $c$ holds \cite{GLSprep}, but stronger notions of regularity remain open in general.

\subsection{The special case $\boldsymbol{c=0}$}\label{subsec: c=0}

The point $c=0$ is the unique fixed point of the doubling map and as such, special properties may be expected for the potential $\psi^0$ that has its maximum at this position. In fact, this explains why $\mu_0 = \delta_0$ is the only $g$-measure in our family that is given by a pure point measure  \cite{ConzeRaugi}. Let us emphasize that $\psi^0$ is a coboundary, as was already noted in \cite{fan_generalized_2020}. More precisely,
\begin{equation}
\label{EQ:coboundary}
\psi^0(x) = 2 \log |\sin(2 \pi x)| - 2 \log |\sin(\pi x)| - 2 \log 2,
\end{equation}
for all $x \neq 0$. Following \cite{fan_generalized_2020}, this can be used to work out an explicit expression for the (variational) pressure function, given by
\begin{equation}
\label{EQ:var-P-at-0}
\mc P_{\operatorname{var},{0}}(t \psi^{0}) = \mc P_{\operatorname{var}}(t \psi^{0}) = \max\{ 1-2t, 0 \}.
\end{equation}
We will show explicitly that this expression coincides with the topological pressure, thus solving the case $c=0$ in Theorem~\ref{thm: Ptop=Pvar}.
On the other hand, the Birkhoff average $\overline{\psi}^c(x)$ can only assume three possible values, namely $0$ (for $x=0$), $- \infty$ (for dyadic rationals) and $- \log 2$ (Lebesgue almost surely). In particular, the relation between $b_c(\alpha)$ and $p_c^{\ast}(\alpha)$ given in Theorem~\ref{cor: dim} does not extend to the case $c=0$. 
Since $\mu_0$ is supported on a single point, both the $L^q$-spectrum and the dimension spectrum $f_0$ are trivial. In particular, the relation to the pressure function in Theorem~\ref{thm: pressure beta} is also not extendable to the case $c=0$.

\subsection{Structure of the paper}
In Section~\ref{sec: autocorr} we recall some facts about mathematical diffraction and in Section~\ref{SEC:diffraction} we introduce the generalization of the Thue{\ts}--Morse sequence $(t^c_n)$ and show that the diffraction measure of its associated system corresponds with the measure $\mu_c$. In Section~\ref{sec: autocorr coeff} we study in detail the autocorrelation coefficients of the generalized Thue{\ts}--Morse sequence $(t^c_n)$ and, in particular, we give a formula for the correlation exponent.
In Section~\ref{sec: pressure} we prove different statements regarding the variational and topological pressure and finally prove Theorem~\ref{thm: Ptop=Pvar}. 
In Section~\ref{sec:MeasureDec} we give some Gibbs type properties for the generalized Thue{\ts}--Morse measure and prove Theorems \ref{thm: dim spectrum} and \ref{thm: pressure beta}.
In Section \ref{sec: KreinFeller} we give 
applications of the $L^q$-spectrum to Fourier dimensions, Kre\u{\i}n--Feller operators, and quantization dimensions. 
Finally, in the last Section~\ref{sec:Technical} we prove 
combinatorial results and results regarding the growth rate of $\psi_n$ that we have used in Sections~\ref{sec: pressure} and \ref{sec:MeasureDec}.

\section{Autocorrelation and diffraction}\label{sec: autocorr}
In this section we recall some basic facts about mathematical diffraction in the context of compact alphabets. We refer the reader to \cite{baake} for background information and further details.
For convenience, we will simply write \emph{measure} for a (complex) Radon measure $\nu$ on $\R$, and given $g \in C_c(\R)$ (the compactly supported continuous functions on $\R$), we write $\nu(g)$ for the integral of $g$ with respect to $\nu$.
As before, we let $\mc A$ be the complex unit circle and $S$ the shift action on the compact sequence space $\mc A^{\Z}$. Let $X \subset \mc A^{\Z}$ be a subshift, that is, a closed and shift invariant subset of $\mc A^{\Z}$. 

Let us assume that $(X,S)$ is uniquely ergodic.
As in the previous section, we associate to a sequence $x \in X$ an \emph{autocorrelation measure}. This is a complex (Radon) measure, given by
\[
\gamma_x = \sum_{n \in \Z} \eta_n(x) \delta_n
, \quad
\eta_n(x) = \lim_{k \to \infty} \frac{1}{k} \sum_{m=0}^{k-1} \overline{x_m} x_{m+n}.
\]
Note that the autocorrelation coefficient $\eta_n(x)$ is the Birkhoff average of the continuous function $f_n \colon x \mapsto \overline{x_0} x_n$. Hence, the unique ergodicity of $(X,S)$ guarantees that $\eta_n = \eta_n(x)$ exists as a limit and is independent of $x \in X$. Similarly, $\gamma = \gamma_x$ is the same measure for all $x \in X$. 

Alternatively, the autocorrelation $\gamma$ is often introduced in terms of an Eberlein convolution. Given a sequence $x \in X$, we assign a weighted Dirac comb
\[
\omega = \omega(x) = \sum_{k \in \Z} x_k \delta_k.
\]
In the context of diffraction theory, this models an atomic configuration of equally spaced scatterers of different types. 
In the following, let $\nu_1 \ast \nu_2$ denote the convolution of two (finite) measures $\nu_1$ and $\nu_2$. Further, for a general measure $\nu$ on $\R$, let $\widetilde{\nu}$ be the measure determined by $\widetilde{\nu}(g) = \overline{\nu(\widetilde{g})}$, where $\widetilde{g}(x) = \overline {g(-x)}$, for every continuous function of compact support $g \in C_c(\R)$.
For $n \in \N$, let $\omega_n$ be the restriction of $\omega$ to the interval $[0,n]$. Then, using that $\widetilde{\alpha \delta_k} = \overline{\alpha} \delta_{-k}$, a straightforward calculation yields that
\begin{equation}
\label{EQ:gamma-Eberlein}
 \gamma = \omega \circledast \widetilde{\omega} := \lim_{n \to \infty} \frac{\omega_n \ast \widetilde{\omega_n}}{n},
\end{equation}  
to be understood as a vague limit of measures.
By construction, the measure $\gamma$ is \emph{positive definite} in the sense that $\gamma(g \ast \widetilde{g})\geqslant 0$ for every $g \in C_c(\R)$. 

The diffraction measure associated to $X$ is the Fourier transform $\widehat{\gamma}$ of the autocorrelation measure $\gamma$. Since the Fourier transformability of measures is a subtle issue, we recall some basic facts on the Fourier transform of tempered distributions and measures; see also \cite[Ch.~8]{baake} for more details. As an invertible transformation on the space $\mc S(\R)$ of Schwartz functions, we define the Fourier transform of $g \in \mc S(\R)$ as
\[
\widehat{g} \colon y \mapsto \int_{\R} g(x) \me^{-2 \pi \im x y} \dd x
\]
and the inverse Fourier transform
\[
\reallywidecheck{g} \colon y \mapsto \int_{\R} g(x) \me^{2 \pi \im x y} \dd x.
\]
This also induces an invertible transformation on the dual space $\mc S'(\R)$ of tempered distributions via $\widehat{u}(g) =  u(\widehat{g})$ for all $ u \in \mc S'(\R)$ and $g \in \mc S(\R)$ (and similarly for the inverse Fourier transform). If both $u$ and $\widehat{u}$ are also measures, this defines the (inverse) Fourier transform of the corresponding measures.

If $\nu$ is positive definite, it has a well-defined Fourier transform $\widehat{\nu}$ which is a positive measure, due to a theorem by Bochner--Schwartz; compare \cite[Prop.~8.6]{baake}. This applies in particular to the autocorrelation measure. Taking the Fourier transform in \eqref{EQ:gamma-Eberlein}, it is natural to inquire whether the Fourier transform commutes with the vague limit of measures. While this is not true in full generality \cite{SpindelerStrungaru}, it \emph{does} hold under the restriction to positive definite measures.

\begin{theorem}[{\cite[Lem.~4.11.10]{MS17}}]
\label{THM:FT-continuity}
The Fourier transform restricted to the cone of positive definite measures is continuous with respect to the vague topology.
\end{theorem}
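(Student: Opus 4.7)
The plan is to exploit the Bochner--Schwartz theorem together with a Plancherel-type identity to reduce vague convergence of Fourier transforms to vague convergence of the original measures tested against compactly supported functions. First I would observe that the cone of positive definite measures is vaguely closed, because the defining inequality $\nu(g \ast \widetilde{g}) \geqslant 0$ is preserved under vague limits (applied to fixed $g \in C_c(\R)$). Hence, given a net $(\gamma_\alpha)$ of positive definite measures with $\gamma_\alpha \to \gamma$ vaguely, the limit $\gamma$ is itself positive definite, and by Bochner--Schwartz each of $\widehat{\gamma_\alpha}$ and $\widehat{\gamma}$ is a well-defined positive (translation bounded) Radon measure. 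The task reduces to showing $\widehat{\gamma_\alpha}(\varphi) \to \widehat{\gamma}(\varphi)$ for every $\varphi \in C_c(\R)$.

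The central ingredient is the Parseval-type identity $\sigma(g \ast \widetilde{g}) = \widehat{\sigma}(|\widehat{g}|^2)$, valid for every positive definite measure $\sigma$ and every Schwartz $g$. Specializing to $g$ chosen so that $g \ast \widetilde{g} \in C_c(\R)$, vague convergence of $(\gamma_\alpha)$ immediately gives
\[
\widehat{\gamma_\alpha}\bigl(|\widehat{g}|^2\bigr) \,\longrightarrow\, \widehat{\gamma}\bigl(|\widehat{g}|^2\bigr)
\]
for a rich class of test functions. Picking $g$ so that $|\widehat{g}|^2$ dominates the indicator of an arbitrary compact $K \subset \R$ then yields local uniform bounds $\sup_\alpha \widehat{\gamma_\alpha}(K) < \infty$, so that $(\widehat{\gamma_\alpha})$ is vaguely precompact with every limit point a positive Radon measure.

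It remains to identify any such vague limit point $\rho$ with $\widehat{\gamma}$. Since both are positive Radon measures and agree on the cone $\{|\widehat{g}|^2 : g \text{ Schwartz with } g \ast \widetilde{g} \in C_c(\R)\}$, equality follows from a density argument: polarizing $|\widehat{g}|^2$ via sums $|\widehat{g_1+g_2}|^2 - |\widehat{g_1}|^2 - |\widehat{g_2}|^2$ and $|\widehat{g_1+\im g_2}|^2 - \ldots$ produces products $\widehat{g_1}\overline{\widehat{g_2}}$, whose complex linear span is a translation- and conjugation-closed subalgebra of $C_0(\R)$ that separates points and vanishes nowhere. A Stone--Weierstrass argument, followed by a standard truncation to pass from $C_0(\R)$ to $C_c(\R)$ using the uniform local bounds established in the previous step, then yields $\rho = \widehat{\gamma}$, completing the proof. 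The principal technical obstacle is precisely this final density/polarization step, together with ensuring that the approximation remains compatible with vague convergence under the Fourier transform; the positivity provided by Bochner--Schwartz and the uniform tightness coming from the Parseval identity are what make the argument go through.
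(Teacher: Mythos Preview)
The paper does not give its own proof of this statement: Theorem~\ref{THM:FT-continuity} is simply quoted from \cite[Lem.~4.11.10]{MS17} and used as a black box, so there is no argument in the paper to compare yours against.

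Your outline is the standard route and is sound in its architecture: Parseval on $g\ast\widetilde g$ with $g\in C_c(\R)$ transfers vague convergence of the $\gamma_\alpha$ to convergence of $\widehat{\gamma_\alpha}$ on the cone $\{|\widehat g|^2\}$; choosing $g$ so that $|\widehat g|^2$ dominates indicators of compacta yields uniform local mass bounds on the $\widehat{\gamma_\alpha}$ and hence vague relative compactness; and the polarisation/Stone--Weierstrass step pins down any subnet limit as $\widehat\gamma$. Two small points deserve care if you write this out in full. First, the identity you state holds literally only after a reflection: with the paper's conventions one gets $\widehat\sigma(|\widehat g|^2)=\int (g\ast\widetilde g)(-x)\,\mathrm d\sigma(x)$, and $(g\ast\widetilde g)(-x)=\overline{(g\ast\widetilde g)(x)}$; for real $g$ this collapses to your formula, which is all you need for the mass bound and for the density argument. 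Second, the passage from density in $C_0(\R)$ to testing against $C_c(\R)$ requires not just the uniform local bound but a uniform tail control, which you do have: any Schwartz $|\widehat g|^2$ dominates $\mathds 1_K$ and simultaneously bounds the tails of the approximants, so a standard $\varepsilon/3$ argument closes the gap. With these details filled in, your proof goes through.
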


Note that each of the measures $\omega_n \ast \widetilde{\omega}_n$ is positive definite with the Fourier transform given by $|\widehat{\omega_n}|^2 \mathrm{Leb}$ (with $\mathrm{Leb}$ denoting Lebesgue measure and $f\mathrm{Leb}$ the absolutely continuous measure with density $f$). For a finite measure $\nu$, we identify $\widehat{\nu}$ with the (density) function
\[
\widehat{\nu} \colon k \mapsto \int_{\R} \me^{-2\pi \im kx} \dd \nu(x).
\]
 Hence, we obtain the following immediate consequence of \eqref{EQ:gamma-Eberlein} and Theorem~\ref{THM:FT-continuity}.

\begin{coro}
\label{COR:diffraction-limit}
The diffraction measure is given by the vague limit
\[
\widehat{\gamma} = \lim_{n \to \infty} \frac{|\widehat{\omega_n}|^2}{n} \mathrm{Leb}.
\]
\end{coro}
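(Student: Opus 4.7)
The plan is to apply Theorem~\ref{THM:FT-continuity} directly to the vague convergence already established in \eqref{EQ:gamma-Eberlein}, after identifying the Fourier transform of each approximating measure. First I would verify that, for each $n\in\N$, the finite measure $\omega_n \ast \widetilde{\omega_n}$ is positive definite; this is automatic since for any $g\in C_c(\R)$, one has $(\omega_n\ast\widetilde{\omega_n})(g\ast\widetilde g) = (\omega_n\ast g)\widetilde{(\omega_n\ast g)}\,(0) = \int|\omega_n\ast g|^2 \geqslant 0$. Dividing by $n>0$ preserves positive definiteness, so the entire sequence $\bigl(\frac{1}{n}\omega_n\ast\widetilde{\omega_n}\bigr)_{n\in\N}$ sits inside the cone of positive definite measures, and by \eqref{EQ:gamma-Eberlein} it converges vaguely to the positive definite measure $\gamma$.

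Next I would compute the Fourier transform of the approximants. Since $\omega_n$ is a finite measure, $\widehat{\omega_n}$ is a bounded continuous function and $\widehat{\widetilde{\omega_n}} = \overline{\widehat{\omega_n}}$. Using the standard identity $\widehat{\nu_1\ast\nu_2} = \widehat{\nu_1}\,\widehat{\nu_2}$ for finite measures, this gives
\[
\widehat{\omega_n \ast \widetilde{\omega_n}} \,=\, \widehat{\omega_n}\cdot\overline{\widehat{\omega_n}}\,\mathrm{Leb} \,=\, |\widehat{\omega_n}|^2\,\mathrm{Leb},
\]
where the Fourier transform of a finite measure is identified with its density as a tempered distribution. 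Dividing by $n$ yields $\frac{1}{n}|\widehat{\omega_n}|^2\,\mathrm{Leb}$.

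Finally, I would invoke Theorem~\ref{THM:FT-continuity}: the Fourier transform is continuous on the cone of positive definite measures with respect to the vague topology, so vague convergence $\frac{1}{n}\omega_n\ast\widetilde{\omega_n}\to\gamma$ passes to the Fourier side, giving
\[
\widehat{\gamma} \,=\, \lim_{n\to\infty}\frac{|\widehat{\omega_n}|^2}{n}\,\mathrm{Leb}
\]
in the vague topology, as desired. The only real subtlety is ensuring the approximants remain positive definite (so that Theorem~\ref{THM:FT-continuity} applies) and that the Fourier transform of the finite measure $\omega_n\ast\widetilde{\omega_n}$ is interpreted consistently as a tempered distribution given by the bounded continuous density $|\widehat{\omega_n}|^2$; both are routine. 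I do not expect any serious obstacle here, as the corollary is essentially a direct combination of \eqref{EQ:gamma-Eberlein}, the convolution theorem, and Theorem~\ref{THM:FT-continuity}.
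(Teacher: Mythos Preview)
Your proposal is correct and follows essentially the same approach as the paper: the paper states the corollary as an immediate consequence of \eqref{EQ:gamma-Eberlein} and Theorem~\ref{THM:FT-continuity}, having noted just beforehand that each $\omega_n\ast\widetilde{\omega_n}$ is positive definite with Fourier transform $|\widehat{\omega_n}|^2\,\mathrm{Leb}$. You simply spell out these two observations in slightly more detail.
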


Since $\gamma$ is supported on $\Z$, it is straightforward to verify that $\widehat{\gamma}$ is $\Z$-periodic. More precisely, it is of the form $\widehat{\gamma} = \nu \ast \delta_\Z$ with $\nu$ a finite measure on the unit interval. In this situation, the autocorrelation coefficients are effectively given by the Fourier--Stieltjes coefficients of $\nu$. This is well known, see for example \cite{BLvE15}. Since conventions on the Fourier transform differ throughout the literature, we give a short proof for convenience.

\begin{lemma}
\label{LEM:FS-coefficients}
Let $\widehat{\gamma}$ be the diffraction measure of a uniquely ergodic subshift and $\nu$ the restriction of $\widehat{\gamma}$ to the unit interval $[0,1)$. Then, the autocorrelation coefficients satisfy
\[
\overline{\eta_n} = \widehat{\nu}(n),
\]
for all $n \in \Z$.
\end{lemma}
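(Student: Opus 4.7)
The plan is to invert the Fourier transform in the periodic decomposition $\widehat{\gamma} = \nu \ast \delta_{\Z}$ and read off the autocorrelation coefficients as the Fourier coefficients of $\nu$. The point is that with the normalisation $\widehat{g}(y) = \int g(x) \me^{-2\pi\im xy}\dd x$ used in the paper, the Poisson summation formula gives the tempered distribution identity $\widehat{\delta_{\Z}} = \reallywidecheck{\delta_{\Z}} = \delta_{\Z}$. Combined with the convolution theorem on $\mc S'(\R)$, this will translate the decomposition of $\widehat{\gamma}$ on the Fourier side into a product decomposition of $\gamma$ that picks out the integer points.

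First, I would apply the inverse Fourier transform to both sides of $\widehat{\gamma} = \nu\ast \delta_{\Z}$ in the distributional sense, obtaining
\[
\gamma \,=\, \reallywidecheck{\widehat{\gamma}} \,=\, \reallywidecheck{\nu\ast \delta_{\Z}} \,=\, \reallywidecheck{\nu} \cdot \reallywidecheck{\delta_{\Z}} \,=\, \reallywidecheck{\nu} \cdot \delta_{\Z}.
\]
Since $\nu$ is a finite positive measure supported on $[0,1)$, its inverse Fourier transform is the bounded continuous function
\[
\reallywidecheck{\nu}(y) \,=\, \int_{[0,1)} \me^{2\pi\im xy}\dd \nu(x),
\]
so the product $\reallywidecheck{\nu}\cdot \delta_{\Z}$ is well-defined as a pure point measure on $\Z$, namely
\[
\reallywidecheck{\nu}\cdot \delta_{\Z} \,=\, \sum_{n\in\Z} \reallywidecheck{\nu}(n)\ts \delta_n.
\]

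Next, I would compare this with the original expression $\gamma = \sum_{n\in\Z} \eta_n \delta_n$ and read off $\eta_n = \reallywidecheck{\nu}(n) = \int_{[0,1)} \me^{2\pi\im nx}\dd\nu(x)$. Since $\nu$ is a positive real measure, conjugation can be pulled inside the integral, giving
\[
\overline{\eta_n} \,=\, \int_{[0,1)} \me^{-2\pi\im nx}\dd\nu(x) \,=\, \widehat{\nu}(n),
\]
which is exactly the claim.

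The only subtlety is to justify the distributional manipulations in the first step: one needs that $\nu\ast\delta_{\Z}$ makes sense as a tempered distribution (which it does, because $\nu$ has compact support), that $\reallywidecheck{\nu}$ is a slowly growing continuous function (here it is even bounded), and that the convolution theorem $\reallywidecheck{\nu\ast \delta_{\Z}} = \reallywidecheck{\nu}\cdot \reallywidecheck{\delta_{\Z}}$ applies in this setting. All three hold by standard results on tempered distributions (e.g.\ \cite[Ch.~8]{baake}), so the proof reduces to the verification described above.
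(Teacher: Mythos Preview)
Your proof is correct and follows essentially the same route as the paper: apply the inverse Fourier transform to $\widehat{\gamma}=\nu\ast\delta_{\Z}$, use Poisson summation and the convolution theorem to obtain $\gamma=\reallywidecheck{\nu}\,\delta_{\Z}$, and read off $\eta_n=\reallywidecheck{\nu}(n)$. The only cosmetic difference is the final step, where the paper passes through the identity $\overline{\eta_n}=\eta_{-n}$ and $\reallywidecheck{\nu}(-n)=\widehat{\nu}(n)$, while you conjugate the integral directly using that $\nu$ is real; both are equivalent.
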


\begin{proof}
Since $\widehat{\gamma} = \nu \ast \delta_\Z$, it follows from the Poisson summation formula that
\[
\gamma = \reallywidecheck{\nu \ast \delta_{\Z}} = \reallywidecheck{\nu} \delta_{\Z},
\]
compare for example \cite[Thm.~8.5, Ex.~9.2]{baake}, implying $\eta_n = \reallywidecheck{\nu}(n)$ for all $n \in \Z$. It is readily verified from the definition that $\overline{\eta_n} = \eta_{-n}$ and
\[
\reallywidecheck{\nu}(n) = \int_{[0,1)} \me^{2\pi \im n x} \dd \nu(x) = \widehat{\nu}(-n),
\]
for all $n \in \Z$. This yields the claimed equality.
\end{proof}

\section{The Riesz product measure as diffraction measure of a substitutional dynamical system}
\label{SEC:diffraction}

We consider the generalization of the Thue{\ts}--Morse sequence $t^{c} = (t_n^{c})_{n \in \N_0}$ given in \eqref{eq: tnc}.
This is a one-sided sequence on the compact alphabet $\mc A = \left\{ \me^{2 \pi \im x} \mid x \in [0,1) \right\}$. 
Let $R_c \colon z \mapsto \me^{2 \pi \im c} z$ denote the circle rotation on $\mc A$. 
The sequence $(t^{c}_n)$ is a fixed point of an appropriate substitution $\vartheta_c$ on the (compact) alphabet $\mc A$. Before we explore this connection further, let us recall some symbolic notation. We refer to elements of $\mc A$ as \emph{letters}. A \emph{word} is a formal concatenation of letters $u = u_1 \cdots u_n \in \mc A^n$, with $u_i \in \mc A$ for all $1 \leqslant i \leqslant n$ and $n \in \N$ and we denote by $|u|$ the length of the word $u$.
We equip $\mc A^n$ with the product topology and the set of finite words $\mc A^+ = \bigcup_{n\geqslant 1} \mc A^n$ with the disjoint union topology. A \emph{substitution} $\vartheta$ on $\mc A$ is a continuous function from $\mc A$ to $\mc A^+$. It is extended to a semigroup homomorphism on $\mc A^+$, $\mc A^\N$ and $\mc A^\Z$ via formal concatenation. That is $\vartheta(u_1 \cdots u_n) = \vartheta(u_1) \cdots \vartheta(u_n)$, and analogously for the action on (bi-)infinite sequences. For background on substitutions on \emph{finite} alphabets, we refer the reader to \cite{baake}. A systematic framework for substitutions on \emph{compact} alphabets was more recently given in \cite{MRW}.

It is a matter of straightforward calculation to verify that the sequence $t^c$ is invariant under
 the following substitution on the alphabet $\mc A$,
\begin{equation}
\label{EQ:theta-subst}
\vartheta_{c} \colon z \mapsto z \, R_c(z),
\end{equation}
where $z \, R_c(z)$ is supposed to be understood as a concatenation of $z$ and $R_c(z)$.

Extend $t^c$ arbitrarily to a sequence $\widetilde{t^c}$
in $\mc A^{\Z}$ and let $X_{c}$ be the set of all accumulation points of $(S^n \widetilde{t^c})_{n \in \N_0}$, where, as before, $S$ is the left shift on  $X_c$. 
Then, $(X_c,S)$ is a uniquely ergodic subshift of $(\mc A^{\Z},S)$, compare \cite[Cor.~12.2, Ex.~12.1]{queffelec}. As the following remark shows, $(X_{c},S)$ is minimal and hence, in fact, strictly ergodic.
\begin{remark}
If $c$ is rational, all entries of $t^c$ are contained in the finite alphabet $\mc B = \left\{R_c^n(1): n \in \N \right\}$. In this case, $t^c$ is the unique fixed point of the primitive substitution $\vartheta_c$ on $\mc B$ and it follows from the classical theory that $(X_c,S)$ is strictly ergodic \cite{baake}. 
If $c$ is irrational, $\vartheta_c$ is a substitution on a compact alphabet, and we can associate a subshift $(X_{\vartheta_c},S)$ that is generated from the language of the substitution, as defined in \cite{MRW}. In their notation, $\vartheta_c$ is a \emph{primitive} substitution and hence the minimality of $(X_{\vartheta_c},S)$ follows from \cite[Prop.~33]{MRW}. It is worth noticing that $X_c$ and $X_{\vartheta_c}$ coincide. Indeed, $X_c \subset X_{\vartheta_c}$ is verified from the definition and equality follows from the fact that $X_{\vartheta_c}$ is minimal and $X_{c}$ is a closed invariant subspace of $X_{\vartheta_c}$.
\end{remark}

Note that $t^{c}_{[0,2^n-1]} = \vartheta_c^n(1)$ for all $n \in \N$. We extend the circle rotation $R_c$ to words as a homomorphism, that is $R_c(v_1 \cdots v_m) = R_c(v_1) \cdots R_c(v_m) \in \mc A^m$. We observe $R_c \circ \vartheta_c = \vartheta_c \circ R_c$. For $u_c^n = \vartheta_c^n(1)$ we easily obtain the recursion
\begin{equation}
\label{EQ:w_n-recursion}
u_c^{n+1} =  \vartheta_c^{n+1}(1) =  \vartheta_c^n(1) \vartheta_c^n(R_c(1)) = u_c^n \, R_c(u_c^n).
\end{equation}

\begin{remark}
If $c$ is irrational, there is an uncountable family $\left\{t^c(\alpha)\right\}_{\alpha \in  \mc A}$ of points in $X_c$ that restrict to $t^c$ on the non-negative integers, but none of these is a fixed point for $\vartheta_c$ (or any power of $\vartheta_c$). 
These points can be constructed as follows. Given $\alpha \in \mc A$, define $\alpha_n = R_c^{-n}(\alpha)$ for all $n \in \N$. Note that all two-letter words in $\mc A \times \mc A$ are legal for $\vartheta_c$ (for a formal definition of legal words see \cite{MRW}). Hence, all of the words
\[
\vartheta_c^n(\alpha_n) \vartheta_c^n(1)
\] 
are legal. Note that $\vartheta_c^{n+1}(1) = \vartheta_c^n(1) R_c \vartheta_c^n(1)$ has $\vartheta_c^n(1)$ as a prefix. Similarly,
\[
 \vartheta_c^{n+1}(\alpha_{n+1}) = R_c^{-1}( \vartheta_c^{n+1}(\alpha_n)) = R_c^{-1} ( \vartheta_c^n(\alpha_n)) \,  \vartheta_c^n(\alpha_n)
\]
has $\vartheta_c^n(\alpha_n)$ as a suffix for all $n \in \N$. Hence, the point $t^c(\alpha)$ with 
\[
t^c(\alpha)_{[-2^n, 2^n-1]} = \vartheta_c^n(\alpha_n)  \vartheta_c^n(1)
\]
for all $n \in \N$ is well defined and contains only legal words.
Here and in the following $u_{[j,k]}$ denotes the finite word of the $j$th digit to the $k$th digit of the (possibly infinite) word $u$.
Note that $t^c(\alpha)_{-1} = \alpha$, that is, $t^c(\alpha)$ is parametrized by its first entry to the left of the origin.
\end{remark}

\begin{prop}
For $c\in \T$, the diffraction measure associated to $(X_{c},S)$ is given by the Riesz product
\begin{equation*}
\widehat{\gamma}_{c} = \prod_{n = 0}^{\infty} (1 + \cos(2\pi (2^n x - c))),
\end{equation*}
which converges in the vague topology. In particular, $\widehat{\gamma}_{c} = \mu_c \ast \delta_\Z$, with $\mu_c$ as in \eqref{eq:mu_c_product}.
\end{prop}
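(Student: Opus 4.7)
My plan is to exploit the self-similarity encoded in the substitution recursion \eqref{EQ:w_n-recursion} to obtain a closed product formula for the finite-window diffraction density, and then invoke Corollary~\ref{COR:diffraction-limit} to pass to the limit. Concretely, since $(X_c,S)$ is uniquely ergodic, Corollary~\ref{COR:diffraction-limit} says that $\widehat{\gamma}_c = \lim_{n\to\infty} n^{-1} |\widehat{\omega_n}|^2 \mathrm{Leb}$ vaguely, regardless of which sequence $x \in X_c$ we pick to build $\omega = \sum_k x_k \delta_k$. I will choose $x = \widetilde{t^c}$ (so that $\omega$ restricted to $[0,2^N-1]$ encodes the word $u_c^N = \vartheta_c^N(1)$) and restrict attention to the subsequence $n = 2^N$, which is sufficient since the full vague limit is already known to exist.

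The key step is the recursion. Writing $F_N(x) := \widehat{\omega_{2^N}}(x) = \sum_{k=0}^{2^N - 1} t^c_k \me^{-2\pi\im k x}$, the identity $u_c^{N+1} = u_c^N R_c(u_c^N)$ from \eqref{EQ:w_n-recursion} directly translates into
\begin{equation*}
F_{N+1}(x) \,=\, F_N(x) + \me^{-2\pi\im 2^N x}\, \me^{2\pi\im c} F_N(x) \,=\, F_N(x)\bigl(1 + \me^{2\pi\im(c - 2^N x)}\bigr),
\end{equation*}
because the second half of $u_c^{N+1}$ is the rotation of the first half by $\me^{2\pi\im c}$, placed $2^N$ steps to the right. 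Taking squared modulus gives $|F_{N+1}(x)|^2 = 2\,(1 + \cos(2\pi(2^N x - c)))\,|F_N(x)|^2$, and since $F_0 \equiv 1$, a trivial induction yields
\begin{equation*}
 \frac{|F_N(x)|^2}{2^N} \,=\, \prod_{n=0}^{N-1} \bigl(1 + \cos(2\pi(2^n x - c))\bigr).
\end{equation*}

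Combining this with Corollary~\ref{COR:diffraction-limit} along the subsequence $n = 2^N$ (whose limit must agree with the full limit, which exists) shows that $\prod_{n=0}^{N-1}(1 + \cos(2\pi(2^n x - c)))\,\mathrm{Leb}$ converges vaguely to $\widehat{\gamma}_c$, giving both the claimed Riesz product representation and the vague convergence statement. For the last assertion, observe that $\gamma_c$ is supported on $\Z$ and is $\Z$-translation bounded, so $\widehat{\gamma}_c$ is $\Z$-periodic and of the form $\nu_c \ast \delta_\Z$ for a finite measure $\nu_c$ on $[0,1)$; comparing the product formulas, the density on $[0,1)$ matches the right-hand side of \eqref{eq:mu_c_product}, hence $\nu_c = \mu_c$.

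The main technical obstacle I anticipate is justifying the passage to the vague limit of the Riesz product itself (as opposed to the diffraction side). However, this is handled essentially for free by Theorem~\ref{THM:FT-continuity}: the measures $2^{-N}|F_N|^2 \mathrm{Leb}$ are positive definite (they are Fourier transforms of the positive definite measures $2^{-N} \omega_{2^N} \ast \widetilde{\omega_{2^N}}$), so vague convergence on this side is automatic once the autocorrelation side converges. A minor bookkeeping point that needs care is the slight mismatch between the window $[0,n]$ appearing in Section~\ref{sec: autocorr} and the dyadic window $[0,2^N - 1]$ I use above, but this contributes only a boundary term that is negligible after dividing by $2^N$, so the argument is unaffected.
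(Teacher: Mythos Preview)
Your proof is correct and follows essentially the same route as the paper's own argument: both exploit the recursion $u_c^{N+1} = u_c^N R_c(u_c^N)$ to derive a multiplicative relation for $|\widehat{\omega_{2^N}}|^2/2^N$, iterate to obtain the finite Riesz product, and then invoke Corollary~\ref{COR:diffraction-limit} to pass to the vague limit. Your treatment is slightly more explicit about the subsequence reduction, the window bookkeeping, and the identification $\widehat{\gamma}_c = \mu_c \ast \delta_\Z$, but the core argument is identical.
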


\begin{proof}
Let $t^c = t^c(\alpha)$ be any of the sequences in $X_c$ that coincides with $\widetilde{t}^c$ on all non-negative indices 
and $\omega^c$ the corresponding weighted Dirac comb. By Corollary~\ref{COR:diffraction-limit}, we can write the diffraction measure as
\[
\widehat{\gamma}_{c} = \lim_{n \to \infty} \frac{|\widehat{\omega_{2^n}^c}|^2}{2^n} \mathrm{Leb},
\]
where
\[
\widehat{\omega_{2^n}^c}(x) 
= \sum_{k = 0}^{2^n-1} t_k^c \me^{2 \pi \im k x}.
\] 
Recall that the first $2^n$ entries of $t^c$ are given by the word $u_c^n = \vartheta_c^n(1)$.
On the level of Dirac combs, \eqref{EQ:w_n-recursion} implies 
\[
\omega_{2^{n+1}}^c = \sum_{k = 0}^{2^n - 1} t_k^{c} \delta_k + \sum_{k = 0}^{2^n - 1} R_c(t_k^{c}) \delta_{2^n + k}  
= \omega_{2^n}^c + \me^{2 \pi \im c} \, \delta_{2^n} \ast \omega_{2^n}^c.
\]
Hence, we find for $h_n = |\widehat{\omega_{2^n}^c}|^2/2^n$ the recursion
\begin{align*}
h_{n+1}(x) &= \frac{1}{2^{n+1}} \Big|\big(1 + \me^{-2 \pi \im (2^n x - c)}\big)\, \widehat{\omega_{2^n}^c}(x)\Big|^2 
= \frac{1}{2} \Big|1 + \me^{-2 \pi \im (2^n x - c)}\Big|^2 h_n(x) 
\\ &=(1 + \cos(2\pi (2^n x - c)) ) h_n(x).
\end{align*}
Since $h_0(x) = 1$, the diffraction measure becomes
\[
\widehat{\gamma}_c = \prod_{k = 0}^{\infty} (1 + \cos(2\pi (2^n x - c))),
\]
to be understood as a vague limit of (Lebesgue) density functions.
\end{proof}

\section{Autocorrelation coefficients and correlation exponent}\label{sec: autocorr coeff}
For the Thue{\ts}--Morse sequence (corresponding to the case $c = 1/2$) it is well known that the autocorrelation coefficients given in \eqref{eq: autocorr coeff} satisfy the recursive relations
\[
\eta_{2n}^{1/2} = \eta_n^{1/2},
\quad \eta_{2n+1}^{1/2} = - \frac{\eta_n^{1/2} + \eta_{n+1}^{1/2}}{2},
\]
for all $n \in \N_0$, see \cite[Ch.~10]{baake}.
This recursive structure is very useful to study both the autocorrelation and the diffraction measure of the Thue{\ts}--Morse sequence. For example, it can be used to show that the Thue{\ts}--Morse sequence is singular continuous. It has also been used in \cite{BG19} to obtain a detailed understanding of the scaling behaviour of the Thue{\ts}--Morse measure at the origin. 
For general $c$, it follows already from 
\eqref{eq: autocorr coeff} that $\eta_{-n}^c = \overline{\eta_n^c}$ for all $n \in \Z$.
We will use the following generalization of the recursion relations of the Thue{\ts}--Morse sequence to obtain an expression for the Fourier dimension of the diffraction measure in the spirit of Zaks et al.\ in \cite{Zaks}.

\begin{prop}
\label{PROP:eta-recursion}
Let $c\in \T$ and $\phi_c = \me^{2 \pi \im c}$. Then, the autocorrelation coefficients of the sequence $(t_n^c)_{n \in \N_0}$ satisfy the relations
\begin{align*}
\eta_{2n}^{c} & = \eta_n^{c},
\\ \eta_{2n+1}^{c} & = \frac{1}{2}(\phi_{c} \eta_n^{c} + \overline{\phi_{c}} \eta_{n+1}^{c}),
\end{align*}
for all $n \in \N_0$. Further, $\eta_0^{c} = 1$ and 
\[
\eta_1^{c} = \frac{\phi_{c}}{2 - \overline{\phi_{c}}}.
\]
\end{prop}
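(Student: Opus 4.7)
The plan is to compute everything directly from the explicit formula $t_n^c = \me^{2\pi\im c\, \mathsf{S}_2(n)}$, using the elementary digit-sum identities $\mathsf{S}_2(2\ell) = \mathsf{S}_2(\ell)$ and $\mathsf{S}_2(2\ell+1) = \mathsf{S}_2(\ell)+1$, and then splitting the Birkhoff sum in the definition of $\eta_n^c$ into its even and odd indexed subsums.

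First I would observe that, in view of the above identities,
\begin{equation*}
t_{2\ell}^{c} = t_\ell^{c}, \qquad t_{2\ell+1}^{c} = \phi_{c}\, t_\ell^{c},
\end{equation*}
which immediately yields the pointwise relations
\begin{equation*}
\overline{t_{2\ell}^{c}}\, t_{2\ell+2n}^{c} = \overline{t_\ell^{c}}\, t_{\ell+n}^{c}, \qquad
\overline{t_{2\ell+1}^{c}}\, t_{2\ell+1+2n}^{c} = \overline{t_\ell^{c}}\, t_{\ell+n}^{c},
\end{equation*}
and, for the odd shift,
\begin{equation*}
\overline{t_{2\ell}^{c}}\, t_{2\ell+2n+1}^{c} = \phi_{c}\, \overline{t_\ell^{c}}\, t_{\ell+n}^{c}, \qquad
\overline{t_{2\ell+1}^{c}}\, t_{2\ell+2n+2}^{c} = \overline{\phi_{c}}\, \overline{t_\ell^{c}}\, t_{\ell+n+1}^{c}.
\end{equation*}

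Next I would exploit that, by unique ergodicity of $(X_c,S)$ (established in the previous section), the limit defining $\eta_n^c$ exists along any subsequence. In particular, restricting the averaging window to length $2k$ and splitting into even and odd indices $m = 2\ell$ and $m = 2\ell+1$, the identities above give
\begin{align*}
\frac{1}{2k}\sum_{m=0}^{2k-1} \overline{t_m^{c}}\, t_{m+2n}^{c}
&= \frac{1}{k}\sum_{\ell=0}^{k-1} \overline{t_\ell^{c}}\, t_{\ell+n}^{c},\\
\frac{1}{2k}\sum_{m=0}^{2k-1} \overline{t_m^{c}}\, t_{m+2n+1}^{c}
&= \frac{\phi_{c}}{2k}\sum_{\ell=0}^{k-1} \overline{t_\ell^{c}}\, t_{\ell+n}^{c} + \frac{\overline{\phi_{c}}}{2k}\sum_{\ell=0}^{k-1} \overline{t_\ell^{c}}\, t_{\ell+n+1}^{c}.
\end{align*}
Passing to the limit $k\to\infty$ yields $\eta_{2n}^{c} = \eta_n^{c}$ and $\eta_{2n+1}^{c} = \tfrac{1}{2}(\phi_c \eta_n^c + \overline{\phi_c}\eta_{n+1}^c)$. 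The normalization $\eta_0^{c} = 1$ is immediate from $|t_m^c|=1$, and the value of $\eta_1^{c}$ follows by specializing the second recursion at $n=0$ and solving the resulting linear equation $(1 - \overline{\phi_c}/2)\,\eta_1^{c} = \phi_c/2$.

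The only technical point worth double-checking is the passage to the limit: strictly speaking one needs the limit defining $\eta_n^c$ to exist along the even subsequence of window lengths, but this is guaranteed by unique ergodicity, which ensures convergence along every subsequence to the same value. Hence no obstacle of substance arises; the proof is essentially a bookkeeping exercise on the digit-sum function combined with the parity split of the averaging window.
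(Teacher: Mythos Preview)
Your proof is correct and essentially identical to the paper's: both derive the pointwise relations $t_{2\ell}^c = t_\ell^c$, $t_{2\ell+1}^c = \phi_c t_\ell^c$ (you via the digit-sum identities, the paper via the substitution $\vartheta_c$, which is the same thing), split the sum of length $2k$ into even and odd indices, and pass to the limit. Your explicit remark about unique ergodicity justifying the subsequence limit is a nice touch that the paper leaves implicit.
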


\begin{proof}
Following the same steps as in \cite[Ch.~10]{baake}, this follows in a straightforward manner from the relations 
\[
t_{2n}^c = t_n^{c}, \quad t_{2n+1}^c = \phi_{c} t_n^c,
\]
for all $n \in \N_0$, which can be verified from the fact that the sequence is a fixed point of the substitution $\vartheta_{c}$ as defined in \eqref{EQ:theta-subst}. Indeed, observe that for $k \in \N$,
\begin{align*}
\sum_{m=0}^{2k-1} \overline{t^c_{m}} t_{m+2n}^c
& = \sum_{m=0}^{k-1} \overline{t^c_{2m}} t_{2m + 2n}^c + 
\sum_{m=0}^{k-1} \overline{t^c_{2m+1}} t_{2m + 2n + 1}^c
\\ &= \sum_{m=0}^{k-1} \overline{t^c_m} t^c_{m+n} 
+ |\phi_{c}|^2 \sum_{m=0}^{k-1} \overline{t^c_m} t^c_{m+n}
= 2 \sum_{m=0}^{k-1} \overline{t^c_m} t^c_{m+n}.
\end{align*}
Dividing by $2k$ and sending $k \to \infty$ yields the first relation $\eta_{2n}^{c} = \eta_n^{c}$. The second relation is obtained in a similar manner, again regrouping a finite sum according to odd and even indices. The statement $\eta_0^{c} = 1$ follows from the definition, noting that each element $t_n^c$ of the sequence is an element of the unit circle. The expression for $\eta_1^{c}$ follows by applying the second recursion relation in the case $n = 0$.
\end{proof}
Recall that $\hat \nu:\R\to \C, x\mapsto \int \me^{-2\pi i x \xi  }\dd \nu(\xi)  $ denotes the {\em Fourier transform} of the Borel measure $\nu$ on $\T$. Then the {\em upper Fourier dimension} of a measure $\nu$ is given by
\begin{eqnarray}\label{DimF}
\overline{\dim}_{F}(\nu)\coloneqq \limsup_{R\to \infty} \frac{\log \left(R^{-1}\int_{-R}^{R}|\hat \nu (x)|^{2}\dd x\right)}{\log R}
\end{eqnarray}
and we talk about the {\em Fourier dimension} $\dim_{F}(\nu)$ in case the $\limsup$ in \eqref{DimF} is a limit.
The Fourier dimension of  $\widehat{\gamma}_{c}$  is related to the growth behavior of the sequence $(\Theta_n^c)_{n \in \N}$, with
\[
\Theta_n^c = \sum_{m = 0}^{n-1} |\eta_m^c|^2.
\]
This growth property is captured in the \emph{correlation exponent} given by
\begin{equation}
D_{2}^{c} \coloneqq \limsup_{n \to \infty} \frac{\log \Theta_n^{c}}{\log n}.\label{eq: def corr expo}
\end{equation}

Since $\Theta_n^c$ is non-decreasing in $n$, a straightforward interpolation argument shows that $D_{2}^{c}$ may alternatively be defined along the subsequence $(2^n)_{n \in\N}$, i.e.\ 
$$D_2^c = \limsup_{n \to \infty} \frac{\log \Theta_{2^n}^{c} }{n \log 2}.$$
We will see in Section~\ref{SEC:Fourier-dimension} that this characteristic number is related to the Fourier dimension of $\widehat{\gamma}_{c}$ and  to  the corresponding pressure function evaluated in $2$. Similarly to the proof of Proposition~\ref{PROP:eta-recursion}, we would like to establish a recursive structure for $\Theta_n^{c}$ by splitting the sum according to odd and even indices and using  Proposition~\ref{PROP:eta-recursion}
for $\eta_{2m}^{c}$ and $\eta_{2m+1}^{c}$. Since this creates several ``cross-terms'', we introduce some auxiliary sequences, defined via
\begin{align*}
Z_n^c &= \sum_{m = n}^{2n-1} |\eta_m^c|^2,
\\ 
\Pi_n^c &= \frac{\phi_c^2}{2}\, \sum_{m=n}^{2n-1} \eta_{m}^{c} \overline{\eta_{m+1}^{c}},
\end{align*}
for all $n \in \N$. 

\begin{prop}
\label{PROP:v-M-recursion}
For every $\c\in \T$ and $n \in \N$, let $v_n =v_n^c = (Z_n^c, \Pi_n^c, \overline{\Pi^c_n})^{T}$. Then, we have $v_{2n} = M_c v_n$ for all $n \in \N$, where
\[
M_c = \frac{1}{2} \begin{pmatrix}
3 & 1 & 1 \\
\phi_{c} & 2 \phi_{c} & 0\\
\overline{\phi_{c}} & 0 & 2 \overline{\phi_{c}}
\end{pmatrix}.
\]
\end{prop}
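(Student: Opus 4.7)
The plan is to verify each row of the claimed identity by splitting the defining sums of $Z_{2n}^c$ and $\Pi_{2n}^c$ according to the parity of the summation index and then applying the recursive relations for $\eta_{2k}^c$ and $\eta_{2k+1}^c$ from Proposition~\ref{PROP:eta-recursion}. The third row of $M_c$ is the complex conjugate of the second row and $Z_n^c$ is real, so once the identity for $\Pi_{2n}^c$ is established, conjugation gives the identity for $\overline{\Pi_{2n}^c}$ for free. Thus only the first two rows need genuine work.

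For the first row, I would write $Z_{2n}^c = \sum_{k=n}^{2n-1}|\eta_{2k}^c|^2 + \sum_{k=n}^{2n-1}|\eta_{2k+1}^c|^2$. The even part collapses to $Z_n^c$ by $\eta_{2k}^c=\eta_k^c$. For the odd part, expanding
\[
|\eta_{2k+1}^c|^2 = \tfrac{1}{4}\bigl(|\eta_k^c|^2 + |\eta_{k+1}^c|^2 + \phi_c^2\,\eta_k^c\,\overline{\eta_{k+1}^c} + \overline{\phi_c}^2\,\overline{\eta_k^c}\,\eta_{k+1}^c\bigr),
\]
using $|\phi_c|=1$, the diagonal terms contribute $\tfrac{1}{4}Z_n^c$ each (the shifted sum $\sum_{k=n}^{2n-1}|\eta_{k+1}^c|^2$ again equals $Z_n^c$ because $\eta_{2n}^c=\eta_n^c$), and the cross terms contribute $\tfrac{1}{2}\Pi_n^c$ and $\tfrac{1}{2}\overline{\Pi_n^c}$ respectively by the definition of $\Pi_n^c$. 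Adding everything gives $Z_{2n}^c=\tfrac{1}{2}(3Z_n^c+\Pi_n^c+\overline{\Pi_n^c})$, matching the first row.

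For the second row I would split $\sum_{m=2n}^{4n-1}\eta_m^c\overline{\eta_{m+1}^c}$ into even $m=2k$ and odd $m=2k+1$ contributions. Using the two relations of Proposition~\ref{PROP:eta-recursion}, a direct computation gives
\[
\eta_{2k}^c\overline{\eta_{2k+1}^c}=\tfrac{1}{2}\bigl(\overline{\phi_c}|\eta_k^c|^2+\phi_c\,\eta_k^c\overline{\eta_{k+1}^c}\bigr), \qquad \eta_{2k+1}^c\overline{\eta_{2k+2}^c}=\tfrac{1}{2}\bigl(\phi_c\,\eta_k^c\overline{\eta_{k+1}^c}+\overline{\phi_c}|\eta_{k+1}^c|^2\bigr).
\]
Summing over $k\in\{n,\ldots,2n-1\}$ and using once more the identity $\sum_{k=n}^{2n-1}|\eta_{k+1}^c|^2=Z_n^c$, the $|\eta|^2$ terms combine to give $\overline{\phi_c}\,Z_n^c$, while the cross terms combine to give $2\phi_c\cdot\tfrac{2}{\phi_c^2}\Pi_n^c=\tfrac{4}{\phi_c}\Pi_n^c$ (up to the factor $\tfrac{1}{2}$ in front). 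Multiplying by $\phi_c^2/2$ as required by the definition of $\Pi_{2n}^c$ and exploiting $\phi_c\overline{\phi_c}=1$ produces $\Pi_{2n}^c=\tfrac{1}{2}(\phi_c Z_n^c+2\phi_c\Pi_n^c)$, which is exactly the second row of $M_c v_n$.

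The only conceptually delicate point — everything else is purely algebraic bookkeeping — is handling the telescoping discrepancy between $\sum_{k=n}^{2n-1}|\eta_k^c|^2$ and $\sum_{k=n}^{2n-1}|\eta_{k+1}^c|^2$ (and analogously between $\sum\eta_k^c\overline{\eta_{k+1}^c}$ and its shift). Both shifts are resolved by the single observation $\eta_{2n}^c=\eta_n^c$, which is precisely the first identity in Proposition~\ref{PROP:eta-recursion}. Once this is recognised, the matching with the rows of $M_c$ is a matter of collecting coefficients.
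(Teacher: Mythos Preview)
Your proof is correct and follows essentially the same approach as the paper: split $Z_{2n}^c$ and $\Pi_{2n}^c$ by parity of the summation index, apply the recursions $\eta_{2k}^c=\eta_k^c$ and $\eta_{2k+1}^c=\tfrac{1}{2}(\phi_c\eta_k^c+\overline{\phi_c}\eta_{k+1}^c)$ from Proposition~\ref{PROP:eta-recursion}, and obtain the third row by conjugation. You make explicit the telescoping step $\sum_{k=n}^{2n-1}|\eta_{k+1}^c|^2=Z_n^c$ via $\eta_{2n}^c=\eta_n^c$, which the paper uses silently. One small remark: your parenthetical about an analogous shift for $\sum\eta_k^c\overline{\eta_{k+1}^c}$ is not actually needed---both cross terms in the $\Pi_{2n}^c$ computation already involve $\eta_k^c\overline{\eta_{k+1}^c}$ without shift---but this does not affect the argument.
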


\begin{proof}
We prove the matrix relation line by line. Recall,  $\phi_{c}=\me^{2 \pi \im c}$.  First, note that
\begin{align*}
Z^{c}_{2n} &= \sum_{m = n}^{2n -1} \left|\eta^{c}_{2m}\right|^2 + \sum_{m = n}^{2n-1} \left|\eta^{c}_{2m+1}\right|^2
 = \sum_{m =n}^{2n-1} \left|\eta^{c}_m\right|^2 + \frac{1}{4}\sum_{m = n}^{2n-1} \left| \phi_{c} \eta^{c}_m + \overline{\phi_{c}} \eta^{c}_{m+1}\right|^2
 \\ & = Z^{c}_n + \frac{1}{2} Z^{c}_n + \frac{1}{4} \sum_{m = n}^{2n-1} \left(\phi_{c}^2 \eta^{c}_m \overline{\eta^{c}_{m+1}} + \overline{\phi_{c}}^2\, \overline{\eta^{c}_m} \eta^{c}_{m+1}\right) 
 \\ & = \frac{3}{2} Z^{c}_n + \frac{1}{2} \Pi^{c}_n + \frac{1}{2} \overline{\Pi^{c}_n},
\end{align*}
which gives the first line of the matrix relation. The second line is found in a similar manner via
\begin{align*}
\Pi^{c}_{2n} & = \frac{\phi_{c}^2}{2} \biggl( \sum_{m=n}^{2n-1} \eta^{c}_{2m} \overline{\eta^{c}_{2m+1}} + \sum_{m = n}^{2n-1} \eta^{c}_{2m+1} \overline{\eta^{c}_{2m+2}} \biggr)
\\ & = \frac{\phi_{c}^2}{4} \sum_{m=n}^{2n-1} \left(\overline{\phi_{c}} |\eta^{c}_m|^2 + \phi_{c} \eta^{c}_m \overline{\eta^{c}_{m+1}} \right)
+ \frac{\phi_{c}^2}{4} \sum_{m=n}^{2n-1} \left(\phi_{c} \eta^{c}_m \overline{\eta^{c}_{m+1}} + \overline{\phi_{c}} |\eta^{c}_{m+1}|^2 \right)
\\ & = \frac{\phi_{c}}{2} Z^{c}_n + \phi_{c} \Pi^{c}_n,
\end{align*}
and the last line follows by taking the complex conjugate of the second line.
\end{proof}

\begin{lemma}
\label{LEM:M-eigenvalues}
For all $c\in \T$, there is a unique   eigenvalue $\lambda_1^c$ of $M_{c}$ with modulus strictly greater than $1$. This eigenvalue is  real and  depends  real-analytically on $c$. An eigenvalue lies on the unit circle  if and only if $c\in \{0,1/2\}$. 
\end{lemma}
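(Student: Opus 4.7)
The plan is to exploit the fact that the characteristic polynomial of $M_c$ depends on the parameter only through $\alpha \coloneqq \phi_c + \overline{\phi_c} = 2\cos(2\pi c) \in [-2, 2]$, and hence has real coefficients. A direct cofactor expansion of $\lambda I - M_c$, using $\phi_c \overline{\phi_c} = 1$, yields
\[
f_\alpha(\lambda) \coloneqq \det(\lambda I - M_c) = \lambda^3 - \bigl(\tfrac{3}{2} + \alpha\bigr)\lambda^2 + \bigl(\tfrac{5\alpha}{4} + 1\bigr)\lambda - 1.
\]
In particular, the product of the eigenvalues equals $\det(M_c) = 1$, their sum equals $\tr(M_c) = 3/2 + \alpha$, and the three eigenvalues are either all real or one real together with a complex-conjugate pair.

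For the existence of a real eigenvalue of modulus greater than $1$, I would compute
\[
f_\alpha(1) = \tfrac{\alpha}{4} - \tfrac{1}{2} \leqslant 0 \quad \text{and} \quad f_\alpha(2) = 3 - \tfrac{3\alpha}{2} \geqslant 0
\]
on $\alpha \in [-2, 2]$, with equality in either case only at $\alpha = 2$. By the intermediate value theorem there is a real root in $[1, 2]$, lying strictly in $(1, 2)$ for $c \neq 0$. At $c = 0$ explicit factorization gives $f_2(\lambda) = (\lambda - 1)(\lambda - 2)(\lambda - 1/2)$, so the unique eigenvalue of modulus greater than $1$ is $\lambda_1^0 = 2$. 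For uniqueness in general, the key step is to rule out any root on the unit circle when $\alpha \in (-2, 2)$: a non-real unit-modulus root $\me^{\im \theta}$ would force $\me^{-\im \theta}$ to also be a root by realness of the coefficients, and since the three roots multiply to $1$, the third would be forced to equal $1$, whence $f_\alpha(1) = 0$ and $\alpha = 2$, a contradiction. The real cases $\lambda = \pm 1$ are handled identically, each forcing $\alpha = \pm 2$. Hence for $\alpha \in (-2, 2)$ no root lies on $\{|\lambda| = 1\}$, so by the argument principle (or equivalently continuity of roots in the coefficients) the integer-valued count of roots in $\{|\lambda| > 1\}$ is constant on this parameter interval. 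A single explicit check (for instance at $\alpha = 0$, where $f_0'(\lambda) = 3\lambda^2 - 3\lambda + 1$ has negative discriminant, so $f_0$ is strictly increasing with a unique real root $\lambda_1 \in (1,2)$, and the conjugate pair has modulus $1/\sqrt{\lambda_1} < 1$ by the product identity) pins this count down to $1$. The endpoints $c \in \{0, 1/2\}$ are verified by the explicit factorizations above, giving the same count.

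Analyticity of $c \mapsto \lambda_1^c$ then follows from the implicit function theorem applied to $F(\lambda, c) \coloneqq f_{2\cos(2\pi c)}(\lambda)$: simplicity of $\lambda_1^c$, equivalently $f_\alpha'(\lambda_1^c) \neq 0$, is automatic, since a multiple root in $\{|\lambda| > 1\}$ would contribute multiplicity at least $2$ and violate the uniqueness count. The final unit-circle characterization is then a direct byproduct of the case analysis above: any eigenvalue of modulus $1$ must equal $\pm 1$ (non-real unit-modulus roots have been excluded), and $f_\alpha(\pm 1) = 0$ is equivalent to $\alpha = \pm 2$, i.e.\ $c \in \{0, 1/2\}$. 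I expect the main subtlety to be the constancy of the root count, which is cleanest via the argument principle applied to $f_\alpha$ on $\{|\lambda| = 1\}$, valid precisely because no root sits on the circle for $\alpha \in (-2, 2)$.
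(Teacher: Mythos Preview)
Your argument is correct and takes a genuinely different, more elementary route than the paper. The key simplification is your observation that the characteristic polynomial has real coefficients depending only on $\alpha = 2\cos(2\pi c)$, which immediately makes the conjugate-pair structure and the product-of-roots identity available. From there, the sign computation $f_\alpha(1)\leqslant 0 \leqslant f_\alpha(2)$ locates a real root, the product identity forces any unit-modulus root to drag $\lambda=1$ along (hence $\alpha=2$), and the argument principle pins the count of roots outside the circle to a constant, checked at a single value.

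The paper instead invokes Kato's perturbation theory to obtain analytic eigenvalue branches, then solves an auxiliary polynomial system to locate all parameter values where two eigenvalues collide, verifies these collisions occur only for $|\lambda|<1$, and tracks the leading branch by a continuity argument ruling out crossings of the unit circle and complex bifurcations. Your approach avoids both the citation of perturbation theory and the degeneracy computation; the paper's approach is perhaps more robust if one later wants finer spectral information about the subdominant eigenvalues.

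Two small points worth tightening. First, you state that the endpoints are ``verified by the explicit factorizations above,'' but you only factor $f_2$; for completeness you should record that at $\alpha=-2$ one has $f_{-2}(\lambda)=(\lambda+1)(\lambda^2-\tfrac12\lambda-1)$ with roots $-1,\ (1\pm\sqrt{17})/4$, giving again a unique simple root of modulus $>1$. Second, the implicit function theorem step at the endpoints $c\in\{0,1/2\}$ requires simplicity of $\lambda_1^c$ there as well; this follows from those same explicit factorizations (all three roots distinct), but is not covered by the argument-principle count, which you have only established on the open interval $\alpha\in(-2,2)$.
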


\begin{proof}
By symmetry, we may restrict our attention to $c \in [0,1/2]$.
It is easily verified that a value $\lambda^{c}$ is an eigenvalue of $M_{c}$ if and only if the same holds for its complex conjugate $\overline{\lambda^{c}}$. Hence, for each $c$, either all eigenvalues are real, or one of them is real and the other two form a complex conjugate pair.
Since all matrix entries of $M_{c}$ depend analytically on $c$, it follows by standard perturbation theory that there is an (unordered) tuple $\lambda_r^{c},\lambda_s^{c},\lambda_t^{c}$ of eigenvalues that depends analytically on $c$ \cite{kato}. Comparing the coefficients of the characteristic polynomial of $M_{c}$ and its Jordan normal form yields
\begin{align*}
\lambda_r^{c} + \lambda_s^{c} + \lambda_t^{c} &= \frac{3}{2} + 2 \cos(2 \pi c),
\\ \lambda_r^{c} \lambda_s^{c} + \lambda_r^{c} \lambda_t^{c} + \lambda_s^{c} \lambda_t^{c} &= 1 + \frac{5}{2} \cos(2 \pi c),
\\ \lambda_r^{c} \lambda_s^{c} \lambda_t^{c} &= 1.
\end{align*}
Assuming $\lambda^{c} = \lambda_s^{c} = \lambda_t^{c}$ (implying $\lambda_r^{c} = (\lambda^{ c})^{-2}$) leads to the system 
\begin{align*}
4 (\lambda^c)^3 - (4 + 10 \cos(2 \pi c)) \lambda^{c} + 8 &= 0,
\\ 4 (\lambda^c)^3 - (3 + 4 \cos(2 \pi c)) (\lambda^c)^2 + 2 & = 0.
\end{align*}
This can be verified to have precisely two pairs of solutions $(c,\lambda^{c})$ with $c \in [0,1/2]$. In both cases $|\lambda^{c}|<1$. 
Further, an explicit calculation shows that $\lambda^{c} = 1$ is an eigenvalue if and only if $c = 0$ and $\lambda^{c} = -1$ is an eigenvalue if and only if $c = 1/2$. In both cases there is a unique third eigenvalue $\lambda_1^{c} > 1$. 
Without loss of generality, for $c=0$, assume that the indices $r,s,t$ are assigned in such a way that $\lambda_1^{c} = \lambda_r^{c}$, $\lambda_s^{c} = 1$ and $\lambda_t^{c} < 1$.

We argue that $\lambda_r^{c}> 1$ for all $c \in [0,1/2]$. Indeed, by continuity, it cannot decrease below $1$ along the real line because there is no eigenvalue $\lambda^{c} = 1$ for $c \in (0,1/2]$. Also, $\lambda_r^{c}$ cannot split into a complex conjugate pair because eigenvalues $\lambda^{c}$ of non-trivial multiplicity occur only for $|\lambda^{c}| < 1$. 
Conversely, for some $c \in (0,1/2)$ we obtain that $\lambda_s^{c} = \lambda_t^{c}$ is contained in the unit disc. Both $\lambda_s^{c}$ and $\lambda_t^{c}$ cannot leave the unit disc for any $c \in (0,1/2)$. Indeed, this is impossible as a complex conjugate pair because it would contradict $\lambda_r^{c} > 1$ and $\lambda_r^{c} \lambda_s^{c} \lambda_t^{c} =1$. Also, along the real line there are no eigenvalue solutions for $|\lambda^{c}| = 1$ if $c \in (0,1/2)$. 
\end{proof}

\begin{figure}
 \includegraphics[width=0.55\textwidth]{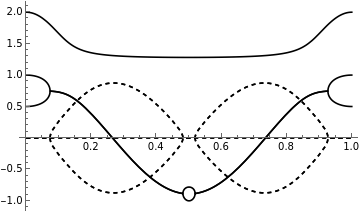}
    \caption{The graph of $c\mapsto \lambda^c_1$ (upper straight line) and the real parts (lower straight lines) 
    and imaginary parts (dashed lines) of the remaining two eigenvalues.}
\end{figure}

\begin{theorem}
\label{PROP:D2-lambda}
The correlation exponent is well defined and given by
\[
D_2^{c} = \frac{\log \lambda_1^{c}}{\log 2},
\] 
for all $c\in \T$. In particular, $c\mapsto D_2^{c}$ is a real-analytic function on $\T$.
\end{theorem}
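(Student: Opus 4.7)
The strategy is to extract the asymptotics of $\Theta_{2^n}^c$ directly from the spectral data of $M_c$, combining the matrix recursion of Proposition~\ref{PROP:v-M-recursion} with the eigenvalue information supplied by Lemma~\ref{LEM:M-eigenvalues}.

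First, I would record the telescoping identity
\[
\Theta_{2^{n}}^c \,=\, 1 + \sum_{k=0}^{n-1} Z_{2^k}^c,
\]
which follows from $Z_n^c = \Theta_{2n}^c - \Theta_n^c$ together with $\Theta_1^c = |\eta_0^c|^2 = 1$. Iterating the matrix recursion yields $v_{2^k}^c = M_c^{k}\, v_1^c$, and $Z_{2^k}^c$ is the first coordinate of this vector. A short computation based on $\eta_1^c = \phi_c/(2 - \overline{\phi_c})$ and $\eta_2^c = \eta_1^c$ (via Proposition~\ref{PROP:eta-recursion}) identifies
\[
v_1^c \,=\, |\eta_1^c|^{2}\,\bigl(1,\,\tfrac{\phi_c^{2}}{2},\,\tfrac{\overline{\phi_c}^{2}}{2}\bigr)^{T},
\]
which is a concrete, nowhere vanishing, analytic vector in $c$.

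Next, I would decompose $\C^{3}$ as the direct sum of the one-dimensional $\lambda_1^c$-eigenspace and the complementary two-dimensional $M_c$-invariant subspace attached to the two remaining eigenvalues. Since Lemma~\ref{LEM:M-eigenvalues} guarantees that both non-dominant eigenvalues lie in the closed unit disc, the component of $M_c^{k} v_1^c$ in that complementary subspace is bounded in $k$. Writing the projection of $v_1^c$ onto the $\lambda_1^c$-eigenspace as $\alpha_c\,\xi_c$, with $\xi_c$ a right eigenvector for $\lambda_1^c$, one obtains
\[
Z_{2^k}^c \,=\, \alpha_c\,(\xi_c)_1\,(\lambda_1^c)^{k} + O(1),
\]
and, by the telescoping identity together with $\lambda_1^c > 1$,
\[
\Theta_{2^n}^c \,=\, \frac{\alpha_c\,(\xi_c)_1}{\lambda_1^c - 1}\,(\lambda_1^c)^{n} + O(n),
\]
provided that $\alpha_c\,(\xi_c)_1 \neq 0$. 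Taking logarithms and dividing by $n \log 2$ then shows that the $\limsup$ defining $D_2^c$ is in fact a limit equal to $\log \lambda_1^c/\log 2$. The real-analyticity of $c \mapsto D_2^c$ follows from the analytic dependence of $\lambda_1^c$ supplied by Lemma~\ref{LEM:M-eigenvalues} (together with the analyticity of the spectral projector at the simple eigenvalue $\lambda_1^c$, cf.\ \cite{kato}).

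The main obstacle is verifying the non-vanishing condition $\alpha_c\,(\xi_c)_1 \neq 0$ for every $c \in \T$. The case $c=0$ is handled by the Perron--Frobenius theorem: $M_0$ is non-negative and irreducible with dominant eigenvalue $\lambda_1^0 = 2$, and both $\xi_0$ and the associated left eigenvector may be chosen with strictly positive entries, while $v_1^0$ itself has strictly positive entries, so a direct computation shows $\alpha_0\,(\xi_0)_1 > 0$. To promote this to every $c$, I would express $\alpha_c\,(\xi_c)_1$ explicitly as an analytic function of $c$, using the adjugate representation of the spectral projector of $M_c$ at $\lambda_1^c$ together with the closed form of $v_1^c$, and verify by direct inspection that the resulting expression does not vanish on $\T$. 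As an alternative that bypasses the explicit computation, one can derive a matching lower bound $\Theta_{2^n}^c \gtrsim (\lambda_1^c)^n$ from the substitutive self-similarity $u_c^{n+1} = u_c^n\, R_c(u_c^n)$: the large repeated prefix $u_c^n$ of $t^c$ forces sufficiently large autocorrelations on the dyadic block of length $2^n$, which combined with the upper bound already implied by the spectral decomposition forces the equality $D_2^c = \log \lambda_1^c/\log 2$ irrespective of the precise value of $\alpha_c\,(\xi_c)_1$.
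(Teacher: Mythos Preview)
Your overall strategy coincides with the paper's: exploit the matrix recursion $v_{2^k}^c = M_c^k v_1^c$ together with the spectral gap from Lemma~\ref{LEM:M-eigenvalues} to show that $Z_{2^k}^c$ grows like $(\lambda_1^c)^k$, and then sum via the telescoping identity for $\Theta_{2^n}^c$. The place where your argument remains incomplete is exactly the one you flag, namely the non-vanishing of $\alpha_c\,(\xi_c)_1$. Your first suggestion (compute the spectral projector explicitly via the adjugate and inspect the resulting analytic expression on all of $\T$) is not carried out and would be laborious; your second suggestion (extract a matching lower bound $\Theta_{2^n}^c \gtrsim (\lambda_1^c)^n$ from the substitutive self-similarity $u_c^{n+1} = u_c^n R_c(u_c^n)$) is too vague as stated, since it is unclear how the prefix structure alone produces a lower bound calibrated to $\lambda_1^c$ without going back through the matrix recursion.

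The paper closes this gap with two short soft observations that avoid any explicit calculation. First, $(\xi_c)_1 \neq 0$: assuming the dominant eigenvector had the form $(0,u_2,u_3)^T$, the second and third rows of $M_c$ force $\lambda_1^c u_2 = \phi_c u_2$ and $\lambda_1^c u_3 = \overline{\phi_c} u_3$, whence $|\lambda_1^c| = |\phi_c| = 1$, contradicting $\lambda_1^c > 1$. Second, $\alpha_c \neq 0$: since $Z_n^c$ is a sum of non-negative terms $|\eta_m^c|^2$, one has $Z_{2n}^c \geqslant Z_n^c$ for all $n$ and $Z_1^c = |\eta_1^c|^2 > 0$; if $v_1^c$ lay entirely in the complementary invariant subspace (whose eigenvalues are in the closed unit disc by Lemma~\ref{LEM:M-eigenvalues}, strictly inside for $c \notin \{0,1/2\}$, and equal to $-1$ at $c=1/2$), then $Z_{2^n}^c$ would either tend to $0$ or oscillate in sign, contradicting this monotone positivity. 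This positivity/monotonicity argument is the missing idea in your proposal. (At $c=0$ the paper simply observes $\eta_n^0 \equiv 1$, so $\Theta_n^0 = n$ and $D_2^0 = 1$ directly, rather than invoking Perron--Frobenius.)
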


\begin{figure}
 \includegraphics[width=0.55\textwidth]{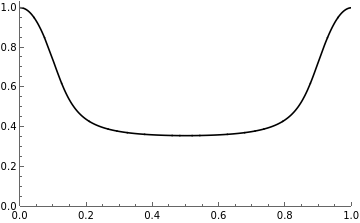}
    \caption{Illustration of $D_2^c$.}\label{FIG:D2-illustration}
\end{figure}

\begin{proof}
If $c=0$, the observation $\eta_n^{0} = 1$ for all $n \in \N$ easily implies the desired relation
\[
D_2^{0} = \frac{\log 2}{\log 2} = 1.
\]
We hence assume $c \in (0,1)$ in the following. Let $\lambda_1^{c},\lambda_2^{c},\lambda_3^{c}$ be the eigenvalues of $M_{c}$, ordered via $|\lambda_1^{c}| \geqslant |\lambda_2^{c}| \geqslant |\lambda_3^{c}|$.
For the sake of establishing a contradiction, assume that $v_1^{c} = ({Z}_1^{c},\Pi_1^{c},\overline{\Pi_1^{c}})^T$ is contained in the two-dimensional subspace that is spanned by the (generalized) eigenvectors corresponding to $\lambda_2^{c}$ and $\lambda_3^{c}$.
We distinguish two cases. 
If $c \neq 1/2$, it follows by Lemma~\ref{LEM:M-eigenvalues} that $|\lambda_2^{c}|,|\lambda_3^{c}| < 1$. We obtain that $v_{2^n}^{c} = M_{c}^n v_1^{c} \to 0$ as $n \to \infty$, in contradiction to the observation that $Z_{2n}^{c} \geqslant Z_n^{c}$ for all $n \in \N$ and 
\[
Z_1^{c} = |\eta_1^{c}|^2 = \frac{1}{|2 - \phi_{c}|^2} \neq 0.
\]
If $c = 1/2$, we have $\lambda_2^{c} = -1$, $|\lambda_3^{c}| < 1$, again in contradiction to $Z_{2n}^{c} \geqslant Z_n^{c}$ for all $n \in \N$.
It follows that $v_1^{c}$ has a non-trivial component in the direction of the eigenvector $u^{c}$ of $\lambda_1^{c}$.
Hence, we have
\begin{equation}
\label{EQ:v_n-growth}
\lim_{n \to \infty} (\lambda_1^{c})^{-n} v_{2^n}^{c}  = \lim_{n \to \infty} (\lambda_1^{c})^{-n} M^n_{c} v_1^{c} = \alpha u^{c},
\end{equation}
for some $\alpha > 0$. It is straightforward to see that $u_1^{c} \neq 0$. Indeed assuming $u^{c} = (0,u_2^{c},u_3^{c})$ would lead to
\[
\lambda_1^{c} u^{c} = M_{c} u^{c} = \begin{pmatrix}
(u_2^{c} + u_3^{c})/2
\\ \phi_{c} u_2^{c}
\\ \overline{\phi_{c}} u_3^{c} 
\end{pmatrix},
\]
which gives rise to the contradiction $1 < |\lambda_1^{c}| = |\phi_{c}| = 1$. The first component in \eqref{EQ:v_n-growth} therefore gives
\[
\lim_{n \to \infty} (\lambda_1^{c})^{-n} Z_{2^n}^{c} = \alpha u_1^{c} > 0.
\]
Since we may write
\[
\Theta_{2^n}^{c} = \sum_{k = 0}^{n-1} Z_{2^k}^{c},
\]
the fact that $Z_{2^k}^{c}$ grows like a constant multiple of $(\lambda_1^c)^k$
immediately implies  
\[
D_2^{c} = \lim_{ n \to \infty} \frac{\log \Theta_{2^n}^{c}}{n \log 2} = \frac{\log \lambda_1^{c}}{\log 2}.
\]
Since $\lambda_1^{c}$ is bounded away from $0$, it follows that $D_2^{c}$ is an analytic function of $\lambda_1^{c}$ and hence of $c$. Since both $c$ and $D_2^{c}$ take only real values, $D_2^{c}$ is in fact real-analytic.
\end{proof}

\section{Properties regarding the topological and variational pressure}\label{sec: pressure}

In this section we will prove Theorem \ref{thm: Ptop=Pvar}. We start with the case $c=0$, proving \eqref{EQ:var-P-at-0}. As pointed out in Section \ref{subsec: c=0}, this case is substantially different and different methods of proof are necessary.

\begin{prop}
\label{PROP:top-P-at-0}
We have $\mc P_{\operatorname{top}} (t \psi^0) = \max\{ (1-2t)\log 2, 0 \}$ for all $t \in \R$.
\end{prop}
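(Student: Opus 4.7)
The plan is to work directly from the explicit formula
\[
\me^{\psi^0_n(x)} \,=\, \left|\frac{\sin(2^n\pi x)}{2^n \sin(\pi x)}\right|^{2},
\]
which I obtain by iterating the coboundary~\eqref{EQ:coboundary} (equivalently, from the product identity $\prod_{k=0}^{n-1}\cos(\pi 2^k x) = \sin(2^n \pi x)/(2^n \sin(\pi x))$) and which is valid whenever $T^k x \neq 0$ for $0\leq k < n$. On each cylinder $\langle\omega\rangle = [k/2^n,(k+1)/2^n]$ with $\omega \notin \{0^n, 1^n\}$ the numerator is at most $1$ and attains $1$ at the midpoint $x_\omega := (k + 1/2)/2^n$, while the inequality $\sin(\pi y) \geq 2y$ for $y\in[0,1/2]$ gives the denominator bound $\inf_{\langle\omega\rangle}|\sin(\pi x)| \geq 2 D(\omega)/2^n$, where $D(\omega) := \min(k,\,2^n - 1 - k)$. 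From these two facts, and evaluating at $x_\omega$ for the matching lower bound, I derive for all $t \geq 0$ the two-sided estimate
\[
\frac{c_t}{D(\omega)^{2t}} \,\leq\, \sup_{x \in \langle\omega\rangle} \me^{t\psi^0_n(x)} \,\leq\, \frac{C_t}{D(\omega)^{2t}},
\]
with constants $0 < c_t \leq C_t$ independent of $n$. On the two remaining cylinders $\omega \in \{0^n, 1^n\}$ the supremum equals $1$, because $\psi^0 \leq 0$ everywhere and $\psi^0_n(0) = 0$ at the fixed point.

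Summing over $\omega \in \Sigma^n$ and grouping by $D(\omega)$ (each value $D \in \{1, \dots, 2^{n-1}-1\}$ corresponds to exactly two cylinders), I obtain for $t\geq 0$
\[
\sum_{\omega \in \Sigma^n} \sup_{\langle\omega\rangle}\me^{t\psi^0_n} \,\asymp\, 1 + \sum_{D=1}^{2^{n-1}-1} D^{-2t},
\]
with implicit constants depending only on $t$. The right-hand side is of order $2^{n(1-2t)}$ for $0 \leq t < 1/2$, of order $n$ for $t = 1/2$, and uniformly bounded (by $1 + \zeta(2t)$) for $t > 1/2$. Since the two-sided estimate is tight up to $n$-independent constants, $\overline{\cP}_{\operatorname{top}}(t\psi^0) = \underline{\cP}_{\operatorname{top}}(t\psi^0)$, and dividing by $n$ and passing to the limit yields $\cP_{\operatorname{top}}(t\psi^0) = \max\{(1-2t)\log 2, 0\}$ in each of these three subcases.

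For $t < 0$ the definition in \eqref{eq: def top pressure} uses infima over cylinders, and the same cylinder analysis gives $\inf_{\langle\omega\rangle}\me^{t\psi^0_n} = \exp(t\sup_{\langle\omega\rangle}\psi^0_n) \asymp D(\omega)^{-2t}$ for $\omega \notin \{0^n, 1^n\}$, while the two boundary cylinders contribute $1$ each. Since $-2t>0$, the grouped sum is of order $\sum_{D=1}^{2^{n-1}-1} D^{|2t|} \asymp 2^{n(1-2t)}$, so $\cP_{\operatorname{top}}(t\psi^0) = (1-2t)\log 2 = \max\{(1-2t)\log 2, 0\}$ because $1-2t > 1$. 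Combining the two regimes settles the claim for all $t \in \R$.

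The main technical delicacy will be the handling of the boundary cylinders $\langle 0^n\rangle, \langle 1^n\rangle$ adjacent to the fixed point $x=0$: any pointwise estimate based only on the coboundary decomposition $\psi^0_n = \varphi\circ T^n - \varphi - 2n\log 2$ with $\varphi(x)=2\log|\sin(\pi x)|$ is too crude there, because $\varphi$ has a logarithmic singularity at $0$; however, the numerator and denominator of the exact product formula have matching zeros at $x=0$, so the contribution of these two cylinders is uniformly equal to $1$. Once this cancellation is observed, the remainder of the argument reduces to the elementary trichotomy according to which $\sum_{D=1}^N D^{-s}$ is of order $N^{1-s}$, $\log N$, or $\zeta(s)$ when $s<1$, $s=1$, or $s>1$, respectively.
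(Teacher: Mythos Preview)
Your proposal is correct and follows essentially the same approach as the paper's proof: both exploit the explicit product/coboundary representation of $\psi^0_n$, use the elementary bounds $2y \le \sin(\pi y) \le \pi y$ on $[0,1/2]$ to show that $\sup_{\langle\omega\rangle}\psi^0_n$ equals $-2\log D(\omega)$ up to an $n$-independent additive constant (with the two boundary cylinders handled separately via $\psi^0_n(0)=0$), and then reduce the pressure to the asymptotics of $\sum_{D=1}^{2^{n-1}} D^{-2t}$. Your write-up is slightly more explicit about the two-sided nature of the estimate and the grouping by $D(\omega)$, but the underlying argument is the same.
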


\begin{proof}
From the coboundary relation \eqref{EQ:coboundary}, we easily get the explicit expression
\[
\psi^0_n(x) = - 2n \log 2 + 2 \log |\sin(2^n \pi x)| - 2 \log|\sin(\pi x)|,
\]
for $x \neq 0$ and fixed $n \in \N$. By symmetry of $\psi^0$, we can restrict our attention to $x \in (0,1/2]$. 
Let $0 \leqslant m < 2^{n-1}$ and $w^{n,m}$ be the word corresponding to the $m$th interval of length $2^{-n}$, that is, assume $\langle w^{n,m} \rangle = 2^{-n}[m, m+ 1]$. Let $x_m = 2^{-n}(m + 1/2)$ be the middle point of $\langle w^{n,m} \rangle$. 
At these points, the expression $|\sin(2^n \pi x)|$ assumes its maximum value $1$ on $\langle w^{n,m} \rangle$, and we have
\[
\psi_n^0(x_m) = - 2 n \log 2 - 2 \log |\sin(\pi x_m)|.
\]
On the other hand, since $2x \leqslant | \sin(\pi x ) | < \pi x$ on $(0,1/2]$ we get for $x \in \langle w^{n,m} \rangle$ and $m\geqslant 1$,
\[
 -2 n \log 2 - 2 \log |\sin(\pi x)| \in (-2 \log(\pi(m+1)), - 2 \log  (2m)].  
\]
Hence, up to an additive constant, $\sup_{x \in \langle w^{n,m} \rangle} \psi^0_n(x)$ evaluates to $- 2 \log m$. For $m =0$, we get $\sup_{x \in \langle w^{n, 0} \rangle} \psi_n^0(x) = \psi_n^0(0) = 0$.  Using these relations, we obtain for the upper topological pressure
\begin{equation}
\label{EQ:P-top-0}
\overline{\mc P}_{\operatorname{top}}(t \psi^0) = \limsup_{n \to \infty} \frac{1}{n} \log \sum_{m=1}^{2^{n-1}} m^{-2t}.
\end{equation}
Using an integral estimate, we get for $t = 1/2$,
\[
\sum_{m=1}^{2^{n-1}} m^{-1} \sim \int_{[1,2^{n-1}]}x^{-1} \dd x = (n-1) \log 2,
\]
where we write $a_n\sim b_n$ if $\lim_{n\to\infty} a_n/b_n=1$. 
Hence, $\mc P_{\operatorname{top}}( \psi^0/2) = 0$. For $t \neq 1/2$, we get
\[
\sum_{m = 1}^{2^{n-1}} m^{-2t} \sim \int_{[1,2^{n-1}]} x^{-2t} \dd x = \frac{1}{1-2t} \left( 2^{(1-2t)(n-1)} - 1\right).
\]
Plugging this into \eqref{EQ:P-top-0}, and distinguishing the cases for $t<1/2$ and $t>1/2$, we note that the limsup in \eqref{EQ:P-top-0} is indeed a limit and we finally get $\mc P_{\operatorname{top}}(t \psi^0) = \max \{ (1-2t)\log 2,0 \}$, as claimed.
\end{proof}

\begin{remark}
Before proceeding with the proof of Theorem~\ref{thm: Ptop=Pvar} a comment regarding the definition of the topological pressure is in place. While for H\"older continuous potentials the value in \eqref{eq: def top pressure} and the line below doesn't change if we consider the supremum or the infimum of the sum $\psi_n^c$, here the use of the supremum (infimum) for $t>0$ ($t<0$ respectively) is paramount as can be seen as follows. Since $\psi^c$ is unbounded on the negative side, there exists $x\in \Sigma^n$ such that $\inf_{x\in\langle w\rangle} \psi_n^c(x)=-\infty$ making the definition trivial. However, as shown in \cite{fan_generalized_2020} the set of points $c$ for which the restricted variational pressure as given in \eqref{eq: rest var press} equals $-\infty$ is a set of zero Hausdorff dimension. Hence, to obtain the statement of Theorem~\ref{thm: Ptop=Pvar}, indeed the choice of the infimum and supremum matters. 
\end{remark}

The next step in proving Theorem~\ref{thm: Ptop=Pvar} is to associate the restricted variational pressure with a restricted topological pressure. 
For a closed subshift $\X'\subseteq\X$ that is invariant under the
left shift, let us define the set of admissible $n$-words $\Sigma'^n=\left\{\omega\in\Sigma^n\colon \langle\omega\rangle \cap \X'\neq \varnothing\right\}$ and thus define the restricted topological pressure by
\[
 \cP_{\operatorname{top}}\left(t\ts \psi^c \ts | \ts \X'\right) \, := 
  \lim_{n\to\infty}\myfrac{1}{n}\log\sum_{\omega\in\Sigma'^{n}}
   \exp
  \Bigl(t\ts \sup_{x\in\langle\omega\rangle\cap\X'}\psi^{c}_{n}(x)\Bigr) .
\]
 By definition,
$\cP_{\operatorname{top}}\left(t\ts \psi^c\right)= \cP_{\operatorname{top}}\left(t\ts \psi^c \ts | \ts \X\right)$ (assuming the limit exits, otherwise analogous statements hold for the upper and lower (restricted) pressure).

Furthermore, we use the definition from \cite{fan_generalized_2020} for the restricted variational pressure 
\begin{align}
 p_{\operatorname{var}}(t; \delta) \coloneqq \sup \left\{h_{\nu} + t\int \psi^c\,\mathrm{d}\nu\colon \nu\in \mathcal{M}_{T}\left(\delta\right)\right\},\label{eq: rest var press}
\end{align}
where 
\begin{align*}
 \mathcal{M}_T\left(\delta\right) = \left\{\nu \in \mathcal{M}_T\colon \supp(\nu) \subset K_{\delta}\right\}
\end{align*}
and 
\begin{equation}
 K_\delta := K^c_\delta := \left\{ x \in \TT: \rho(T^n x, \breve{c} ) \geqslant \delta, \, \forall n \geqslant 0 \right\},\label{eq: def Kdelta}
 \end{equation}
 for $\delta>0$. 
For the following let $\breve{c}^{\ast}$ to be the alternative representation of $\breve{c}$ in case $\breve{c}$ is a dyadic point and has thus two dyadic representations. Otherwise, formally, we simply identify $\breve{c}$ and $\breve{c}^{\ast}$. 
Here we give a refinement of \cite[Prop.~6.2]{fan_generalized_2020}, using
the subshift
 \begin{align*}
  \mathbb{X}_m^c\coloneqq \left\{x \in\mathbb{X}\colon x_{[\ell,\ell+m]} \notin \{ \breve{c}_{[1,m+1],\breve{c}_{[1,m+1]}^{\ast}\}},\forall\ell \in\mathbb{N}\right\}.
 \end{align*}
We also note that $\mathbb{X}_m^c$ is closed and invariant under the left shift.
It will be one of the main steps in this section to prove $\lim_{n\to\infty}\cP_{\operatorname{top}}(t \psi^c \ts | \ts \X_m^c) = \cP_{\operatorname{top}}(t \psi^c)$, for all $t\in \mathbb{R}$ (for $t\geqslant 0$, by definition, we already have 
$\cP_{\operatorname{top}}(t \psi^c \ts | \ts \X_m^c) \leqslant \underline{\cP}_{\operatorname{top}}(t \psi^c)$). 
We will prove the cases $t<0$ and $t\geqslant 0$ separately in Propositions \ref{prop: pm to p} and \ref{prop: PtopmtoPtop}. However, before we start by showing that $\X_m^c$ is an irreducible, aperiodic Markov subsystem not containing $\breve{c}$.

\begin{prop}\label{prop: Xm Kdelta}
 Let $c\neq 0$. For every $\delta > 0$ there exists an irreducible aperiodic Markov subsystem $U = U_{\delta, c}\colon \bigcap_{j} J_j \to \mathbb{X}$ whose maximal invariant set contains $K_{\delta}^{c}$, and does not contain $\breve{c}$.
 This system can be chosen as the subshift $\mathbb{X}_m^{c}$ for $m$ sufficiently large. 
\end{prop}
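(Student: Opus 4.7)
The plan is to verify that $\mathbb{X}_m^c$ itself serves as the required subsystem once $m$ is taken large enough in terms of $\delta$, specifically so that $2^{-(m+1)} < \delta$; the corresponding Markov map $U$ is then the restriction of $T$ to the union of those dyadic cylinders indexed by admissible words of length $m$. The four requirements split naturally into three that are essentially definitional (Markov structure, exclusion of $\breve{c}$, inclusion of $K_\delta^c$) and one that requires genuine combinatorial work (irreducibility and aperiodicity).

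First I would dispatch the three quick items. By construction, $\mathbb{X}_m^c$ is cut out from $\mathbb{X}$ by forbidding at most two words of length $m+1$, so it is a subshift of finite type; passing to the standard $m$-block recoding (vertices are the admissible length-$m$ words, edges the admissible length-$(m+1)$ words) presents it as a topological Markov chain, giving the Markov structure for free. The point $\breve{c}$ is excluded from $\mathbb{X}_m^c$ simply because every binary expansion of $\breve{c}$ begins with one of the two forbidden prefixes $\breve{c}_{[1,m+1]}$ or $\breve{c}^{\ast}_{[1,m+1]}$. For the containment $K_\delta^c \subseteq \mathbb{X}_m^c$, suppose some factor $x_{[\ell,\ell+m]}$ of an $x \in K_\delta^c$ coincided with a forbidden word; then $T^\ell x$ would lie in a dyadic cylinder of diameter $2^{-(m+1)} < \delta$ that also contains $\breve{c}$, forcing $\rho(T^\ell x, \breve{c}) \leqslant 2^{-(m+1)} < \delta$ and contradicting membership in $K_\delta^c$.

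The hard step is irreducibility and aperiodicity of the $m$-block graph $G_m$. The guiding observation is that $G_m$ is obtained from the binary de Bruijn graph of order $m$—itself strongly connected and primitive—by deleting at most two edges, so for $m$ large this small perturbation should preserve both properties. Concretely, I would show directly that any admissible length-$m$ word $u$ can be driven in $O(m)$ steps to a distinguished recurrent state (namely $0^m$ whenever $0^{m+1}$ is admissible, otherwise a suitable short alternating word) and is likewise reachable from that state; this yields strong connectivity, and aperiodicity follows either from a self-loop at that state or from a pair of cycles of coprime length. The main obstacle, and the only point requiring separate attention, is the degenerate case $c = 1/2$: here $\breve{c}$ coincides with the dyadic point $0 \equiv 1$ so that both $0^{m+1}$ and $1^{m+1}$ are forbidden and no pure-digit self-loops survive. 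For this case one falls back on the classical fact that the run-length-limited SFT avoiding runs of length $m+1$ in both digits is topologically mixing as soon as $m \geqslant 2$, which closes the argument.
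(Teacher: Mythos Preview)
Your proposal handles the Markov structure, the exclusion of $\breve{c}$, and the containment $K_\delta^c\subset\mathbb{X}_m^c$ correctly, and these match the paper's treatment (up to a harmless index shift: if $x_{[\ell,\ell+m]}$ is a forbidden word then it is $T^{\ell-1}x$, not $T^\ell x$, that lies near $\breve{c}$). For aperiodicity your self-loop argument at $0^m$, with the fall-back to coprime cycles when $c=1/2$, is likewise what the paper does.

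For irreducibility, your hub strategy (route everything through $0^m$) is a legitimate alternative to the paper's approach, which instead constructs for each pair of admissible states $u,v$ an explicit bridging word $u\,y\,v$ via a long case analysis on the shape of $\breve{c}$. But the heuristic you offer for why the hub strategy succeeds is misleading: it is \emph{not} true that deleting one or two edges from a de Bruijn graph is a harmless perturbation. Deleting the single edge $01^m$ from $B(2,m)$ already leaves $1^m$ reachable only via its self-loop, and deleting $0^m1$ leaves $0^m$ with no out-edge except its self-loop; either makes the graph reducible. What saves the proposition is specific to the forbidden word: since $\breve{c}\neq 1/2$, for all sufficiently large $m$ the word $\breve{c}_{[1,m+1]}$ avoids every dangerous form $0^{m+1},1^{m+1},01^m,10^m,0^m1,1^m0$, and in the dyadic case one also needs the two representations to diverge well before position $m$ so that the two forbidden edges do not share a source vertex (the paper secures this by taking $m\geqslant 4k$). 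Your drive to $0^m$ can indeed be carried out once you use this structural information about $\breve{c}_{[1,m+1]}$, but as written the proposal defers precisely the combinatorics that carries the argument, and the de Bruijn heuristic gives no indication of where that difficulty lies.
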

Before giving the proof we introduce the flip of a digit $a$ by $\underline{a}$, i.e.\ we write $\underline{a}=1$ if $a=0$ and $\underline{a}=0$ if $a=1$.

\begin{proof}
 We remember the identification of $[0,1]$ with the shift space $\mathbb{X}$ up to a countable number of points. In case that $\breve{c}$ is not a dyadic point it has a unique representation in $\mathbb{X}$.
 If for such $\breve{c}$ we choose $m$ large enough we have that $K_{\delta}^{c}\cap \langle\breve{c}_{[1,m+1]}\rangle=\emptyset$. If $\breve{c}$ is a dyadic point we have 
 $K_{\delta}^c\cap \Big(\langle\breve{c}_{[1,m+1]}\rangle \cup \langle\breve{c}_{[1,m+1]}^{\ast}\rangle\Big)=\emptyset$.

We may identify $\mathbb{X}_m^{c}$ with a space of $2^{m+1}-u$ states where $u$ is the number of forbidden words.
More precisely, our alphabet for such a space contains all the $m+1$ letter words of $\mathbb{X}_m^{c}$, i.e.\ all words $\{0,1\}^{m+1}$ except $\breve{c}_1\ldots \breve{c}_{m+1}$ possibly with the two different ways of representation if $\breve{c}$ is dyadically rational. 
We will see that for $m$ sufficiently large this space is an irreducible and aperiodic Markov chain. 
To show aperiodicity, let us first assume that $\breve{c}\neq 0$. In that case, for $m$ sufficiently large, $0^{m+1}$ is an admissible word which can be reached within one step from itself giving aperiodicity immediately. 
For the following, for a finite word $u$ let $\overline{u}$ denote the infinite word with infinitely many concatenations of $u$. 
If $\breve{c}=0$, then $\overline{01}_{[1,m+1]}$ and $\overline{001}_{[1,m+1]}$ are both admissible words. 
Starting with the word $\overline{01}_{[1,m+1]}$, it can be reached within two steps going via the word $\overline{10}_{[1,m+1]}$. On the other hand, it is possible to return to the word $\overline{001}_{[1,m+1]}$ within three steps going via the words $\overline{010}_{[1,m+1]}$ and $\overline{100}_{[1,m+1]}$. As two and three are relatively prime, the corresponding Markov matrix has to be aperiodic.

We show irreducibility directly by showing that all states ($m+1$ letter words in $\mathbb{X}_m^{c}$) - except for the forbidden ones - can be reached from any other state ($m+1$ letter words in $\mathbb{X}_m^{c}$). This can be done in the following way:
For the moment we assume that $\breve{c}$ has a unique expansion.
Let the first state be given as $u=u_1\ldots u_{m+1}$ and the second as $v=v_1\ldots v_{m+1}$. 

If $(uv)_{[\ell, \ell+m]}\neq \breve{c}_{[1,m+1]}$ for all $\ell=1,\ldots, m+1$, i.e.\ the forbidden  word does not occur by concatenating $u_1\ldots u_{m+1}v_1\ldots v_{m+1}$ we are done. Let us for the following assume that this is not the case, then we can proceed in the following way: First we assume that $m$ is sufficiently large such that $\breve{c}_{[1,m+1]}\notin\{ 0^{\ell}1^{m+1-\ell}, 1^{\ell}0^{m+1-\ell}\}$, for $\ell=0,\ldots, m+1$.
Since we excluded the words with a dyadically rational expansion for the moment, it is always possible to choose $m$ sufficiently large such that $\breve{c}_{[1,m+1]}$ has a different expansion than the ones just mentioned. Then we might consider $w= u  \breve{\underline{c}}_{m+1}^{m+1} \breve{\underline{c}}_1^{m+1} v$ 
and $w_{[\ell,\ell+m]}\neq \breve{c}_{[1,m+1]}$, for all $\ell$.

For all the following considerations we note that 
$$(uy_1\ldots y_iv)_{[1,m+1]}=u \neq  \breve{c}_{[1,m+1]}\text{ and }(uy_1\ldots y_iv)_{[m+2+i,2m+2+i]}=v\neq  \breve{c}_{[1,m+1]}$$ 
by the assumption on $u$ and $v$; we will not separately consider these cases.

We assume now that $\breve{c}$ is dyadically rational and has thus two representations. First we consider the case 
$\breve{c}=0^{\infty}=1^{\infty}$. 
In case there exists $\ell\in \{2,\ldots, m+1\}$ such that 
$(uv)_{[\ell,\ell+m]}=\breve{c}_{[1,m+1]}$ we may consider 
$(u\underline{u}_{m+1}v)$ instead and we have 
$(u\underline{u}_{m+1}v)_{[m+1,m+3]}=u_{m+1}\underline{u}_{m+1}u_{m+1}$ and for all $\ell\in \{2,\ldots, m+2\}$ we have $(u\underline{u}_{m+1}v)_{[\ell,\ell+m]}\neq\breve{c}_{[1,m+1]}$.

Next, we assume that $\breve{c}$ has the representations 
$\breve{c}_1\ldots\breve{c}_k10^{\infty}$ and 
$\breve{c}_1\ldots\breve{c}_k01^{\infty}$.
In the following we will assume that $m\geqslant 4k$ and for simplicity we set 
$\breve{c}_{[1,m+1]}=\breve{c}_1\ldots\breve{c}_k10^{m-k}$
and $\breve{c}^\ast_{[1,m+1]}=\breve{c}_1\ldots\breve{c}_k01^{m-k}$.
Again, let the first state be given as $u=u_1\ldots u_{m+1}$ and the second as $v=v_1\ldots v_{m+1}$
and assume the forbidden word $\breve{c}_{[1,m+1]}$
occurs by concatenating $uv$, then we distinguish some cases at which position $\ell$ of the concatenated word $uv$ the forbidden word appears, i.e.\ we distinguish for which $\ell$ we have $(uv)_{[\ell,\ell+m]}=\breve{c}_{[1,m+1]}$.
We will not treat the case $(uv)_{[\ell,\ell+m]}=\breve{c}^\ast_{[1,m+1]}$ as the argumentation is analogous in all cases.

$(uv)_{[\ell,\ell+m]}=\breve{c}_{[1,m+1]}$ for some $\ell\in \{m+2-k,\ldots,m+1\}$. 

\begin{adjustwidth}{.8cm}{0cm}
Then we may consider $u\underline{u}_{m+1}\breve{\underline{c}}_{m-\ell+2}v$ instead.
By having the subword $u_{m+1}\underline{u}_{m+1}\in \{01,10\}$ it is clear that for all $j\in \{2,\ldots, m-k\}$ we have 
$(u\underline{u}_{m+1}\breve{\underline{c}}_{m-\ell+2}v)_{[k+j,m+j]}\notin\{10^{m-k}, 01^{m-k}\}.$
Knowing that 
$$(uv)_{[\ell+k,\ell+k+1]}=(u\underline{u}_{m+1}\breve{\underline{c}}_{m-\ell+2}v)_{[\ell+k+2,\ell+k+3]}=10$$
 we can also conclude that for all $j\in \{\ell+k+3-m,\ldots,\ell+1\}$ we have 
$$(u\underline{u}_{m+1}\breve{\underline{c}}_{m-\ell+2}v)_{[k+j,m+j]}\notin\{10^{m-k}, 01^{m-k}\}.$$ 
Since we know that $\ell<m+2$, we can also restrict the range in this case to $j\in \{k+4,\ldots, \ell+1\}\subset\{m-k+1,\ldots, \ell+1\}$. 

Moreover, knowing that 
$$(uv)_{[\ell+k+1,\ell+m]}=(u\underline{u}_{m+1}\breve{\underline{c}}_{m-\ell+2}v)_{[\ell+k+3,\ell+m+2]}=0^{m-k}$$
we can also conclude that 
$$(u\underline{u}_{m+1}\breve{\underline{c}}_{m-\ell+2}v)_{[k+j,m+j]}\notin\{10^{m-k}, 01^{m-k}\}$$
holds for all $j\in \{\ell+3,\ldots, m+4\}$.
Combining the just made observations we can conclude that 
$$(u\underline{u}_{m+1}\breve{\underline{c}}_{m-\ell+2}v)_{[j',j'+m]}\notin \{\breve{c}_{[1,m+1]}, \breve{c}^\ast_{[1,m+1]}\},$$ for all 
$j'\in \{1,\ldots, \ell+1 \} \cup \{\ell+3,\ldots, m+4\}$.

For the only excluded case $j'= \ell+2$ we have 
$$(u\underline{u}_{m+1}\breve{\underline{c}}_{m-\ell+2}v)_{\ell+2+(m+1-\ell)}=(u\underline{u}_{m+1}\breve{\underline{c}}_{m-\ell+2}v)_{m+3}
=\breve{\underline{c}}_{m-\ell+2}$$
implying 
$(u\underline{u}_{m+1}\breve{\underline{c}}_{m-\ell+2}v)_{[\ell+2,\ell+2+m]}\notin \{ \breve{c}_{[1,m+1]}, \breve{c}^\ast_{[1,m+1]}\}$
finishing this case.
\end{adjustwidth}

 $(uv)_{[m+1-k, 2m+1-k]}=\breve{c}_{[1,m+1]}$. 

 \begin{adjustwidth}{.8cm}{0cm}
Here, we might consider the word 
$u\breve{\underline{c}}_k1v$ instead. 

For this word we have $(u\breve{\underline{c}}_k1v)_{[m+4,2m+4-k]}=v_{[1,m-k]}=0^{m-k}$. Hence, we can conclude
$(u\breve{\underline{c}}_k1v)_{[k+j,j+m]}\notin\{01^{m-k}, 10^{m-k}\}$, for all $j\geqslant m-k+4$.

Moreover, we know that 
$(u\breve{\underline{c}}_k1v)_{[m,m+4]}=u_mu_{m+1}\breve{\underline{c}}_k1v_1=\breve{c}_k1\breve{\underline{c}}_k10\in \{ 01110, 11010\}$ and thus 
$(u\breve{\underline{c}}_k1v)_{[k+j,j+m]}\notin \{01^{m-k}, 10^{m-k}\}$, for all $j\in \{2,\ldots, m-k+2\}$.

Together the last considerations imply that 
$(u\breve{\underline{c}}_k1v)_{[j',j'+m]}\notin \{\breve{c}_{[1,m+1]}, \breve{c}^\ast_{[1,m+1]}\}$, 
for all 
$j'\in \{2,\ldots, m-k+2\}\cup\{m-k+4,\ldots, m+3\}$.

For the last case $j'=m-k+3$ we have 
$$(u\breve{\underline{c}}_k1v)_{m-k+3+(k-1)}=(u\breve{\underline{c}}_k1v)_{m+2}
=\breve{\underline{c}}_{k}$$ 
and thus
$(u\breve{\underline{c}}_k1v)_{[m-k+3,2m-k+3]}\notin\{\breve{c}_{[1,m+1]}, \breve{c}^\ast_{[1,m+1]}\}$.
\end{adjustwidth}

 One of the following holds:
\begin{itemize}
 \item $(uv)_{[\ell,\ell+m]}=\breve{c}_{[1,m+1]}$
for $k+1< \ell \leqslant  m-k$;
\item  $(uv)_{[k+1,k+1+m]}=\breve{c}_{[1,m+1]}$ and at least one of $\breve{c}^\ast_{[1,m+1]}\neq 10^{k}1^{m-k}$
or $v_{[k+1,m]}\neq 1^{m-k}$ holds.
\end{itemize}

\begin{adjustwidth}{.8cm}{0cm}
In these cases we may consider $u\breve{\underline{c}}_k1v$ instead. 

For this word we have $(u\breve{\underline{c}}_k1v)_{[m+4,\ell+m+2]}=v_{[1,\ell-1]}=0^{\ell-1}$ and hence
$(u\breve{\underline{c}}_k1v)_{[j+k,j+m]}\notin\{01^{m-k}, 10^{m-k}\}$, for all $j= m-k+4, \ldots, m+2$ and additionally for $j=m+3$ in case $\ell> k+1$.
On the other hand, in case that $\ell=k+1$, we have that $(u\breve{\underline{c}}_k1v)_{[m+3,2m+2]}\notin \{\breve{c}_{[1,m+1]}, \breve{c}^\ast_{[1,m+1]}\}$ by our assumption on $\breve{c}^\ast_{[1,m+1]}$ and on $v$.

Moreover, we know that 
$(u\breve{\underline{c}}_k1v)_{[m+1,m+4]}=0\breve{\underline{c}}_k10$ and thus we can conclude that
$(u\breve{\underline{c}}_k1v)_{[k+j,m+j]}\notin \{01^{m-k}, 10^{m-k}\}$, for all $j\in \{3,\ldots, m-k+2\}$.

Furthermore, we have $(u\breve{\underline{c}}_k1v)_{[\ell+k,\ell+k+1]}=10$ and since $\ell+k  \leqslant  m$ this also implies $(u\breve{\underline{c}}_k1v)_{[k+2,m+2]}\notin \{01^{m-k}, 10^{m-k}\}$.

Together the last considerations imply that 
$(u\breve{\underline{c}}_k1v)_{[j',j'+m]}\notin \{\breve{c}_{[1,m+1]}, \breve{c}^\ast_{[1,m+1]}\}$, 
for all 
$j'\in \{1,\ldots, m-k+2\} \cup\{ m-k+4,\ldots, m+3\}$.

In order to consider the remaining case $j'=m-k+3$ we note that by construction $(u\breve{\underline{c}}_k1v)_{m-k+3+(k-1)}=(u\breve{\underline{c}}_k1v)_{m+2}=\breve{\underline{c}}_k\neq \breve{c}_k$ and thus also 
$(u\breve{\underline{c}}_k1v)_{[m-k+3,2m-k+3]}\notin \{\breve{c}_{[1,m+1]}, \breve{c}^\ast_{[1,m+1]}\}$.
\end{adjustwidth}

$(uv)_{[k+1,k+1+m]}=\breve{c}_{[1,m+1]}$ and $\breve{c}^\ast_{[1,m+1]}=10^{k}1^{m-k}$ and $v_{[k+1,m]}=1^{m-k}$.

\begin{adjustwidth}{.8cm}{0cm}
Then we may consider the word $(u101v)$ instead.
As $\breve{c}^\ast_{[1,m+1]}=10^{k}1^{m-k}$ implies 
$\breve{c}_{[1,m+1]}=10^{k-1}10^{m-k}$
the word can also be written as 
$$(u101v)=u_{[1,k]}10^{k-1}10^{m-2k}1010^k1^{m-k}v_{m+1}.$$
By a direct comparison we have that 
$(u101v)_{[j,j+m]}\notin \{\breve{c}_{[1,m+1]}, \breve{c}^\ast_{[1,m+1]}\}$, 
for all 
$j\in \{1,\ldots, m+5\}$.
\end{adjustwidth}

$(uv)_{[\ell,\ell+m]}=\breve{c}_{[1,m+1]}$ for 
 $1<\ell\leqslant k$.

\begin{adjustwidth}{.8cm}{0cm} 
In this case, we do a double construction; first we consider $u1v$. 

We have $(u1v)_{[m+1,m+3]}=010$ implying that 
$(u1v)_{[k+j,j+m]}\notin\{01^{m-k}, 10^{m-k}\}$, for all $j\in\{ 2, \ldots, m-k+1\}$. 
This implies that 
$(u1v)_{[i,i+m]}\notin \{\breve{c}_{[1,m+1]}, \breve{c}^\ast_{[1,m+1]}\}$, 
for all 
$i\in \{ 2, \ldots, m-k+1\}$.

If $(u1v)_{[i,i+m]}\notin\{\breve{c}_{[1,m+1]},\breve{c}^\ast_{[1,m+1]}\}$ also holds for all $i> m-k+1$, we are done. 
However, let us for the following assume the contrary, i.e.\ there exists $i>m-k+1$ such that 
$(u1v)_{[i,i+m]}\in\{\breve{c}_{[1,m+1]}, \breve{c}^\ast_{[1,m+1]}\}$. 

Here, we distinguish two cases: If $i=m+2$, we consider 
$u 10v$ instead.
In this case, we have $(u10v)_{[m+1,m+3]}=010$ implying 
$(u10v)_{[k+j,j+m]}\notin\{01^{m-k}, 10^{m-k}\}$, for all $j\in\{ 2, \ldots, m-k+1\}$. 

Moreover, we have 
$$(u10v)_{[m+3+k,2m+3]}=(u10v)_{[i+1+k,i+1+m]}=(u1v)_{[i+k,i+m]}\in \{01^{m-k},10^{m-k}\}$$ 
implying $(u10v)_{[j+k,j+m]}\notin \{01^{m-k},10^{m-k}\}$ for all $j\in \{k+3,\ldots, i\}\cup \{i+2\}$.

These together imply 
$(u10v)_{[j',j'+m]}\notin \{\breve{c}_{[1,m+1]}, \breve{c}^\ast_{[1,m+1]}\}$, 
for all 
$j'\in \{ 2, \ldots, i\}\cup \{i+2\}$.
For the last case $j'=i+1$ we note that 
$(u10v)_{i+1+ (m+2-i)}=(u10v)_{m+3}=0$. On the other hand, 
$(u1v)_{i+(m+2-i)}=(u1v)_{m+2}=1=\underline{\breve{c}}_{m+3-i}$ by construction and thus $(u10v)_{[i+1,i+1+m]}\notin \{\breve{c}_{[1,m+1]}, \breve{c}^\ast_{[1,m+1]}\}$.

If $i<m+2$ we consider the word $u 11v$.
Now, we have $(u11v)_{[m+1,m+4]}=0110$ implying that 
$(u11v)_{[k+j,j+m]}\notin\{01^{m-k}, 10^{m-k}\}$, for all $j\in\{ 3, \ldots, m-k+2\}$. 
The case $j=2$ is also clear as $(u11v)_{[k+\ell,k+\ell+1]}=10$
implying $(u11v)_{[k+2,2+m]}\notin\{01^{m-k}, 10^{m-k}\}$.

Moreover, we have $(u11v)_{[i+1+k,i+1+m]}=(u1v)_{[i+k,i+m]}\in \{01^{m-k},10^{m-k}\}$ implying $(u11v)_{[j+k,j+m]}\notin \{01^{m-k},10^{m-k}\}$ for all $j\in \{i+2+k-m,\ldots, i\}\cup \{i+2,\ldots, m+4\}$.

These together - noting that $i+2+k-m<m+k+4$ - imply 
$(u11v)_{[j',j'+m]}\notin \{\breve{c}_{[1,m+1]}, \breve{c}^\ast_{[1,m+1]}\}$, 
for all 
$j'\in \{ 2, \ldots, i\}\cup \{i+2,\ldots, m+4\}$.
For the last case $j'=i+1$ we note that 
$(u11v)_{i+1+(m+1-i)}=(u11v)_{m+2}=1$. On the other hand, 
$(u1v)_{i+(m+1-i)}=(u1v)_{m+1}=0=\underline{\breve{c}}_{m+2-i}$ by construction and thus $(u11v)_{[i,i+m]}\notin \{\breve{c}_{[1,m+1]}, \breve{c}^\ast_{[1,m+1]}\}$.
\end{adjustwidth}
\end{proof}

The next proposition provides us with a certain  exhaustion principle in the sense of \cite{MR3190211}.
\begin{prop}\label{prop: pm to p}
Let $c \neq 0$. For all $t<0$ we have that 
 \begin{align}
  \lim_{m\to\infty}\mathcal{P}_{\operatorname{top}}\left(t\psi^{c}\lvert\X_m^{c}\right)=\mathcal{P}_{\operatorname{top}}\left(t\psi^{c}\right). \label{eq: P rest= P unrest}
 \end{align}
 In particular, $\mathcal{P}_{\operatorname{top}}\left(t\psi^{c}\right)$ exists as a limit.
\end{prop}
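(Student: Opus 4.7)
The plan is to combine a classical variational principle on each Markov subshift $\X_m^c$ with a Misiurewicz-type comparison in order to control the unrestricted topological pressure. First, I would observe that by Proposition~\ref{prop: Xm Kdelta} the shift $\X_m^c$ is an irreducible aperiodic subshift of finite type, and since $\X_m^c$ is uniformly bounded away from the singularity $\breve{c}$, the restriction $\psi^c\rvert_{\X_m^c}$ is Lipschitz (in fact real-analytic). Standard thermodynamic formalism for SFTs with continuous potential then yields both the existence of the limit defining $\cP_{\operatorname{top}}(t\psi^c\ts|\ts\X_m^c)$ and the variational identity
\[
\cP_{\operatorname{top}}(t\psi^c\ts|\ts\X_m^c)=\sup\bigl\{h_{\nu}(T)+t\,\nu(\psi^c)\colon\nu\in\mathcal{M}_{T,m}\bigr\},
\]
where $\mathcal{M}_{T,m}$ is the set of $T$-invariant probability measures supported in $\X_m^c$.

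Next, I would pass to the limit in $m$. By construction $m\mapsto\X_m^c$ is nondecreasing, so $\mathcal{M}_{T,m}\subseteq\mathcal{M}_{T,m+1}$, and hence $\cP_{\operatorname{top}}(t\psi^c\ts|\ts\X_m^c)$ is monotone nondecreasing in $m$. Conversely, every $\nu\in\mathcal{M}_{T,c}$ has $\supp\nu\subseteq K_{\delta}^c$ for some $\delta>0$ (since $\breve{c}\notin\supp\nu$ and $\supp\nu$ is closed and $T$-invariant), and Proposition~\ref{prop: Xm Kdelta} then gives $\nu\in\mathcal{M}_{T,m}$ for all sufficiently large $m$. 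Thus $\bigcup_m\mathcal{M}_{T,m}=\mathcal{M}_{T,c}$, and monotone approximation of the supremum yields
\[
\lim_{m\to\infty}\cP_{\operatorname{top}}(t\psi^c\ts|\ts\X_m^c)=\sup_{\nu\in\mathcal{M}_{T,c}}\bigl(h_{\nu}(T)+t\,\nu(\psi^c)\bigr)=p_c(t).
\]

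The remaining task is to show that $\cP_{\operatorname{top}}(t\psi^c)$ itself exists as a limit and equals this common value, which I would do via the sandwich
\[
\cP_{\operatorname{top}}(t\psi^c\ts|\ts\X_m^c)\leqslant\underline{\cP}_{\operatorname{top}}(t\psi^c)\leqslant\overline{\cP}_{\operatorname{top}}(t\psi^c)\leqslant\cP_{\operatorname{top}}(t\psi^c\ts|\ts\X_m^c)+\varepsilon_m,
\]
with $\varepsilon_m\to 0$ as $m\to\infty$. For the lower bound, I would argue that for each $\omega\in\Sigma_m'^n$ a maximizer $y^{\ast}\in\langle\omega\rangle$ of $\psi_n^c$ can be modified in its tail so as to produce some $y\in\langle\omega\rangle\cap\X_m^c$ with $\lvert\psi_n^c(y)-\psi_n^c(y^{\ast})\rvert\leqslant C_m$, exploiting Lipschitz continuity of $\psi^c$ on the complement of $\langle\breve{c}_{[1,m+1]}\rangle$ together with the geometric decay $\lvert T^ky-T^ky^{\ast}\rvert\leqslant 2^{-(n-k)}$. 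For the upper bound, I would partition $\Sigma^n$ into words in $\Sigma_m'^n$ and words containing $\breve{c}_{[1,m+1]}$ as a factor, decomposing the latter according to the first occurrence of the forbidden factor and summing the resulting renewal-type series.

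The main obstacle will be this upper bound. For $t<0$ the potential $t\psi^c$ is unbounded from above, and each word hitting $\langle\breve{c}_{[1,m+1]}\rangle$ carries an exponentially heavy weight of order $2^{-2t(m+1)}$ in the partition function. Careful combinatorial bookkeeping --- in effect a renewal decomposition by first occurrences of $\breve{c}_{[1,m+1]}$, combined with the growth estimates for $\psi_n^c$ announced in Section~\ref{sec:Technical} --- is needed in order to show that these contributions are absorbed into $\cP_{\operatorname{top}}(t\psi^c\ts|\ts\X_m^c)$ up to an error that vanishes as $m\to\infty$.
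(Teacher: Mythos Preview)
Your lower bound and the variational exhaustion argument are fine, but the upper bound via a renewal decomposition has a genuine direction-of-inequality problem that you have not addressed. For $t<0$ the summand is $\exp\bigl(t\sup_{\langle\omega\rangle}\psi_n^c\bigr)=\exp\bigl(|t|\inf_{\langle\omega\rangle}\phi_n\bigr)$ with $\phi=-\psi^c\geqslant 0$. Splitting $\omega=\alpha\,\breve{c}_{[1,m+1]}\,\beta$ and using additivity of Birkhoff sums gives only $\inf_{\langle\alpha\breve{c}\beta\rangle}\phi_n\geqslant\inf_{\langle\alpha\rangle}\phi_k+\inf_{\langle\breve{c}_{[1,m+1]}\rangle}\phi_{m+1}+\inf_{\langle\beta\rangle}\phi_{n-k-m-1}$, hence a \emph{lower} bound on each weight and on $Z_n$; a first-return renewal therefore yields $Z_n\geqslant(\cdots)$, which is useless for $\overline{\cP}_{\operatorname{top}}\leqslant\lim_m p_m$. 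To get an upper bound on $\inf\phi_n$ one would need to evaluate at a single point approximating all three block-infima simultaneously, but the $\alpha$-block may end in a length-$m$ prefix of $\breve{c}$, forcing any $x\in\langle\alpha\breve{c}_{[1,m+1]}\beta\rangle$ to pass much closer to $\breve{c}$ than the $\langle\alpha\rangle$-optimizer does; the resulting distortion constant blows up with $m$, so the ``careful bookkeeping'' you allude to cannot close the gap without an additional idea.

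The paper circumvents this entirely. Instead of decomposing bad words, it \emph{embeds} $\Sigma^n$ into a larger admissible class: by Lemma~\ref{LEM:sliding-out-no-prefix} each $\omega\in\Sigma^n$ admits a suffix $u_\omega$ of length $\ell_n\leqslant 5\log_2\log_{3/2}n$ such that no suffix of $\omega u_\omega$ is a prefix of $\breve{c}$, whence $\omega u_\omega\in\widetilde{\Sigma}_\infty^{\,n+\ell_n}$. Since $t\psi^c\geqslant 0$ and $\langle\omega u_\omega\rangle\subset\langle\omega\rangle$, one gets $\inf_{\langle\omega\rangle}\exp(t\psi_n^c)\leqslant\inf_{\langle\omega u_\omega\rangle}\exp(t\psi_{n+\ell_n}^c)$, so $Z_n$ is dominated by the partition sum over $\widetilde{\Sigma}_\infty^{\,n+\ell_n}$. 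The point of the tilde-classes is concatenation stability, $\widetilde{\Sigma}_m^{n_1}\widetilde{\Sigma}_m^{n_2}\subset\widetilde{\Sigma}_m^{n_1+n_2}$, which makes $n\mapsto n\,\widetilde a(n,m)$ superadditive; together with monotonicity in $m$ this lets one interchange the limits $n\to\infty$ and $m\to\infty$ and identify the result with $\lim_m\cP_{\operatorname{top}}(t\psi^c\,|\,\X_m^c)$. The special case $c=1/2$ is handled separately (both sides are $+\infty$). Your sandwich is the right shape, but the mechanism that makes the upper half work is this embedding/superadditivity device, not a renewal series.
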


\begin{proof}
The classical Thue{\ts}--Morse case $c=1/2$, i.e.\ $\breve{c}=0$, plays a special role and we start by considering this case.
 Our aim is to show that for $c=1/2$ both sides of \eqref{eq: P rest= P unrest} equal $\infty$. (In \cite{BGKS} a similar  convergence statement has already been shown. However, in there the definition of the topological pressure for $t<0$ was different, why, for completeness, we give a self-contained proof for this case here.)
 
 We first note that $\psi^c(x)
\leqslant 2 \log (\pi \rho(x, \breve{c}))$ - for a proof of that statement, see Lemma~\ref{LEM:psi-distance-bound} below.
 In order to bound the LHS from above we obtain for $\omega_n=\overline{0^m1^m}_{[1,n]}$ (again with $\overline{u}$ the concatenation of infinitely many copies of $u$) that
\begin{align*}
 \sup_{x\in\langle\omega_n\rangle\cap\X_m}\psi^{c}_{n}(x)
 &\leqslant 2n\log \pi+ 2\left\lfloor \frac{n}{m} \right\rfloor  \sum_{k=1}^m \log (2^{-k})
 =2n\log \pi -\left\lfloor \frac{n}{m} \right\rfloor m(m+1)\log 2\\
 &\leqslant -\frac{ n(m+1)}{2}+m^2, 
\end{align*}
for $m$ sufficiently large. 
Hence, remembering that $t<0$ we have
\begin{align*}
 \liminf_{m\to\infty} \mathcal{P}_{\operatorname{top}}\left(t\psi^{c}\lvert\X_m^{c}\right)
 &\geqslant  \liminf_{m\to\infty} \lim_{n\to\infty}\myfrac{t\ts \sup_{x\in\langle\omega_n\rangle\cap\X_m}\psi^{c}_{n}(x)}{n}
\\ &\geqslant  \liminf_{m\to\infty}\lim_{n\to\infty}t\left( - \myfrac{m+1}{2}+\myfrac{m^2}{n}\right)=\infty. 
\end{align*}
In particular, the limit of this expression as $m \to \infty$ exists. Moreover, as $\psi^c$ is H\"older continuous on $\X_m^c$, we can directly argue with $\mathcal{P}_{\operatorname{top}}$ not needing to consider the upper or lower pressure.
On the other hand, using again $\psi^c(x)
\leqslant 2 \log (\pi \rho(x, \breve{c}))$ we have for $\omega_n=0^n$ that 
\begin{align*}
  \sup_{x\in\langle\omega_n\rangle}\psi^{c}_{n}(x)
 &\leqslant 2\sum_{k=1}^n\log(\pi 2^{-k})
 = n(2  \log \pi -  (n+1)\log 2)
\end{align*}
and thus
\begin{align*}
\overline{\mathcal{P}}_{\operatorname{top}}\left(t\psi^{c}\right)
 \geqslant \underline{\mathcal{P}}_{\operatorname{top}}\left(t\psi^{c}\right)
 &\geqslant \lim_{n\to\infty}\myfrac{t\ts \sup_{x\in\langle\omega_n\rangle}\psi^{c}_{n}(x)}{n}
 \geqslant \lim_{n\to\infty}t(2  \log \pi - (n+1)\log 2)=\infty,
\end{align*}
proving the proposition for $c=1/2$.

Next, we prove the general case $c\notin\{0,1/2\}$. For the following we set 
\begin{align}
 \Sigma_{m}^{n} =\Sigma_m^n(c)=\left\{\omega\in\Sigma^n\colon \nexists\ell\in \{1,\ldots, n-m\}\colon \omega_{[\ell,\ell+m]}=\breve{c}_{[1,m+1]}\text{ or }\omega_{[\ell,\ell+m]}=\breve{c}^{\ast}_{[1,m+1]}\right\}.\label{eq: def Sigma nm}
\end{align}
In words, $\Sigma_{m}^{n}$ denotes the set of $n$-letter words which do not have an $m+1$-letter subword coinciding with $\breve{c}_{[1,m+1]}$ (or $\breve{c}_{[1,m+1}^{\ast}$) and thus $\Sigma_{m}^{n}=\{\omega\in\Sigma^n\colon \langle\omega\rangle \cap \X_m\neq \varnothing\}$. For $m\geqslant n$ we understand $\Sigma_m^n$ to be $\Sigma^n$.

Furthermore, we set 
\begin{align*}
 \widetilde{\Sigma}_{m}^{n}
 =\widetilde{\Sigma}_{m}^{n}(c) =\left\{\omega\in\Sigma^n\colon \right.&\nexists\ell\in \{1,\ldots, n-m\}\colon \omega_{[\ell, \ell+m]}\in \big\{\breve{c}_{[1,m+1]},\breve{c}^{\ast}_{[1,m+1]}\big\},\\
 & \hspace{-0.8mm}\left.\nexists i\in \{1, \ldots, m+1 \} \colon \omega_{[n-i+1,n]}\in \big\{\breve{c}_{[1,i]},\breve{c}^{\ast}_{[1,i]}\big\}
 \right\}.
\end{align*}
Here we have additionally excluded words $u \in \Sigma^n_m$ such that a suffix of $u$ coincides with a prefix of the forbidden word(s). This construction ensures that the concatenation of words in $\widetilde \Sigma^{n_1}_m$ and $\widetilde \Sigma^{n_2}_m$ still does not contain a forbidden word. More precisely, 
\begin{equation}
\label{EQ:concat}
\widetilde \Sigma^{n_1}_m \widetilde \Sigma^{n_2}_m \subset \widetilde \Sigma^{n_1 + n_2}_m,
\end{equation}
for all $n_1, n_2 ,m \in \N$. This property will be important for establishing monotonicity in a suitable approximation of $\mathcal{P}_{\operatorname{top}}(t \psi^c \vert \mathbb{X}_m^{c})$ if $t < 0$.

We start by showing an alternative expression for $\mathcal{P}_{\operatorname{top}}(t \psi^{c} \vert \mathbb{X}_m^{c})$, i.e.\ we first show that for
large enough $m\in\mathbb{N}$ and $t< 0$ we have that 
\begin{align}
 \lim_{n\to\infty}\frac{1}{n} \log \sum_{\omega\in \Sigma_m^{n}}\sup_{x\in  \langle\omega\rangle\cap\mathbb{X}_m^{c}}\exp\left(t \psi_{n}^{c}(x)\right)
 = \lim_{n\to\infty}\frac{1}{n} \log \sum_{ \omega\in \Sigma_{m}^{n}}\inf_{x\in  \langle\omega\rangle}\exp\left(t \psi_{n}^{c}(x)\right).\label{eq: lim w wo Xm1}
\end{align}
We will use a similar argumentation as in \cite[Proof of Lem.~4.1]{BGKS} and first note that 
there exists $K>0$ such that on $[0,1]\setminus C_m(\breve{c})$ we have $|\psi'|\leqslant K/2$. 
Then 
for all $\omega\in\Sigma_m^n$ we have 
\begin{align*}
 \sup_{x\in \langle\omega\rangle}\psi_n^{c}(x)- \inf_{y\in \langle\omega\rangle\cap\mathbb{X}_m^{c}}\psi_n^{c}(y)
 &\leqslant \sum_{k=1}^n \left(\sup_{x\in \langle S^{k-1}\omega\rangle}\psi^{c}(x)- \inf_{y\in \langle S^{k-1}\omega\rangle\cap\mathbb{X}_m^{c}}\psi^{c}(y)  \right)\notag\\
 &\leqslant  \sum_{k=1}^n K 2^{-(n-k)-1}\leqslant K.
\end{align*}
This follows from the fact that the supremum over $\langle S^{k-1}\omega\rangle$ is always attained outside $C_m(\breve{c})$.
Thus, for $t<0$ we obtain \eqref{eq: lim w wo Xm1}.
Since this expression is smaller than the (lower) topological pressure of $t \psi^c$, we obtain that
$
\mathcal{P}_{\operatorname{top}}(t \psi^{c} \vert \mathbb{X}_m^{c}) 
\leqslant \underline{\mathcal{P}}_{\operatorname{top}}(t \psi^{c}) 
$
for all $m \in \N$, and hence that 
\begin{equation}
\label{EQ:pressure-upper-estimate}
\limsup_{m \to \infty} \mathcal{P}_{\operatorname{top}}(t \psi^{c} \vert \mathbb{X}_m^{c})  \leqslant \underline{\mathcal{P}}_{\operatorname{top}}(t \psi^{c}).
\end{equation}

For $t<0$ we define the functions $\widetilde a , a\colon \left(\mathbb{N}\cup\{\infty\}\right)^2\to\mathbb{R}$ by 
 \begin{align*}
  \widetilde a\left(n,m\right)\coloneqq 
  \frac{1}{n}\, \log \sum_{\omega\in \widetilde \Sigma_{m}^{n}}\inf_{x\in  \langle\omega\rangle}\exp\left(t \psi_{n}^{c}(x)\right)
 \end{align*}
 and its acceleration $a(n,m) = \widetilde a (2^n,m)$.
 For $m=\infty$ we understand the above term as
 $$\widetilde \Sigma_{\infty}^n = \left\{\omega\in\Sigma^n\colon \nexists i\in \{1, \ldots, n \} \colon \omega_{[n-i+1,n]}\in\big\{\breve{c}_{[1,i]},\breve{c}^{\ast}_{[1,i]}\big\}
 \right\}.$$

 We notice that $a(n, \cdot)$ is monotonically increasing since for $m$ increasing we just add more summands.
  Moreover, $n \cdot \widetilde{a}(n, m)$ is superadditive in $n$ -- this can be derived in a straightforward manner, using \eqref{EQ:concat}.  
  Thus, 
  $$2^{n+1} a(n+1,m)= 2^{n+1} \widetilde{a}(2^{n+1},m)\geqslant 2^{n} (\widetilde{a}(2^n,m)+\widetilde{a}(2^n,m))= 2^{n+1} a(n,m)$$ 
  and thus the function $a(\cdot, m)$ is monotonically increasing and hence, $a(\infty, m)$ is well-defined by taking the limit $n\to\infty$.
  
Hence, for $t < 0$ the function $a(\cdot,\cdot)$ is monotonically increasing in both variables. This allows us to conclude
\begin{align}
  \liminf_{m \to \infty} \mathcal{P}_{\operatorname{top}}(t\psi^{c} \vert \mathbb{X}_m^{c})
&\geqslant \lim_{m \to \infty} \lim_{n \to \infty} a(n,m)
=\lim_{n \to \infty} \lim_{m \to \infty} a(n,m)\notag\\
&=\lim_{n \to \infty} \frac{1}{2^n}\, \log \sum_{\omega\in \widetilde \Sigma_{\infty}^{2^n}}\inf_{x\in  \langle\omega\rangle}\exp\left(t \psi_{2^n}^{c}(x)\right)\notag\\
&=\lim_{n \to \infty} \frac{1}{n}\, \log \sum_{\omega\in \widetilde \Sigma_{\infty}^{n}}\inf_{x\in  \langle\omega\rangle}\exp\left(t \psi_{n}^{c}(x)\right),\label{eq: lim m Ptop}
\end{align}
where the last limit exists by superadditivity.
It remains to show that 
\begin{align*}
 \lim_{n \to \infty} \frac{1}{n}\, \log \sum_{\omega\in \widetilde \Sigma_{\infty}^{n}}\inf_{x\in  \langle\omega\rangle}\exp\left(t \psi_{n}^{c}(x)\right)
  \geqslant \limsup_{n \to \infty} \frac{1}{n}\, \log \sum_{\omega\in \Sigma^{n}}\inf_{x\in  \langle\omega\rangle}\exp\left(t \psi_{n}^{c}(x)\right).
\end{align*}

In order to proceed we will give the following lemma. Its proof will be postponed to Section~\ref{sec:Technical}.
\begin{lemma}
\label{LEM:sliding-out-no-prefix}
Assume $c \notin \{0,1/2\}$. Then, for every $n\in\N$, there exists an $\ell_n \in \N$ with the following property. For all $v \in \Sigma^n$, there is $w = v u \in \Sigma^{n + \ell_n}$ such that $w_{[k,|w|]}$ is not a prefix of $\breve{c}$ for all $1\leqslant k \leqslant |w|$. For large enough $n$, we can choose $\ell_n\leqslant 5 \log_2 \log_{3/2} n$.
\end{lemma}

By Lemma \ref{LEM:sliding-out-no-prefix} 
for each $\omega\in \Sigma^n$ there exists a word $u$ with 
$|u|= \ell_n$ and $\ell_n\leqslant \lfloor  5 \log_2 \log_{3/2} n\rfloor$
such that no suffix of $wu$ is a prefix of $\breve{c}$. For the following we denote such a word by $u_{\omega}$. In case there are more than one such word, we choose one. 
Since $\psi^c\leqslant 0$ and thus $t\psi^c\geqslant 0$ if $t<0$, we have by \eqref{eq: lim m Ptop}
\begin{align*}
  \limsup_{n \to \infty} \frac{1}{n}\, \log \sum_{\omega\in \Sigma^{n}}\inf_{x\in  \langle\omega\rangle}\exp\left(t \psi_{n}^{c}(x)\right)
  &\leqslant \limsup_{n \to \infty} \frac{1}{n}\, \log \sum_{\omega\in \Sigma^{n}}\inf_{x\in  \langle\omega u_{\omega}\rangle}\exp\left(t \psi_{n}^{c}(x)\right) \\
  &\leqslant \limsup_{n \to \infty} \frac{1}{n}\, \log \sum_{\omega\in \Sigma^{n}}\inf_{x\in  \langle\omega u_{\omega}\rangle}\exp\left(t \psi_{n+\ell_n}^{c}(x)\right) \\
  &\leqslant \limsup_{n \to \infty} \frac{1}{n}\, \log \sum_{\omega\in \widetilde{\Sigma}_{\infty}^{n+\ell_n}}\inf_{x\in  \langle\omega \rangle}\exp\left(t \psi_{n+\ell_n}^{c}(x)\right) \\
  &= \limsup_{n \to \infty} \frac{1}{n+\ell_n}\, \log \sum_{\omega\in \widetilde{\Sigma}_{\infty}^{n+\ell_n}}\inf_{x\in  \langle\omega \rangle}\exp\left(t \psi_{n+\ell_n}^{c}(x)\right)\\
  &= \limsup_{n \to \infty} \frac{1}{n}\, \log \sum_{\omega\in \widetilde{\Sigma}_{\infty}^{n}}\inf_{x\in  \langle\omega \rangle}\exp\left(t \psi_{n}^{c}(x)\right)
  \\ &\leqslant \liminf_{m \to \infty} \mc P_{\operatorname{top}}(t \psi^c \vert \mathbb{X}_m^c).
\end{align*}
Combining this with \eqref{EQ:pressure-upper-estimate}, we obtain
\[
\overline{\mc P}_{\operatorname{top}}(t \psi^c)
\leqslant \liminf_{m \to \infty} \mc P_{\operatorname{top}}(t \psi^c | \mathbb{X}_m^c)
\leqslant \limsup_{m \to \infty} \mc P_{\operatorname{top}}(t \psi^c | \mathbb{X}_m^c)
\leqslant \underline{\mc P}_{\operatorname{top}}(t \psi^c),
\]
proving the statement of the proposition. 
\end{proof}

In the next steps we will prove an analog to Proposition \ref{prop: pm to p} for $t\geqslant 0$.
\begin{prop}\label{prop: PtopmtoPtop}
Let $c \neq 0$. For all $t \geqslant 0$, we have that
\[
\lim_{m \to \infty} \mc P_{\operatorname{top}} (t \psi^c | \mathbb{X}^c_m) = \mc P_{\operatorname{top}}(t \psi^c).
\]
\end{prop}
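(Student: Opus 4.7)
The easy direction $\limsup_{m \to \infty} \mc P_{\operatorname{top}}(t \psi^c \vert \X_m^c) \leqslant \underline{\mc P}_{\operatorname{top}}(t \psi^c)$ follows at once from the inclusions $\Sigma_m^n \subset \Sigma^n$ and $\langle \omega \rangle \cap \X_m^c \subset \langle \omega \rangle$: since $t \geqslant 0$ and $\psi_n^c \leqslant 0$ give $\exp(t \psi^c_n) \in [0,1]$, restricting both the index set and the supremum domain only decreases each term of the defining sum.

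For the converse inequality $\overline{\mc P}_{\operatorname{top}}(t \psi^c) \leqslant \liminf_{m \to \infty} \mc P_{\operatorname{top}}(t \psi^c \vert \X_m^c)$, the plan is to split $\sum_{\omega \in \Sigma^n}\sup_{\langle\omega\rangle}\exp(t\psi_n^c)$ into a \emph{good} part over $\Sigma_m^n$ and a \emph{bad} part over $\Sigma^n \setminus \Sigma_m^n$, and to bound both by $P_m := \mc P_{\operatorname{top}}(t\psi^c \vert \X_m^c)$ up to an error vanishing with $m$. For the good part I would imitate the proof of Proposition~\ref{prop: pm to p}: Lemma~\ref{LEM:sliding-out-no-prefix} (together with its analogue for $\breve c^{\ast}$ when $\breve c$ is dyadic) attaches to each $\omega \in \Sigma_m^n$ a suffix $u_\omega$ of length $\ell_n = O(\log\log n)$ such that $\omega u_\omega$ lies in $\widetilde\Sigma_m^{n+\ell_n}$, so that every $x \in \langle\omega u_\omega\rangle$ extends within $\X_m^c$. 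Because $\omega \in \Sigma_m^n$ forces $\rho(T^k x, \breve c) \geqslant 2^{-(m+1)}$ for $0 \leqslant k \leqslant n-m-1$, on which $\psi^c$ is bounded and Lipschitz, while the $\ell_n = o(n)$ extra iterates are absorbed into the uniform bound $\|\psi^c\|_{\infty,\X_m^c}$, one gets
\[
\sup_{\langle\omega\rangle} \psi_n^c \;\leqslant\; \sup_{\langle\omega u_\omega\rangle \cap \X_m^c} \psi_{n+\ell_n}^c + o(n).
\]
Summing (using that $\omega \mapsto \omega u_\omega$ is injective) and taking $\tfrac{1}{n}\log$ then yields good-sum rate $\leqslant P_m$ as $n\to\infty$.

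The main obstacle is the bad part: $|\Sigma^n \setminus \Sigma_m^n| \sim 2^n$ for each fixed $m$, and the naive bound $\sup_{\langle\omega\rangle}\exp(t\psi_n^c) \leqslant \exp(-tD_m)$ for $\omega \notin \Sigma_m^n$ (with $D_m := 2(m+1)\log 2 - 2\log\pi$ the value of $\psi^c$ when some $T^{\ell-1}x$ lies in $\langle \breve c_{[1,m+1]}\rangle$) yields only the crude rate $\log 2$. I would refine this by classifying $\omega$ by the number $r \geqslant 1$ of its non-overlapping forbidden occurrences, decomposing $\psi_n^c$ across the $r+1$ intervening pieces, and estimating each such piece by the \emph{full-cylinder} pressure $P_m^{\mathrm{full}} := \lim_k \tfrac1k\log\sum_{u\in\Sigma_m^k}\sup_{\langle u\rangle}\exp(t\psi_k^c)$; the same sliding-out argument from the good part identifies $P_m^{\mathrm{full}}$ with $P_m$. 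A binomial summation over partitions then gives
\[
\sum_{\omega \notin \Sigma_m^n} \sup_{\langle\omega\rangle}\exp(t\psi_n^c) \;\leqslant\; \me^{P_m n}\bigl((1+\epsilon_m)^n - 1\bigr), \qquad \epsilon_m := \me^{-(tD_m + P_m(m+1))},
\]
and $\epsilon_m \to 0$ as $m\to\infty$ for every $t \geqslant 0$ (either $tD_m \to \infty$ if $t>0$, or $P_m(m+1)\to\infty$ if $t=0$, since $P_m \to \log 2$), so the bad-sum rate is bounded by $P_m + \epsilon_m$. Combining with the good-part estimate, letting $n\to\infty$ and then $m\to\infty$ closes the argument.
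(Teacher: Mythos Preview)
Your good/bad decomposition is a natural idea, but it diverges substantially from the paper's route, which constructs a word-modification map $\theta_m\colon\Sigma^n\to\Sigma_m^n$ (Proposition~\ref{PROP:word-regularisation}) that sends \emph{every} word to an admissible one while raising $\sup_{\langle\omega\rangle}\psi_n^c$ by at most $\varepsilon n$ and keeping fibers of size $\leqslant 2^{\varepsilon n}$; this handles good and bad words in one stroke and never needs to estimate the bad sum separately.

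Your good-part argument is essentially correct: the bounded-distortion estimate from the proof of \eqref{eq: lim w wo Xm1} (which holds verbatim for $t\geqslant 0$) gives $\sup_{\langle\omega\rangle}\psi_n^c \leqslant \inf_{\langle\omega\rangle\cap\X_m^c}\psi_n^c + K_m$ for $\omega\in\Sigma_m^n$, so the good-sum rate is $\leqslant P_m$. (Your claim $\rho(T^kx,\breve c)\geqslant 2^{-(m+1)}$ is slightly off---the correct lower bound is the distance $\delta'_m$ of $\breve c$ to the boundary of its $(m{+}1)$-cylinder---but this is harmless here.)

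The bad-part argument, however, has a genuine gap. Your displayed binomial bound implicitly uses $\prod_{i=0}^r S_{|p_i|}^m \leqslant e^{P_m(n-r(m+1))}$, where $S_k^m=\sum_{u\in\Sigma_m^k}\sup_{\langle u\rangle}\exp(t\psi_k^c)$. But $(S_k^m)_k$ is \emph{sub}multiplicative, so Fekete gives $S_k^m\geqslant e^{P_m k}$, the wrong direction; already for $t=0$ one has $S_k^m=|\Sigma_m^k|=2^k>\lambda_m^k=e^{P_m k}$ for $k\leqslant m$. One can obtain $S_k^m\leqslant C_m e^{P_m k}$ from Ruelle--Perron--Frobenius on the mixing SFT $\X_m^c$ (Proposition~\ref{prop: Xm Kdelta}), but then $\epsilon_m$ must be replaced by $C_m\,e^{-(tD_m+P_m(m+1))}$, and $\log C_m$ is controlled by the H\"older distortion $\sum_{k\geqslant 1}\operatorname{var}_k(t\psi^c|_{\X_m^c})\lesssim t/\delta'_m$. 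For typical $\breve c$ (e.g.\ $\breve c=1/3$) one has $\delta'_m\asymp 2^{-m}$, so $\log C_m$ can grow like $t\,2^m$, overwhelming the merely linear-in-$m$ term $tD_m+P_m(m+1)\asymp m$. Hence the corrected $\epsilon_m$ need not tend to $0$, and the bad-sum rate is not shown to approach $P_m$. This is precisely the obstacle that forces the paper into the delicate combinatorics of Section~\ref{sec:Technical}: Proposition~\ref{PROP:word-regularisation} modifies only an $\varepsilon$-fraction of letters in each word, yielding uniform-in-$m$ control that your renewal estimate cannot provide.
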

We note that here, the existence of $\mc P_{\operatorname{top}}$ already follows by subadditivity.
An important part of the proof of Proposition \ref{prop: PtopmtoPtop} is the following Proposition \ref{PROP:word-regularisation}. Its proof is rather technical and we postpone it to Subsection \ref{subsec: proof word regularisation} after we have collected all the necessary results in Section \ref{sec:Technical}.

\begin{prop}
\label{PROP:word-regularisation}
Let $\breve{c} \neq 1/2$ and $\varepsilon > 0$ and let $\Sigma^+$ be the set of finite letter words with letters in $\Sigma$ and let $\Sigma^n_m$ be defined as in \eqref{eq: def Sigma nm}. There are infinitely many $m$ such that there is a map $\theta_m \colon \Sigma^+ \to \Sigma^+$ with the following properties for all $n \in \N$ sufficiently large,
\begin{enumerate}
\item $\theta_m(\Sigma^n) \subset \Sigma^n_m$;
\item for all $\omega\in \Sigma^n_m$ we have $ \# \theta_m^{-1}(w) \leqslant 2^{\varepsilon n}$;
\item if $w \in \Sigma^n$ and $w'= \theta_m(w)$, we have 
\[
\sup_{x \in \langle w \rangle} \psi^c_n(x)
\leqslant
\sup_{y \in \langle w'  \rangle \cap \mathbb{X}_m^c } \psi^c_n(y) + \varepsilon n.
\]
\end{enumerate}
\end{prop}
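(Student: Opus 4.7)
The plan is to construct $\theta_m$ by a greedy surgical modification of each word $w \in \Sigma^n$ that destroys all occurrences of the forbidden patterns $f = \breve{c}_{[1,m+1]}$ and $f^{\ast} = \breve{c}^{\ast}_{[1,m+1]}$ while flipping few bits. Scanning positions from left to right, whenever an occurrence of $f$ (or $f^{\ast}$) still appears at position $i$ in the current state of the word, we flip a single bit inside the block $[i+m-\kappa, i+m]$, for a fixed small $\kappa$, so that the flipped bit disagrees with the corresponding symbol of $\breve{c}$ and destroys this occurrence. Choosing $m$ along an infinite subsequence for which $\breve{c}_{[1,m+1]}$ has no short period, which is available because $c \notin\{0,1/2\}$ (the same combinatorial input that drives Lemma~\ref{LEM:sliding-out-no-prefix}), one verifies that each flip destroys the occurrence at $i$ without generating an overlapping new one. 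Hence the procedure terminates after at most $k \leqslant c_1 n / m$ flips for some constant $c_1$, and the output $w' = \theta_m(w)$ lies in $\Sigma^n_m$, giving Property~(1).

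For Property~(2), any preimage of $w'$ under $\theta_m$ is obtained by un-flipping at most $k$ of its positions; hence
\[
\#\theta_m^{-1}(w') \,\leqslant\, \textstyle\binom{n}{\leqslant k}\, 2^k \,\leqslant\, 2^{(H(k/n)+k/n)\, n},
\]
where $H$ denotes the binary entropy. Taking $m$ large enough that $c_1/m$ is smaller than the solution of $H(\alpha)+\alpha = \varepsilon$ bounds the right-hand side by $2^{\varepsilon n}$ for all sufficiently large $n$, which explains why the statement is formulated for infinitely many $m$ (rather than all $m$).

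For Property~(3), given $x \in \langle w \rangle$ attaining the sup of $\psi^c_n$ on $\langle w \rangle$, we produce $y \in \langle w' \rangle \cap \X^c_m$ by fixing its first $n$ digits equal to $w'$ and appending a tail compatible with $\X^c_m$; this is possible because Proposition~\ref{prop: Xm Kdelta} guarantees that $\X^c_m$ is a non-trivial irreducible aperiodic Markov subshift for large $m$, so $\langle w' \rangle \cap \X^c_m \neq \emptyset$. We split
\[
\psi^c_n(x) - \psi^c_n(y) \,=\, \sum_{k=0}^{n-1}\bigl(\psi^c(T^k x) - \psi^c(T^k y)\bigr)
\]
into two contributions. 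At indices $k$ where $T^k x$ lay in $\langle f \rangle$, the flip pushes $T^k y$ out of $\langle f \rangle$, so $\psi^c(T^k y) \geqslant \psi^c(T^k x) - O(1)$ and these terms are \emph{favourable}. At the remaining indices, the inequality $|\psi^c(u) - \psi^c(v)| \leqslant C\, \rho(u,v)/\rho(u,\breve{c})$ away from the singularity, combined with the estimate $\rho(T^k x, T^k y) \leqslant 2^{-(j-k)}$ (where $j$ is the nearest flipped position to the right of $k$) and the sparsity of flips, yields a geometrically summable contribution of order $O(k)$, which is less than $\varepsilon n$.

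The main obstacle is the rigorous verification of Property~(3) in the delicate regime where the supremizing $x$ is near $\breve{c}$ under many iterates, so that several greedy flips fall into overlapping windows and several terms $\psi^c(T^k x)$ are simultaneously very negative. Establishing the claimed bound in this regime requires careful bookkeeping of the positions of flipped bits relative to every occurrence of $f$ in $w$ and exploits in an essential way that flipping \emph{inside} $[i+m-\kappa, i+m]$ (rather than inside $[i, i+\kappa]$) localizes the dyadic perturbation between adjacent forbidden windows and keeps the cross-interactions between windows controllable.
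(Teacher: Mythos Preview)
Your bit-flip construction is an appealing idea, but the proposal leaves the heart of the matter---Property~(3)---unproven, and the final paragraph essentially concedes this. The sketch you give breaks down at several points. The claim that at indices $k$ where $T^k x \in \langle f\rangle$ the flip gives $\psi^c(T^k y) \geqslant \psi^c(T^k x) - O(1)$ is not justified: a flip inside $[i+m-\kappa, i+m]$ leaves $w'_{[i,i+m-\kappa-1]} = \breve{c}_{[1,m-\kappa]}$ intact, so $T^{i-1}y$ is still forced into a $2^{-(m-\kappa)}$-neighbourhood of $\breve{c}$ and $\psi^c(T^{i-1}y)$ can be just as negative as $\psi^c(T^{i-1}x)$; no favourable term is produced. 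More seriously, for intermediate positions $i < k < i+m$ the Lipschitz bound $|\psi^c(u)-\psi^c(v)| \leqslant C\rho(u,v)/\rho(u,\breve{c})$ requires a lower bound on $\rho(T^k x,\breve{c})$, but if $\breve{c}$ has self-overlaps then $w_{[k,i+m]}$ can itself be a long prefix of $\breve{c}$, forcing $T^{k-1}x$ arbitrarily close to $\breve{c}$ and blowing up the Lipschitz constant. A single bit flip does not untangle these overlapping near-returns, and the ``geometrically summable'' claim has no uniform basis.

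The paper takes a structurally different route: rather than flipping bits inside forbidden blocks, it replaces a short segment $u_q'$ \emph{after} each occurrence $w_q \in \mc G_{m'}$ by a word $u_q$ supplied by Proposition~\ref{PROP:word-modification-control}. That proposition (built on Lemmas~\ref{LEM:block-for-variation-bound}, \ref{LEM:closing-extension-II}, \ref{LEM:sliding-out-block-I}, \ref{LEM:sliding-out-block-II}) guarantees simultaneously that no suffix of $w_q u_q v_q$ lies in $\mc G$ \emph{and} that $\psi^c_{|w_q u_q|}(x') - \psi^c_{|w_q u_q|}(y') \leqslant \varepsilon m'$ for $x' \in \langle w_q u_q' v_q\rangle$, $y' \in \langle w_q u_q v_q\rangle$. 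This block-by-block variation control is precisely what your single-bit approach lacks; obtaining it is the whole point of the hitting-time combinatorics in Section~\ref{sec:Technical}, and it is not clear that any choice of $\kappa$ and flip position can reproduce it.
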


\begin{proof}[Proof of Proposition \ref{prop: PtopmtoPtop}]
Let $t \geqslant 0$ and $\varepsilon > 0$. Then, the sequence $(p_m(t))_{m \in \N}$ with $p_m(t) = \mc P_{\operatorname{top}} (t \psi^c | \mathbb{X}^c_m)$ is monotonously increasing in $m$, and hence 
\[
p(t): = \lim_{m \to \infty} p_m(t) \leqslant \mc P_{\operatorname{top}}(t \psi^c) 
\]
certainly exists. Choose $m \in \N$ and $\theta_m$ as in Proposition~\ref{PROP:word-regularisation} and $n$ sufficiently large. Then
\begin{align*}
\sum_{w \in \Sigma^n} \sup_{x \in \langle w \rangle} \exp(t \psi^c_n(x))
& \leqslant \me^{ t \varepsilon n} \sum_{w \in \Sigma^n} \sup_{y \in \langle \theta_m(w)  \rangle \cap \mathbb{X}_m^c} \exp(t \psi^c_n(y))
\\ &\leqslant \me^{(t+\log 2) \varepsilon n} \sum_{w' \in \Sigma^n_m} \sup_{y \in \langle w' \rangle \cap \mathbb{X}_m^c } \exp(t\psi^c_n(y)).
\end{align*}
Taking logarithms, dividing by $n$ and taking limits, yields
\[
\mc P_{\operatorname{top}}(t \psi^c) \leqslant p_m(t)   + (t+\log 2) \varepsilon,
\]
for infinitely many $m$ and hence $\mc P_{\operatorname{top}}(t \psi^c) \leqslant p(t) + (t+\log 2) \varepsilon$. Since $\varepsilon>0$ was arbitrary, the claim follows.
\end{proof}

Finally, we are in the position to give a proof of Theorem \ref{thm: Ptop=Pvar}.

\begin{proof}[Proof of Theorem \ref{thm: Ptop=Pvar}] 
For $c=0$, the equality of topological pressure and (restricted) variational pressure follows from the explicit expressions in Proposition~\ref{PROP:top-P-at-0} and \eqref{EQ:var-P-at-0}. In the following, we assume $c \neq 0$. If we set  
 \begin{align*}
 \mathcal{P}_{\operatorname{var}}\left(t\psi^{c}\lvert \X_m^{c}\right) \coloneqq \sup \left\{h_{\nu} + t\int \psi^c\,\mathrm{d}\nu\colon \nu\in \mathcal{M}_{T}\left(\X_m^{c}\right)\right\},
\end{align*}
with
\begin{align*}
 \mathcal{M}_T\left(\X_m^{c}\right) = \left\{\nu \in \mathcal{M}_T\colon \supp(\nu) \subset \X_m^c\right\},
\end{align*}
then we have 
$\mathcal{P}_{\operatorname{var}}\left(t\psi^{c}\lvert \X_m^{c}\right)=\mathcal{P}_{\operatorname{top}}\left(t\psi^{c}\lvert \X_m^{c}\right)$ as $\psi^c$ is H\"older continuous on $\X_m^{c}$. 

Furthermore, for all $m\in\mathbb{N}$ there exists $\delta>0$ such that $\langle\breve{c}_{[1,m+1]}\rangle\supset B_{\delta}( \breve{c})$ 
(or $(\langle\breve{c}_{[1,m+1]}\rangle\cup \langle\breve{c}^{\ast}_{[1,m+1]}\rangle) \supset B_{\delta}(\breve{c})$ in case of a dyadic representation of $\breve{c}$) and on the other hand for all $\delta>0$ there exists $m\in\mathbb{N}$ such that $B_{\delta}(\breve{c})\supset \langle\breve{c}_{[1,m+1]}\rangle$ (or $B_{\delta} (\breve{c})\supset \langle\breve{c}_{[1,m+1]}\rangle\cup \langle\breve{c}^{\ast}_{[1,m+1]}\rangle$) in case of a dyadic representation of $\breve{c}$).
Hence, by Propositions \ref{prop: Xm Kdelta}, \ref{prop: pm to p} and \ref{prop: PtopmtoPtop} we have
\begin{align*}
 \mathcal{P}_{\operatorname{var},c}(t\psi^c)
 =\lim_{\delta\searrow 0}p_{\operatorname{var}}(t;\delta)
 =\lim_{m\to\infty}\mathcal{P}_{\operatorname{var}}\left(t\psi^c\lvert \X_m^c\right)
 =\lim_{m\to\infty}\mathcal{P}_{\operatorname{top}}\left(t\psi^c\lvert \X_m^c\right)
 =\mathcal{P}_{\operatorname{top}}\left(t\psi^c\right)
\end{align*}
giving \eqref{eq: pressure 1}. 

In order to prove \eqref{eq: pressure 2} we follow the proof of \cite[Prop.~1.21]{Bow}.
Let $\nu\in \mathcal{M}_T$. Then we have 
$\int t\psi_n^c/n \,\mathrm{d}\nu = t\int \psi^c\, \mathrm{d}\nu$. Hence, we obtain for $t\geqslant 0$ that
\begin{align*}
h_{\nu} + t\int \psi^{c}\, \mathrm{d}\nu
 &\leqslant \lim_{n\to\infty} \frac{1}{n}\left(\sum_{w\in \Sigma^n} \nu(\langle w \rangle)(-\log \nu(\langle w \rangle))+t \sup_{x\in \langle w \rangle} \psi_n^{c}(x) \right)\\
 &\leqslant \lim_{n\to\infty} \frac{1}{n}\log \sum_{w\in \Sigma^n}\sup_{x\in \langle w \rangle}\exp(t\psi_n^{c}(x))=\mathcal{P}_{\operatorname {top}}(t\psi^{c}),
\end{align*}
where the last inequality follows from elementary calculus, compare \cite[Lem.~1.1]{Bow}.
In particular, we have 
\begin{align*}
 \mathcal{P}_{\operatorname {var}}(t\psi^{c})
 =\sup_{\nu\in \mathcal{M}_T}h_{\nu} + t\int \psi^{c}\, \mathrm{d}\nu\leqslant \mathcal{P}_{\operatorname {top}}(t\psi^{c})
 \end{align*}
implying \eqref{eq: pressure 2}. 

Finally, we note that $\mathcal{P}_{\operatorname{var}}\left(t\psi^{c}\right)$ is a supremum over affine functions and hence convex.
\end{proof}

Two comments regarding the just concluded proof are in order.

\begin{remark}
By \cite[Thm.~4.4.11]{keller}, for $t\geqslant 0$ we already have that $\mathcal{P}_{\operatorname{var}}\left(t\psi^{c}\right)=\mathcal{P}_{\operatorname{top}}\left(t\psi^{c}\right)$. 
\end{remark}

\begin{remark}\label{rem: proof of FSSthm}
 The proof of Theorem \ref{cor: dim} is given in \cite[Sec.~6]{fan_generalized_2020}. One central part of this proof is \cite[Prop.~6.4]{fan_generalized_2020} stating 
 \begin{align}
   p^*(t) = \lim_{\delta\searrow 0} p^*(t; \delta).\label{eq: p* conv with delta}
 \end{align}
Our results provide an alternative method to show this statement by noting that 
  the proof of Theorem \ref{thm: Ptop=Pvar} implies that for each $\delta>0$ sufficiently small we can find $m, m'\in\N$ such that $\mathcal{P}_{\operatorname{top}}\left(t\psi^{c}\lvert \X_m^{c}\right)\leqslant p_{\operatorname{var}}(t;\delta)\leqslant \mathcal{P}_{\operatorname{top}}\left(t\psi^{c}\lvert \X_{m'}^{c}\right)$. Applying then \cite[Prop.~4.1]{MR2719683} we obtain \eqref{eq: p* conv with delta}.
\end{remark}

\section{Measure decay of the generalized Thue{\ts}--Morse measure}\label{sec:MeasureDec}

In this section for $\omega\in \Sigma^n$  we will first find ways to estimate $\mu_c(\langle \omega\rangle)$ comparing it with $\psi^c_{n}(x)$, $x\in  \langle \omega\rangle$. These results will help us to prove Theorem \ref{thm: dim spectrum} and Theorem \ref{thm: pressure beta}.

\subsection{Gibbs type properties}\label{subsec:Gibbs-type}

It is immediately clear that $\mu_c$ can not be a (weak) Gibbs measure (see \cite{K01} for a definition of weak Gibbs measures) for any $c\in \T$ as for any $\omega \in \Sigma^n$ we have $\inf_{x\in \langle \omega \rangle}\exp(\psi_n^c(x))=0$.
However, in this section we collect a number of statements presenting weak (partly one-sided) Gibbs properties which will be helpful in the following.

 We start by recalling a few basic facts about $g$-measures. To a given continuous $g$-function $g \colon \T \to \T$, satisfying $\sum_{y \in T^{-1}x} g(y) \equiv 1$, we assign a transfer operator $\varphi_g$ on the space $C(\T)$ of continuous functions from $\T$ to $\R$ via
\[
\varphi_g(f) \colon x \mapsto \sum_{y \in T^{-1}x} g(y) f(y).
\]
We define its dual action on the set of  Borel probability measure $\nu$ on $\T$ via the relation $(\varphi^{*}_g \nu)(f) = \nu(\varphi_g f)$, where as before $\nu(f) = \int f \dd \nu$. Following Keane \cite{Kea72}, we call $\nu$ a \emph{$g$-measure} (with respect to $g$), if it is invariant under $\varphi^{*}_g$. Due to a result by Ledrappier \cite{Ledrappier}, this is the case precisely if $\nu$ is an equilibrium measure with respect to the potential $\psi_g = \log g$. While the existence of $g$-measures is automatic for continuous $g$, uniqueness is more subtle in general and depends on the set of pre-images of one
$g^{-1}(\{1\})$  and the regularity of $g$. For convenience, we restrict ourselves to H\"{o}lder continuous functions $g$ in the following. As long as there is at most one position $x_1$ with $g(x_1) = 1$, uniqueness of the $g$-measure $\nu_g$ is guaranteed, and the corresponding $g$-measure is continuous, unless the position $x_1 = 0$ is the unique fixed point of $T$ (in which case $\nu_g = \delta_0$) \cite{ConzeRaugi}. If $x_1 \neq 0$, the measure $\nu_g$ has full topological support \cite[Thm.~4.1]{BCEG} and $\nu_g$ is singular with respect to Lebesgue measure unless $g \equiv 1/2$. 
For our family of $g$-functions $(g^c)_{c \in \T}$, this yields the following (for notational convenience, let $\varphi_c = \varphi_{g^c}$).
\begin{coro}\label{coro: unique g}
For each $c \in \T$, the measure $\mu_c$ is the unique $g$-measure with $\varphi_{c}^{*} \mu_c = \mu_c$. We have $\mu_0 = \delta_0$, and for all $c \neq 0$ the measure $\mu_c$ is singular continuous and has full topological support on $\T$.
\end{coro}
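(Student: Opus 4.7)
The proof plan is to identify this corollary as a direct consequence of the cited results on $g$-measures, once the relevant regularity and zero-set conditions for the $g$-function $g^c$ have been verified.

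First I would verify that each $g^c$ fits the framework recalled just before the corollary. By construction, $g^c(x) = \cos^2\bigl(\pi(x-c)\bigr)$ is real-analytic, hence H\"{o}lder continuous, and satisfies the $g$-function identity $\sum_{y \in T^{-1}x} g^c(y) = 1$, which was already noted in the paper. Since $\cos^2(\pi(x-c)) = 1$ if and only if $x = c$ in $\T$, the preimage $(g^c)^{-1}(\{1\})$ consists of the single point $x_c := c$. Thus the hypothesis ``at most one position $x_1$ with $g(x_1)=1$'' of the quoted uniqueness statement is satisfied, and it yields the existence of a unique $g$-measure $\mu_c$ with $\varphi_c^{\ast} \mu_c = \mu_c$, which by Ledrappier's theorem is the unique equilibrium measure of $\psi^c = \log g^c$. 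This identifies $\mu_c$ with the measure introduced in \eqref{eq:mu_c_product} (the product representation itself follows from Keane \cite{Kea72} as indicated in the paper).

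Next I would treat the dichotomy $c=0$ versus $c \neq 0$. For $c=0$, the unique point $x_0 = 0$ where $g^0$ equals $1$ is also the unique fixed point of $T$, so the quoted result gives $\mu_0 = \delta_0$ directly. For $c \neq 0$, the point $x_c = c$ is not a fixed point of $T$ (since $0$ is the only fixed point on $\T$), so the cited result of Conze--Raugi gives that $\mu_c$ is continuous (no atoms), \cite[Thm.~4.1]{BCEG} gives that $\supp(\mu_c) = \T$, and singularity with respect to Lebesgue measure follows from the fact that $g^c \not\equiv 1/2$.

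There is essentially no obstacle here: the entire argument is bookkeeping, matching the hypotheses of the classical $g$-measure theorems to the explicit family $(g^c)_{c \in \T}$. The only mild subtlety worth flagging in the write-up is the identification of $\mu_c$ obtained abstractly as the unique fixed point of $\varphi_c^{\ast}$ with the Riesz product in \eqref{eq:mu_c_product}; this is guaranteed by Keane's construction, since the partial products $\prod_{n=0}^{N} 2g^c(T^n x)$ are, up to normalization, precisely the densities of $(\varphi_c^{\ast})^N \mathrm{Leb}$, and weak$^{\ast}$ compactness together with uniqueness forces convergence to $\mu_c$.
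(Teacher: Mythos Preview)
Your proposal is correct and follows exactly the route the paper indicates: the corollary is stated without a separate proof because it is a direct consequence of the results of Keane, Ledrappier, Conze--Raugi, and \cite{BCEG} recalled in the preceding paragraph, and you have correctly verified the relevant hypotheses (H\"older regularity of $g^c$, the single point $x_c=c$ with $g^c(x_c)=1$, and the dichotomy $c=0$ versus $c\neq 0$). The identification of the abstract $g$-measure with the Riesz product via $(\varphi_c^{\ast})^N\mathrm{Leb}$ is also the argument the paper spells out immediately after the corollary.
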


In fact, 
 $\mu_c$ is strongly mixing with respect to the dynamical system $(\TT,T)$ and is attractive in the sense that $(\varphi_{c}^*)^n \nu$ converges weakly to $\mu_c$ for every $\nu \in \mc M_T$ \cite{Kea72}. Choosing $\nu$ to be the Lebesgue measure, we obtain that $(\varphi_c^*)^n \nu$ is absolutely continuous with respect to the Lebesgue measure, with density function
\[
\prod_{m=0}^{n-1} 2 g^{c} (T^m x),
\] 
compare \cite[Prop.~1]{FanLau}.
For $n \to \infty$, this gives the product representation of $\mu_c$ in \eqref{eq:mu_c_product}. Using the transfer operator $\varphi_c^*$ will also be helpful to prove the following self-consistency relation.

\begin{lemma}
\label{LEM:mu-on-interval}
Let $n \in \N$ and $I \subset \mathbb{T}$ an interval of length at most $2^{-n}$. If $|I| = 2^{-n}$ assume that at least one of the boundary points is not contained in $I$.  Then, the restriction $T_I^n \colon I \to T^n I$ is bijective and, writing $T_I^{-n}$ for its inverse,
\[
\mu_c(I) = \int_{T^n I} \exp(\psi_n^c(T_I^{-n}x) \dd \mu_c(x).
\]
In particular,
\[
\log \mu_c(T^n I) + \inf_{x \in I} \psi_n^c(x)
\leqslant \log \mu_c(I)
\leqslant \log \mu_c(T^n I) + \sup_{x \in I} \psi_n^c(x).
\]
\end{lemma}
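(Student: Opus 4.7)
The plan is to derive both parts from the defining property of $\mu_c$ as a $g$-measure, namely $\varphi_{c}^{\ast}\mu_c = \mu_c$, iterated $n$ times and applied to the indicator function of $I$.

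First, I would check the bijectivity claim. The doubling map $T$ is a local isometry that doubles lengths, so $T^n$ acts on any interval of length strictly less than $2^{-n}$ by scaling by $2^n$, which is injective. The $2^n$ preimages under $T^n$ of any point $x \in \T$ are spaced exactly at distance $2^{-n}$, so an interval $I$ of length $2^{-n}$ can contain two preimages of the same point only if both boundary points are in $I$; excluding one boundary point rules this out. Hence $T^n_I \colon I \to T^n I$ is a bijection, and its inverse $T_I^{-n}$ is well-defined.

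Next I would iterate the transfer operator. Since $\psi^c = \log g^c$, a direct computation gives
\[
\varphi_c^{\,n} f(x) \,=\, \sum_{y \in T^{-n}x} \Bigl(\prod_{k=0}^{n-1} g^c(T^k y)\Bigr) f(y) \,=\, \sum_{y \in T^{-n}x} \me^{\psi_n^c(y)} f(y),
\]
for every $f \in C(\T)$. The invariance $\varphi_c^{\ast}\mu_c = \mu_c$ together with the density of $C(\T)$ in $L^1(\mu_c)$ (for $c \neq 0$, $\mu_c$ is continuous and has full support; for $c=0$ the statement is trivially checked on $\mu_0 = \delta_0$) extends this to bounded measurable $f$, in particular to $f = \mathbb{1}_I$. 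Substituting and using that the unique preimage of $x \in T^n I$ that lies in $I$ is $T_I^{-n}x$, while $x \notin T^n I$ contributes nothing, yields
\[
\mu_c(I) \,=\, \int \varphi_c^{\,n}\mathbb{1}_I \dd \mu_c \,=\, \int_{T^n I} \me^{\psi_n^c(T_I^{-n}x)} \dd \mu_c(x),
\]
which is the desired identity.

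Finally, the two-sided bound is an immediate consequence: since $T_I^{-n}x \in I$ for every $x \in T^n I$, one has
\[
\me^{\inf_{y\in I}\psi_n^c(y)}\, \mu_c(T^n I) \,\leqslant\, \mu_c(I) \,\leqslant\, \me^{\sup_{y\in I}\psi_n^c(y)}\, \mu_c(T^n I),
\]
and taking logarithms yields the claim. The only mildly subtle step is extending $\varphi_c^{\ast}\mu_c = \mu_c$ from continuous test functions to the indicator $\mathbb{1}_I$, which I expect to be the main thing worth spelling out, but it is routine given that $\mu_c$ is non-atomic for $c \neq 0$ (so the boundary of $I$ has measure zero and a standard approximation by continuous functions from above and below applies).
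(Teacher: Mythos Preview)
Your proof is correct and follows essentially the same route as the paper: both use the invariance $\varphi_c^{\ast}\mu_c=\mu_c$ applied to $\mathds{1}_I$, compute $\varphi_c^n\mathds{1}_I$ explicitly using the bijectivity of $T^n|_I$, and then bound the integrand by its extrema. You are slightly more careful than the paper in justifying bijectivity and in extending the duality from continuous test functions to indicators, but the core argument is identical.
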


\begin{proof}
This follows from the fact that $\mu_{c}$ is a $g$-measure for the $g$-function $g(x) = \exp(\psi^c(x))$, the proof is similar to \cite[Prop.~6.1.14]{gohlke}. In fact, $\mu_{c}$ being invariant under the transfer operator $\varphi_{c}$, we have
\[
\mu_c(I) = \mu_c(\mathds{1}_I) = \mu_c(\varphi_c^n \mathds{1}_I),
\]
where $\mathds{1}_I$ is the indicator function on $I$,
and, writing $g_n^{c}(x) := \exp(\psi_n^c(x))$,
\[
(\varphi_c^n \mathds{1}_I)(x)
= \sum_{y \in T^{-n}x} g_n^{c}(y) \mathds{1}_I(y)
= g_n^{c}(T_I^{-n} x) \mathds{1}_{T^n I}(x),
\]
using that $y \in I$ requires $x = T^n y \in T^n I$ and, in this case, $y = T_I^{-n} x$ because the restriction of $T^n$ to $I$ is bijective. This shows the first claim. The second claim follows by estimating the function $g_n^{c}(T^{-n}_Ix)$ with its infimum (or supremum) and taking logarithms.
\end{proof}

\begin{remark}
If $c \neq 0$, the restriction in Lemma~\ref{LEM:mu-on-interval} that $I$ may not be closed if $|I|=2^{-n}$ is not essential. In this case, the claim remains true if we choose $T^{-n}_I$ to be any of the inverse branches of $T_I^n \colon I \to T^n I$ because $\mu_c$ assigns no mass to the boundary points.
\end{remark} 

\begin{coro}
\label{COR:mu-on-double-word}
Let $w \in \Sigma^n$ and $v \in \Sigma^m$ for some $n \in \N$ and $m \in \N_0$. Then, for all $c \in (0,1)$, 
\[
\log \mu_{c}(\langle v\rangle) + \inf_{x \in \langle wv\rangle}  \psi^c_n(x)
\leqslant \log \mu_{c}(\langle wv\rangle) 
\leqslant \log \mu_{c}(\langle v\rangle) + \sup_{x \in \langle wv\rangle} \psi^c_n(x)
\]
and for $v$ being the empty word this simplifies to 
\[
\inf_{x \in \langle w\rangle} \psi^c_n(x)
\leqslant \log \mu_{c}(\langle w\rangle) 
\leqslant \sup_{x \in \langle w\rangle} \psi^c_n(x).
\]
\end{coro}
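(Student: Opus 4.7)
The plan is to deduce Corollary~\ref{COR:mu-on-double-word} as a direct application of Lemma~\ref{LEM:mu-on-interval} to the interval $I = \langle wv \rangle$.

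First I would observe that $\langle wv \rangle$ is an interval of length $2^{-(n+m)} \leqslant 2^{-n}$ in $\TT$, so the hypothesis $|I|\leqslant 2^{-n}$ of Lemma~\ref{LEM:mu-on-interval} is satisfied. Next, I would verify that $T^n$ restricted to $\langle wv \rangle$ maps bijectively onto $\langle v \rangle$ (and onto $\TT$ when $v$ is the empty word); this is standard from the identification of cylinder sets with dyadic intervals and the fact that the doubling map simply removes the leading $n$ binary digits. Hence $T^n \langle wv \rangle = \langle v \rangle$ in the sense relevant to the measure.

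In the case $|I| = 2^{-n}$ (which occurs only when $m=0$, so $I = \langle w \rangle$), the boundary point caveat in Lemma~\ref{LEM:mu-on-interval} is harmless because for $c \neq 0$ the measure $\mu_c$ is continuous (by Corollary~\ref{coro: unique g}), so it assigns no mass to the boundary endpoints and we may pick any of the inverse branches; cf.\ the remark after Lemma~\ref{LEM:mu-on-interval}. Applying the lemma then yields
\[
\mu_c(\langle wv \rangle) = \int_{\langle v \rangle} \exp\bigl(\psi_n^c(T_I^{-n}x)\bigr) \dd \mu_c(x).
\]
Estimating the integrand by its infimum and supremum on $\langle wv \rangle$ (the image under $T_I^{-n}$ of $\langle v \rangle$) and taking logarithms produces
\[
\log \mu_c(\langle v \rangle) + \inf_{x \in \langle wv \rangle} \psi_n^c(x) \,\leqslant\, \log \mu_c(\langle wv \rangle) \,\leqslant\, \log \mu_c(\langle v \rangle) + \sup_{x \in \langle wv \rangle} \psi_n^c(x),
\]
which is the claimed two-sided estimate. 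The simplified form for empty $v$ then follows from $\mu_c(\langle v \rangle)=\mu_c(\TT)=1$.

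I do not anticipate any serious obstacle here; the argument is essentially a bookkeeping translation of Lemma~\ref{LEM:mu-on-interval} to the symbolic notation. The only point that needs a moment of care is the boundary/uniqueness-of-preimage issue for $m=0$, which is handled by invoking continuity of $\mu_c$ for $c \neq 0$.
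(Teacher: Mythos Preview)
Your proposal is correct and is precisely the argument the paper has in mind: the corollary is placed directly after Lemma~\ref{LEM:mu-on-interval} and the remark on the boundary-point caveat, with no separate proof given, because it is exactly the specialisation $I=\langle wv\rangle$, $T^nI=\langle v\rangle$ that you describe. Your handling of the $m=0$ boundary issue via continuity of $\mu_c$ for $c\neq 0$ matches the paper's remark verbatim.
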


There is a global lower bound for the decay of the measure on cylinder sets, as long as the measure $\mu_c$ is non-atomic.

\begin{prop}
\label{PROP:measure-decay}
Assume that $c \neq 0$. Then,
\[
\liminf_{n \to \infty} \frac{1}{n^2} \inf_{w \in \Sigma^n} \log \mu_c(\langle w \rangle) \geqslant -1.
\]
\end{prop}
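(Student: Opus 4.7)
The plan is to apply Lemma~\ref{LEM:mu-on-interval} and Jensen's inequality. Writing $y_x = T_{\langle w\rangle}^{-n}x$ and noting that $T^k y_x = a_k + 2^{-(n-k)}x$ with $a_k = 0.w_{k+1}\cdots w_n$, one obtains
\[
\log\mu_c(\langle w\rangle) \,=\, \log\int_\T \exp(\psi_n^c(y_x))\dd\mu_c(x) \,\geqslant\, \sum_{k=0}^{n-1}\int_\T \psi^c\bigl(a_k + 2^{-(n-k)}x\bigr)\dd\mu_c(x).
\]

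Each summand will be estimated using the pointwise bound $\psi^c(z) \geqslant 2\log 2 + 2\log\rho(z, \breve{c})$, which follows from the elementary inequality $|\cos(\pi u)| \geqslant 2\rho(u, 1/2)$. When $\breve{c} \in [a_k, a_k + 2^{-(n-k)}]$, rescaling yields $\rho(T^k y_x, \breve{c}) = 2^{-(n-k)}\rho(x, u_k)$ with $u_k := T^{n-k}(\breve{c})$, hence
\[
\int \psi^c(a_k + 2^{-(n-k)}x)\dd\mu_c(x) \,\geqslant\, -2(n-k)\log 2 + 2\log 2 + 2\int \log\rho(x, u_k)\dd\mu_c(x),
\]
with an analogous (and better) estimate applying in the complementary case, where the distance is bounded below by the geometric gap between $\breve{c}$ and the cylinder. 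Setting $C := \sup_{y \in \T}\int \log(1/\rho(x, y))\dd\mu_c(x)$ and summing over $k = 0, \ldots, n-1$ (using $\sum_{k=0}^{n-1}(n-k)=n(n+1)/2$) would then yield $\log\mu_c(\langle w\rangle) \geqslant -(\log 2)n^2 + O(n)$, giving $\liminf_n n^{-2}\inf_w \log\mu_c(\langle w\rangle) \geqslant -\log 2 > -1$, which is actually a slightly stronger conclusion than claimed.

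The hard part will be to verify that $C < \infty$, i.e.\ a uniform polynomial bound $\mu_c(B_r(y)) \leqslant C' r^{\alpha}$ for some $\alpha > 0$ and all $y \in \T$. Pointwise, the smallest local dimension of $\mu_c$ is $-\overline{\alpha}_c / \log 2 > 0$ (since $\psi^c$ is not a coboundary when $c \neq 0$, so $\overline{\alpha}_c < 0$), and at the exceptional point $y = \breve{c}$ the decay of $\mu_c$ is super-polynomial, consistent with the very proposition being proved. Upgrading these pointwise bounds to a uniform-in-$y$ estimate is the delicate step; the natural route is to combine the upper half of Corollary~\ref{COR:mu-on-double-word} with the control on $\sup\psi_m^c$ over short cylinders obtained in Section~\ref{sec: pressure}, so that for every $y \in \T$ and small $r$ the measure $\mu_c(B_r(y))$ can be estimated by covering with the $O(1)$ cylinders of length $\lceil -\log_2 r\rceil$ intersecting $B_r(y)$ and bounding each via the corresponding supremum of $\psi_m^c$.
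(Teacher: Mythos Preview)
Your Jensen-inequality route is genuinely different from the paper's argument and, if completed, would yield the sharper constant $-\log 2$. But the proof has a real gap at precisely the point you flag: the finiteness of $C=\sup_{y}\int\log(1/\rho(x,y))\dd\mu_c(x)$ is not established, and your suggested route via ``control on $\sup\psi_m^c$ obtained in Section~\ref{sec: pressure}'' does not work---that section proves the variational principle and contains no pointwise bound of the form $\max_x\psi_n^c(x)\leqslant -\beta n$. What is actually needed is the ergodic-optimization fact $\sup_{\nu\in\mc M_T}\int\psi^c\dd\nu<0$ for $c\neq 0$. This follows by a short compactness argument (the map $\nu\mapsto\int\psi^c\dd\nu$ is weak-$*$ upper semicontinuous on $\mc M_T$ since $\psi^c$ is upper semicontinuous and bounded above; a maximizer with integral $0$ would be supported on $\{c\}$, forcing $c$ to be a fixed point), and then empirical measures give $\max_x\psi_n^c(x)\leqslant -\beta n$ for large $n$, whence the uniform H\"older bound via Corollary~\ref{COR:mu-on-double-word}. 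None of this is in the paper; it is an external input you must supply. Two smaller points: your case distinction is unnecessary, since $\rho(T^{n-k}z,T^{n-k}\breve c)\leqslant 2^{n-k}\rho(z,\breve c)$ holds for all $z\in\T$, so the bound with $u_k=T^{n-k}\breve c$ applies whether or not $\breve c$ lies in the cylinder; and your claim that the complementary case gives a ``better'' estimate is not correct in general (the gap can be arbitrarily small), though this is harmless since the uniform bound suffices.

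The paper's proof avoids $C<\infty$ altogether. It uses the combinatorial Lemma~\ref{LEM:closing-extension-II} to append to any $w\in\Sigma^n$ a word $v$ of length $O(\log\log n)$ such that $w_{[k,n]}v\notin\mc G$ for every $k$; then $\inf_{\langle wv\rangle}\psi_n^c$ is bounded directly by $-(\log 2)n^2+o(n^2)$ via Lemma~\ref{LEM:psi-distance-bound}, and one iterates on the short tail $v$. This is self-contained---it needs only the word combinatorics of Section~\ref{sec:Technical} and no input about the global scaling of $\mu_c$---at the price of being considerably more technical. Your approach trades that combinatorial work for one clean analytic lemma, which is a reasonable exchange once the lemma is actually proved.
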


\begin{remark}
For the case of arbitrary $c \neq 0$, the bound $-1$ in the preceding lemma is sharp. Indeed, for the Thue{\ts}--Morse measure, the limit exists and equals $-1$. Of course, for particular choices of $c$, better estimates can be obtained.
\end{remark}

The proof of this as well as of all the following propositions in this subsection are postponed to a later point in Subsection \ref{subsec: proofs gibbs type}.

The following proposition gives a two-sided bound for those systems where $\mc P_{\operatorname{top}}(t \psi^c) < \infty$.
It will play a central role in the proof of Theorem \ref{thm: pressure beta}.
Note that $\mc P_{\operatorname{top}}(t \psi^c) < \infty$ holds clearly for all $t \geqslant 0$, and $\mc P_{\operatorname{top}}(t \psi^c) = \infty$ holds for some $t<0$ if and only if it holds for all $t < 0$ as follows from a straightforward calculation.

\begin{prop}
\label{PROP:Gibbs-like}
Let $c \neq 0$. If $\mc P_{\operatorname{top}}(t\, \psi^c) < \infty$ for all $t\in \R$, we have 
\[
\sup_{w \in \Sigma^n} \Bigl|\log \mu_{c}(\langle w \rangle) - \sup_{x \in \langle w \rangle} \psi^c_n(x) \Bigr| \in o(n).
\]
\end{prop}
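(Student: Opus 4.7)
The upper bound $\log\mu_c(\langle w\rangle) \leq \sup_{x\in\langle w\rangle}\psi_n^c(x)=:\alpha_w$ is immediate from Corollary~\ref{COR:mu-on-double-word} applied with $v$ the empty word; the substance of the proposition is the matching lower bound $\log\mu_c(\langle w\rangle) \geq \alpha_w - o(n)$ uniformly in $w\in\Sigma^n$. The starting point is the self-consistency identity of Lemma~\ref{LEM:mu-on-interval},
\[
\mu_c(\langle w\rangle) \,=\, \int_{\T} \exp\!\bigl(\psi_n^c(T_w^{-n}y)\bigr)\, d\mu_c(y),
\]
so that it suffices to exhibit a set $A_w\subset \T$ of sub-exponentially small $\mu_c$-measure on which $\psi_n^c(T_w^{-n}y) \geq \alpha_w - \varepsilon n$, for arbitrary $\varepsilon > 0$ and $n$ large enough.

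Fix $\varepsilon>0$, pick a near-maximizer $x_w^{\ast}\in\langle w\rangle$ of $\psi_n^c$, and set $y^{\ast}:=T^n x_w^{\ast}$ and $d_k:=\rho(T^k x_w^{\ast},\breve c)$ for $0\leq k<n$ (these are positive since $\alpha_w>-\infty$). I will take $A_w = B(y^{\ast},r_w)$, and the radius $r_w$ must balance two constraints. First, the elementary Lipschitz estimate $|\psi^c(z)-\psi^c(z')|\leq C|z-z'|/d_k$ (valid on $|z-z'|\leq d_k/2$), combined with $|T^k x - T^k x_w^{\ast}| = 2^{k-n}|y-y^{\ast}|$ for $x=T_w^{-n}y$, gives
\[
\bigl|\psi_n^c(T_w^{-n}y)-\alpha_w\bigr| \,\leq\, C\sum_{k=0}^{n-1}\frac{2^{k-n}|y-y^{\ast}|}{d_k},
\]
which is $\leq\varepsilon n$ provided $|y-y^{\ast}|\leq\varepsilon\min_{0\leq k<n}2^{n-k}d_k=:r_w$. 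Second, for Proposition~\ref{PROP:measure-decay} to yield $\mu_c(A_w)\geq e^{-o(n)}$, the ball $A_w$ must contain a dyadic cylinder of length $o(\sqrt{n})$, i.e.\ it is enough that $r_w\geq 2^{-o(\sqrt{n})}$.

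The core difficulty is therefore the uniform sub-exponential lower bound $\min_{0\leq k<n}2^{n-k}d_k \geq 2^{-\delta(n)}$ with $\delta(n)=o(\sqrt{n})$, for the optimally chosen near-maximizer and uniformly over all $w\in\Sigma^n$. This is precisely where the finite-pressure hypothesis enters. Finiteness of $\mc P_{\operatorname{top}}(t\psi^c)$ for every negative $t$ controls, via a Chebyshev-type estimate against the partition functions $\sum_w \exp(t\alpha_w)$, how many cylinders can have $\alpha_w$ excessively negative, and hence (through the pointwise bound $\alpha_w\leq 2\sum_k\log d_k+O(n)$ coming from $|\cos(\pi(T^k x_w^{\ast}-c))|\leq \pi d_k$) restricts the joint smallness of the factors $d_k$. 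Combining this with the combinatorial and orbit-distance lemmas of Section~\ref{sec:Technical} -- which furnish the freedom to shift $x_w^{\ast}$ slightly within $\langle w\rangle$ so as to avoid the worst short-time encounters with $\breve c$ -- yields the required lower bound on $r_w$. Plugging into the distortion estimate and invoking Proposition~\ref{PROP:measure-decay} gives $\log\mu_c(A_w)\geq -O(\delta(n)^2)=-o(n)$, hence $\log\mu_c(\langle w\rangle) \geq \alpha_w - o(n)$ uniformly in $w$, completing the proof. The hardest step is this uniform orbit-distance estimate: without it the measure of $A_w$ could collapse exponentially, even though the distortion estimate itself is routine.
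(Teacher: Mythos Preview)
Your overall architecture---restrict the self-consistency integral to a small set $A_w$ where $\psi_n^c(T_w^{-n}y)$ is close to $\alpha_w$, then bound $\mu_c(A_w)$ from below via Proposition~\ref{PROP:measure-decay}---matches the paper's (with $A_w=\langle v\rangle$ for a short word $v$). But your execution of the central step has a real gap.

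You identify correctly that the crux is a uniform lower bound $\min_k 2^{n-k}d_k \geq 2^{-o(\sqrt n)}$ for a suitable near-maximizer $x_w^\ast$, and you say this follows from a Chebyshev argument against the partition function plus the lemmas of Section~\ref{sec:Technical}. This is not a proof. A Chebyshev bound on $\sum_w\exp(t\alpha_w)$ would at best say that \emph{most} words $w$ have $\alpha_w$ not too negative; it says nothing about a \emph{single fixed} $w$, and the proposition demands uniformity over all $w\in\Sigma^n$. Moreover, you have not explained the trade-off: shifting $x_w^\ast$ inside $\langle w\rangle$ to enlarge the $d_k$'s can decrease $\psi_n^c(x_w^\ast)$, so you must simultaneously control how far the new value falls below $\alpha_w$. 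You acknowledge this is ``the hardest step'' and then do not carry it out.

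The paper's mechanism is different from what you suggest and is quite concrete. One takes $x_w^\ast\in\langle wv\rangle$ where $v$ is the short word from Lemma~\ref{LEM:closing-extension-II} (so $|v|\leq 2\log_2\log_{3/2}n$ and $w_{[k,n]}v\notin\mc G$ for all $k$). This immediately forces $2^{n-k}d_k\gtrsim 2^{-|v|}\geq (\log n)^{-O(1)}$---far stronger than what you asked for. The price is that $x_w^\ast$ may no longer maximise $\psi_n^c$ on $\langle w\rangle$; the loss is controlled by Lemma~\ref{LEM:inf-sup-relation}, which gives
\[
\sup_{\langle w\rangle}\psi_n^c - \inf_{\langle wv\rangle}\psi_n^c \leq 2^{|v|+2}\kappa_{n+|v|}.
\]
The finite-pressure hypothesis enters \emph{here}, through Lemma~\ref{LEM:fn-bound}: finiteness for some $t<0$ forces $\kappa_n=O(\sqrt n)$ (the maximal number of full hitting times in a word of length $n$), so the right-hand side is $O(\sqrt n\,(\log n)^2)=o(n)$. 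Combined with $\log\mu_c(\langle v\rangle)\geq -2|v|^2$ from Proposition~\ref{PROP:measure-decay}, this closes the argument. No Chebyshev-type counting is involved.
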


\begin{remark}
This result shows that if $\mc P_{\operatorname{top}}(t\,  \psi^c) < \infty$ and $c \neq 0$, the measure $\mu_{c}$ is in fact very close to a (weak) Gibbs measure. More precisely, we obtain that there is a sequence $(\ell_n)_{n \in \N}$ such that for all $n$ and $w \in \Sigma^n$,
\[
\sup_{x \in \langle w \rangle} \exp(\psi
^c_n(x) - \ell_n) \leqslant \mu(\langle w \rangle) \leqslant \sup_{x \in \langle w \rangle} \exp(\psi^c_n(x)),
\]
and $\ell_n \in o(n)$.
From a metrical point of view the case $\mc P_{\operatorname{top}}(t\,  \psi^c) < \infty$ indeed is the typical case; due to \cite[Thm.~A]{fan_generalized_2020} and Theorem \ref{thm: Ptop=Pvar} the set of points $c$ for which $\mc P_{\operatorname{top}}(t\,  \psi^c) = \infty$, for $t<0$ is a residual set of zero Hausdorff dimension.
\end{remark}

Finally, if we restrict ourselves to intervals that intersect the $T$-invariant subset $K_{\delta} \subset \TT$ given in \eqref{eq: def Kdelta}, then $\psi^c$ is H\"older continuous on this set and we get Gibbs bounds with a constant that depends on the parameter $\delta$. 
We also note that $K_\delta$ is closed and hence compact.
 
\begin{lemma}
\label{LEM:mu-on-Kdelta}
Let $c \neq 0$ and $0 < \delta < 1/2$. Then there exists a number $R:=R(c,\delta)>0$ with the following property. If $I$ is an interval of length $2^{-n}$, and  $x \in I \cap K_\delta \neq \emptyset$, then
\[
  R^{-1} \exp \left(\psi^c_n (x)\right)
  \leqslant  \mu^{\pa}_{c}(I)
  \leqslant R \exp \left(\psi^c_n (x)\right). 
\]
\end{lemma}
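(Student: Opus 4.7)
My plan is to use Lemma~\ref{LEM:mu-on-interval} as the exact multiplicative decomposition of $\mu_c(I)$, combine it with a Lipschitz estimate on $\psi^c$ away from the singularity at $\breve{c}$ (valid only for iterates that remain in the good region), and close the argument with a compactness / full support argument applied to $\mu_c$. Implicit constants will be allowed to depend on $c$ and $\delta$.

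First I reduce to the case $I \subset \langle w \rangle$ for some $w \in \Sigma^n$. An arbitrary interval of length $2^{-n}$ meets at most two adjacent dyadic cylinders of length $n$, and $x \in K_\delta$ lies in at least one of them, so at the cost of a factor $2$ in the final constant, I may assume $T^n$ is bijective on $I$ with $T^n I = \TT$. Lemma~\ref{LEM:mu-on-interval} then gives
\[
\mu_c(I) \,=\, \int_{\TT} \exp\!\bigl(\psi^c_n(T_I^{-n} y)\bigr) \, \dd \mu_c(y).
\]
Choose the scale $m := n - \lceil \log_2(2/\delta) \rceil$, so that $n - m$ depends only on $\delta$ and $2^{m-n} \leqslant \delta/2$. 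For each $k \in \{0,\dots,m\}$, $T^k I$ is an interval of length $2^{k-n} \leqslant \delta/2$ containing $T^k x$, and since $x \in K_\delta$ the point $T^k x$ is at distance $\geqslant \delta$ from $\breve{c}$; hence $T^k I$ stays inside $\{z : |z-\breve{c}| \geqslant \delta/2\}$, on which a direct differentiation shows that $\psi^c$ is Lipschitz with constant $L_\delta \lesssim \delta^{-1}$. A geometric summation then yields $|\psi^c_m(y) - \psi^c_m(x)| \leqslant L_\delta \, \delta =: C_\delta$ for every $y \in I$.

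Splitting $\psi^c_n = \psi^c_m + \psi^c_{n-m}\circ T^m$, substituting $z = T^m T_I^{-n} y = T_{T^m I}^{-(n-m)} y$, and using the Lipschitz bound to pull $\exp(\psi^c_m(x))$ out of the integral up to a factor $\me^{\pm C_\delta}$, a second application of Lemma~\ref{LEM:mu-on-interval} (now to the interval $T^m I$ at time $n-m$) recognizes the remaining integral as $\mu_c(T^m I)$. This delivers
\[
\mu_c(I) \,\asymp\, \exp\!\bigl(\psi^c_m(x)\bigr)\, \mu_c(T^m I),
\]
with implicit constants depending only on $\delta$.

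The final step is to show $\mu_c(T^m I) \asymp \exp(\psi^c_{n-m}(T^m x))$. Since $n-m$ is bounded by a constant depending only on $\delta$ and the orbit of $T^m x$ stays $\delta$-far from $\breve{c}$, the quantity $\exp(\psi^c_{n-m}(T^m x))$ is pinned in a fixed compact subinterval of $(0,1]$. For $\mu_c(T^m I)$ I will use a compactness argument: $T^m I$ ranges over the family $\mathcal{J}_\delta$ of closed intervals of length in $[\delta/4, \delta/2]$ disjoint from $B_{\delta/2}(\breve{c})$; this family is compact in the Hausdorff topology, the map $J \mapsto \mu_c(J)$ is continuous on $\mathcal{J}_\delta$ because $\mu_c$ is non-atomic, and $\mu_c(J) > 0$ for every $J \in \mathcal{J}_\delta$ because $\mu_c$ has full topological support (Corollary~\ref{coro: unique g}). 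Hence $\inf_{J \in \mathcal{J}_\delta} \mu_c(J) =: m_{c,\delta} > 0$ and $\mu_c(T^m I) \in [m_{c,\delta}, 1]$. This is the main (and only nontrivial) obstacle in the proof, and is precisely where the hypothesis $c \neq 0$ enters: at $c=0$ one has $\mu_0 = \delta_0$, whose support is trivial, and the uniform lower bound would fail. Combining the two estimates yields $\mu_c(I) \asymp \exp(\psi^c_n(x))$ with constant $R(c,\delta)$, as required.
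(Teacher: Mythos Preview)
Your proof is correct, and it is genuinely different from the paper's argument. The paper treats the two inequalities separately: for the upper bound it introduces a regularised potential $\psi^{c,\delta}$ (equal to $\psi^c$ outside $B_\delta(\breve c)$, constant inside) and uses its global H\"older continuity; for the lower bound it subdivides $I$ into $N$ equal pieces, identifies a bounded number of ``bad'' pieces whose forward images approach $\breve c$, and then chooses $N$ large enough---via non-atomicity of $\mu_c$---so that the bad pieces carry at most half the mass after pushing forward by $T^n$.

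Your approach instead splits the \emph{time} interval as $n = m + (n-m)$ with $n-m$ bounded in terms of $\delta$ alone. For the first $m$ iterates every point of $I$ stays in the Lipschitz region, so $\psi^c_m$ is essentially constant on $I$; the remaining integral is then recognised, via a second use of Lemma~\ref{LEM:mu-on-interval}, as $\mu_c(T^m I)$, and you close with a compactness argument on the fixed-length interval $T^m I$. This yields both inequalities at once and is arguably more transparent. The paper's route, by contrast, keeps track of exactly which spatial pieces fail the Lipschitz estimate, which is more informative if one later needs quantitative control over those exceptional sets. Both proofs use the same three ingredients (Lemma~\ref{LEM:mu-on-interval}, the Lipschitz bound of Lemma~\ref{LEM:psi-distance-bound}, and full support plus non-atomicity of $\mu_c$ from Corollary~\ref{coro: unique g}), just orchestrated differently.

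Two cosmetic remarks: your opening reduction to a dyadic cylinder is unnecessary, since Lemma~\ref{LEM:mu-on-interval} already applies to any interval of length $2^{-n}$ (and the remark following it removes the half-open restriction when $c\neq 0$); and for small $n$ your $m$ could be non-positive, but those finitely many cases are absorbed into the constant $R(c,\delta)$ by the same compactness reasoning.
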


\subsection{Connection to $L^q$-spectrum -- proof of Theorem \ref{thm: pressure beta}}\label{subsec: Lq spectrum}

The results collected in Section \ref{sec: pressure} and the last Subsection \ref{subsec:Gibbs-type} enable us to prove Theorem \ref{thm: pressure beta}.

\begin{proof}[Proof of Theorem \ref{thm: pressure beta}]
 By the last line of Corollary~\ref{COR:mu-on-double-word} we have for $t\geqslant 0$ that 
 \begin{align*}
  \overline{\beta}_{\mu_c}(t)
  &\leqslant  \limsup_{n\to\infty} \frac{\log \left(\sum_{\omega\in\Sigma^n} \sup_{y\in \langle\omega\rangle} \exp\left(t\, \psi^c_n(y)\right)\right)}{n\log 2}
  = \frac{\mathcal{P}_{\operatorname{top}}(t\, \psi^c)}{\log 2}. 
 \end{align*}
 
 On the other hand, by Lemma~\ref{LEM:mu-on-Kdelta} and Proposition~\ref{prop: Xm Kdelta} we have for $t\geqslant 0$ and all $m\in\mathbb{N}$ sufficiently large that
 \begin{align*}
  \underline{\beta}_{\mu_{c}}(t)
   &\geqslant \liminf_{n\to\infty}
  \frac{\log R+\log \left(\sum_{\omega\in\Sigma^n_m} \sup_{y\in \langle\omega\rangle\cap \mathbb{X}_m^{c}} \exp\left(t\,\psi^c_n(y)\right)\right)}{n\log 2}
  \geqslant \frac{\mathcal{P}_{\operatorname{top}}(t\psi^{c}\vert\mathbb{X}_m^{c})}{\log 2}
 \end{align*}
 and by Proposition \ref{prop: pm to p} we have $\underline{\beta}_{\mu_c}(t)\geqslant \mathcal{P}_{\operatorname{top}}(t\,\psi^c)/\log 2$. 
 
 In the next steps we consider the case $t<0$. By Corollary~\ref{COR:mu-on-double-word} we have 
 \begin{align*}
   \underline{\beta}_{\mu_c}(t)
  &\geqslant  \liminf_{n\to\infty} \frac{\log \left(\sum_{\omega\in\Sigma^n} \inf_{y\in \langle\omega\rangle} \exp\left(t\, \psi^{c}_n(y)\right)\right)}{n\log 2}
  = \frac{\mathcal{P}_{\operatorname{top}}(t\, \psi^c)}{\log 2}. 
 \end{align*}

To show the upper bound for $t<0$ we notice that if
$\mc P_{\operatorname{top}}(t\, \psi^c) = \infty$, there is nothing to show. Otherwise, we can control the size of $\mu_{c}$ on cylinder sets via Proposition~\ref{PROP:Gibbs-like}. In particular, there is a sequence $(\iota_n)$ with $\iota_n \in o(n)$ such that
\[
\log \mu(\langle w \rangle) \geqslant \sup_{x \in \langle w \rangle} \psi^c_n(x) - \iota_n,
\]
for all $w \in \Sigma^n$. It follows that
\begin{align*}
 \overline{\beta}_{\mu_c}(t) \, \log 2 
& = \limsup_{n \to \infty} \frac{1}{n} \log \left( \sum_{w \in \Sigma^n} \exp(t\, \log \mu_{c}(\langle w \rangle))  \right)\\
& \leqslant \limsup_{n \to \infty} \frac{1}{n} \log
\left( \sum_{w \in \Sigma^n} \exp \Bigl(t \sup_{x \in \langle w \rangle}  \psi^c_n(x) \Bigr)\right) - \frac{t \iota_n}{n}
= \mc P_{\operatorname{top}}(t\, \psi^c),
\end{align*}
completing the proof.
\end{proof}

\subsection{Proof of Theorem \ref{thm: dim spectrum}}

It is well-known that the dimension spectrum of a measure can be bounded above by the Legendre transform of its $L^q$-spectrum, see for example \cite{LauNgai} - a fact that we will need to prove Theorem \ref{thm: dim spectrum}. Since definitions and formulations of the result differ slightly in the literature, we give a self-contained proof. 

\begin{lemma}
\label{LEM:f-alpha-lq-general}
Let $\nu$ be a (fully-supported) measure on $\TT$ and $f_{\nu}$ its dimension spectrum. Assume $\beta_{\nu}(q)$ exists for all $q \in \R$. Then, $f_{\nu}(\alpha) \leqslant \max\{ - \beta_\nu^{\ast}(-\alpha), 0 \}$ for all $\alpha \in \mathbb{R}$.
\end{lemma}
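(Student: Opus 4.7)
The plan is to establish the standard multifractal upper bound by using a dyadic covering of the level set $E_\alpha = \{x \in \TT : d_\nu(x) = \alpha\}$ and then optimizing over $q \in \R$. Concretely, I would decompose
\[
E_\alpha = \bigcup_{N \in \N} E_\alpha^N, \quad E_\alpha^N \coloneqq \bigl\{x \in \TT : r^{\alpha + \varepsilon} \leqslant \nu(B_r(x)) \leqslant r^{\alpha - \varepsilon} \text{ for all } r \leqslant 2^{-N}\bigr\},
\]
for a fixed small $\varepsilon > 0$. Since dimension is stable under countable unions, it suffices to bound $\dim_H E_\alpha^N$ for each $N$. For $n > N$ and $x \in E_\alpha^N$, any dyadic cylinder $\langle \omega \rangle$ of length $2^{-n}$ containing $x$ is contained in $B_{2^{-n}}(x)$ (which has diameter $2^{-n+1}$), hence $\nu(\langle \omega \rangle) \leqslant 2^{-n(\alpha-\varepsilon)}$. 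Conversely, the ball $B_{2^{-n}}(x)$ meets at most three adjacent dyadic cylinders of generation $n$, so one of them -- call it $\langle \omega^{*}(x,n) \rangle$ -- satisfies $\nu(\langle \omega^{*}(x,n) \rangle) \geqslant 2^{-n(\alpha+\varepsilon)}/3$, and the triple consisting of $\langle \omega^{*}(x,n) \rangle$ together with its two neighbours covers $x$.

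I would then split into two cases based on the sign of $q$. For $q \leqslant 0$, cover $E_\alpha^N$ by the cylinders $\mathcal{F}_n \coloneqq \{\omega \in \Sigma^n : \langle\omega\rangle \cap E_\alpha^N \neq \varnothing\}$. Each satisfies $\nu(\langle\omega\rangle)^q \geqslant 2^{-nq(\alpha-\varepsilon)}$, so
\[
\#\mathcal{F}_n \cdot 2^{-nq(\alpha-\varepsilon)} \leqslant \sum_{\omega \in \Sigma^n} \nu(\langle\omega\rangle)^q \leqslant 2^{n(\beta_\nu(q) + \varepsilon)}
\]
for all $n$ large enough, by definition of $\beta_\nu(q)$. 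For $q \geqslant 0$, cover $E_\alpha^N$ instead by the set $\mathcal{F}_n^{*} \coloneqq \{\omega^{*}(x,n) : x \in E_\alpha^N\}$ (together with its neighbors); here $\nu(\langle\omega\rangle)^q \geqslant 3^{-q} 2^{-nq(\alpha+\varepsilon)}$ for each $\omega \in \mathcal{F}_n^{*}$, yielding an analogous bound
$\#\mathcal{F}_n^{*} \leqslant 3^q \, 2^{nq(\alpha+\varepsilon) + n(\beta_\nu(q)+\varepsilon)}$.

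In either case, using that each cover consists of sets of diameter at most $3 \cdot 2^{-n}$, we obtain
\[
\mathcal{H}^s_{3 \cdot 2^{-n}}(E_\alpha^N) \leqslant C_q \cdot 2^{n(q\alpha + \beta_\nu(q) + \varepsilon(|q|+1) - s)},
\]
which tends to $0$ as $n \to \infty$ whenever $s > q\alpha + \beta_\nu(q) + \varepsilon(|q|+1)$. Letting $\varepsilon \searrow 0$ gives $\dim_H E_\alpha^N \leqslant q\alpha + \beta_\nu(q)$ for every $q \in \R$, hence
\[
f_\nu(\alpha) = \dim_H E_\alpha \leqslant \inf_{q \in \R}\bigl(q\alpha + \beta_\nu(q)\bigr) = - \sup_{q \in \R}\bigl(-q\alpha - \beta_\nu(q)\bigr) = -\beta_\nu^{*}(-\alpha).
\]
Since Hausdorff dimension is non-negative (with $\dim_H \varnothing = 0$ by convention), we may replace the right-hand side by $\max\{-\beta_\nu^{*}(-\alpha), 0\}$, completing the proof.

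The main subtlety is the case $q \geqslant 0$, where the direct cylinder cover of $E_\alpha^N$ does not give any useful lower bound on $\nu(\langle\omega\rangle)$. The three-neighbour trick (using $\omega^{*}(x,n)$) is what makes the argument go through; once this is in place the rest is a bookkeeping exercise. No extra structure of $\nu$ beyond the existence of $\beta_\nu(q)$ is needed, so this works in the stated generality.
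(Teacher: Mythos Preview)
Your proof is correct and follows essentially the same route as the paper's: both split according to the sign of $q$, use a dyadic cover together with the three-neighbour trick for $q>0$, bound the cardinality of the cover via the $L^q$-sum, and optimise over $q$. The only cosmetic differences are that the paper uses the limsup-type cover $\bigcup_{n\geqslant m}\Gamma_n^{\pm}$ rather than your preliminary decomposition $E_\alpha=\bigcup_N E_\alpha^N$, and for $q>0$ the paper works with the sum $\nu(\langle w^L\rangle)+\nu(\langle w\rangle)+\nu(\langle w^R\rangle)$ instead of selecting the largest of the three; both are equivalent up to constants.
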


\begin{proof}
First, fix $\alpha \in \R$ and some $\varepsilon > 0$, and let us set $r_n = 2^{-n}$. Then, $d_{\nu}(x) = \alpha$ implies that $r_n^{\alpha + \varepsilon}\leqslant \nu(B_{r_n}(x)) \leqslant r_n^{\alpha - \varepsilon}$ for all large enough $n \in \N$. For each such $n$, we have $\langle x_1 \ldots x_{n+1}\rangle \subset B_{r_n}(x)$. That is, $x \in \langle v \rangle$ for some $v \in \Sigma^{n+1}$ with
\[
\nu(\langle v \rangle) \leqslant  \nu(B_{r_n}(x)) \leqslant r_n^{\alpha - \varepsilon}.
\]
If we set $\Gamma_n^{-} = \{ v \in \Sigma^n : \nu(\langle v \rangle) \leqslant r_{n-1}^{\alpha - \varepsilon}  \}$, we therefore get for all $m \in \N$,
\begin{equation}
\label{EQ:f-alpha-minus-bound}
\mc F_{\nu}(\alpha) = \{ x \in \mathbb{T}: d_\nu(x) = \alpha \} \subset \bigcup_{n \geqslant m} \bigcup_{v \in \Gamma_n^{-}} \langle v \rangle.
\end{equation}
Similarly, $x \in \langle w \rangle$ for words $w,w^{L},w^{R} \in \Sigma^n$ such that $B_{r_n}(x) \subset \langle w^L\rangle \cup \langle w \rangle \cup \langle w^{R} \rangle$. Here $w^L$ and $w^R$ are the left and right neighbors of $w$ in (cyclic) lexicographic order. The sets 
\[
\Gamma_n^+ = \{ w \in \Sigma^n :  \nu(\langle w^L \rangle) + \nu(\langle w \rangle) +  \nu(\langle w^R \rangle) \geqslant r_n^{\alpha + \varepsilon} \}
\]
therefore also provide a cover, for all $m \in \N$, via
\begin{equation*}
\mc F_{\nu}(\alpha) \subset \bigcup_{n\geqslant m} \bigcup_{w \in \Gamma_n^+} \langle w \rangle.
\end{equation*}
For $s > \max\{- \beta_{\nu}^{\ast}(-\alpha),0 \}$, we use these covers to show that the $s$-dimensional Hausdorff measure of $\mc F_{\nu}(\alpha)$ vanishes, implying the desired bound on the Hausdorff dimension of $\mc F_{\nu}(\alpha)$. By definition of the Legendre transform, we can find $q \in \R$ such that
\[
s > \alpha q + \beta_\nu(q).
\]
We proceed by a distinction of cases. First assume that $q\leqslant 0$. The value of $\beta_{\nu}(q)$ provides a bound on the cardinality of $\Gamma_n^-$ because for each $\delta > 0$, we get for large enough $n\in \N$,
\[
r_n^{- \beta_\nu(q) - \delta} \geqslant
\sum_{w \in \Sigma^n} \nu(\langle w \rangle)^q 
\geqslant \sum_{w \in \Gamma_n^-} \nu(\langle w \rangle)^q \geqslant r_{n-1}^{(\alpha - \varepsilon) q} \# \Gamma_n^-.
\]
We use the cover in \eqref{EQ:f-alpha-minus-bound} to bound the $s$-dimensional Hausdorff measure of $\mc F_\nu(\alpha)$ from above by
\[
\sum_{n\geqslant m} \# \Gamma_n^- r_n^s 
\leqslant C \sum_{n \geqslant m} r_n^{s - \beta_\nu(q) -\delta - (\alpha - \varepsilon) q} \xrightarrow{m \to \infty} 0,
\]
provided that $\varepsilon > 0$ and $\delta>0$ are chosen small enough. This shows that $f_\nu(\alpha) > s$ in the case $q \leqslant 0$. 
If $q>0$, we estimate $\# \Gamma_n^+$ via
\begin{align*}
3^{q+1} \sum_{w \in \Sigma^n}  \nu(\langle w \rangle)^q &=  \sum_{w \in \Sigma^n} 3^q \left(  \nu(\langle w^L \rangle)^q +  \nu(\langle w \rangle)^q +  \nu(\langle w^R \rangle)^q \right)
\\ &\geqslant \sum_{w \in \Gamma_n^+} \left(  \nu(\langle w^L \rangle) + \nu(\langle w \rangle) +  \nu(\langle w^R \rangle) \right)^q 
\geqslant \# \Gamma_n^+ r_n^{(\alpha + \varepsilon)q}.
\end{align*}
The remainder of the proof proceeds just like in the case $q\leqslant 0$.
\end{proof}

\begin{proof}[Proof of Theorem \ref{thm: dim spectrum}]
We first note that
\begin{align*}
 \{x\in  \T \colon  d_{\mu_c}(x) = \alpha\}
 \supseteq \left\{x\in K_{\delta} \colon d_{\mu_c} (x) = \alpha\right\},
\end{align*}
for all $\delta>0$ with $K_{\delta}$ as in \eqref{eq: def Kdelta}.
 In the following we will prove
\begin{align}
 \left\{x\in K_{\delta} \colon d_{ \mu_c} (x) = \alpha\right\}
 =\left\{x \in K_{\delta} \colon \overline{\psi}^c(x) = - \alpha \log 2\right \},\label{eq: sets for xin Kdelta}
\end{align}
where $\overline{\psi}^c(x)$ denotes the Birkhoff average $\lim_{n\to\infty}\psi_n^c(x)/n$.
We  note that 
\begin{align*}
\lim_{\delta\searrow 0}\dim_H\left\{x \in K_{\delta} \colon \overline{\psi}^c(x) =\alpha\right\}
= \lim_{\delta\searrow 0}-\frac{p_c^*(\alpha;\delta)}{\log 2}
=-\frac{p_c^*(\alpha)}{\log 2}
= \dim_H\left\{x \in \mathbb{T} \colon \overline{\psi}^c(x)=\alpha\right\},
\end{align*}
where the first equality follows from the fact that $\psi^c$ is H\"older continuous on $K_{\delta}$, the second equality is given in \eqref{eq: p* conv with delta} and the third equality is Theorem \ref{cor: dim}. 
Hence, \eqref{eq: sets for xin Kdelta} establishes the lower bound of Theorem \ref{thm: dim spectrum}.
Let $x \in K_\delta$. Applying Lemma~\ref{LEM:mu-on-Kdelta} to $I = B_{2^{-n-1}}(x)$, we obtain, for all $n \in \N$,
\[
R^{-1} \leqslant \frac{\mu_c(B_{2^{-n-1}}(x))}{\exp(\psi^c_n(x))} \leqslant R.
\]
This yields
\[
d_{\mu_c}(x) = \lim_{n \to \infty} \frac{\log \mu_c(B_{2^{-n}}(x))}{\log 2^{-n}} = \lim_{n \to \infty} \frac{\psi^c_n(x)}{-n \log 2} 
=- \frac{\overline{\psi}^c(x)}{\log 2},
\]
whenever the limit exists.

To prove the upper bound, recall from Lemma~\ref{LEM:f-alpha-lq-general} that $f_c(\alpha) \leqslant \max\{-\beta_{\mu_c}^{\ast}(-\alpha),0\}$ for all $\alpha \in \R$. By Theorem~\ref{thm: pressure beta}, we have $p_c =\beta_{\mu_c} \log 2$, which implies $\beta^{\ast}_{\mu_c}(\alpha) = p^{\ast}(\alpha \log 2)/\log 2$ for all $\alpha$. Hence,
 \[
 f_c(\alpha) \leqslant \max\left\{ - \frac{p_c^\ast(- \alpha \log 2)}{\log 2},0 \right\},
 \]
 which is precisely the expression for $b_c(- \alpha \log 2)$ in Theorem~\ref{cor: dim}, if $-\alpha \log 2 \neq \underline{\alpha}_c$.
\end{proof}

\section{Applications of the $L^q$-spectrum to Fourier dimensions, Kre\u{\i}n--Feller operators, and quantization dimensions}\label{sec: KreinFeller}
In this section we demonstrate how special points of the $L^q$-spectrum $\beta_{\mu_{c}}$   can be related to other analytic and fractal-geometric characteristics of $\mu_{c}$. A particularly prominent example is the \emph{Fourier dimension}  $\dim_{F}(\mu_c)$  as defined in \eqref{DimF} which we consider next.

\subsection{The Fourier dimension}
\label{SEC:Fourier-dimension}

The following statement holds for arbitrary measures  $\nu$ on $\R$ and gives the connection of the Fourier dimension and the $L^{q}$-spectrum, see for example \cite{Lau92} and \cite[Thm.~6.6]{Strichartz}. If the $L^q$-spectrum $\beta_\nu$ exists as a limit, we have 
\begin{equation}
\label{EQ:dim_F-Lq}
\dim_F(\nu) = - \beta_\nu(2).
\end{equation}

If $\nu$ has compact support, regarding it as a measure on the torus yields a similar relation that employs the discrete Fourier coefficients instead of the Fourier transform on the real line. This can be understood from the fact that the Pontryagin dual of the torus is given by the integer lattice (whereas the real line is self-dual). The natural analogue of \eqref{EQ:dim_F-Lq} reads as follows.

\begin{prop}
\label{PROP:FD-characterisation}
Let $\nu$ be a Borel probability measure on the torus $\T$ and assume that $\beta_{\nu}(2)$ exists. Then,
\begin{equation}
\label{EQ:dim-F-analogue}
\lim_{n \to \infty} \frac{\log\Bigl(\sum_{m=0}^{n-1} |\widehat{\nu}(m)|^2 \Bigr)}{\log n} = 1 - \beta_{\nu}(2).
\end{equation}
In particular, the limit exists and coincides with $1+\dim_F(\nu)$.
\end{prop}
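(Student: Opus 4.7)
The strategy is to relate the partial Fourier sum $S_n := \sum_{m=0}^{n-1}|\hat\nu(m)|^2$ to the dyadic partition function $\sigma_k := \sum_{\omega\in\Sigma^k}\nu(\langle\omega\rangle)^2$ via a Parseval identity with the Féjer kernel on $\T$, and then to transfer the resulting estimate to the $\R$-integral in the definition of $\dim_F(\nu)$ by an analogous continuous computation.

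Fix $n=2^k$ and let $F_n(t) := n^{-1}\bigl(\sin(\pi n t)/\sin(\pi t)\bigr)^{2}$ be the Féjer kernel on $\T$, whose Fourier coefficients are $\hat F_n(m) = (1-|m|/n)_+$. Applied to the positive definite measure $\nu\ast\widetilde{\nu}$, whose Fourier coefficients are $|\hat\nu(m)|^2$, Parseval gives
\begin{equation*}
\sum_{|m|<n}\bigl(1-|m|/n\bigr)\,|\hat\nu(m)|^2 \;=\; \iint F_n(x-y)\, d\nu(x)d\nu(y).
\end{equation*}
Since $F_n$ is non-negative and concentrated on $\{|t|\leqslant 1/n\}$ with peak of order $n$, the right-hand side is comparable (up to multiplicative constants independent of $n$) to $n\cdot(\nu\times\nu)(\{|x-y|\leqslant 1/n\})$. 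The constraint $|x-y|\leqslant 1/n$ forces $x,y$ to lie in the same or in two adjacent $k$-th level dyadic intervals, so by an AM--GM bound on the cross-terms $\nu(I_\omega)\nu(I_{\omega\pm 1})\leqslant(\nu(I_\omega)^2+\nu(I_{\omega\pm 1})^2)/2$ this quantity is in turn comparable to $\sigma_k$. The tail of $F_n$ is controlled by a dyadic decomposition and the trivial bound $\sigma_j\leqslant 1$.

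Next, we pass from the triangular-weighted sum to the unweighted $S_n$. Interchanging the order of summation yields the Cesàro representation
\begin{equation*}
\sum_{|m|<n}\bigl(1-|m|/n\bigr)\,|\hat\nu(m)|^2 \;=\; \frac{1}{n}\sum_{j=1}^{n} A_j, \qquad A_j := \sum_{|m|<j}|\hat\nu(m)|^2 = 2S_j - 1,
\end{equation*}
and monotonicity of $(A_j)$ traps this average inside $[A_{\lceil n/2\rceil}/2,\,A_n]$. Combining this with the first step gives $S_n \asymp n\,\sigma_k$ for $n=2^k$. Taking logarithms, dividing by $\log n = k\log 2$, and invoking the hypothesis that $\beta_\nu(2)$ exists as a limit establishes \eqref{EQ:dim-F-analogue} along the dyadic subsequence; the monotonicity of $S_n$ in $n$ extends the limit to arbitrary $n\in\N$ by sandwiching.

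For the identification with $\dim_F(\nu)$, we carry out the analogous argument on $\R$ with the continuous Féjer kernel $R\,\sinc^{2}(\pi R\cdot)$: the Plancherel identity
\begin{equation*}
\int_{-R}^{R}\bigl(1-|\xi|/R\bigr)|\hat\nu(\xi)|^2\, d\xi \;=\; R\iint \sinc^{2}(\pi R(x-y))\, d\nu(x)d\nu(y)
\end{equation*}
permits the same two-sided estimates (viewing $\nu$ as a compactly supported measure on $\R$), and a Tauberian step based on the monotonicity of $R\mapsto \int_{-R}^{R}|\hat\nu|^2\,d\xi$ removes the triangular weight. This yields the same logarithmic growth rate for $R^{-1}\int_{-R}^{R}|\hat\nu|^2\,d\xi$ as for $R^{-1}S_{\lceil R\rceil}$, so $\dim_F(\nu)$ exists and coincides with the limit in \eqref{EQ:dim-F-analogue} shifted by $-1$. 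The main technical subtlety is controlling the tail of the Féjer kernel uniformly in $n$, which is harmless because the trivial bound $\sigma_k\leqslant 1$ makes the dyadic geometric sum converge; the remaining ingredients (Parseval, the interchange of summation, and the Cesàro comparison) are standard.
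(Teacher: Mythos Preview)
Your F\'ejer-kernel approach is genuinely different from the paper's and would work, but there is a real gap in the tail control. The paper proceeds through the correlation integral $\Gamma(r)=\int_{\T}|\nu(B_r(x))|^2\,dx$ (citing Strichartz for the identity $\beta_\nu(2)=\lim_{r\to 0}\log\Gamma(r)/\log r-1$): it writes $\nu(B_r(\cdot))=\nu\ast\mathds{1}_{B_r}$, applies Parseval to get $\Gamma(r)=\sum_{m}|\widehat\nu(m)|^2\,4r^2\sinc^2(2\pi rm)$, uses a pointwise bound $4\sinc^2\geqslant\mathds{1}_I$ on a fixed interval for the lower estimate, and for the upper estimate replaces $\mathds{1}_{B_r}$ by a compactly supported $C^\infty$ bump $g_r\geqslant\mathds{1}_{B_r}$, so that $|\widehat{g_r}|^2$ decays faster than any polynomial and the tail is genuinely negligible. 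The identification with $\dim_F(\nu)$ is not reproved but taken from the cited literature via \eqref{EQ:dim_F-Lq}.

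Your gap is the sentence ``the trivial bound $\sigma_j\leqslant 1$ makes the dyadic geometric sum converge.'' A dyadic decomposition of $\iint F_n\,d\nu\,d\nu$ over the shells $\{2^{-j-1}<|x-y|\leqslant 2^{-j}\}$, $0\leqslant j<k$, together with $F_n(t)\leqslant C/(nt^2)$ gives a tail of order $n^{-1}\sum_{j<k}2^{2j}\sigma_j$; with only $\sigma_j\leqslant 1$ this is $O(n)$, not $O(n\sigma_k)$. Since $n\sigma_k$ can be arbitrarily smaller than $n$ (for Lebesgue measure $n\sigma_k=1$), you have not established the upper half of the claimed comparability $\iint F_n\,d\nu\,d\nu\asymp n\sigma_k$. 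The fix is short but not ``trivial'': Cauchy--Schwarz over the $2^{k-j}$ level-$k$ children of each level-$j$ cylinder gives $\sigma_j\leqslant 2^{k-j}\sigma_k$, and then
\[
\frac{1}{n}\sum_{j=0}^{k-1}2^{2j}\sigma_j\;\leqslant\;\frac{\sigma_k}{n}\sum_{j=0}^{k-1}2^{k+j}\;\leqslant\;2^{k}\sigma_k\;=\;n\sigma_k,
\]
which is exactly what you need. (Alternatively, since only the logarithmic rate matters, you could use the hypothesis $\sigma_j\leqslant 2^{j(\beta_\nu(2)+\varepsilon)}$ for large $j$ and let $\varepsilon\to 0$.) The same repair is needed in your continuous analogue for $\dim_F(\nu)$, where the $\sinc^2$ tail has the same $1/(Rt^2)$ decay; this is precisely the difficulty the paper sidesteps by passing to a Schwartz bump.
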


\begin{proof}
The last statement follows immediately from \eqref{EQ:dim_F-Lq}. For the proof of \eqref{EQ:dim-F-analogue}, we follow similar ideas as in \cite{GH}.
First, let us regard $\nu$ as a measure on $[0,1] \subset \R$. Then, an alternative expression for $\beta_{\nu}(2)$ is given by
\[
\beta_{\nu}(2) = \lim_{r \to 0}  \frac{1}{\log r}  \log \Bigl( \int_{\R} |\nu(B_r(x))|^2 \dd x \Bigr) - 1,
\]
see for example \cite[Thm.~6.2]{Strichartz}. Considering the ball to be defined on the torus instead of $\R$ 
gives a negligible contribution since for $r < 1/2$,
\[
\int_{\R} |\nu(B_r(x))|^2 \dd x 
\leqslant  \int_{\T} |\nu(B_r(x))|^2 \dd x 
\leqslant 2 \int_{\R} |\nu(B_r(x))|^2 \dd x,
\]
which leaves the exponential growth behaviour unchanged. We may hence work with an integration over the torus instead. 
For convenience of notation, let us set
\[
\Gamma(r) = \int_{\T} |\nu(B_r(x))|^2 \dd x.
\]
Note that $\nu(B_r(x))$ as a function of $x$ can also be expressed as $\nu \ast \mathds{1}_{B_r}$. Using Parseval's identity, we hence get
\[
\Gamma(r) = \sum_{m \in \Z} |\widehat{\nu}(m) \widehat{\mathds{1}_{B_r}}(m)|^2,
\]
where 
\[
\widehat{\mathds{1}_{B_r}}(m) = 2r \sinc(2r \pi m), 
\quad \sinc(x) \coloneqq\frac{\sin(x)}{x}.
\]
Since $\sinc(x)$ is continuous in $x$ and $\sinc(0) =1$, it is easy to see that there is an interval $I = [-2 \pi K, 2 \pi K]$ with $K > 0$ such that $4 \sinc^2 (x) \geqslant \mathds{1}_I(x)$ for all $x \in \R$. Hence,
\[
\Gamma(r) \geqslant r^2 \sum_{m \in \Z} |\widehat{\nu}(m)|^2 \mathds{1}_{I}(2 r \pi m)
= r^2 \sum_{|m| \leqslant Kr^{-1}} |\widehat{\nu}(m)|.
\]
Along the sequence $r_n = K/n$ this yields
\[
\Gamma(r_n) = \frac{K^2}{n^2} \sum_{|m| \leqslant n} |\widehat{\nu}(m)|^2 \geqslant \frac{K^2}{n^2} M_n,
\]
where $M_n=\sum_{m \leqslant n} |\widehat{\nu}(m)|^2$.

Therefore, we obtain the upper bound
\[
\beta_{\nu}(2) = \lim_{n \to \infty} \frac{\log \Gamma(r_n)}{\log r_n} -1
\leqslant \liminf_{n \to \infty} \frac{\log (n^{-2}  M_n)}{-\log n} -1 = 1 - \limsup_{n\to\infty}\frac{\log  \sum_{m \leqslant n} |\widehat{\nu}(m)|^2}{\log n}.
\]
Showing the opposite bound is slightly more involved, because $\sinc(x)$ cannot be dominated by the indicator function on a finite interval. We therefore proceed a bit more cautiously and replace the function $\mathds{1}_{B_r}$ by a smoother function before taking the Fourier transform. More precisely, we choose a $C^{\infty}$-function $g \colon \R \to \R_{\geqslant 0}$ with compact support and such that $g(x) = 1$ on $[-1,1]$.
In particular, $g$ is in the Schwartz space of rapidly decreasing functions and the same holds for $\widehat{g}$.
Setting $g_r(x) = g(x/r)$ for all $r > 0$, we obtain for small enough $r$ that $g_r$ is supported on the unit interval $[-1/2,1/2]$ (identified with the torus $\T$ in the following). In this case, we have $\mathds{1}_{B_r} \leqslant g_r$ and therefore obtain
\[
\Gamma(r) \leqslant \int_{\T} |\nu \ast g_r(x)|^2 \dd x.
\]
Again, Parseval's identity yields
\[
\Gamma(r) \leqslant \sum_{m \in \Z} |\widehat{\nu}(m) \widehat{g_r}(m)|^2.
\]
Note that 
\[
\widehat{g_r}(m) = \int_{\T} g_r(x) \me^{-2 \pi \im  m x} \dd x 
= \int_{\R} g_r(x) \me^{-2 \pi \im m x} \dd x
= r \widehat{g}(rm).
\]
Splitting the summation region into a core and a tail, we get for arbitrary $\varepsilon > 0$,
\[
\Gamma(r) 
\leqslant r^2 |\widehat{g}(0)|^2 \sum_{|m| \leqslant r^{-(1+\varepsilon)}} |\widehat{\nu}(m)|^2
+ r^2 \sum_{|m| > r^{-(1+\varepsilon)}} |\widehat{g}(rm)|^2.
\]
Without loss of generality assume that $|\widehat{g}(k)|$ is symmetric for $k \mapsto -k$ and monotonically decreasing for $k > 0$.

We can then estimate the tail via
\[
\sum_{|m|> r^{-(1 + \varepsilon)}} |\widehat{g}(rm)|^2
\leqslant 2 \int_{r^{-(1+\varepsilon)}/2}^{\infty} |\widehat{g}(rx)|^2 \dd x
= \frac{2}{r} \int_{r^{-\varepsilon}/2}^{\infty} |\widehat{g}(y)|^2 \dd y,
\] 
which in the limit $r\to 0$ decays faster than any polynomial in $r$ because $\widehat{g}$ is rapidly decreasing. Hence, for $r_n = 1/n$ and large enough $n$,
\[
\Gamma(r_n) \leqslant \frac{2|\widehat{g}(0)|^2}{n^2} \sum_{|m| \leqslant n^{1+\varepsilon}}|\widehat{\nu}(m)|^2
\leqslant \frac{  4|\widehat{g}(0)|^2}{n^2} M_{\floor{n^{1+\varepsilon}}}.
\]

Similarly as before, this yields,
\[
\beta_{\nu}(2) \geqslant \limsup_{n \to \infty} \frac{\log(n^{-2} M_{\floor{n^{1+\varepsilon}}})}{-\log n} - 1 = 1 - (1 + \varepsilon)\, \liminf_{n\to\infty}\frac{\log  \sum_{m \leqslant n} |\widehat{\nu}(m)|^2}{\log n}.
\]
Since $\varepsilon > 0$ was arbitrary, the claim follows.
\end{proof}

The next corollary is a refinement to Theorem \ref{thm: beta(2) analytic}.
\begin{coro}
\label{COR:fourier-correlation}
For  $c\in \T$, we have
\[
\beta_{\mu_{c}}(2) = 1 - {D_2^c} = 1 - \frac{\log \lambda_1^{c}}{\log 2},
\]
where $\lambda_1^{c}$ is the largest eigenvalue of the matrix $M_{c}$, as defined in Proposition~\ref{PROP:v-M-recursion}. In particular, $c\mapsto \beta_{\mu_{c}}(2)$ is a real-analytic.
\end{coro}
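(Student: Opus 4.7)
The plan is to splice together three results already at our disposal. First, Lemma~\ref{LEM:FS-coefficients} applied to the uniquely ergodic subshift $(X_c,S)$ identifies the Fourier–Stieltjes coefficients of $\mu_c = \widehat{\gamma}_c\vert_{[0,1)}$ with the autocorrelation coefficients, namely $\widehat{\mu_c}(n) = \overline{\eta_n^c}$. Taking squared moduli and summing gives
\[
\sum_{m=0}^{n-1} \abs{\widehat{\mu_c}(m)}^2 \,=\, \Theta_n^c,
\]
so the Fourier sum appearing in Proposition~\ref{PROP:FD-characterisation} literally coincides with the partial sums defining the correlation exponent in \eqref{eq: def corr expo}.

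For $c \neq 0$ the $L^q$-spectrum $\beta_{\mu_c}$ exists as a genuine limit by Theorem~\ref{thm: pressure beta} (through $\beta_{\mu_c} = p_c/\log 2$ and the existence of $p_c$ granted by Theorem~\ref{thm: Ptop=Pvar}), so Proposition~\ref{PROP:FD-characterisation} applies to $\nu = \mu_c$ and delivers
\[
\lim_{n\to\infty} \frac{\log \Theta_n^c}{\log n} \,=\, 1 - \beta_{\mu_c}(2).
\]
Theorem~\ref{PROP:D2-lambda} evaluates the left-hand side: its proof actually establishes that $Z_{2^k}^c$ is asymptotic to a positive constant times $(\lambda_1^c)^k$, so that $\Theta_{2^k}^c$ grows at the same polynomial rate $\log \lambda_1^c/\log 2$; together with the monotonicity of $n \mapsto \Theta_n^c$ and the sandwich $\Theta_{2^k}^c \leqslant \Theta_n^c \leqslant \Theta_{2^{k+1}}^c$ for $2^k \leqslant n < 2^{k+1}$, this upgrades the limsup in \eqref{eq: def corr expo} to an honest limit equal to $D_2^c = \log \lambda_1^c/\log 2$. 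Equating the two displays produces the identity $\beta_{\mu_c}(2) = 1 - D_2^c$ for every $c \in \T \setminus \{0\}$.

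The case $c=0$ is not covered by Theorem~\ref{thm: pressure beta}, but it is trivial by direct computation: $\mu_0 = \delta_0$ yields $\widehat{\mu_0}\equiv 1$, hence $\Theta_n^0 = n$ and $\beta_{\mu_0}(2) = 0$, while a quick calculation of the characteristic polynomial of $M_0$ gives $\lambda_1^0 = 2$ and thus $D_2^0 = 1$; both sides of the claimed identity equal $0$. Real-analyticity of $c \mapsto \beta_{\mu_c}(2)$ on all of $\T$ is then inherited from the real-analyticity of $c \mapsto \lambda_1^c$ established in Lemma~\ref{LEM:M-eigenvalues} and carried through Theorem~\ref{PROP:D2-lambda}. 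Since all the heavy lifting has been done in the earlier sections, the corollary really is a bookkeeping exercise; the one place requiring a touch of care is confirming that the limsup in \eqref{eq: def corr expo} is in fact a limit, since Proposition~\ref{PROP:FD-characterisation} is stated as a bona fide limit — I regard this as the main (and only mild) obstacle.
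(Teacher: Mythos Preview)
Your proof is correct and follows essentially the same route as the paper: identify $\widehat{\mu_c}(m)=\overline{\eta_m^c}$ via Lemma~\ref{LEM:FS-coefficients}, feed the resulting identity $\sum_{m<n}|\widehat{\mu_c}(m)|^2=\Theta_n^c$ into Proposition~\ref{PROP:FD-characterisation}, and invoke Theorem~\ref{PROP:D2-lambda}. You are in fact slightly more careful than the paper in two places---you explicitly verify the hypothesis of Proposition~\ref{PROP:FD-characterisation} (existence of $\beta_{\mu_c}(2)$ via Theorem~\ref{thm: pressure beta}) and you treat $c=0$ separately---but these are refinements of the same argument rather than a different approach.
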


\begin{proof}
Recall that $\mu_c$ coincides with the restriction of $\widehat{\gamma_c}$ to the torus, represented by the unit interval. Hence, we get from Lemma~\ref{LEM:FS-coefficients} that $\widehat{\mu_c}(m) = \overline{\eta^c_m}$ for all $m \in \Z$. In particular,
\[
\sum_{m=0}^{n-1} |\widehat{\mu_c}(m)|^2 = \sum_{m=0}^{n-1} |\eta_m^{c}|^2 = \Theta_n^c,
\]
for all $n \in \N$ and hence, via Proposition~\ref{PROP:FD-characterisation},
\[
1- \beta_{\mu_c}(2) = \lim_{n \to \infty} \frac{\log  \Theta_n^c}{\log n} = D_2^{c}.
\]
The relation to $\lambda_1^{c}$ was established in Proposition~\ref{PROP:D2-lambda}.
\end{proof}

\subsection{Kre\u{\i}n--Feller operators}
Classical Kre\u{\i}n--Feller operators have been introduced in 
\cite{kreinKFop, kackreinKFop, feller_KFop} as the second order weak derivative of a function with respect to the Lebesgue measure. 

We only  give a brief  introduction of Kre\u{\i}n--Feller operators with respect to  measures with full support as details for the general case can e.g.\ be found  in  \cite[Sec.~2.1]{KessNie} or \cite{KNSW}.
First, we define the formal Laplace operator $\Delta_{\nu}$ with respect to a finite Borel measure $\nu$. 
To ease notation we always assume that our underlying interval is $[0,1]=\supp(\nu)$. 
Let $H^{1,2}$ denote the usual real Sobolev space of weakly differentiable functions $f$ on $[0,1]$ with square-integrable derivatives $f'\in L^{2}(\mathrm{Leb}|_{[0,1]})$. 
 We define the Laplacian (here only with Neumann boundary conditions) via a Dirichlet form  on $H^{1,2} \times H^{1,2} $ given by
 \begin{align*}
 \mathcal{E}(u, v)  
=\int_{[0,1]}u' v' \dd \mathrm{Leb}.
\end{align*}
 For each finite Borel measure    $\nu$  with  $\supp(\nu)=[0,1]$ we have that the natural embedding of $H^{1,2} $ into $L^{2}(\nu)$ is continuous, injective and dense.
Then $\Delta_{\nu}$ is the non-negative self-adjoint operator with the following properties: 
$f \in H^{1,2}$ lies in the domain $\dom(\Delta_{\nu})$ of the Laplacian $\Delta_{\nu}$, if and only if there exists $h \in L^2(\nu)$ such that
\begin{align*}
 \mathcal{E} (g,f)= \langle g, h\rangle_{\nu},\quad \text{ for all } g \in H^{1,2}
\end{align*}
and we set $\Delta_{\nu} f:=h$.

In this section we will investigate the asymptotic spectral properties of $\Delta_{\nu}$. 
An element $f \in H^{1,2} \backslash\{0\}$ is called \emph{eigenfunction} of $\Delta_{\nu}$ with \emph{eigenvalue} $\kappa$, if for all $g \in H^{1,2}$, we have
\begin{align*}
 \mathcal{E}( g, f) = \kappa  \langle g, f\rangle_{\nu}.
\end{align*}
Moreover, we introduce the eigenvalue counting function $N_{\nu}$ as 
\[
N_{\nu}(x):=\#\{\kappa\leqslant x:  \kappa \text{ is eigenvalue of }\Delta_{\nu}\}
\]
with which we are able to introduce the \emph{upper} and \emph{lower spectral dimension} given by 
\begin{align*}
 \overline{s}_{\nu} := \limsup_{x\to\infty}\frac{\log (N_{\nu} (x))}{\log(x)}\qquad 
 \text{ and }\qquad
\underline{s}_{\nu} := \liminf_{x\to\infty}\frac{ \log (N_{\nu} (x))}{\log(x)}.
\end{align*}
We say that the \emph{spectral dimension} exists if $\underline{s}_{\nu}= \overline{s}_{\nu}$ and denote its common value by $s_{\nu}$. For the remaining theorems of this section we introduce 
\[q_{r}\coloneqq q_r (\nu) \coloneqq \inf \{q > 0 : \beta_{\nu} (q) < rq\}.\]

\begin{theorem}\label{thm: specdim}
 For each $c \neq 0$, the spectral dimension of $\mu_{c}$ exists and can be expressed in terms of the $L^{q}$-spectrum via
 \[
 s_{\mu_{c}}=q_1(\mu_{c}).
 \]
\end{theorem}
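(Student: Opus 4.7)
The plan is to derive Theorem \ref{thm: specdim} as an instance of the general spectral-dimension theorem for Kreĭn--Feller operators developed in \cite{KessNie,KNSW}. In that framework, for a non-atomic Borel probability measure $\nu$ on $[0,1]$ with full support and with $L^q$-spectrum $\beta_\nu$ existing as a genuine limit for all $q > 0$, the spectral dimension of $\Delta_\nu$ exists and equals $q_1(\nu)$ whenever $q_1(\nu) \in (0,1)$. The underlying mechanism is Dirichlet--Neumann bracketing on the dyadic partition: one sandwiches the eigenvalue counting function $N_{\mu_c}(x)$ between counts of dyadic cells $\langle \omega \rangle$ for which the local scale $\mu_c(\langle \omega \rangle) \cdot 2^{-|\omega|}$ exceeds $1/x$. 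Summing over these cells and optimising over the dyadic level $n$ converts this into an asymptotic expressed in terms of the $L^q$-partition sums, ultimately yielding $N_{\mu_c}(x) \asymp x^{q_1(\mu_c)}$.

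First I would verify the hypotheses of that general theorem for $\mu_c$, $c \neq 0$. By Corollary \ref{coro: unique g}, $\mu_c$ is singular continuous with full topological support on $\T$, so the Dirichlet form set up in Section \ref{sec: KreinFeller} and hence $\Delta_{\mu_c}$ are well-defined. Theorem \ref{thm: Ptop=Pvar} ensures that $\mathcal{P}(q \psi^c)$ exists as a limit for every $q \in \R$ and is convex on $\R_{>0}$, and Theorem \ref{thm: pressure beta} then identifies $\beta_{\mu_c}(q) = \mathcal{P}(q \psi^c)/\log 2$, so $\beta_{\mu_c}$ exists as a limit and is convex on $(0,\infty)$. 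The normalisations $\beta_{\mu_c}(0) = 1$ and $\beta_{\mu_c}(1) = 0$, together with convexity, force $\beta_{\mu_c}$ to cross the diagonal $q \mapsto q$ at a unique point in $(0,1)$, so $q_1(\mu_c) \in (0,1)$ is well-defined and strictly positive.

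Next I would feed in the additional regularity that the bracketing argument requires, namely a subexponential comparison between individual cylinder masses and the partition potential. Proposition \ref{PROP:Gibbs-like} supplies precisely this (in the typical case $\mathcal{P}_{\operatorname{top}}(t\psi^c) < \infty$) in the form
\[
\Bigl| \log \mu_c(\langle \omega \rangle) - \sup_{x \in \langle \omega \rangle} \psi^c_n(x) \Bigr| \in o(n),
\]
uniformly in $\omega \in \Sigma^n$, turning $\mu_c$ into a weak Gibbs measure at the level of dyadic cells. Combined with the hereditary mass estimate of Corollary \ref{COR:mu-on-double-word}, which controls $\mu_c(\langle w v \rangle)$ in terms of $\mu_c(\langle v \rangle)$, this transfers the $L^q$-partition asymptotics to the geometric scale demanded by the bracketing.

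The hardest part, I anticipate, is not the bracketing itself but ensuring that the precise hypothesis list of the general theorem from \cite{KessNie,KNSW} is matched in a uniform way across all $c \neq 0$. In particular, the exceptional parameters for which $\mathcal{P}_{\operatorname{top}}(t\psi^c) = \infty$ for some $t < 0$ are not directly covered by Proposition \ref{PROP:Gibbs-like}; there one has to rely only on positive-$q$ control of the partition sums together with the weaker universal lower bound from Proposition \ref{PROP:measure-decay}, and verify that this still suffices for the bracketing input. Once these technicalities are resolved, the general theorem applies and yields $s_{\mu_c} = q_1(\mu_c)$, as claimed.
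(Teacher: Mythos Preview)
Your first two paragraphs already constitute the paper's proof: invoke \cite[Thm.~1.1 \& 1.2]{KessNie}, which for any finite Borel measure $\nu$ on $\T$ gives $\overline{s}_\nu = q_1(\nu)$ unconditionally and upgrades this to $s_\nu = q_1(\nu)$ as soon as $\beta_\nu$ exists as a limit in a neighbourhood of $q_1(\nu)$; then observe that Theorem~\ref{thm: pressure beta} supplies this for every $c \neq 0$ on all of $(0,1)$. That is the entire argument in the paper.

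Your remaining paragraphs are unnecessary and rest on a misconception about what \cite{KessNie} requires. The cited theorems are stated purely in terms of the $L^q$-spectrum; no Gibbs-type control on individual cylinders enters as a hypothesis. Proposition~\ref{PROP:Gibbs-like} and Corollary~\ref{COR:mu-on-double-word} are used in this paper to \emph{establish} Theorem~\ref{thm: pressure beta} (in particular the $t<0$ regime), not as separate inputs for the Kre\u{\i}n--Feller machinery. Once Theorem~\ref{thm: pressure beta} is in hand, the behaviour of $\mc P_{\operatorname{top}}(t\psi^c)$ for negative $t$ is irrelevant to the spectral dimension: only $q \in (0,1)$ matters for $q_1$, and on that range the $L^q$-spectrum exists as a limit for \emph{every} $c \neq 0$, including the parameters you flag as exceptional. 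The ``hardest part'' you anticipate is therefore a non-issue.
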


\begin{remark}
 For $c=0$, we have the trivial case of a Dirac measure at $0$, i.e.\ $\supp(\mu_0)=0$ and hence not fulfilling the condition $\supp(\nu)=[0,1]$.
\end{remark}

\begin{proof}[Proof of Theorem \ref{thm: specdim}]  
By  \cite[Thm.~1.1]{KessNie} we have for any finite Borel measure $\nu$ on $\T$  that
  \[\overline s_{\nu}=q_{1}(\nu).\] 
Further, by Theorem \ref{thm: pressure beta} we know that $\beta_{\mu_{c}}$ restricted to    $(0,1)$ exists as a limit  rather than a limit superior. Hence, it follows from   \cite[Thm.~1.1 \& 1.2]{KessNie} that this is sufficient for  the spectral dimension  $s_{\mu_{c}}$ to  exist and consequently $s_{\mu_{c}}=q_1(\mu_{c})$.  
\end{proof}

We would like to point out that the existence of the spectral dimension is by no means self-evident, as \cite[Ex.~5.5]{KessNie} --an example in which the spectral dimension does not exist-- demonstrates.

\subsection{Quantization dimension}
As a  good reference  on quantization theory we refer the reader to  \cite{GrafLuschgy}. The main object is the  {\em $n$th quantization error of order $r\geqslant 0$} of a measure $\nu$, say, as in our case, on $[0,1]$,  given by
\begin{align*}
 \mathfrak{e}_{n,r}(\nu)
 =\begin{cases}
   \inf_{\alpha\in\mathcal{A}_n}(\int \rho(x,\alpha)^r\,\mathrm{d}\nu(x))^{1/r}& r>0\\
   \inf_{\alpha\in\mathcal{A}_n}\exp(\int\log  \rho(x,\alpha)\,\mathrm{d}\nu(x))& r=0,
  \end{cases}
 \end{align*} 
where $\mathcal{A}_n:=\{\alpha \subset \mathbb{R}: 1\leqslant \card(\alpha)\leqslant n\}$
 and $\rho(x,\alpha):=\min_{y\in\alpha} \rho(x,y)$. 
 It is known that for $r>0$, we have $\mathfrak{e}_{n,r}(\nu)=O(n^{-r})$ and,  if $\nu$ is singular with respect to the Lebesgue measure, then $\mathfrak{e}_{n,r}(\nu)=o(n^{-r})$  \cite[p.~78]{GrafLuschgy}.

In order to study this behaviour in more detail  
 we consider the notion of  \emph{upper} and \emph{lower quantization dimension for $\nu$ of order $r$}  
 given by
 \begin{align*}
 \overline{\mathsf{D}}_r (\nu) := \limsup_{n\to\infty} -\frac{\log n}{\log \mathfrak{e}_{n,r}(\nu)}\quad \text{ and } \quad
 \underline{\mathsf{D}}_r (\nu) := \liminf_{n\to\infty} -\frac{\log n}{\log \mathfrak{e}_{n,r}(\nu)}.
\end{align*}
If $\overline{\mathsf{D}}_r (\nu) =\underline{\mathsf{D}}_r(\nu)$, then we denote the common value by $\mathsf{D}_r(\nu)$ and call it the \emph{quantization dimension of $\nu$ of order $r$}.
Indeed, as shown in \cite{KessNieZhu} (for  generalizations  to negative values of $r$ see  \cite{KessNieNegtive}), this concept is closely related to the $L^q$-spectrum. 
We also introduce the \emph{(upper) generalized R\'enyi dimension}  
\begin{align*}
 \overline{\mathfrak{R}}_{\nu}(q)
 =\begin{cases}\cfrac{\beta_{\nu}(q)}{1-q}&\text{if }q\neq 1\\
\limsup_{n\to \infty} \cfrac{\sum_{C\in \mathcal{D}_n} \nu(C) \log \nu(C)}{\log (2^{-n})}
&\text{if } q = 1,
  \end{cases}
\end{align*}
where $\mathcal{D}_n$ denotes the partition of $\mathbb{R}$ by half-open intervals of the form $(k 2^{-n}, (k+1) 2^{-n}]$ with $k\in\mathbb{Z}$. 
Moreover, for $q\neq 1$ the R\'enyi dimension can be expressed by the  
\emph{Hentschel-Procaccia generalized dimension}
\begin{align*}
 \overline{\mathfrak{R}}_{\nu}(q)
 =\frac{1}{1-q}\limsup_{r\to 0}\frac{\log \int\nu(B_r(x))^{q-1}\,\mathrm{d}\nu(x)}{-\log r}.
\end{align*}

A relation between the just introduced quantities is given in \cite{KessNieZhu} as follows:
\begin{prop}[{\cite[Thm.~1.1]{KessNieZhu}}]\label{prop: kessniezhu}
 Let $\nu$ be compactly supported probability measure
on $\R^{d}$. If $\sup_{x\in(0,1)}\beta_{\nu}(x)>0$, then for every
$r>0$, 
\[
\underline{\mathsf{D}}_{r}\left(\nu\right)\leq\overline{\mathsf{D}}_{r}\left(\nu\right)=\overline{\mathfrak{R}}_{\nu}\left(q_{r}\right).
\]
Otherwise, if $\beta_{\nu}(x)=0$ for all $x>0$, then $D_{r}\left(\nu\right)=0$
for all $r\geq0$.
\end{prop}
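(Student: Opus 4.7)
The plan is to establish the equality $\overline{\mathsf{D}}_r(\nu) = \overline{\mathfrak{R}}_\nu(q_r)$ by proving matching upper and lower bounds on the optimal quantization error $\mathfrak{e}_{n,r}(\nu)$, following the classical Graf--Luschgy approach but expressed in the dyadic $L^q$-language. The inequality $\underline{\mathsf{D}}_r(\nu) \leqslant \overline{\mathsf{D}}_r(\nu)$ is of course immediate. The degenerate case $\sup_{x\in(0,1)}\beta_\nu(x)=0$, which forces $\nu$ to be almost atomic in a precise sense, will be handled separately at the end by showing directly that one can approximate $\nu$ by a finite convex combination of point masses with super-polynomial accuracy, so that $\log n / (-\log \mathfrak{e}_{n,r}(\nu)) \to 0$.

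For the upper bound $\overline{\mathsf{D}}_r(\nu)\leqslant \overline{\mathfrak{R}}_\nu(q_r)$, I would construct good quantizers from the dyadic partition $\mathcal{D}_k$. Given $n$ and a scale $2^{-k}$ to be chosen, to each cell $C \in \mathcal{D}_k$ allocate $n_C$ quantization points with $\sum_C n_C \leqslant n$, giving a local error contribution of order $\nu(C) (2^{-k} / n_C)^r$. Minimizing $\sum_C \nu(C) n_C^{-r}$ via Hölder's inequality gives the optimum $n_C \propto \nu(C)^{1/(1+r)}$ and a total error of order
\[
2^{-kr} n^{-r}\, \Bigl(\sum_{C \in \mathcal{D}_k} \nu(C)^{1/(1+r)}\Bigr)^{1+r}.
\]
Using the definition of the $L^q$-spectrum and writing $q = 1/(1+r)$, this sum is essentially $2^{-k \beta_\nu(q)}$. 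Choosing $k$ to balance $2^{-kr}$ against $n$ so that the total cell count $2^k$ matches the point budget — which ties the optimal $q$ to the equation $\beta_\nu(q) = rq$, i.e.\ precisely $q=q_r$ — and inserting this into the definition of $\overline{\mathfrak{R}}_\nu(q_r) = \beta_\nu(q_r)/(1-q_r)$ yields the required upper bound.

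For the matching lower bound $\overline{\mathsf{D}}_r(\nu)\geqslant \overline{\mathfrak{R}}_\nu(q_r)$, I would use a covering/Voronoi argument. For any $n$-point set $\alpha$, the Voronoi cells $(V_a)_{a\in\alpha}$ partition $\supp(\nu)$, and $\mathfrak{e}_{n,r}(\nu)^r = \sum_a \int_{V_a} \rho(x,a)^r\,\mathrm{d}\nu$. A standard pigeonhole/truncation argument shows that most of the $\nu$-mass sits in Voronoi cells of diameter at most a fixed multiple of $\mathfrak{e}_{n,r}(\nu)$, which can be covered by $O(n)$ dyadic cells at scale $\approx \mathfrak{e}_{n,r}(\nu)$. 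A Hölder inequality applied to $\sum_{C} \nu(C) = \sum_C \nu(C)^{q_r} \cdot \nu(C)^{1-q_r}$ over this cover, combined with the definition of $\beta_\nu(q_r)$, forces a lower bound on $\mathfrak{e}_{n,r}(\nu)$ of the required polynomial order. The case $q_r = 1$ must be treated separately because $\overline{\mathfrak{R}}_\nu(1)$ is defined via entropy rather than moments; here one passes to the limit via continuity of the quantization dimension in $r$ and of the Rényi dimension in $q$ on the appropriate side of the critical value.

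The main obstacle is the lower bound, and specifically the uniformity in $n$ of the Voronoi-to-dyadic comparison: one needs to ensure the pigeonhole step loses no more than a polylogarithmic factor (which is absorbed in the $\limsup$), and that the implied constants depend only on $r$ and ambient dimension. The delicate balance at $q_r = 1$, where the R\'enyi dimension formula changes form, is the other technical sticking point and is typically the reason the stated theorem requires the hypothesis $\sup_x \beta_\nu(x)>0$ to separate from the degenerate regime.
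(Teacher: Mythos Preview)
The paper does not prove this proposition: it is quoted verbatim as \cite[Thm.~1.1]{KessNieZhu} and used as a black box in the proof of the subsequent theorem on the quantization dimension of $\mu_c$. So there is no ``paper's own proof'' to compare against; any argument you supply is necessarily extra.

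That said, your sketch contains a genuine error in the upper-bound step. After the H\"older optimisation you set $q = 1/(1+r)$, but then claim that ``choosing $k$ \ldots ties the optimal $q$ to the equation $\beta_\nu(q)=rq$, i.e.\ precisely $q=q_r$''. These are two different numbers: $1/(1+r)$ depends only on $r$, while $q_r=\inf\{q>0:\beta_\nu(q)<rq\}$ depends on the measure $\nu$ through its $L^q$-spectrum. Choosing the dyadic scale $k$ cannot retroactively change the exponent $q$ that appeared in the H\"older step. The correct route for the upper bound is to take an arbitrary $q\in(q_r,1)$ (so that $\beta_\nu(q)<rq$), build quantizers at a scale adapted to that $q$, deduce $\overline{\mathsf D}_r(\nu)\leqslant rq/(1-q)$, and then let $q\downarrow q_r$. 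Your fixed choice $q=1/(1+r)$ only recovers the correct bound in the special case where $q_r$ happens to equal $1/(1+r)$. The lower-bound outline is plausible but too vague as stated; in particular the ``pigeonhole/truncation'' step that compares Voronoi cells to dyadic cubes in $\R^d$ needs a doubling-type argument that you have not supplied, and your remark about handling $q_r=1$ via continuity is not obviously justified since the statement only asserts equality for $\overline{\mathsf D}_r$, not continuity in $r$.
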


In case of the generalized Thue--Morse measure we get the following even stronger result:

\begin{theorem}
 For   $c\in\T$     we have that the quantization dimension of order $r\geqslant 0$ exists and, setting $q_{r}\coloneqq q_{r}(\mu_{c})$,  fulfills
 \begin{align*}
 \mathsf{D}_r(\mu_{c})=\overline{\mathfrak{R}}_{\mu_{c}}(q_r)=\frac{rq_r}{1-q_r}.
 \end{align*}
\end{theorem}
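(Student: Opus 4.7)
The plan is to leverage Proposition~\ref{prop: kessniezhu} together with the fact, supplied by Theorem~\ref{thm: pressure beta}, that $\beta_{\mu_c}$ exists as a genuine limit rather than merely as a $\limsup$. The case $c=0$ is trivial since $\mu_0=\delta_0$ gives $\mathfrak{e}_{n,r}(\mu_0)=0$ and all quantities vanish, so I focus on $c\neq 0$.

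First, I would verify the hypothesis of Proposition~\ref{prop: kessniezhu}: the bound $\mu_c(\langle\omega\rangle)^q\geqslant \mu_c(\langle\omega\rangle)$ valid for $q\in[0,1]$ gives $\beta_{\mu_c}(q)\geqslant 0$ on $[0,1]$, and continuity of the convex function $\beta_{\mu_c}$ together with $\beta_{\mu_c}(0)=1$ implies $\sup_{q\in(0,1)}\beta_{\mu_c}(q)>0$. Proposition~\ref{prop: kessniezhu} then delivers the upper quantization equality
\[
\overline{\mathsf{D}}_r(\mu_c)=\overline{\mathfrak{R}}_{\mu_c}(q_r).
\]

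The principal obstacle is the matching lower bound $\underline{\mathsf{D}}_r(\mu_c)\geqslant \overline{\mathfrak{R}}_{\mu_c}(q_r)$. I would tackle this in the same spirit as the proof of Theorem~\ref{thm: specdim}: since Theorem~\ref{thm: pressure beta} upgrades $\beta_{\mu_c}$ on $(0,1)$ from a $\limsup$ to an honest limit, the associated Hentschel--Procaccia scaling quantity $\int \mu_c(B_r(x))^{q-1}\dd\mu_c(x)$ acquires a two-sided polynomial rate, not just a one-sided $\limsup$ rate. Feeding this two-sided control into the covering/packing arguments of \cite{KessNieZhu,KessNieNegtive} produces the desired lower bound on $\underline{\mathsf{D}}_r(\mu_c)$, exactly as the existence of $s_{\mu_c}$ was deduced in Theorem~\ref{thm: specdim}. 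This simultaneously yields existence of $\mathsf{D}_r(\mu_c)$ and the first claimed equality.

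Finally, for the second equality I would use that $q_r=\inf\{q>0:\beta_{\mu_c}(q)<rq\}$ combined with continuity of $\beta_{\mu_c}$ forces $\beta_{\mu_c}(q_r)=rq_r$ at the transition point. For $r>0$, the identity $\beta_{\mu_c}(1)=0<r$ and continuity give $q_r\in(0,1)$, so the non-unit branch of $\overline{\mathfrak{R}}_{\mu_c}$ applies and yields
\[
\overline{\mathfrak{R}}_{\mu_c}(q_r)=\frac{\beta_{\mu_c}(q_r)}{1-q_r}=\frac{rq_r}{1-q_r}.
\]
The boundary value $r=0$, where $q_0=1$, is handled via the entropy-dimension branch of $\overline{\mathfrak{R}}_{\mu_c}$ together with a short limiting argument as $r\searrow 0$.
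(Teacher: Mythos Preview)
Your approach is essentially the same as the paper's: verify the hypothesis of Proposition~\ref{prop: kessniezhu}, obtain $\overline{\mathsf{D}}_r(\mu_c)=\overline{\mathfrak{R}}_{\mu_c}(q_r)$, and then use the existence of $\beta_{\mu_c}$ as a limit (Theorem~\ref{thm: pressure beta}) to upgrade this to equality of upper and lower quantization dimensions via \cite{KessNieZhu}. The paper sharpens your vague ``covering/packing arguments'' step by invoking \cite[Thm.~1.11 and Def.~1.10(2a)]{KessNieZhu} directly---existence of the $L^q$-spectrum in a left neighbourhood of $q_r$ is precisely the \emph{multifractal regularity} condition that forces $\underline{\mathsf{D}}_r=\overline{\mathsf{D}}_r$---and reads off the final identity $\overline{\mathsf{D}}_r(\mu_c)=rq_r/(1-q_r)$ from \cite[Rem.~1.3(1)]{KessNieZhu} rather than deriving it by hand.
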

For an illustration  connecting  the quantization dimension and the $L^{q}$-spectrum see Fig.~ \ref{Dr}. 

\begin{proof}
We aim to apply Proposition \ref{prop: kessniezhu}. For $c\neq 0$ we first verify $\sup_{q\in (0,1)}\beta_{\mu_c}(q)>0$. 
Indeed, by Theorem \ref{thm: pressure beta}  we have $\beta_{\mu_c}(0)=1$ and $\beta_{\mu_c}(1)=0$ and since $\psi^c$ is non-positive, we can conclude that $\mathcal{P}(\cdot \psi^c)$ 
and thus also $\beta_{\mu_c}$ is monotonically decreasing. 
Moreover, the convexity of the pressure claimed in Theorem \ref{thm: Ptop=Pvar} implies  in particular continuity. Consequently,  $\beta_{\mu_c}(\epsilon)>0$ for some $\epsilon>0$ sufficiently small. 
Hence, for $c\neq 0$, we may apply the first part of Proposition \ref{prop: kessniezhu} giving the equality $\overline{\mathsf{D}}_r(\mu_{c})=\overline{\mathfrak{R}}_{\mu_{c}}(q_r)$. 
Furthermore, the equality $\overline{\mathsf{D}}_r(\mu_{c})=\underline{\mathsf{D}}_r(\mu_{c})$, $c\neq 0$, follows from \cite[Thm.~1.11]{KessNieZhu} using the fact  that $\mu_c$ is multifractal regular  --a definition that asks for the existence of the $L^q$-spectrum in a left-sided neighborhood  around $q_r$ (cf. \cite[Def.~1.10(2a)]{KessNieZhu}). Indeed, from Theorem \ref{thm: pressure beta} we know that the  $L^q$-spectrum exists on $(0,1)$.

For $c=0$, we have $\beta_{\mu_0}=0$, for all $x>0$ and thus, by the second part of Proposition \ref{prop: kessniezhu}, $\mathsf{D}_r(\mu_{0})=0$ and an easy calculation shows that $\overline{\mathfrak{R}}_{\mu_{c}}=0$.

Finally, by \cite[Rem.~1.3 (1)]{KessNieZhu}, for all $c$ we have the equality $\overline{\mathsf{D}}_r(\mu_{c})=rq_r/(1-q_r)$.
\end{proof}

\begin{figure}
\center{\begin{tikzpicture}[scale=01, every node/.style={transform shape},line cap=round,line join=round,>=triangle 45,x=1cm,y=1cm] \begin{axis}[ x=2.7cm,y=2.7cm, axis lines=middle, axis line style={very thick},ymajorgrids=false, xmajorgrids=false, grid style={thick,densely dotted,black!20}, xlabel= {$q$}, ylabel= {$\beta_{\mu_{c}} (q)$}, xmin=-0.3 , xmax=1.5 , ymin=-0.3, ymax=1.3,x tick style={thick, color=black}, xtick={0, .425,1},xticklabels = {0,$q_r(\mu_{c})$,1}, y tick style={thick, color=black}, ytick={0,1/1.15,1},yticklabels = {0,$\mathsf{D}_{r}(\mu_{c})$,1}] \clip(-0.5,-0.3) rectangle (4,4); 
\draw[line width=1pt,smooth,samples=180,domain=-0.0:1.4] plot(\x,{log10(0.8^((\x))+0.2^((\x)))/log10(2)}); 
 \draw [line width=01pt,dotted, domain=-0.05 :1.4] plot(\x,{1/(1.15)*(1-\x)});
\draw [line width=01pt,dashed, domain=-0.05 :1.0] plot(\x,{1.2*\x});

\node[circle,draw] (c) at (2.48 ,0 ){\,};
 
\draw [line width=.7pt,dotted, gray] (0.425 ,0.)--(0.425,1); 
 
\end{axis} 
\end{tikzpicture}}
    \caption{\label{Dr}Geometric reconstruction of the quantization dimension $\mathsf{D}_{r}(\mu_{c})$ from the $L^{q}$-spectrum of $\mu_{c}$ with $c=1/2$ and   $r=1.2$, which is the slope of the dashed line through the origin.
    }
\end{figure}
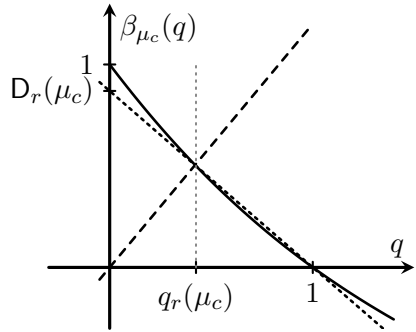

\section{Estimates on Birkhoff sums and combinatorial results}\label{sec:Technical}

In general, we are concerned with expressions of the form $\psi_n^c(x)$. On cylinders $\langle w \rangle$ with $|w| = r \geqslant n$, this takes the form
\[
\psi^c_n(x) = \sum_{k = 0}^{n-1} \psi^c(x_k),
\]
with $x_k \in \langle w_{[k,r]} \rangle$ for all $k$. Both the size and the variation of $\psi^c$ on $\langle w_{[k,r]} \rangle$ depend on how close this cylinder is to the singularity $\breve{c}$. This problem is not independent for different values of $k$ since being close to $\breve{c}$ determines a certain prefix of $w_{[k,r]}$. We are therefore led to investigate the combinatorial structure of $\breve{c}$ and words $w$. For some applications, we wish to find a modification of $w$ that allows a certain control of $\psi_n^c$ on $\langle w \rangle$. This falls basically into two classes: First, we try to find words $v$ that restrict the length of prefixes of $\breve{c}$ that appear in $wv$ (possibly starting in $w$). Another goal will be to concatenate words $w$ and $w'$ by relatively short words $v$ such that $u = w v w'$ has the property that $\langle u_{[k,|u|]} \rangle$ stays sufficiently far away from the singularity for $k \leqslant |wv|$. 
We split this section into two parts. In the first part, we work purely combinatorially, and in the second part we spell out the implications for the corresponding Birkhoff sums.

\subsection{Combinatorics of words}

For convenience of notation, we may think of a word $w$ as a dyadic representation of an integer such that adding or subtracting a number is well-defined, as long as the resulting number is non-negative. For the moment, let us assume that the singularity position $\breve{c}$ has a unique expansion in base $2$.
Again, we denote by $\breve{c}_{[m,n]}$ the finite word of the $m$th digit to the $n$th digit of $\breve{c}$.
We introduce
\[
\mc G_n = \mc G_n(\breve{c}) = \breve{c}_{[1,n]} + \{ 0,1,-1 \}; \quad \mc G =\mc G (\breve{c})= \bigcup_{n \geqslant 1} \mc G_n,
\]
together with the conventions $0^n - 1 = 1^n$ and $1^n + 1 = 0^n$.
That is, $\{\langle w \rangle\}_{w \in \mc G_n}$ consists of the length-$n$ cylinder set containing $\breve{c}$, along with its right and left neighbor. By this definition, 
\begin{align*}
x_{[1,n]} \in \mc G_n & \implies \rho(x,\breve{c}) \leqslant 2^{-n+1},
\\x_{[1,n]} \notin \mc G_n & \implies \rho(x,\breve{c}) \geqslant 2^{-n}.
\end{align*}

It is straightforward to see that, given $w \in \mc G_n$ and $m \leqslant n$, we have $w_{[1,m]} \in \mc G_m$. Conversely,
\[
w_{[1,m]} \notin \mc G_m \implies w_{[1,n]} \notin \mc G_n,
\]
whenever $m \leqslant n \leqslant |w|$.
If $\breve{c}$ is a dyadic point, let $(\breve{c}_n)_{n\in\N}$ and $(\breve{c}^\ast_n)_{n \in \N}$ be its expansions in base $2$. In this case we set 
\[
\mc G_n = \left\{\breve{c}_{[1,n]}, \breve{c}^\ast_{[1,n]} \right\},
\]
for all $n$ with $\breve{c}_n \neq \breve{c}^\ast_n$,
without altering any of the decisive properties above.

In the symbolic representation, self-returns correspond to the repetition of blocks. Given a word $u$ of length $m$,
and some $r = n + k/m$, with $n \in \N$ and $0\leqslant k \leqslant m-1$, the rational power $u^r$ is defined as
\[
u^r = u^n u_{[1,k]},
\]
where we denote by $u^n:=u^{n-1}u$.
If, $w = u^r$ for some $r \in \Q$, we say that $|u|$ is \emph{a} period of $w$. \emph{The} period of $w$ is the shortest such value.

The following are standard tools in combinatorics on words and are given without proofs; for the first statement compare \cite[Lem.~1]{ABCD}.

\begin{lemma}
\label{LEM:period-division}
Let $p$ be the period of a word $w$ and assume that $q$ is a period of $w$ with $q \leqslant |w|/2$. Then, $q$ is divisible by $p$.
\end{lemma}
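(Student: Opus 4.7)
The plan is to deduce this statement from the Fine--Wilf theorem, which says that if a word $u$ has two periods $p$ and $q$ with $p + q \leqslant |u| + \gcd(p,q)$, then $u$ also has period $\gcd(p,q)$. This is a textbook tool in combinatorics on words (see for example Lothaire's monograph), and it captures exactly the interaction between two periods of a sufficiently long word.

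To apply it here, I would first note that since $p$ is \emph{the} period of $w$, it is by definition the smallest period, so $p \leqslant q$. Combined with the hypothesis $q \leqslant |w|/2$, this yields
\[
p + q \,\leqslant\, 2q \,\leqslant\, |w| \,\leqslant\, |w| + \gcd(p,q),
\]
so the Fine--Wilf hypothesis is satisfied. Applying the theorem gives that $w$ has period $d \coloneqq \gcd(p,q)$. By minimality of $p$, we must have $d \geqslant p$, but also $d \leqslant p$ since $d$ divides $p$. Hence $d = p$, which is equivalent to $p \mid q$, as claimed.

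The argument is essentially a one-line consequence of Fine--Wilf once the bound $p + q \leqslant |w|$ is verified; there is no real obstacle, since this bound is immediate from $p\leqslant q \leqslant |w|/2$. The only subtlety worth a brief remark is that the statement uses ``the'' period (unique shortest period), which we use twice: once to get $p\leqslant q$ (enabling the Fine--Wilf hypothesis) and once to conclude $\gcd(p,q)=p$ rather than some smaller divisor.
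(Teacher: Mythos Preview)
Your proof is correct: the Fine--Wilf theorem applies directly once you observe $p\leqslant q\leqslant |w|/2$, and the minimality of $p$ then forces $\gcd(p,q)=p$. The paper itself does not give a proof of this lemma but simply refers to \cite[Lem.~1]{ABCD} as a standard fact in combinatorics on words, so your argument via Fine--Wilf is an appropriate way to fill in the omitted details.
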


\begin{lemma}
\label{LEM:word-power}
Assume that $w = u w_p$, where $w_p$ is a prefix of $w$. Then, $w$ is a (rational) power of the word $u$.
\end{lemma}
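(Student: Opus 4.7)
The plan is to show that $|u|$ is a period of $w$, and then to unfold this periodicity to identify $w$ with an initial segment of the infinite periodic word whose fundamental block is $u$.

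First I would compare lengths: the equation $w = u w_p$ immediately gives $|w_p| = |w| - |u|$, so in particular $|u| \leqslant |w|$ and $w_p$ is a prefix of $w$ of length $|w| - |u|$. Reading the identity $w = u w_p$ letter by letter yields $w_i = (w_p)_{i - |u|}$ for every index with $|u| < i \leqslant |w|$. Since $w_p$ is a prefix of $w$, the right-hand side equals $w_{i - |u|}$, and therefore $w_i = w_{i - |u|}$ for every such $i$. This is precisely the statement that $|u|$ is a period of $w$.

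Next I would iterate this relation. By induction on $\lceil i / |u|\rceil$, using the base case $w_{[1,|u|]} = u$ that one reads off directly from $w = u w_p$, one obtains
\[
w_i = u_{((i-1) \bmod |u|) + 1}, \qquad 1 \leqslant i \leqslant |w|.
\]
Writing $|w| = m\,|u| + k$ with $m \in \N_0$ and $0 \leqslant k < |u|$, this identifies $w$ with $u^m\, u_{[1,k]}$, which by the convention introduced just before the lemma is exactly the rational power $u^{|w|/|u|}$.

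There is no real obstacle here: the argument is a direct unfolding of the recurrence $w_i = w_{i-|u|}$, and the only points requiring (trivial) care are the length bookkeeping $|w_p| = |w| - |u|$ and the verification that the initial block $w_{[1,|u|]}$ coincides with $u$, both of which are immediate from $w = u w_p$.
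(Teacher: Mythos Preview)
Your proof is correct and complete. The paper itself does not give a proof of this lemma; it is stated there without proof as a standard tool in combinatorics on words, so your argument actually supplies more detail than the paper does.
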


For the following, we write $a \triangleleft_p b$ if $a$ is a prefix of $b$ (this is true by convention if $a$ is the empty word).

Next, we introduce the concepts of hitting time and hitting depth.

\begin{definition}
Given a word $w$ and $j \in \N$ with $j < |w|$, the \emph{$j$-hitting times} (with respect to $w$) are given by
\[
\mc H_j(w) = \left\{ k \leqslant j: w_{[k,j]} \in \mc G \mbox{ and } w_{[k,j+1]} \notin \mc G \right\}.
\]
Likewise, the \emph{(full) hitting times} (with respect to $w$) are
\[
\mc H(w) = \left\{ k \leqslant |w|: w_{[k,|w|]} \in \mc G \right\}.
\]
\end{definition}
Note that every $k \in [1,|w|]$ is either a full hitting time or a $j$-hitting time for some unique $j < |w|$. That is, the sets $\mc H(w)$ and $\mc H_j(w)$ with $1\leqslant j < n$ form a partition of $[1,|w|]$.
If $k \in \mc H_j(w)$, we call $j$ the \emph{hitting depth} of $k$ and say that $k$ has \emph{full hitting depth} if $k \in \mc H(w)$.

We will see later in Section \ref{subsec: Variation psi} that the concept of a $j$-hitting time is useful because it allows us to find bounds for $\psi^c$ and its variation on the corresponding cylinders.

Moreover, the self-return properties of $\breve{c}$ impose some structure on the possible sets of \mbox{($j$-)}hit\-ting times. In order to analyze these self-returns in a purely combinatorial manner, we consider some (symbolic) variants of hitting times. Choose an arbitrary symbolic representation of $\breve{c}$, if it is not unique, in the following. We set
\[
\widetilde{\mc H}_j(w) = \left\{ k \leqslant j+1 : j \mbox{ is maximal with } w_{[k,j]} \triangleleft_p \breve{c}  \right\}
\]
 for $j<|w|$, 
 and
\[
\widetilde{\mc H} (w) = \left\{k \leqslant |w|: w_{[k,n]} \triangleleft_p \breve{c} \right\}.
\]
In analogy to the earlier discussion, $j$ is the \emph{symbolic hitting depth} of $k$ if $k \in \widetilde{\mc H}_j(w)$, and $k$ has \emph{full symbolic hitting depth} if $k \in \widetilde{\mc H} (w)$. In general, the hitting depth of $k$ will be larger than its symbolic variant. However, blocks of the form $01$ or $10$ form an obstacle for the additional depth.
\begin{lemma}
\label{LEM:hitting-depths-relations}
Let $w \in \Sigma^n$ and $r<n-1$ be such that $w_{[r+1,r+2]} \in \{01,10 \}$. If $k \in \widetilde{\mc H}_j(w)$ for some $j < r $, then the hitting depth of $k$ is at most $r+1$.
\end{lemma}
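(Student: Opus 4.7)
The plan is to track the integer difference
\[
d_m \;\coloneqq\; w_{[k,j+m]} - \breve{c}_{[1,j-k+1+m]} \qquad (m\geqslant 0),
\]
reading each finite binary string as a non-negative integer, and to exploit the arithmetic constraint $|d_m|\leqslant 1$, which is necessary for $w_{[k,j+m]}\in\mc G_{j-k+1+m}$ in both the non-dyadic and the dyadic definitions of $\mc G$. Directly from the definition of $\widetilde{\mc H}_j(w)$ we note $d_0=0$ and $w_{j+1}\neq \breve{c}_{j-k+2}$, so $d_1 \in \{-1,+1\}$.

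The central step is a rigidity induction based on the recursion
\[
d_{m+1} \;=\; 2 d_m \,+\, \bigl(w_{j+1+m}-\breve{c}_{j-k+2+m}\bigr).
\]
If $|d_m|\leqslant 1$ for every $m$ in a range $1 \leqslant m\leqslant M+1$, then since $2 d_m\in\{\pm 2\}$ and the digit difference lies in $\{-1,0,+1\}$, the recursion forces $d_m = d_1$ throughout and uniquely determines the digits: $w_{j+1+m}=0,\,\breve{c}_{j-k+2+m}=1$ if $d_1=+1$, and $w_{j+1+m}=1,\,\breve{c}_{j-k+2+m}=0$ if $d_1=-1$. Consequently, under these assumptions,
\[
w_{[j+1,\,j+M+1]} \;\in\; \bigl\{1\ts 0^{M},\; 0\ts 1^{M}\bigr\}.
\]

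To conclude, I would set $M\coloneqq r+1-j$, so that $M\geqslant 2$ (by $j<r$) and $j+M+1=r+2\leqslant n$ (by $r<n-1$). Assume for contradiction that $w_{[k,r+2]}\in\mc G$. The monotonicity of $\mc G$ noted in the text then gives $w_{[k,j+m]}\in\mc G$ for every $0 \leqslant m\leqslant M+1$, so the rigidity applies and forces $w_{[j+1,r+2]}\in\{1\ts 0^M,\,0\ts 1^M\}$. Since $M\geqslant 2$, the last two letters $w_{[r+1,r+2]}$ are then $00$ or $11$, contradicting the hypothesis $w_{[r+1,r+2]}\in\{01,10\}$. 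Hence $w_{[k,r+2]}\notin\mc G$, whence the hitting depth of $k$ is at most $r+1$; the same monotonicity also yields $w_{[k,n]}\notin\mc G$, so $k$ is not a full hitting time.

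The only nontrivial moment is the rigidity induction, where the inequality $|d_m|\leqslant 1$ over a binary alphabet must be turned into a deterministic digit assignment; once the pattern ``$1\ts 0^\ast$ or $0\ts 1^\ast$'' is extracted, the two-letter hypothesis on $w_{[r+1,r+2]}$ fires the contradiction immediately.
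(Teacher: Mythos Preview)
Your inductive approach via the integer differences $d_m$ is sound in spirit and genuinely different from the paper's argument, but it carries a small gap coming from the cyclic conventions $0^n-1=1^n$ and $1^n+1=0^n$ built into the definition of $\mc G_n$. The implication ``$w_{[k,j+m]}\in\mc G_{j-k+1+m}\Rightarrow |d_m|\leqslant 1$'' is indeed correct whenever $k\leqslant j$: then $w_{[k,j+m]}$ and $\breve{c}_{[1,j-k+1+m]}$ share the nonempty prefix $w_{[k,j]}=\breve{c}_{[1,j-k+1]}$, which automatically excludes the wraparound element of $\mc G$. But for $k=j+1$ (which is admitted by the definition of $\widetilde{\mc H}_j$) the shared prefix is empty and the wraparound can be hit: for instance, if $\breve{c}_{[1,2]}=00$ then $\mc G_2=\{11,00,01\}$, so $w_{[j+1,j+2]}=11$ lies in $\mc G_2$ with $d_2=3$. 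The fix is painless: tracing your recursion along this branch forces $w_{[j+1,j+M+1]}=1^{M+1}$ (or symmetrically $0^{M+1}$), and one still reads off $w_{[r+1,r+2]}\in\{00,11\}$, so the contradiction fires as before.

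The paper's proof bypasses the induction entirely by jumping straight to level $r+2$. Since the symbolic hitting depth of $k$ is $j<r$, the word $u=w_{[k,r]}$ is not a prefix of $\breve{c}$. Now $w_{[k,r+2]}=u\,w_{[r+1,r+2]}$ ends in $01$ or $10$, hence is neither $0^{|u|+2}$ nor $1^{|u|+2}$, and its numerical neighbours $w_{[k,r+2]}\pm 1$ leave the prefix $u$ untouched. Thus $w_{[k,r+2]}$ and both neighbours all start with $u$ and cannot equal $\breve{c}_{[1,|u|+2]}$, whence $w_{[k,r+2]}\notin\mc G$. This single observation is exactly what the hypothesis $w_{[r+1,r+2]}\in\{01,10\}$ buys: it prevents any carry from reaching the prefix when one forms the $\pm 1$ neighbours. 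Your induction reconstructs this carry-control digit by digit, which works but is more laborious and exposes you to the wraparound edge case.
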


\begin{proof}
Assume $w_{[r+1,r+2]} = 01$, the other case is completely analogous. Since $j < r$, the word $u=w_{[k,r]}$ is not a prefix of $\breve{c}$. But this implies that each of the neighbors of $w_{[k,r+2]}$, given by $u00$ and $u10$, is also not a prefix of $\breve{c}$. This precludes $w_{[k,r+2]}$ from being in $\mc G$ and hence the hitting depth of $k$ must be smaller than $r+2$.
\end{proof}

Similarly, there is a relation between symbolic and non-symbolic hitting depth if the length of blocks of the type $0^m$ and $1^m$ is bounded in $\breve{c}$. We spell that out in the particular case that $\breve{c} \neq 0$ is periodic.

\begin{lemma}
\label{LEM:periodic-hitting-depth}
Assume that $\breve{c} \neq 0$ has a periodic expansion of the form $\breve{c} = u^\infty$ with $|u| = p > 1$. Then, if $w \in \Sigma^n$ and $k \in \widetilde{\mc H}_j(w)$ for some $j < n-p$, the hitting depth of $k$ is at most $j+p$.
\end{lemma}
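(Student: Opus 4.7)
The plan is to argue by contradiction: assume the hitting depth of $k$ is at least $j+p+1$, so that $w_{[k, j+p+1]} \in \mathcal{G}$. Setting $L = j-k+1$ (the length of $w_{[k,j]}$), the word $w_{[k, j+p+1]}$ has length $L+p+1$, and by the definition of $\mathcal{G}$ it must equal either $\breve{c}_{[1, L+p+1]}$ or one of its $\pm 1$ neighbors. The case of equality is immediate: it would force $w_{[k,j+1]} = \breve{c}_{[1, L+1]} \triangleleft_p \breve{c}$, contradicting $k \in \widetilde{\mathcal{H}}_j(w)$.

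For the two remaining cases, the core idea is to locate precisely where $\breve{c}_{[1, L+p+1]}$ and its $+1$-neighbor (resp.\ $-1$-neighbor) differ: adding one in binary only flips a suffix of the form $0\,1^a$ into $1\,0^a$, for some $a \geqslant 0$, and subtracting one acts analogously. I would then impose two constraints coming from $k \in \widetilde{\mathcal{H}}_j(w)$. On the one hand, $w_{[k,j]} = \breve{c}_{[1,L]}$ forces the modified suffix not to touch the first $L$ positions, which translates to $a \leqslant p$. On the other hand, $w_{[k,j+1]} \not\triangleleft_p \breve{c}$ forces position $L+1$ to be modified, giving $a \geqslant p$.

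The crux is then the observation that $a = p$ pins down $\breve{c}_{[L+1,\, L+p+1]} = 0\,1^p$ in the $+1$-case (and $1\,0^p$ in the $-1$-case), so that in particular $\breve{c}_{L+1} \neq \breve{c}_{L+p+1}$. This directly contradicts the periodicity hypothesis $\breve{c} = u^\infty$ with $|u| = p$, which forces $\breve{c}_{L+1} = \breve{c}_{L+p+1}$. Hence the hitting depth of $k$ cannot exceed $j+p$.

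I expect the only subtlety to be bookkeeping: one should check that the wrap-around conventions $0^n - 1 = 1^n$ and $1^n + 1 = 0^n$ are never actually invoked, which holds because $\breve{c} = u^\infty$ with $|u| = p > 1$ and $\breve{c} \neq 0$ rules out $\breve{c}_{[1,L+p+1]}$ being $0^{L+p+1}$ or $1^{L+p+1}$ (the latter would require $u = 1^p$ and hence $\breve{c} = 0$ in $\TT$). The edge case $L = 0$, i.e.\ $k = j+1$, goes through by exactly the same counting, with $w_{[k,j]}$ being the empty word and the first modified position simply being position $1$.
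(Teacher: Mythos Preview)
Your proof is correct and follows essentially the same idea as the paper's: both hinge on the observation that the $\pm 1$ carry cannot propagate through the length-$p$ block $\breve{c}_{[L+2,L+p+1]}$, because periodicity together with $\breve{c}\neq 0$ prevents this block from being $0^p$ or $1^p$. The paper phrases this directly (the neighbors of $w'\underline{a}v$ still have $\underline{a}$ at position $L+1$, so $w_{[k,j+p+1]}=w'a\cdots$ cannot lie in $\mathcal G$), whereas you run the contrapositive and make the carry bookkeeping explicit; the content is the same.
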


\begin{proof}
By assumption, $w' = w_{[k,j]} \triangleleft \breve{c}$, but $w_{[k,j+1]} = w'a$ is not a prefix of $\breve{c}$. Again, denote by $\underline{a}$ the flip of $a$
and let $v$ with $|v| = p$ be the unique word such that $w'\underline{a}v \triangleleft \breve{c}$. Since $\breve{c} \neq 0$ by assumption, we certainly have $v \notin \left\{0^p, 1^p\right\}$. Hence, the neighbors of $w'\underline{a}v$ are of the form $w'\underline{a}v'$ for some $v' \in \Sigma^p$. Since $a\neq \underline{a}$, we therefore conclude that $w_{[k,j+1+p]} = w'a v'' \notin \mc G$ and it follows that the hitting depth of $k$ can be at most $j+p$.
\end{proof}

Given a word $w \in \Sigma^n$, in view of Corollary~\ref{COR:mu-on-double-word}, it seems desirable to find a word $v$ such that $\psi^c_n$ avoids the singularities on $\langle wv\rangle$ in order to get a meaningful lower bound on $\mu(\langle w \rangle)$. A priori, it is not clear how long $v$ needs to be in order to fulfill this purpose. Since $\langle w \rangle$ can contain up to $n$ singularities, slicing $\langle w \rangle$ into $n$ intervals of the same length should do the job. This would require $|v|$ to be of the order $\log_2 n$. Unfortunately, this bound is not sufficient for our purposes. The next result shows that we can do considerably better. Intuitively, this is because the occurrence of many singularities of $\psi_n^c$ in $\langle w \rangle$ requires them to be appropriately clustered, as we will see in the proof.

\begin{lemma}
\label{LEM:hitting-word-structure}
Let $w \in \Sigma^n$ and let $\mc W(w) = \left\{ w_{[k,n]} : k \in \widetilde{\mc H}(w) \right\}$ be the set of full hitting words. Then there are $s = s(w) \in \N_0$, pairwise disjoint words $u_1, \ldots, u_s$, and multiplicities 
$N_1, \ldots, N_s \in \N$ with the following properties. Let $v_i = u_i^{N_i}$ for each $1\leqslant i \leqslant s$. Then,

\begin{enumerate}
\item\label{en: 1} $\mc W(w) = \bigcup_{i=1}^s \mc W_i$, where
\[
\mc W_i = \left\{ u_i^j v_{i-1} \cdots v_1: 1 \leqslant j \leqslant N_{i} \right\}.
\]
\item\label{en: 2} Each element of $\mc W_i$ is a rational power of $u_i$.
\item\label{en: 3} $|u_{i+1}| \geqslant \max_{w \in \mc W_i}|w|/2$ for all $1\leqslant i < s$.
\end{enumerate}
In particular, $\# \widetilde{\mc H}(w) = \sum_{i=1}^s N_i$ and $s \leqslant \lceil \log_{3/2} n \rceil$.
\end{lemma}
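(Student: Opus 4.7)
The plan is to build the decomposition greedily by grouping hitting words according to periodic repetition, and then verify the structural properties via the Fine--Wilf type arguments provided by Lemmas~\ref{LEM:period-division} and \ref{LEM:word-power}.

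\textbf{Setup.} First I would observe that every $W \in \mathcal{W}(w)$ is simultaneously a prefix of $\breve{c}$ (by definition of full symbolic hitting time) and a suffix of $w$. Because prefixes of the fixed sequence $\breve{c}$ are totally ordered by length, $\mathcal{W}(w)$ is totally ordered by $\triangleleft_p$: enumerating $\mathcal{W}(w) = \{W_1,\ldots,W_m\}$ with $W_1 \triangleleft_p W_2 \triangleleft_p \cdots \triangleleft_p W_m$, each shorter $W_i$ is both a prefix and (being a suffix of $w$) a suffix of every longer $W_j$. Lemma~\ref{LEM:word-power} then implies that $W_j$ is a rational power of a word of length $|W_j|-|W_i|$ for all $i<j$.

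\textbf{Greedy construction.} Set $u_1 := W_1$ and let $N_1$ be the largest integer with $u_1^k \in \mathcal{W}(w)$ for all $1\leqslant k \leqslant N_1$; define $\mathcal{W}_1 := \{u_1^k : 1\leqslant k\leqslant N_1\}$ and $v_1 := u_1^{N_1}$. Inductively, assume $u_1,\ldots,u_i$, $N_1,\ldots,N_i$, $v_1,\ldots,v_i$ have been constructed so that $\mathcal{W}_1 \cup \cdots \cup \mathcal{W}_i$ comprises the $N_1+\cdots+N_i$ shortest elements of $\mathcal{W}(w)$. If further hitting words remain, pick the next shortest $W$. Since the previous maximum $v_i\cdots v_1$ is both a prefix and a suffix of $W$, we may write $W = u_{i+1}v_i\cdots v_1$ with $u_{i+1}$ nonempty. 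Take $N_{i+1}$ maximal such that $u_{i+1}^k v_i\cdots v_1 \in \mathcal{W}(w)$ for all $1\leqslant k\leqslant N_{i+1}$, and set $v_{i+1} := u_{i+1}^{N_{i+1}}$. Terminate when no unused hitting words remain, after $s$ steps.

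\textbf{Verifying (1) and (2).} Property~(1) is built into the construction. For~(2), each element $u_{i+1}^j v_i\cdots v_1$ of $\mathcal{W}_{i+1}$ has $v_i\cdots v_1$ as both prefix (since it is a shorter prefix of $\breve{c}$) and suffix (since it is a suffix of $w$, as is the longer hitting word); Lemma~\ref{LEM:word-power} yields that the word is a rational power of its prefix of length $|u_{i+1}|$, which is $u_{i+1}$ itself.

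\textbf{Verifying (3)---the main obstacle.} I would argue by contradiction: suppose $|u_{i+1}| < |v_i\cdots v_1|/2$. By~(2), $v_i\cdots v_1 = u_i^{N_i}v_{i-1}\cdots v_1$ is a rational power of $u_i$, hence has period $|u_i|$. The shortest word $W^\ast := u_{i+1}v_i\cdots v_1 \in \mathcal{W}_{i+1}$ has period $|u_{i+1}|$, and the hypothesis implies $|u_{i+1}| \leqslant |W^\ast|/2$, so by Lemma~\ref{LEM:period-division} the primitive period $p$ of $W^\ast$ divides $|u_{i+1}|$. The suffix $v_i \cdots v_1$ of length larger than $2|u_{i+1}|$ therefore inherits the period $|u_{i+1}|$, and combined with its period $|u_i|$ a Fine--Wilf style argument (again via Lemma~\ref{LEM:period-division}) forces $u_{i+1}$ to be a rational power of $u_i$. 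But then $W^\ast = u_i^{N_i+j'}v_{i-1}\cdots v_1$ for some $j'\geqslant 1$, placing $W^\ast$ in $\mathcal{W}_i$ and contradicting the maximality of $N_i$. This is the step I expect to require the most care, since the Fine--Wilf conclusion must be extracted without a direct appeal to the full Fine--Wilf theorem (which is not stated in the paper)---one has to exploit the specific structure coming from the prefix/suffix identifications.

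\textbf{Logarithmic bound.} Set $M_i := \max_{w\in\mathcal{W}_i}|w| = |v_i\cdots v_1|$. By (3), $|u_{i+1}| \geqslant M_i/2$, so
\[
M_{i+1} = |u_{i+1}^{N_{i+1}} v_i\cdots v_1| \geqslant |u_{i+1}| + M_i \geqslant \tfrac{3}{2}\,M_i.
\]
Iterating, $n \geqslant M_s \geqslant (3/2)^{s-1} M_1 \geqslant (3/2)^{s-1}$, and rearranging yields $s \leqslant \lceil \log_{3/2}n\rceil$. The equality $\#\widetilde{\mathcal{H}}(w) = \sum_{i=1}^s N_i$ is immediate from~(1).
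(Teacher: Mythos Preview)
Your greedy construction and the overall strategy coincide with the paper's; the key ingredients (nested prefix/suffix structure, Lemma~\ref{LEM:word-power}, and Lemma~\ref{LEM:period-division}) are the same. However, your argument for (3) has a genuine gap. To apply Lemma~\ref{LEM:period-division} to $v_i\cdots v_1$ you need to know its \emph{smallest} period, not just that $|u_i|$ and $|u_{i+1}|$ are both periods. Your text appeals to a ``Fine--Wilf style argument'' at this point, but Lemma~\ref{LEM:period-division} only yields divisibility when one of the two periods is minimal, and a genuine Fine--Wilf argument would require $|u_i|+|u_{i+1}|\leqslant M_i$, which can fail when $N_i=1$.

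The paper fills this gap by first proving, for \emph{every} consecutive pair $w_{j-1},w_j$ in $\mathcal W(w)$, that the increment length $|w_j|-|w_{j-1}|$ is the minimal period of $w_j$: a strictly smaller period would produce, via Lemma~\ref{LEM:word-power}, an element of $\mathcal W(w)$ strictly between $w_{j-1}$ and $w_j$. Once this is established, $|u_i|$ is identified as the minimal period of $v_i\cdots v_1$, Lemma~\ref{LEM:period-division} gives $|u_i|\mid |u_{i+1}|$, and hence $u_{i+1}$ is an \emph{integer} (not merely rational) power of $u_i$ --- which is what is actually needed to contradict the maximality of $N_i$. This minimal-period observation also justifies your unproved inductive hypothesis that the $\mathcal W_i$ form consecutive blocks of $\mathcal W(w)$ (since the increment lengths are then non-decreasing), and incidentally yields the strict inequality $|u_{i+1}|>M_i/2$, sharpening your recursion to $(3/2)^{s-1}<n$ and recovering the bound $s\leqslant\lceil\log_{3/2}n\rceil$ exactly rather than only $s\leqslant 1+\log_{3/2}n$.
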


Note that in the last lemma, to ease notation, we have used $u_i$ and $w_i$ to refer to a specific word, not the $i$th letter of a word.

\begin{proof}
Note that all elements of $\mc W(w)$ are suffixes of each other and prefixes of $\breve{c}$. This imposes some additional structure. 
In particular, for every $w',w''  \in \mc W$ with $|w'| < |w''|$, the word $w'$ is both a suffix and a prefix of $w''$.
\begin{figure}[h]
 \begin{tikzpicture}[x=1cm,y=0.4cm]
    \draw (0,0) node[anchor=south] {$\breve{c}_1$};
    \draw (1.5,0) node[anchor=south] {$\breve{c}_2$};
    \draw (3,0) node[anchor=south] {$\breve{c}_3$};
    \draw (4.5,0) node[anchor=south] {$\ldots$};
    \draw (6,0) node[anchor=south] {$\breve{c}_{n-k-1}$};
    \draw (7.5,0) node[anchor=south] {$\breve{c}_{n-k}$};
    \draw (9,0) node[anchor=south] {$\breve{c}_{n-k+1}$};
    \draw (10.5,0) node[anchor=south] {$\breve{c}_{n-k+2}$};
        \draw (12,0) node[anchor=south] {$\ldots$};
        \draw [decorate,     decoration = {brace}] (0,1.6) --  (9,1.6);
        \draw (4.5,2) node[anchor=south] {$w_{[k,n]}$};
        \draw [decorate,     decoration = {brace}] (3,-0.4) --  (0,-0.4);
        \draw [decorate,     decoration = {brace}] (9,-0.4) --  (5.6,-0.4);
        \draw (1.5,-2) node[anchor=south] {$w_{[n-2,n]}$};
        \draw (7.4,-2) node[anchor=south] {$w_{[n-2,n]}$};%
  \end{tikzpicture}
   \caption{Example of $w'=w_{[n-2,n]}$, $w''=w_{[k,n]}$ with $w',w''\in \mc W$ and  $|w'|<|w''|$. The word $w'$ is both a prefix and suffix of $w''$.}
\end{figure}

Ordered according to their length, we denote the elements of $\mc W(w)$ by $w_1,w_2,\ldots,w_r$ with $r = \#  \widetilde{\mc H}(w)$.
Since $w_j$ is always a suffix of $w_{j+1}$, we can write 
\begin{equation*}
w_{j+1} =  \tilde{u}_{j+1} w_j
\end{equation*}
for all $1 \leqslant j \leqslant r-1$. For notational convenience, we also set $ \tilde{u}_1 = w_1$. 
Due to Lemma~\ref{LEM:word-power}, it follows that each $w_j$ is a (rational) power of $ \tilde{u}_j$.
We may now set $u_1=\tilde{u}_1$ and if $u_{j}=\tilde{u}_\ell$ then we set $u_{j+1}=\tilde{u}_{\ell+i}$, where $i$ is the minimal term such that $\tilde{u}_{\ell+i}\neq \tilde{u}_{\ell}$.

We observe that the (smallest) period of $w_j$ is $|\tilde{u}_j|$.
If this statement were not true, we could write $w_j =  u^p$ for some $|u|< |\tilde{u}_j|$ and $p > 1$. 
In this case, it would follow that $w' = u^{p-1} \in \mc W(w)$ with $|w_{j-1}| < |w'| < |w_j|$. This would  contradict the definition of $w_j$ as the lengthwise successor of $w_{j-1}$.

This also implies that $\tilde{u}_j$ is a prefix of $\tilde{u}_{j+1}$. This is because $w_j$, being a prefix of $w_{j+1}$, can also be written as a power of $\tilde{u}_{j+1}$, which cannot be larger than $\tilde{u}_j$ due to the minimality of the period $|\tilde{u}_j|$ of $w_j$.

In particular, we obtain that $u_j$ is a strict prefix of $u_{j+1}$ for all $1 \leqslant j \leqslant s-1$, implying that all of these words are pairwise disjoint.

The above construction already gives the structure claimed in \eqref{en: 1} and \eqref{en: 2}.

Finally, if $|\tilde{u}_{j+1}| \leqslant |w_j|/2$ we can use Lemma~\ref{LEM:period-division} and the statement that the smallest period of $w_j$ is $|\tilde{u}_j|$ to conclude that $|\tilde{u}_{j+1}|$ is divisible by $|\tilde{u}_j|$. Hence, $\tilde{u}_{j+1}$ is an integer power of $\tilde{u}_j$ and the same follows for $w_{j+1}$. Unless $\tilde{u}_j = \tilde{u}_{j+1}$ this is in contradiction to the minimality of $|\tilde{u}_{j+1}|$ as the period of $w_{j+1}$. 

Item \eqref{en: 3} implies that the growth of $|w_j|$ alternates between phases of linear growth (due to periodic continuation) and exponential growth with a factor of at least $3/2$. 

The cardinality $s$ of the set $\mc U = \left\{ \tilde{u}_1, \ldots, \tilde{u}_r \right\}$ is in general smaller than $r$ because several of the words might be equal. Ordering the elements of $\mc U$ according to their length we obtain the words $u_1, \ldots, u_s$. By the observations above, we can find a strictly increasing sequence $(j_k)_{1\leqslant k \leqslant s}$ of non-negative integers such that 
\[
w_{j+1} = u_k w_j
\]
for all $j_k \leqslant j < j_{k+1}$ (with the convention that $w^0$ is the empty word). By \eqref{en: 3}, we see that $|w_{j_{k}+1}| > 3/2 |w_{j_k}|$ for all $1\leqslant k \leqslant s$. Iterating this relation yields
$
|w_{j_s}| > (3/2)^{s-1}.
$
Since $|w_{j_s}| \leqslant n$, we obtain
\begin{equation*}
s \leqslant \lceil \log_{3/2} n \rceil.
\end{equation*}
\end{proof}

In the notation of Lemma~\ref{LEM:hitting-word-structure}, we interpret $s(w)$ as the number of \emph{independent} hitting times in $\widetilde{\mc H}(w)$. We show that the length of $v$ required to terminate singularities starting in $w$ can be bound in terms of $s(w)$.

\begin{lemma}
\label{LEM:closing-extension-II}
There is a constant $C>0$ with the following property. Let $w \in \Sigma^n$ and $s = s(w)$ the number of independent hitting times in $\widetilde{\mc H}(w)$. Then there is a word $v$ such that
\begin{itemize}
\item $w_{[k,n]}v \notin \mc G$ for all $1\leqslant k \leqslant n$
\item $|v| \leqslant \max \left\{C, 2 \log_2 s \right\}$.
\end{itemize}
In particular, there is $n_0 \in \N$ such that for all $n \geqslant n_0$, we have $|v| \leqslant 2 \log_2 \log_{3/2} n$.
\end{lemma}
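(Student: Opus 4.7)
The plan is a pigeonhole/counting argument: I would choose $|v|=\ell$ large enough that the set of ``forbidden'' extensions (those for which $w_{[k,n]}v\in\mc G$ for some $k$) has cardinality strictly less than $2^\ell$, so a good $v$ must exist.  Two reductions simplify matters.  First, $\mc G$ is prefix-closed --- if $uv\in\mc G$ then $u\in\mc G$, because the length-$m$ prefixes of three consecutive length-$n$ binary numbers form at most three consecutive length-$m$ binary numbers, hence lie in $\mc G_m$ --- so it suffices to verify $w_{[k,n]}v\notin\mc G$ for the full hitting times $k\in\mc H(w)$.  Second, writing $m:=n-k+1$ and $w_{[k,n]}=\breve{c}_{[1,m]}+\delta_1\pmod{2^m}$ with $\delta_1\in\{-1,0,1\}$, a direct modular calculation shows that $w_{[k,n]}v\in\mc G_{m+\ell}$ forces $v\equiv\breve{c}_{[m+1,m+\ell]}+\delta_2-\delta_1 2^\ell\pmod{2^{m+\ell}}$ for some $\delta_2\in\{-1,0,1\}$, with the representative lying in $[0,2^\ell)$.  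For symbolic hits ($\delta_1=0$, i.e.\ $k\in\widetilde{\mc H}(w)$) this yields the three consecutive binary words around $\breve{c}_{[m+1,m+\ell]}$; for non-symbolic hits ($\delta_1=\pm 1$) the range constraint forces $\breve{c}_{[m+1,m+\ell]}\in\{0^\ell,1^\ell\}$ and pins the forbidden $v$ to one of the two corner words, so the entire non-symbolic contribution is at most $2$ extra values irrespective of $|\mc H(w)\setminus\widetilde{\mc H}(w)|$.

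Next I would apply Lemma~\ref{LEM:hitting-word-structure} to the symbolic set $\widetilde{\mc H}(w)$.  Within group $\mc W_i$ the hitting words have lengths forming an arithmetic progression of common difference $|u_i|$, and $\breve{c}$ has an initial segment of length $L_i\geqslant\max_{w\in\mc W_i}|w|$ that is a rational power of $u_i$.  By periodicity, all lengths $m$ arising from $\mc W_i$ with $m+\ell\leqslant L_i$ yield the same window $\breve{c}_{[m+1,m+\ell]}$, while at most $\lceil\ell/|u_i|\rceil+1$ ``boundary'' lengths may produce distinct windows.  Group $i$ thus contributes at most $3(\ell/|u_i|+2)$ forbidden $v$'s, and since $u_i$ is a strict prefix of $u_{i+1}$ we have $|u_i|\geqslant i$.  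Summation yields a total forbidden count
\[
3\sum_{i=1}^{s}\left(\frac{\ell}{|u_i|}+2\right)+2\ \leqslant\ 3\ell(1+\ln s)+6s+2\ =\ O(\ell\log s+s).
\]

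Setting $\ell=\lceil 2\log_2 s\rceil$ gives $2^\ell\geqslant s^2$, which dominates the forbidden count once $s$ exceeds a universal threshold; for smaller $s$ the constant $C$ in the statement absorbs the exception.  The asymptotic bound $|v|\leqslant 2\log_2\log_{3/2}n$ for large $n$ then follows from $s\leqslant\lceil\log_{3/2}n\rceil$, established in Lemma~\ref{LEM:hitting-word-structure}.

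The hardest part will be the modular computation in the second reduction: carefully tracking the interaction between $\delta_1$, $\delta_2$ and the range constraint $v\in[0,2^\ell)$ to obtain the decisive dichotomy ``three forbidden $v$'s for symbolic hits, but only two globally for non-symbolic ones.''  A secondary subtlety lies in verifying within-group periodicity of $\breve{c}$ and the tight boundary count $\lceil\ell/|u_i|\rceil+1$, both of which can be read off from the proof of Lemma~\ref{LEM:hitting-word-structure}.  The hypothesis $\breve{c}\neq 1/2$ enters precisely by excluding eventually-constant tails $10^\infty$ or $01^\infty$, which would produce arbitrarily many $m$ with $\breve{c}_{[m+1,m+\ell]}\in\{0^\ell,1^\ell\}$ and thereby undermine the ``at most two corner values'' bound for the non-symbolic contribution.
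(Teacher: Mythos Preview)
Your approach is correct and follows the same overall strategy as the paper: a pigeonhole argument on the set of forbidden extensions, using the structural decomposition from Lemma~\ref{LEM:hitting-word-structure} to control the count group by group. There are two notable differences in execution. First, to dispose of the non-symbolic hitting times (i.e., $k\in\mc H(w)\setminus\widetilde{\mc H}(w)$), the paper simply restricts $v$ to begin with $01$ and invokes Lemma~\ref{LEM:hitting-depths-relations} to conclude $w_{[k,n]}v\notin\mc G$ automatically for all such $k$; your modular-arithmetic argument that the non-symbolic forbidden set is confined to the two corner words $\{0^\ell,1^\ell\}$ is equally valid and arguably more transparent. Second, your per-group bound $3(\ell/|u_i|+2)$ is sharper than the paper's $3(L_n+2)$ (the paper does not exploit $|u_i|\geqslant i$ in the summation), so your total count is $O(s+\ell\log s)$ rather than $O(s\ell)$; either is comfortably dominated by $2^\ell\geqslant s^2$.

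One correction: your final paragraph is mistaken. The lemma carries no hypothesis $\breve{c}\neq 1/2$, and none is needed. Your own dichotomy already shows that the non-symbolic forbidden set is contained in $\{0^\ell,1^\ell\}$ \emph{regardless} of how many lengths $m$ satisfy $\breve{c}_{[m+1,m+\ell]}\in\{0^\ell,1^\ell\}$ --- the forbidden $v$-values do not proliferate with $m$, only the set of $m$'s triggering them does, which is irrelevant to the count. You may be conflating this lemma with later statements (e.g., Proposition~\ref{PROP:word-modification-control}) that do carry such a restriction for different reasons.
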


\begin{proof}
First, let us note that whenever $u \in \Sigma^m$ is not a prefix of $\breve{c}$ and $v' \in \{01,10\}$, then $u v' \notin \mc G$ due to Lemma~\ref{LEM:hitting-depths-relations}. 
Hence, choosing the first two letters of $v$, $v_1 v_2 \in \{01,10 \}$, the first property is automatic, unless $w_{[k,n]} \triangleleft_p \breve{c}$.
We therefore restrict our attention to the sets $\mc H(w)$ and $\mc W(w)$ with the latter one defined as in Lemma \ref{LEM:hitting-word-structure}.

Let $L_n \in \N$ be an integer length to be determined later and let
\[
\widetilde{ \mc G} = \widetilde{ \mc G}(\mc W(w)) := \left\{ v \in 01\Sigma^{L_n} : wv \in \mc G \mbox{ for some } w \in \mc W(w) \right\}
\]
be the forbidden extensions of words in $\mc W(w)$.
Provided that the cardinality of $\widetilde{\mc G}$ is strictly smaller than $\# 01 \Sigma^{L_n} = 2^{ L_n}$, there is some $v \in 01\Sigma^{L_n}$ such that $wv \notin \mc G$ for all $w \in \mc W$. Due to the prefix $01$, it follows in fact for every $v \in 01\Sigma^{L_n} \setminus \widetilde{\mc G}$ that
\begin{equation}
\label{EQ:not-in-F}
w_{[k,n]} v \notin \mc G,
\end{equation}
for all $1\leqslant k \leqslant n$, as we discussed at the beginning of the proof. It remains to show the existence of such an $L_n$ that also satisfies the bound $L_n + 2 \leqslant \log_2 s $. 
Let us try to find an appropriate estimate for the cardinality of $\widetilde{\mc G}$. Given a fixed word $w \in \mc W$, all words of the form $wv$ with $v \in 01\Sigma^{L_n}$ have the same length and hence, there can be at most $3$ such words $v$ with $wv \in \mc G$. This gives the immediate bound 
\begin{equation}
\label{EQ:F-trivial-bound}
\# \widetilde{\mc G}(\mc W)(w) \leqslant 3 \#\mc W(w). 
\end{equation}
This is however not quite sufficient for our purposes. To obtain a finer estimate, we consider the disjoint decomposition $\mc W(w) = \bigcup_{1\leqslant k \leqslant s} \mc W_k$ with $\mc W_k$ as in Lemma \ref{LEM:hitting-word-structure}.
Note that all words in $\mc W_k$ differ only by some integer power of $u_k$ at the beginning of the word. 
Accordingly, we may decompose
\[
\widetilde{\mc G}(\mc W(w)) = \bigcup_{1 \leqslant k \leqslant s}  \widetilde{\mc G}(\mc W_k). 
\]
If $\# \mc W_k \leqslant {L_n}$, the bound $\# \widetilde{\mc G}(\mc W_k) \leqslant 3 { L_n}$ is immediate; compare \eqref{EQ:F-trivial-bound}. In general, the elements in $\mc W_k$ are precisely those of the form
\[
w'_m = (u_k)^{m+q}
\]
for some $q \in \Q$ and $1\leqslant m \leqslant m_0 := \# \mc W_k$. In particular, $\breve{c}$ starts with the word
\[
\breve{c}_p = (u_k)^{m_0 + q}.
\]
If $m_0 - m \geqslant {L_n} + 2$, it follows in particular that $|\breve{c}_p| - |w'_m| \geqslant {L_n} + 2$ and hence 
\[
|w'_m v| \leqslant |\breve{c}_p|
\]
for all $v \in 01\Sigma^{L_n}$. Therefore, $w'_m v$ being a prefix of $\breve{c}$ requires that $w'_m v$ is a periodic repetition of $u'_k$. This singles out a word $v'$ uniquely, which depends on $q$ and $u_k$ but \emph{not} on $m$. In this case, $w'_m v \in \mc G$ precisely if $w'_m v$ is in the `neighborhood' of $w'_m v'$, that is, if
\[
w'_m v \in w'_m v' + \{0,1,-1\},
\]
which requires
\[
v \in v' + \{0,1,-1\}.
\]
But this means that $\widetilde{\mc G}(\left\{ w^m \right\})$ is the same set (of at most $3$ elements) as long as $m \leqslant m_0 -({L_n}+2)$. Hence, we obtain
\[
\#  \widetilde{\mc G}(\mc W_k) \leqslant 3 ({L_n} + 2)
\]
in all cases. Due to the bound on $s$ in Lemma \ref{LEM:hitting-word-structure}, this implies
\[
\# \widetilde{\mc G}(\mc W) \leqslant \sum_{k=1}^s \# \widetilde{\mc G}(\mc W_k) \leqslant 3 ({L_n} + 2) \lceil  s \rceil.
\]
Now, let us choose
\begin{equation}
\label{EQ:c-n-definition}
{L_n} = \lfloor 2 \log_2  s \rfloor -2.
\end{equation}
Then, there is $n_0 \in \N$ such that for $n \geqslant n_0$, we have
\[
\# \widetilde{\mc G}(\mc W) \leqslant  s^2.
\] 
Therefore, $\log_2 \# \widetilde{\mc G}(\mc W) \leqslant 2 \log_2  s < {L_n} $, implying
\[
\# \widetilde{\mc G}(\mc W) < 2^{L_n}.
\]
Hence, there exists a word $v \in 01\Sigma^{ L_n}\setminus \widetilde{\mc G}(\mc W)$. Due to \eqref{EQ:not-in-F} and \eqref{EQ:c-n-definition} this word $v$ has the desired properties.
The bound $s \leqslant \lceil \log_{3/2} n \rceil$ was given in Lemma~\ref{LEM:hitting-word-structure}.
\end{proof}

For later use, we record that a large number of hitting times (relative to the word length) requires the number of independent hitting times to be small.

\begin{lemma}
\label{LEM:s-hitting-relation}
Let $w \in \Sigma^n$ and let $\delta \in (0,1)$ be such that $\delta n = \# \widetilde{\mc H}(w) - s$, with $s$ as in Lemma~\ref{LEM:hitting-word-structure}. Then we have $s \leqslant 2 + \log_{3/2} (2\delta^{-1})$.
\end{lemma}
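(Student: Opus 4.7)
The plan is to leverage the structural description of $\widetilde{\mc H}(w)$ in Lemma~\ref{LEM:hitting-word-structure} to force a lower bound on $n$ in terms of $s$ and the excess
\[
\delta n \;=\; \#\widetilde{\mc H}(w) - s \;=\; \sum_{i=1}^s (N_i - 1).
\]
Let $M_i \coloneqq \sum_{k=1}^i N_k |u_k|$ denote the length of the longest word in $\mc W_i$; by construction $M_s \leqslant n$, and property~\eqref{en: 3} of Lemma~\ref{LEM:hitting-word-structure} gives $|u_{i+1}| \geqslant M_i/2$ for every $1 \leqslant i \leqslant s-1$. Together with $|u_1| \geqslant 1$, this yields $M_1 \geqslant N_1$ and the recursion $M_i \geqslant M_{i-1}(1 + N_i/2)$ for $i \geqslant 2$, which iterates to
\[
n \;\geqslant\; M_s \;\geqslant\; N_1 \prod_{i=2}^s \bigl(1 + N_i/2\bigr).
\]

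Next, I would minimize this lower bound over admissible tuples $(N_1,\dots,N_s) \in [1,\infty)^s$ subject to $\sum_{i=1}^s N_i = s + \delta n$. Because $N_1 \mapsto \log N_1$ and $N_i \mapsto \log(1 + N_i/2)$ are concave, the logarithm of the right-hand side is a concave function on this $(s-1)$-dimensional simplex, and its minimum is therefore attained at an extreme point: a configuration in which all $N_i$ equal $1$ except one which takes the value $\delta n + 1$. A direct comparison of the two vertex types (excess placed at index $1$ versus at some index $j \geqslant 2$) shows the latter yields the smaller value, namely $(3/2)^{s-2}(\delta n + 3)/2$. Consequently,
\[
n \;\geqslant\; (3/2)^{s-2}\, \frac{\delta n + 3}{2} \;\geqslant\; (3/2)^{s-2}\, \frac{\delta n}{2},
\]
and rearranging gives $(3/2)^{s-2} \leqslant 2\delta^{-1}$, hence $s \leqslant 2 + \log_{3/2}(2\delta^{-1})$.

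The only subtlety is the vertex-minimum step. For $s \leqslant 2$ the inequality is automatic since $\log_{3/2}(2\delta^{-1}) > 0$ for every $\delta \in (0,1)$, so only $s \geqslant 3$ needs attention; there the reduction to an extreme point is routine from the concavity of each log-factor, and the explicit comparison of the two vertex values completes the bound. I do not anticipate any further obstacle.
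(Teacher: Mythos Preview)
Your proof is correct and follows essentially the same route as the paper: both arguments set $M_i=\ell_i=\sum_{k\leqslant i}N_k|u_k|$, use property~\eqref{en: 3} to obtain the recursion $\ell_i\geqslant \ell_{i-1}(1+N_i/2)$, and then bound $n\geqslant \ell_s\geqslant N_1\prod_{i=2}^s(1+N_i/2)$ from below in terms of $\delta n=\sum_i(N_i-1)$. The only cosmetic difference is how the last bound is extracted: the paper uses the elementary inequality $\prod_{i}(a_i+3)\geqslant 3^{r-1}(3+\sum_i a_i)$ (together with $aN_1\geqslant a+N_1-1$), while you relax to real $N_i\in[1,\infty)$ and invoke concavity to push the minimum to a vertex of the simplex; both land on $n\geqslant (3/2)^{s-2}(\delta n+3)/2$ and hence the same conclusion.
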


\begin{proof}
In the notation of Lemma~\ref{LEM:hitting-word-structure}, let $m_i = |u_i|$ for all $1\leqslant i \leqslant s$. Then, for every $k$, the largest word in $\mc W_k$ has length $\ell_k = \sum_{i=1}^k N_i m_i$. Furthermore, $m_{i+1} \geqslant \ell_i/2$, and thus
\[
\ell_{i+1} = N_{i+1} m_{i+1} + \ell_i
\geqslant \frac{1}{2} (N_{i+1} + 2) \ell_i,
\]
for all $1\leqslant i < s$. Iterating this relation, and using $\ell_1 = N_1 m_1 \geqslant N_1$ yields
\begin{equation}
\label{EQ:ell-s-product}
\ell_s \geqslant 2^{-s+1} N_1 \prod_{i=2}^s (N_i + 2).
\end{equation}
Note that for $a_1,a_2\geqslant 0$ and $k \in \N$, we have
$
(a_1 + k)(a_2 + k) \geqslant k(a_1 + a_2 + k)
$
and inductively
\[
\prod_{i=1}^r (a_i + k) \geqslant k^{r-1} \biggl( k+\sum_{i=1}^r a_i  \biggr),
\]
given $a_i \geqslant 0$.
Applying this with $a_i = N_{i+1} -1$ and $k=3$ gives
\[
\prod_{i=2}^s (N_i + 2) \geqslant 3^{s-2} \biggl( 3 + \sum_{i=2}^s (N_i - 1) \biggr).
\]
Moreover, 
from \eqref{EQ:ell-s-product}, the fact that $a N_1\geq a+N_1-1$ for all $a\geq 1$ and $N_1\in\mathbb{N}$ and $\# \widetilde{\mc H}(w) = \sum_{i=1}^s N_i$ we get
\[
2^{s-1} \ell_s \geqslant N_1 \prod_{i=2}^s (N_i + 2) \geqslant 3^{s-2}\biggl( 3 + \sum_{i=2}^s (N_i - 1) \biggr)N_1 \geqslant 3^{s-2}\biggl( 3 + \sum_{i=1}^s (N_i - 1) \biggr) \geqslant 3^{s-2} \delta n.
\]
Since the length $\ell_s$ of the largest word in $\mc W(w)$ is bounded by $n$, we therefore conclude that 
$
2n \geqslant (3/2)^{s-2} \delta n,
$
which gives the desired bound for $s$.
\end{proof}

So far we have worked on finding words $v$ for a given word $w$ such that $wv$ does not have large prefixes of $\breve{c}$ (or neighbouring words) starting in $w$. For some applications it will be essential to obtain a similar property that also excludes large prefixes of $\breve{c}$ starting in $v$. This is the general idea of the results below.
In the following, if $\breve{c}$ is a dyadic point, arbitrarily choose one of its dyadic representations.

\begin{lemma}
\label{LEM:sliding-out-block-I}
Let $v \in \Sigma^n$. Then, there is a word $w = v v'$ and $ab \in \Sigma^2$ such that $w_{[k,|w|]}ab$ is not a prefix of $\breve{c}$ for all $1\leqslant k \leqslant |w|$.
Furthermore, we can choose $|v'| < |v|$ if $n$ is large enough, with $|v'|$ being uniform for all $v \in \Sigma^n$.
\end{lemma}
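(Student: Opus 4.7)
The plan is to reduce to Lemma~\ref{LEM:sliding-out-no-prefix} where it applies and to handle the two exceptional values $c \in \{0, 1/2\}$ by a direct choice of $ab$ with $v'$ empty. The key observation is that $w_{[k,|w|]}\, ab$ being a prefix of $\breve{c}$ forces $w_{[k,|w|]}$ itself to be a prefix of $\breve{c}$ of length $|w|-k+1$, so it suffices either to ensure that no suffix of $w$ is a prefix of $\breve{c}$, or to ensure that the two characters of $\breve{c}$ following every such matching prefix differ from $ab$.

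For $c \notin \{0,1/2\}$, I apply Lemma~\ref{LEM:sliding-out-no-prefix} directly: it produces a word $u$ of length at most $5 \log_2 \log_{3/2} n$ (for $n$ sufficiently large) with the property that no suffix of $w = v u$ is a prefix of $\breve{c}$ (in either representation if $\breve{c}$ happens to be dyadic). Setting $v' := u$, any choice of $ab \in \Sigma^2$ then works, and $|v'|$ depends only on $n$, not on $v$, and lies well below $n$ for $n$ large.

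For the exceptional cases I take $v' := \emptyset$ and $ab := 01$. If $c = 0$, then $\breve{c} = 1/2$ has the two dyadic expansions $10^\infty$ and $01^\infty$; every prefix of $10^\infty$ of length $\geqslant 3$ ends in $00$, and every prefix of $01^\infty$ of length $\geqslant 3$ ends in $11$. Since $w_{[k,|w|]}\,01$ has length at least $3$ and ends in $01$, it cannot be a prefix of $\breve{c}$ in either representation. If $c = 1/2$, then $\breve{c} \equiv 0 \pmod 1$ has representations $0^\infty$ and $1^\infty$, and the word $w_{[k,|w|]}\,01$ contains both a $0$ and a $1$, hence coincides with neither $0^\ell$ nor $1^\ell$. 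In both exceptional cases $|v'| = 0$, which is trivially uniform in $v$ and less than $n$.

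The only non-trivial ingredient is Lemma~\ref{LEM:sliding-out-no-prefix}; the remainder of the argument is a pure case distinction, with the exceptional cases resolved by exploiting the rigid tail structure of the two dyadic representations of $\breve{c}$ at those special values of $c$.
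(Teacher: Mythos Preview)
Your argument is circular. In the paper's logical structure, the proof of Lemma~\ref{LEM:sliding-out-no-prefix} (given in Section~\ref{sec:Technical}) explicitly invokes Lemma~\ref{LEM:sliding-out-block-I}: it first applies Lemma~\ref{LEM:closing-extension-II} to obtain a short word $\widetilde{v}$ with $w_{[k,n]}\widetilde{v}\notin\mc G$, and then applies Lemma~\ref{LEM:sliding-out-block-I} to $\widetilde{v}$ to produce the further extension $\widetilde{v}'$ and the pair $ab$. So invoking Lemma~\ref{LEM:sliding-out-no-prefix} as a black box here is not permissible. Your handling of the exceptional values $c\in\{0,1/2\}$ is fine, but these are the easy cases; the substantive content of the lemma lies precisely in the generic case that you delegate to a result depending on the lemma itself.

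The paper's own proof avoids this by a direct iterated pigeonhole construction that does not rely on any of the more refined combinatorial lemmas. One observes that for any word $v\in\Sigma^n$ there are only $n$ suffixes $v_{[k,n]}$, hence only $n$ ``forbidden'' continuations of length $n_1$, so as soon as $2^{n_1}>n$ one can pick $u_1\in\Sigma^{n_1}$ with $v_{[k,n]}u_1$ not a prefix of $\breve{c}$ for any $k$. One then repeats the same counting argument on $u_1$ (with $n_1$ in place of $n$), obtaining $u_2$ of length $n_2>\log_2 n_1$, and so on, with the lengths $n_i$ decreasing roughly like iterated logarithms until $n_r=2$, at which point the last block plays the role of $ab$. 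The total length $|v'|=\sum_{i<r}n_i$ is then easily seen to be below $n$ for $n$ large, and depends only on $n$. This argument is elementary and self-contained; your reduction, by contrast, would require an independent proof of Lemma~\ref{LEM:sliding-out-no-prefix}, which the paper does not supply.
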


\begin{proof}
We build the word $v'$ inductively. First, note that if $n_1 \in \N$ is such that $2^{n_1} > n$, there is certainly a word $u_1 \in \Sigma^{n_1}$ such that $v_{[k,n]} u_1$ is not a prefix of $\breve{c}$ for all $1\leqslant k \leqslant n$. Likewise, if $2^{n_2} > n_1$, there is $u_2 \in \Sigma^{n_2}$ such that $(u_1)_{[k,n_1]} u_2$ is not a prefix of $\breve{c}$ for $1\leqslant k \leqslant n_1$. We hence find a finite sequence of words $u_i \in \Sigma^{n_i}$ and $n_{i+1} > \log_2 n_i$ with $1\leqslant i \leqslant r$ such that $v' = u_1 \cdots u_{r-1}$ and $u_r$ have the property that $w= vv'$ satisfies that $w_{[k,|w|]} u_r$ is not a prefix of $\breve{c}$ for all $k$. Note that as long as $n_i > 2$, there is a number $n_{i+1} < n_i$ with the desired property $n_{i+1} > \log_2 n_i$. Therefore, we can assume that the sequence $n_i$ is monotonically decreasing and that $n_r = 2$. 
Since the numbers $n_i$ can be chosen independently of the particular word $v$ in $\Sigma^n$, the same holds for the length $|v'|$.
Furthermore, since the iterated logarithms decay very fast, it is straightforward to see that $|v'|$ can be chosen smaller than $n$, provided $n$ is large enough. 
\end{proof}

\begin{proof}[Proof of Lemma~\ref{LEM:sliding-out-no-prefix}]
By Lemma~\ref{LEM:closing-extension-II} 
we can choose a word $\widetilde{v}$ such that $w_{[k,n]}\widetilde{v}\notin \mc G$, for all $1\leqslant k\leqslant n$ with $|\widetilde{v}|\leqslant 2\log_2\log_{3/2} n$ if $n$ is large enough. Moreover, by Lemma~\ref{LEM:sliding-out-block-I} we can choose a word $\widetilde{v}'$ and $ab \in \Sigma^2$ such that setting $v'=\widetilde{v}\widetilde{v}'$ we have 
$v'_{[k,|v'|}ab$ is not a prefix of $\breve{c}$ for all $1\leqslant k\leqslant |v'|$ and
$|\widetilde{v}'|\leqslant |\widetilde{v}|$ if $|\widetilde{v}|$ is large enough.
Combining these two observations, we have that $v'$ and $ab$ are of the form that
$w'=vv'$ satisfies that $w'_{[k,|w'|]}ab$ is not a prefix of $\breve{c}$ for all $1\leq k\leq |w'|$ and we can assume that $|v'| \leqslant 4 \log_2 \log_{3/2}n$ for large enough $n$.
In the following, we give an explicit extension $v''$ such that no suffix of $abv''$ is a prefix of $\breve{c}$. We do this via a case distinction that covers all possible $\breve{c} \in (0,1/2)$. Note that the values $0,1/2$ are excluded by assumption and that we can extend the argument to all $\breve{c} \in (0,1)\setminus\{1/2\}$ by symmetry.
First, let us assume that $\breve{c}$ is not a dyadic point. Then, a possible set of choices is provided in the list below.

\begin{center}
    \begin{tabular}{ | l | l | l | l | l |}
    \hline
    $\breve{c}$ & $000\ldots$ & $0010\ldots$ & $0011\ldots$ & $01^n0\ldots$  \\ \hline
    $abv''$ & $ab11$ & $ab11$ &  $ab101$ & $ab1^{n+1}$  \\ 
    \hline
    \end{tabular}
\end{center}
If $\breve{c} \in (0,1/2)$ is a dyadic point, it is of the form $\breve{c} = p 0\overline{1} = p 1\overline{0}$ for some non-empty word $p$. In the case that $p$ starts with any of the prefixes considered in the table above, we can use the choice of $v''$. All other cases are considered in the table below.

\begin{center}
    \begin{tabular}{ | l | l | l | l | l |}
    \hline
    $p$ & $0$ & $00$ & $001$ & $01^n$  \\ \hline
    $abv''$ & $ab1011$ & $ab11$ &  $ab111$ & $ab1^{n+2}$  \\ 
    \hline
    \end{tabular}
\end{center}
This completes the proof.
\end{proof}

\begin{lemma}
\label{LEM:sliding-out-block-II}
Let $c \neq 0$. Then, there are lengths $\ell,\ell' \in \N$ such that for all $ab \in \Sigma^2$ and every word $v \in \Sigma^{\ell}$, there is a word $v' \in \Sigma^{\ell'}$ and $u \in \{01,10\}$ such that $w = ab u v'$ satisfies $w_{[k,|w|]} v \notin \mc G$ for all $1\leqslant k \leqslant |w|$. 
\end{lemma}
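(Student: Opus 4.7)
The strategy is to construct $v'$ as a concatenation $v' = v^* \tilde{v}$ of a short ``shield'' $v^*$ followed by a greedy tail $\tilde{v}$, combined with a choice of $u \in \{01, 10\}$. The combinatorial fact powering the construction is the \emph{diameter property} of $\mc G_n$: being a set of at most three consecutive integers (or, in the dyadic case, at most two such triples), $\mc G_n$ has diameter at most $2$ as a subset of $\{0, 1, \ldots, 2^n - 1\}$. Consequently, for any $s \in \Sigma^m$ with $m \geq 2$, at most one of the two words $0s, 1s \in \Sigma^{m+1}$ lies in $\mc G_{m+1}$, because they differ by $2^m \geq 4 > 2$ as integers.

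First I would fix $\ell \geq 2$ and pick some $u \in \{01, 10\}$, and apply Lemma~\ref{LEM:closing-extension-II} to the four-letter word $abu$ to obtain a word $v^* \in \Sigma^{\ell^*}$ with $\ell^*$ bounded by an absolute constant (depending only on the combinatorics of $\breve{c}$) such that $(abu)_{[k, 4]} v^* \notin \mc G$ for all $k \in \{1, 2, 3, 4\}$. Using that $\mc G$-membership is preserved under extension on the right ($w \notin \mc G_{|w|}$ implies $w w' \notin \mc G_{|ww'|}$ for any $w'$), this automatically handles the ``boundary'' constraints for $k \in \{1, 2, 3, 4\}$, irrespective of what follows $v^*$ in the eventual word $w = abuv^* \tilde{v}$.

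Next I would construct the tail $\tilde{v} \in \Sigma^{\ell^{**}}$ greedily from right to left. At step $i$, having fixed the last $i - 1$ letters of $\tilde{v}$, I would choose the next letter subject to the constraint that the length-$i$ suffix of $\tilde{v}$ followed by $v$ does not lie in $\mc G_{i+\ell}$. The diameter property (with $m = i + \ell - 1 \geq 2$) guarantees that at most one candidate letter is forbidden, so a valid choice always exists. This handles the ``interior'' constraints, corresponding to indices $k \in \{5 + \ell^*, \ldots, 4 + \ell^* + \ell^{**}\}$. The remaining constraints, for $k \in \{5, \ldots, 4 + \ell^*\}$, take the form $(v^*)_{[j, \ell^*]} \tilde{v} v \notin \mc G$ for $j \in \{1, \ldots, \ell^*\}$, and I would incorporate these into the greedy procedure at the step matching each constraint's suffix length.

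The main obstacle is ensuring that at a single greedy step different constraints do not jointly forbid both letters: each individual constraint forbids at most one letter (by the diameter property), but two distinct active constraints could forbid distinct letters. I expect to resolve this in one of two ways: either by choosing $\ell^{**}$ much larger than $\ell^*$ so that the $\ell^*$ supplementary constraints spread sparsely across the greedy steps, with at most one active per step, or by switching the choice of $u \in \{01, 10\}$ and rerunning the construction, exploiting the hypothesis $c \neq 0$ (equivalently $\breve{c} \neq 1/2$) --- which enters Lemma~\ref{LEM:closing-extension-II} through the irreducibility and aperiodicity of the associated Markov subsystem --- to preclude pathological configurations in which both choices of $u$ fail simultaneously. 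Setting $\ell' = \ell^* + \ell^{**}$ then yields the desired uniform constants.
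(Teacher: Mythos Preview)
Your approach is quite different from the paper's, which simply performs a finite case analysis on the first few digits of $\breve{c}$ (reducing by symmetry to $\breve{c}\in[0,1/2)$, then treating $\breve{c}_{[1,4]}\in\{0000,0001,\dots,0111\}$ separately) and for each case exhibits explicit words $u,v'$ and a length $\ell$ that work for all $v\in\Sigma^\ell$ by direct inspection of $\mc G_{\ell+1}$. There is no greedy construction at all.

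Your proposal has a genuine gap at exactly the ``main obstacle'' you flag, and neither of your suggested fixes resolves it. The $\ell^*$ boundary constraints have the form $(v^*)_{[j,\ell^*]}\tilde{v}v\notin\mc G$; each one involves the \emph{entire} word $\tilde{v}$, so in a right-to-left construction of $\tilde{v}$ they only become checkable at the very last step, when you choose $\tilde{v}_1$. They cannot be ``spread sparsely across the greedy steps'': enlarging $\ell^{**}$ postpones the collision but does not dissolve it, since at step $\ell^{**}$ you still face the interior constraint together with all $\ell^*$ boundary constraints simultaneously, each forbidding up to one value of $\tilde{v}_1$, and there are only two values available. Your second fix, switching $u\in\{01,10\}$, is not connected to the obstacle: changing $u$ alters the output $v^*$ of Lemma~\ref{LEM:closing-extension-II}, but you give no mechanism by which this would prevent the collision at the $v^*/\tilde{v}$ boundary, and your stated justification (that $c\neq 0$ enters Lemma~\ref{LEM:closing-extension-II} via irreducibility and aperiodicity of a Markov subsystem) is a misattribution---that lemma carries no such hypothesis and uses no such structure.

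If you want to salvage the greedy idea, you would need an additional ingredient to terminate the constraints starting inside $v^*$ before they reach $\tilde{v}$; for instance, forcing a $01$ or $10$ block at the $v^*/\tilde{v}$ interface and invoking Lemma~\ref{LEM:hitting-depths-relations} might help, but this requires controlling the \emph{symbolic} hitting depths of positions inside $v^*$, which Lemma~\ref{LEM:closing-extension-II} does not give you. Absent such control, the argument as written does not close.
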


\begin{proof}
We prove this statement via a tedious case distinction, based on the location of $\breve{c}$. For convenience of notation, we let $\mc W = \{ w_{[k,|w|]}v: 1\leqslant k \leqslant |w| \}$ be all suffixes of $wv$ that overlap the word $w$. 
By symmetry, it is enough to consider $0\leqslant \breve{c} \leqslant 1/2$.

Let $\breve{c}_{[1,4]} = 0000$. Then, choose $\ell=3$ and $\ell'=1$. If $v=0w'$ for some $w'\in \Sigma^2$, we set $u=10$ and $v'=1$. In this case $w=ab101$, and 
\[
\mc W = \{ab1010w',b1010w',1010w',010w',10w' \}.
\]
 None of these words has a prefix in $\mc G_4 = \{0000,0001,1111 \}$ and hence, none of the words can be in $\mc G$. The claim follows for this special case. Likewise, if $v=1w'$, we set $u=10$ and $v' = 0$, instead, giving 
 \[
 \mc W = \{ab1001w', b1001w', 1001w', 001w', 01w' \},
 \]
and again, none of them has a prefix in $\mc G_4$. 

For $\breve{c}_{[1,4]} \in \{0001,0010,0011,0100,0101,0110 \}$, it suffices to consider $\ell \leqslant 6$ and $\mc G_{\ell+1}$, and no case distinction needs to be made based on the choice of $v \in \Sigma^\ell$. We give some possible choices of $u$ and $v'$, along with the resulting word $w v$ (where $w=abuv'$) in the table below. We also provide a set $\mc G_{\ell+1}' \supset \mc G_{\ell+1}$ of all words that could be in $\mc G_{\ell+1}$, given the choice of $\breve{c}_{[1,4]}$. For notational convenience, we use $x$ as a place holder for all possible words of the appropriate length. It can be verified by direct inspection, that none of the relevant suffixes starts with a word in $\mc G'_{\ell+1}$, and hence none of them is in $\mc G$.
\begin{center}
    \begin{tabular}{ | l | l | l | l | l | l | l |}
    \hline
    $\breve{c}_{[1,4]}$ & $u$ & $\ell'$ & $v'$ & $wv$ & $\ell$ & $\mc G_{\ell+1}'$ \\ \hline
    $0001$ & $10$ & $1$ &  $1$ & $ab101v$ & $4$ & $\{0001x,00001,00100 \}$  \\ 
     $0010$ & $01$ & $4$ &  $1101$ & $ab011101v$ & $6$ & $\{0010x,0001111,0011000 \}$  \\ 
     $0011$ & $10$ & $4$ & $1011$ & $ab101011v$ & $5$ & $\{0011x,001011,010000 \}$  \\ 
     $0100$ & $10$ & $2$ &  $11$ & $ab1011v$ & $4$ & $\{0100x,00111,01010 \}$  \\ 
     $0101$ & $10$ & $5$ &  $00111$ & $ab1000111v$ & $6$ & $\{0101x,0100111,0110000 \}$  \\ 
     $0110$ & $01$ & $6$ &  $001111$ & $ab01001111v$ & $4$ & $\{0110x,01011,01110 \}$  \\ 
    \hline
    \end{tabular}
\end{center}
It remains to treat the case that $\breve{c}_{[1,4]} = 0111$. Since we excluded the case $\breve{c} = 1/2$ by assumption, there is certainly an integer $m \geqslant 3$ such that $\breve{c}$ starts with the word $01^m0$. We choose $\ell = m+2$ and note that $\mc G'_{\ell+1} = \{ 01^m 0 x, 01^{m-1}011, 01^{m+1}0 : x \in \Sigma^2 \}$ certainly contains all words in $\mc G_{\ell+1}$. With $u = 01$, $\ell'= m+1$ and $v'=1^{m+1}$, we get that $wv = ab01^{m+2}v$. Clearly, none of the relevant suffixes of $wv$ starts with a word in $\mc G'_{\ell+1}$. This completes the last remaining case.
\end{proof}

\subsection{Variation of $\psi_n^c$ on cylinders}\label{subsec: Variation psi}

It will be important to assess the distance of cylinder sets to the singularity point $\breve{c}$.
This is because $\psi^c$ and the variations of $\psi^c$ can be estimated in terms of the Euclidean distance $\rho$ to $\breve{c}$.

\begin{lemma}
\label{LEM:psi-distance-bound}
For every $x$, we have
\[
2 \log (2 \rho(x,\breve{c})) \leqslant \psi^c(x)
\leqslant 2 \log (\pi \rho(x, \breve{c})).
\]
Furthermore, if $x,y \in \mathbb{T} \setminus B_r(\breve{c})$ for some $r>0$, we have the Lipschitz estimate
\[
|\psi^c(x) - \psi^c(y)| \leqslant \frac{2}{r} \rho(x,y).
\]
\end{lemma}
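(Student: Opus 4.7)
The plan is to first rewrite $\psi^c$ so that the dependence on $\rho(\cdot,\breve c)$ becomes explicit. Since $\breve c = c+1/2$, the trigonometric identity
\[
\cos(\pi(x-c)) \,=\, \cos\bigl(\pi(x-\breve c)+\pi/2\bigr) \,=\, -\sin(\pi(x-\breve c))
\]
gives $\psi^c(x) = 2 \log|\sin(\pi(x-\breve c))|$. Writing $\rho = \rho(x,\breve c)\in[0,1/2]$, this reduces to $\psi^c(x) = 2\log\sin(\pi\rho)$. Jordan's inequality $\sin(u)\geqslant (2/\pi)u$ on $[0,\pi/2]$ gives $\sin(\pi\rho)\geqslant 2\rho$, while the standard $\sin(u)\leqslant u$ gives $\sin(\pi\rho)\leqslant \pi\rho$; applying the monotone map $2\log(\cdot)$ immediately yields the desired two-sided bound $2\log(2\rho(x,\breve c))\leqslant\psi^c(x)\leqslant 2\log(\pi\rho(x,\breve c))$.

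For the Lipschitz estimate I would differentiate: on $\mathbb T\setminus\{\breve c\}$ the function $\psi^c$ is smooth with $(\psi^c)'(x) = 2\pi\cot(\pi(x-\breve c))$. The key analytic ingredient is the inequality $\tan(\pi s)\geqslant \pi s$ for $\pi s\in[0,\pi/2)$, which rearranges to $\cot(\pi s)\leqslant 1/(\pi s)$ and thus gives the pointwise derivative bound
\[
|(\psi^c)'(x)| \,\leqslant\, \frac{2}{\rho(x,\breve c)} \,\leqslant\, \frac{2}{r}, \qquad x\in\mathbb T\setminus B_r(\breve c).
\]
The Lipschitz inequality then follows from the mean value theorem, integrated along the arc from $x$ to $y$ that does not pass through the singularity $\breve c$.

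The main care is required when $x$ and $y$ lie on opposite sides of $\breve c$, since then the shorter arc joining them may cross $\breve c$ and the longer arc is in general too long to yield the claimed constant. To handle this I would exploit the reflection symmetry $\psi^c(\breve c + s) = \psi^c(\breve c - s)$, which holds because $|\sin|$ is even, and replace $y$ by its reflection $\tilde y$ through $\breve c$. A short case analysis shows that $\tilde y$ lies on the same side of $\breve c$ as $x$, that $\psi^c(\tilde y) = \psi^c(y)$, and that $\rho(x,\tilde y)\leqslant\rho(x,y)$; integrating the derivative bound along the direct arc from $x$ to $\tilde y$ (which avoids $B_r(\breve c)$) then produces the claimed estimate. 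This elementary geometric bookkeeping around the singularity is the only real obstacle; the analytic content is routine.
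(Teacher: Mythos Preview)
Your proof is correct and follows the same approach as the paper. For the two-sided bound, both you and the paper reduce to the elementary inequalities $2\rho \leqslant \sin(\pi\rho) \leqslant \pi\rho$ on $[0,1/2]$ after rewriting $\psi^c$ in terms of $\sin(\pi(x-\breve c))$; for the Lipschitz estimate, the paper simply invokes the mean value theorem and refers to \cite[Lem.~4.1]{BGKS}, whereas you spell out the derivative bound $|(\psi^c)'|\leqslant 2/\rho(\cdot,\breve c)$ and correctly handle the one nontrivial geometric case (geodesic passing through $\breve c$) via the reflection symmetry $\psi^c(\breve c+s)=\psi^c(\breve c-s)$.
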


\begin{proof}
Recall that $\psi^c(x) = 2 \log | \cos(\pi (x-c))|$.
Regarding $y = x-\breve{c}= x- c+1/2$ as an element on the torus, we observe that 
\[
2\rho(y,0) \leqslant |\sin (\pi y)| \leqslant \pi \rho(y,0).
\]
This is straightforward to verify for $y\leqslant 1/2$ and the general case follows by symmetry. Since $\rho(y,0) = \rho(x,\breve{c})$, 
the first claim follows. The second claim follows easily from the mean value theorem; compare \cite[Lem.~4.1]{BGKS} for details.
\end{proof}

We use these bounds to obtain estimates on cylinders that correspond to $j$-hitting times.

\begin{lemma}
\label{LEM:psi-on-hitting}
Let $w \in \Sigma^n$ and $k \in \mc H_j(w)$ for some $j < n$. If $x,y \in \langle w_{[k,n]} \rangle$, we have
\[
(k-j-1)\, 2 \log 2
\leqslant \psi^c(x) \leqslant 
(k-j +2)\, 2 \log 2
\]
and
\[
|\psi^c(x) - \psi^c(y)|
\leqslant  2^{j-n + 2}.
\]
The lower bound on $\psi^c(x)$ also holds for $j=n$.
\end{lemma}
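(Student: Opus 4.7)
The plan is a direct application of the distance-based estimates in Lemma~\ref{LEM:psi-distance-bound}, after translating the combinatorial hitting condition $k\in\mathcal{H}_j(w)$ into a two-sided geometric bound on $\rho(x,\breve{c})$. For any $x\in\langle w_{[k,n]}\rangle$ the prefix $x_{[1,j-k+1]}=w_{[k,j]}$ lies in $\mathcal{G}_{j-k+1}$, which puts $x$ inside the three-cylinder block around $\breve{c}$ at level $j-k+1$ and hence forces $\rho(x,\breve{c})\leqslant 2^{-(j-k)}$ by the implications recorded after the definition of $\mathcal{G}_n$. Conversely, $x_{[1,j-k+2]}=w_{[k,j+1]}\notin\mathcal{G}_{j-k+2}$ pushes $x$ out of the analogous block at level $j-k+2$, giving $\rho(x,\breve{c})\geqslant 2^{-(j-k+2)}$. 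Once the geometric bound
\[
2^{-(j-k+2)}\,\leqslant\,\rho(x,\breve{c})\,\leqslant\, 2^{-(j-k)}
\]
is in hand, everything else is routine.

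Substituting the lower bound on $\rho(x,\breve{c})$ into the left inequality of Lemma~\ref{LEM:psi-distance-bound} yields
\[
\psi^c(x)\,\geqslant\, 2\log\bigl(2\cdot 2^{-(j-k+2)}\bigr)\,=\,(k-j-1)\,2\log 2,
\]
and substituting the upper bound on $\rho(x,\breve{c})$ into the right inequality, combined with $\log\pi<2\log 2$, yields
\[
\psi^c(x)\,\leqslant\, 2\log\bigl(\pi\cdot 2^{-(j-k)}\bigr)\,\leqslant\, 2\log\bigl(4\cdot 2^{-(j-k)}\bigr)\,=\,(k-j+2)\,2\log 2.
\]
For the variation estimate, the lower bound on $\rho(\cdot,\breve{c})$ guarantees that the entire cylinder $\langle w_{[k,n]}\rangle$ sits in the complement of $B_r(\breve{c})$ with $r=2^{-(j-k+2)}$, so the Lipschitz part of Lemma~\ref{LEM:psi-distance-bound} applies on this cylinder; combined with the trivial diameter bound $\rho(x,y)\leqslant 2^{-(n-k+1)}$, this gives
\[
|\psi^c(x)-\psi^c(y)|\,\leqslant\,\tfrac{2}{r}\,\rho(x,y)\,\leqslant\, 2\cdot 2^{j-k+2}\cdot 2^{-(n-k+1)}\,=\,2^{j-n+2}.
\]

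The one point I expect to need some care is the case $j=n$, where the defining non-membership condition of $\mathcal{H}_j(w)$ is vacuous because $w_{[k,n+1]}$ falls outside $w$. The lemma only claims the lower bound in that regime, and this is precisely the part of the argument that still goes through: refining $\langle w_{[k,n]}\rangle$ according to the $(n-k+2)$-nd digit of $x$, on the sub-cylinder where $x_{[1,n-k+2]}\notin\mathcal{G}_{n-k+2}$ the previous reasoning applies verbatim with the value $j=n$, producing $\rho(x,\breve{c})\geqslant 2^{-(n-k+2)}$ and hence the asserted lower bound $\psi^c(x)\geqslant (k-n-1)\,2\log 2$. The main mild obstacles are thus bookkeeping ones: correctly packaging the geometric content of $\mathcal{G}_m$ into the two-sided bound on $\rho(x,\breve{c})$, and keeping the proper numerical constant via $\log\pi<2\log 2$ to land on the clean $(k-j+2)$ in the upper bound.
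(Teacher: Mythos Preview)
Your proof of the two-sided bound and the variation estimate for $j<n$ is correct and is exactly the paper's argument: translate $k\in\mc H_j(w)$ into $2^{-(j-k+2)}\leqslant\rho(x,\breve{c})\leqslant 2^{-(j-k)}$ via the recorded implications for $\mc G_m$, then feed this into Lemma~\ref{LEM:psi-distance-bound}.

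The $j=n$ paragraph, however, does not do what you claim. You restrict to the sub-cylinder where $x_{[1,n-k+2]}\notin\mc G_{n-k+2}$ and deduce the lower bound there; but the lemma is stated for \emph{all} $x\in\langle w_{[k,n]}\rangle$, and on the complementary sub-cylinder (which contains $\breve{c}$ or is adjacent to its cylinder) $\rho(x,\breve{c})$ can be arbitrarily small, so no uniform lower bound on $\psi^c(x)$ is available. In other words, your argument only covers part of the cylinder and the remaining part is exactly where the bound fails.

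In fact the lemma statement itself appears to contain a slip: for $k\in\mc H(w)$ one has $w_{[k,n]}\in\mc G$, which yields the \emph{upper} bound $\rho(x,\breve{c})\leqslant 2^{-(n-k)}$ and hence the \emph{upper} bound on $\psi^c(x)$. The paper's own proof says precisely this (``with the upper bound holding also for $j=n$''), and the only place the $j=n$ case is invoked, namely Lemma~\ref{LEM:fn-bound}, uses $\psi^c(x)\leqslant (k-n+2)\,2\log 2$. So the intended final sentence is ``The upper bound on $\psi^c(x)$ also holds for $j=n$,'' and that is the statement you should be proving---which follows immediately from your derivation of $\rho(x,\breve{c})\leqslant 2^{-(j-k)}$.
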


\begin{proof}
By assumption, we have $w_{[k,j]} = x_{[1,j-k+1]}$ and  $w_{[k,j+1]} =  x_{[1,j-k+2]}$. Hence, $k \in \mc H_j(w)$ (together with the properties of $\mc G$) implies that 
\[
2^{-(j-k+2)} \leqslant \rho(x,\breve{c}) \leqslant 2^{-(j-k)},
\]
and likewise for $y$, with the upper bound holding also for $j=n$. Using furthermore that $\rho(x,y) \leqslant 2^{-(n-k+1)}$, all estimates follow readily from the bounds in Lemma~\ref{LEM:psi-distance-bound}.
\end{proof}
It is remarkable that the variation bound in the preceding lemma does \emph{not} depend on the choice of the $j$-hitting time $k$.

For every word $w$ let $\kappa(w) = \# \mc H(w)$ be the cardinality of the set of (full) hitting times in $w$,
and let $\kappa_n = \max_{w \in \Sigma^n} \kappa(w)$. It is straightforward to verify that $\kappa_n$ is increasing in $n$. Indeed, if $v \in \Sigma^m$ is a suffix of $w \in \Sigma^n$, we obtain that $\kappa(v) \leqslant \kappa(w)$. Since for $m\leqslant n$, every word $v \in \Sigma^m$ is a suffix of some $w(v) \in \Sigma^n$, we obtain
\[
\kappa_m = \max_{v \in \Sigma^m} \kappa(v) 
\leqslant \max_{v \in \Sigma^m} \kappa(w(v))
\leqslant \max_{w \in \Sigma^n} \kappa(w) = \kappa_n.
\]

\begin{lemma}
\label{LEM:fn-bound}
If the pressure function $\mc P_{\operatorname{\operatorname{top}}}(t \psi^c)$ is finite for some $t<0$, it follows that
\[
\limsup_{n \to \infty} \frac{\kappa_n^2}{n} < \infty.
\]
In particular, the sequence $(\kappa_n/\sqrt{n})_{n \in \N}$ is bounded in this case.
\end{lemma}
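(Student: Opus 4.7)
My plan is to exploit the logarithmic singularity of $\psi^c$ in order to convert the presence of many hitting times in a word $w \in \Sigma^n$ into a quadratic (in $\kappa(w)$) drop of $\sup_{x \in \langle w \rangle}\psi_n^c(x)$, and then to use a single word realising $\kappa_n$ to lower-bound the partition sum defining $\overline{\cP}_{\operatorname{top}}(t\psi^c)$.

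First I would fix $w \in \Sigma^n$ with $\kappa(w) = r$ and list its hitting times as $\mc H(w) = \{k_1 < k_2 < \cdots < k_r\}$. By definition of a full hitting time, $w_{[k_i,n]} \in \mc G_{n-k_i+1}$, so for every $x \in \langle w\rangle$ the orbit point $T^{k_i-1}x$ lies within Euclidean distance $2^{-(n-k_i)}$ of $\breve{c}$, and Lemma~\ref{LEM:psi-distance-bound} gives $\psi^c(T^{k_i-1}x) \leqslant 2\log\pi - 2(n-k_i)\log 2$. Because the $k_i$ are distinct integers in $[1,n]$ we have $n-k_i \geqslant r-i$, and since $\psi^c \leqslant 0$ at all remaining iterates, summing over $i$ yields
\[
\sup_{x \in \langle w \rangle}\psi_n^c(x) \;\leqslant\; 2r\log\pi - r(r-1)\log 2.
\]

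Next I would pick $w^\star_n \in \Sigma^n$ with $\kappa(w^\star_n) = \kappa_n$. The corresponding single term in the partition sum is non-negative, and since $t < 0$,
\[
\overline{\cP}_{\operatorname{top}}(t\psi^c) \;\geqslant\; \limsup_{n\to\infty}\frac{|t|(\kappa_n^2 - \kappa_n)\log 2 - 2|t|\kappa_n\log\pi}{n}.
\]
The trivial bound $\kappa_n \leqslant n$ makes the linear correction $\kappa_n/n$ uniformly bounded, hence $\limsup_n \kappa_n^2/n = \infty$ would force the right-hand side to be $\infty$, contradicting the hypothesis that $\cP_{\operatorname{top}}(t\psi^c)$ is finite. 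Therefore $\limsup_n \kappa_n^2/n < \infty$, and in particular $\kappa_n/\sqrt{n}$ is bounded.

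I do not anticipate any serious obstacle here: the only combinatorial input is the pigeonhole inequality $n-k_i \geqslant r-i$, while the logarithmic blow-up of the potential quantified by Lemma~\ref{LEM:psi-distance-bound} automatically supplies the quadratic gain that closes the argument. None of the heavier machinery from Section~\ref{sec:Technical} or Subsection~\ref{subsec:Gibbs-type} appears to be needed.
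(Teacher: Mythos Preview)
Your proof is correct and follows essentially the same approach as the paper's own argument. The paper uses Lemma~\ref{LEM:psi-on-hitting} (which is itself derived from Lemma~\ref{LEM:psi-distance-bound}) to obtain the bound $\psi^c(x) \leqslant (k-n+2)\log 4$ on cylinders corresponding to full hitting times, then applies the same pigeonhole estimate on the distinct hitting positions to get $\sup_{x\in\langle w^n\rangle}\psi^c_n(x) \leqslant -\kappa_n^2/2$ for large $\kappa_n$, and concludes by lower-bounding the partition sum with a single word exactly as you do.
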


\begin{proof}
For a proof via contradiction, assume $\limsup_{n \to \infty} \kappa_n^2/n = \infty$ and $t < 0$. We estimate $\psi^c_n$ on $\langle w \rangle$ with $w \in \Sigma^n$. First, note that
\[
\sup_{x \in \langle w \rangle}  \psi^c_n(x) 
\leqslant \sum_{k=1}^n \sup_{x \in  \langle w_{[k,n]}\rangle} \psi^c(x)
\leqslant \sum_{k \in \mc H(w)} \sup_{x \in  \langle w_{[k,n]}\rangle} \psi^c(x).
\]
By Lemma~\ref{LEM:psi-on-hitting}, we have
$\psi^c(x) \leqslant (k-n+2) \log 4$ for $x\in\langle w_{[k,n]}\rangle$ and hence \[
\sup_{x \in \langle w \rangle} \psi^c_n(x) 
\leqslant  \sum_{k \in \mc H(w)} (k-n+2)\log 4
\leqslant  \sum_{j =1}^{\kappa(w)} (3-j)\log 4.
\]
For each $n \in \N$ let $w^n$ be a word such that $\kappa_n = \kappa(w^n)$, yielding
\[
\sup_{x \in \langle w^n\rangle} \psi^c_n(x) 
\leqslant  (6 \kappa_n - \kappa_n^2)\log 2 
\leqslant - \frac{\kappa_n^2}{2},
\]
provided $\kappa_n$ is large enough. For the pressure function, this implies
\begin{align*}
\mc P_{\operatorname{top}}(t \psi^c)
& = \limsup_{n\to \infty} \frac{1}{n} \log \left( \sum_{w \in \Sigma^n} \exp\biggl(t \sup_{x \in \langle w \rangle} \psi_n^{c}(x) \biggr) \right)
\geqslant \limsup_{n \to \infty} t \sup_{x \in \langle w^n\rangle} \psi_n^{c}(x)
\\ &\geqslant \limsup_{n \to \infty} (-t) \frac{\kappa_n^2}{2n} = \infty
\end{align*}
and we have reached the desired contradiction.
\end{proof}

\begin{lemma}
\label{LEM:inf-sup-relation}
Let $w \in \Sigma^n$ and $v \in \Sigma^m$ and assume that $w_{[k,n]} v \notin \mc G$ for all $1\leqslant k \leqslant n$. Then,
\[
\sup_{x \in \langle w \rangle} \psi^c_n(x) - \inf_{y \in \langle wv\rangle} \psi^c_n(y) \leqslant 2^{m+2} \kappa_{m+n}.
\]
\end{lemma}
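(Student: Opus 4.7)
The plan is to begin with the pointwise decomposition
\begin{equation*}
\sup_{x\in\langle w\rangle}\psi_n^c(x)-\inf_{y\in\langle wv\rangle}\psi_n^c(y)\;\leqslant\;\sum_{k=1}^n\Bigl(\sup_{\langle w_{[k,n]}\rangle}\psi^c-\inf_{\langle w_{[k,n]}v\rangle}\psi^c\Bigr),
\end{equation*}
which holds because $T^{k-1}x$ ranges over $\langle w_{[k,n]}\rangle$ for $x\in\langle w\rangle$ and $T^{k-1}y$ over $\langle w_{[k,n]}v\rangle$ for $y\in\langle wv\rangle$. I would then split the sum according to whether $k$ is a full hitting time in $w$ or only a $j$-hitting time for some $j<n$, treating each case separately.

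For $k\in\mc H_j(w)$ with $j<n$, the cylinder $\langle w_{[k,n]}\rangle$ is already bounded away from $\breve c$, so Lemma~\ref{LEM:psi-on-hitting} yields the variation bound $2^{j-n+2}$ on $\langle w_{[k,n]}\rangle$, which in particular dominates $\sup_{\langle w_{[k,n]}\rangle}\psi^c-\inf_{\langle w_{[k,n]}v\rangle}\psi^c$ because $\langle w_{[k,n]}v\rangle\subseteq\langle w_{[k,n]}\rangle$. Grouping by depth and using the inclusion $\mc H_j(w)\subseteq\mc H(w_{[1,j]})$ gives $|\mc H_j(w)|\leqslant\kappa_j\leqslant\kappa_n$, so the total contribution of this case is bounded by the geometric sum
\begin{equation*}
\sum_{j=1}^{n-1}|\mc H_j(w)|\cdot 2^{j-n+2}\;\leqslant\;4\kappa_n\sum_{j=1}^{n-1}2^{j-n}\;<\;4\kappa_n.
\end{equation*}

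The main obstacle is the case $k\in\mc H(w)$, where the singularity $\breve c$ may lie inside $\langle w_{[k,n]}\rangle$, so any variation bound on this cylinder is useless. The key is to estimate $\sup$ and $\inf$ on the two \emph{different} cylinders separately, via the global distance bounds of Lemma~\ref{LEM:psi-distance-bound}. Since $w_{[k,n]}\in\mc G_{n-k+1}$, every point in $\langle w_{[k,n]}\rangle$ lies at distance at most $2^{-(n-k)}$ from $\breve c$, which gives
$\sup_{\langle w_{[k,n]}\rangle}\psi^c\leqslant 2\log\pi-2(n-k)\log 2$.
On the other hand, the hypothesis $w_{[k,n]}v\notin\mc G$ forces $\rho(z,\breve c)\geqslant 2^{-(n-k+1+m)}$ on $\langle w_{[k,n]}v\rangle$, so
$\inf_{\langle w_{[k,n]}v\rangle}\psi^c\geqslant -2(n-k+m)\log 2$.
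Subtracting telescopes the $k$-dependence and leaves the bound $2\log\pi+2m\log 2$ per index, which elementary calculus shows is at most $2^{m+1}$ for every $m\geqslant 1$. (For $m=0$ the hypothesis forces $\mc H(w)=\varnothing$, so this case is vacuous.)

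Summing over the $|\mc H(w)|\leqslant\kappa_n$ full hitting times yields at most $2^{m+1}\kappa_n$ from Case B, and combining with Case A gives a total bound of $(4+2^{m+1})\kappa_n\leqslant 2^{m+2}\kappa_n\leqslant 2^{m+2}\kappa_{n+m}$ when $m\geqslant 1$; for $m=0$ only Case A contributes and the bound reduces to $4\kappa_n=2^{m+2}\kappa_{n+m}$. This is the desired inequality.
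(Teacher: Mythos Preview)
Your proof is correct and takes a genuinely different route from the paper's. Both start with the same term-by-term decomposition, but the paper splits according to hitting depths in the \emph{extended} word $u=wv$: for every $1\leqslant j<n+m$ it derives the uniform bound $\psi^c(x_k)-\psi^c(y_k)\leqslant 2^{j-n+2}$, where the case $j\geqslant n$ requires a separate Lipschitz argument (choosing $x_k$ to maximise the distance to $\breve c$ inside $\langle w_{[k,n]}\rangle$ and comparing with the lower bound for $y_k$). It then bounds $\#A_j\leqslant\kappa_{n+m}$ and sums the geometric series. You instead split according to hitting depths in $w$ itself: for $k\in\mc H_j(w)$ with $j<n$ you invoke Lemma~\ref{LEM:psi-on-hitting} directly, while for $k\in\mc H(w)$ you bypass the Lipschitz step entirely by bounding $\sup$ and $\inf$ separately via Lemma~\ref{LEM:psi-distance-bound}, obtaining the uniform per-index bound $2\log\pi+2m\log 2\leqslant 2^{m+1}$. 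Your treatment of the ``deep'' case is more elementary, and along the way you actually obtain the slightly sharper constant $\kappa_n$ rather than $\kappa_{n+m}$; the paper's approach has the aesthetic advantage of producing a single $j$-dependent estimate valid across all depths.
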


\begin{proof}
By definition, we have
\begin{align*}
\sup_{x \in \langle w \rangle} \psi^c_n(x) - \inf_{y \in \langle wv\rangle} \psi^c_n(y)
& \leqslant \sum_{k=1}^n \sup_{x \in \langle w_{[k,n]}\rangle} \psi^c(x) - \inf_{y \in \langle w_{[k,n]} v\rangle} \psi^c(y)
\\& = \sum_{k=1}^n \bigl(\psi^c(x_k) - \psi^c(y_k) \bigr),
\end{align*}
with $x_k \in \langle w_{[k,n]}\rangle$ and $y_k \in \langle w_{[k,n]}v\rangle$ maximising the difference. In particular, we observe that
\begin{equation}
\label{EQ:xk-yk-difference}
\rho(x_k,y_k) \leqslant |w_{[k,n]}| = 2^{-(n-k+1)}.
\end{equation}
For notational convenience, let us set $u = wv$.
Due to the assumption that $w_{[k,n]}v \notin \mc G$ for all $1\leqslant k \leqslant n$, we obtain that $\mc H(u)\cap \{1,\ldots,n\} = \emptyset$, and we can therefore partition $\{1,\ldots,n\}$ into sets of the form
\[
A_j = \mc H_j(u) \cap \{1,\ldots,n\},
\]
with $1\leqslant j < n+m$. If $j < n$ and $k \in A_j$, we immediately get from Lemma~\ref{LEM:psi-on-hitting} the estimate $\psi^c(x_k) - \psi^c(y_k) \leqslant 2^{j-n+2}$.
If $j \geqslant n$ and $k \in A_j$, we use that $x_k$ maximises the value of $\psi^c$ on $\langle w_{[k,n]}\rangle$. By the monotonicity of $\psi^c$, this implies that $x_k$ maximises also the distance to $\breve{c}$. Since $\langle w_{[k,n]}\rangle$ has diameter $2^{-(n-k + 1)}$, we obtain
\[
\rho(x_k,\breve{c}) \geqslant 2^{-(n - k +2)} \geqslant 2^{-(j-k+2)}.
\]
Since $y_k \in \langle u_{[k,j + 1]}\rangle$ and $k \in A_j$, the prefix of $y_k$ of length $j - k+2$
is not in $\mc G$ and we also get $\rho(x_k,\breve{c}) \geqslant 2^{-(j-k+2)} $. Due to Lemma~\ref{LEM:psi-distance-bound}, this implies that both $x_k$ and $y_k$ lie in a region with Lipschitz constant  $2^{j - k + 3}$.
Hence, using \eqref{EQ:xk-yk-difference},
\[
\psi^c(x_k) - \psi^c(y_k)
\leqslant 2^{j - k + 3} \rho(x_k,y_k) \leqslant 2^{j - n + 2},
\]
which takes the same form as for $j<n$.
By the definition of $A_j$,
\[
\# A_j \leqslant \kappa(u_{[1,j]}) \leqslant \kappa_j \leqslant \kappa_{n+m}.
\]
Since every $k$ is contained in precisely one of the sets $A_j$, we get
\begin{align*}
\sup_{x \in \langle w \rangle} \psi^c_n(x) - \inf_{y \in \langle wv\rangle} \psi^c_n(y) 
&\leqslant \sum_{j=1}^{n+m-1} \sum_{k \in A_j} \psi^c(x_k) - \psi^c(y_k)
 \leqslant \sum_{j=1}^{n+m-1} (\# A_j)\, 2^{j-n+2} 
\\ &\leqslant \kappa_{n+m} 2^{2-n} \sum_{j=1}^{n+m-1} 2^j
\leqslant \kappa_{n+m} 2^{m+2},
\end{align*}
which is precisely the claimed bound.
\end{proof}

\begin{lemma}
\label{LEM:block-for-variation-bound}
Assume the binary expansion of $\breve{c}$ to be non-periodic and let $\varepsilon > 0$. Then, there are infinitely many $n\in \N$ such that for each $w \in \mc G_n$, there is a word $v$ with $|v| \leqslant 2 \log_2 \log_{3/2} n$ and $w_{[k,n]}v \notin \mc G$ for all $1\leq k \leq n$, and such that, given $x \in \langle w \rangle$ and $y \in \langle wv \rangle$,
\[
\psi^c_n(x) - \psi^c_n(y) \leqslant \varepsilon n.
\]
The same holds for periodic $\breve{c}$, if additionally $x_{[1,n+2]} \triangleleft_p \breve{c}$.
\end{lemma}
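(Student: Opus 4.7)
The first two properties of $v$ are furnished directly by Lemma~\ref{LEM:closing-extension-II}: for all sufficiently large $n$ and any $w\in\mc G_n$, there exists a word $v$ with $|v|\leqslant 2\log_2\log_{3/2} n$ such that $w_{[k,n]}v\notin \mc G$ for every $1\leqslant k\leqslant n$. The remaining task is to control the Birkhoff variation $\psi_n^c(x)-\psi_n^c(y)$ for $x\in\langle w\rangle$ and $y\in\langle wv\rangle$.

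The plan for the variation bound is to invoke Lemma~\ref{LEM:inf-sup-relation}, which, once $v$ has been fixed as above, gives
\[
\sup_{x\in\langle w\rangle}\psi^c_n(x)\,-\,\inf_{y\in\langle wv\rangle}\psi^c_n(y)\;\leqslant\;2^{|v|+2}\kappa_{n+|v|}\;\leqslant\;4(\log_{3/2}n)^{2}\,\kappa_{n+|v|}.
\]
Since this upper bounds the difference $\psi_n^c(x)-\psi_n^c(y)$ for any such $x,y$, it suffices to exhibit infinitely many $n$ for which $\kappa_{n+|v|}\leqslant \varepsilon n\big/(4(\log_{3/2}n)^2)$. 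The maximum $\kappa_N=\max_{w\in\Sigma^N}\kappa(w)$ is essentially attained at $w=\breve{c}_{[1,N]}$, so the problem reduces to extracting a subsequence along which the number of suffixes of $\breve{c}_{[1,N]}$ lying in $\mc G$ grows only sublinearly (more precisely, like $o(N/(\log N)^2)$).

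For non-periodic $\breve{c}$, the key combinatorial input is the structural decomposition in Lemma~\ref{LEM:hitting-word-structure}, which writes the hitting set $\mc W(\breve{c}_{[1,N]})$ in terms of at most $s\leqslant\lceil\log_{3/2}N\rceil$ pairwise disjoint fundamental words $u_1,\dots,u_s$, every hitting word being a rational power of one of them. If $\kappa(\breve{c}_{[1,N]})\geqslant\delta N$ persistently, then by Lemma~\ref{LEM:s-hitting-relation} the index $s$ must stay bounded, forcing one of the $u_i$ to occur as a long rational power, propagating a global period to $\breve{c}$ via Lemmas~\ref{LEM:period-division} and \ref{LEM:word-power}. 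This contradicts non-periodicity, and a diagonal argument over $\delta\to 0$ yields the desired subsequence. In the periodic case, the extra hypothesis $x_{[1,n+2]}\triangleleft_p\breve{c}$ is handled separately by picking $n$ in the subsequence where $\breve{c}_{[n+1,n+2]}\in\{01,10\}$ (a set of positive density whenever the period is neither $\overline{0}$ nor $\overline{1}$), so that Lemma~\ref{LEM:hitting-depths-relations} caps the full hitting depths of $w$ by $n+1$ and the $2^{j-n+2}$ contributions become summable to $O(1)$.

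The main obstacle is the rigorous combinatorial step: turning non-periodicity of $\breve{c}$ into a quantitative statement $\kappa_{n_k}/n_k\to 0$ fast enough along some subsequence $(n_k)$. The intuition that overlaps of $\breve{c}$ with its shifts cannot remain dense without producing periodicity is captured qualitatively by the earlier lemmas, but making the decay rate explicit requires careful bookkeeping of how the periods $|u_i|$ must lift to a period of $\breve{c}$, together with the observation that $\mc G_n$ itself contains only the three candidates $\breve{c}_{[1,n]}\pm\{0,1\}$, so that the bound is indeed uniform over $w\in\mc G_n$.
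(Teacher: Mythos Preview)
Your reduction via Lemma~\ref{LEM:inf-sup-relation} is clean, but the remaining step is a genuine gap that your sketch does not close. You need $\kappa_{n_k}(\log n_k)^2/n_k\to 0$ along a subsequence, where $\kappa_N$ is a maximum over \emph{all} of $\Sigma^N$. Several issues arise: the claim that $\kappa_N$ is essentially attained at $\breve{c}_{[1,N]}$ is unjustified; Lemma~\ref{LEM:s-hitting-relation} and Lemma~\ref{LEM:hitting-word-structure} are stated for the symbolic set $\widetilde{\mc H}$, not $\mc H$, so $\kappa(w)=\#\mc H(w)$ cannot be fed in directly; the fundamental words $u_i$ depend on the word and need not stabilize as $N$ grows, so the passage to a global period of $\breve{c}$ is not immediate; and even $\liminf\kappa_n/n=0$ would not give the sharper $o(n/(\log n)^2)$ rate you require. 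Your periodic sketch has a parallel problem: when $\breve{c}=u^\infty$ and $w=u^m$, the positions $k\equiv 1\pmod{|u|}$ have \emph{full} symbolic hitting depth, so Lemma~\ref{LEM:hitting-depths-relations} gives no bound for them, and their total contribution is linear in $n$ under your scheme.

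The paper avoids $\kappa_n$ altogether by a direct termwise analysis that exploits the freedom to \emph{choose} $n$. In the non-periodic, not eventually constant case one fixes constants $m,p\ll n_p\ll n'$ and selects $n$ so that $\breve{c}_{[1,n]}=w_1\,01\,w_2'$ with $|w_1|\geqslant n'$ and $|w_2'|=m$. The sum $\sum_k(\psi^c(x_k)-\psi^c(y_k))$ is then split by symbolic hitting depth: depths $j<|w_1|$ are capped at true depth $\leqslant|w_1|+1$ by the $01$-barrier via Lemma~\ref{LEM:hitting-depths-relations}, giving $O(n\cdot 2^{-m})$ in total; positions with depth in $[|w_1|,n)$ are sparse (at most $n/p+n_p+m+O(1)$) because two such positions within distance $p$ would force a period $\leqslant p$ in $\breve{c}_{[1,n_p]}$; and full hits $k\in\widetilde{\mc H}(w)$ are controlled by combining the $s$-dependent bound $2^{|v|}\leqslant s^2$ from Lemma~\ref{LEM:closing-extension-II} with Lemma~\ref{LEM:s-hitting-relation}. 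In the periodic case the hypothesis $x_{[1,n+2]}\triangleleft_p\breve{c}$ is used precisely to pin $x$ so that $\rho(x_k,\breve{c})\leqslant\rho(y_k,\breve{c})$, hence $\psi^c(x_k)-\psi^c(y_k)\leqslant 0$, at the otherwise uncontrolled positions $k\equiv 1\pmod{|u|}$.
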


\begin{proof}
The general strategy of proof is as follows. Given $w \in \mc G_n$, we choose $s = s(w)$ and choose $v$ as in Lemma~\ref{LEM:closing-extension-II}.
Note that
\begin{equation}
\label{EQ:psi-variation-decompose}
\psi^c_n(x) - \psi^c_n(y) = \sum_{k=1}^n \psi^c(x_k) - \psi^c(y_k)
\end{equation}
for some $x_k \in \langle w_{[k,n]} \rangle$ and $y_k \in \langle w_{[k,n]} v \rangle$. For each $k$, we can find an estimate as follows. By the choice of $v$, we have $w_{[k,n]}v \notin \mc G$, implying that $\rho(y_k,\breve{c}) \geqslant 2^{-(n + |v| + 1 - k)}$. Furthermore, we certainly have $\psi^c(x_k) \leqslant \psi^c(x_k')$ where $x_k'$ is the point in $\langle w_{[k,n]} \rangle$ maximizing the distance to $\breve{c}$, in particular $\rho(x_k', \breve{c}) \geqslant \rho(y_k,\breve{c})$. Since $\rho(x_k,y_k) \leqslant 2^{-(n+1-k)}$, we get from the Lipschitz estimate in Lemma~\ref{LEM:psi-distance-bound},
\begin{equation}
\label{EQ:psi-variation-general}
\psi^c(x_k) - \psi^c(y_k)
\leqslant \psi^c(x_k') - \psi^c(y_k)
\leqslant 2^{|v|+1}.
\end{equation} 
However, using this bound for every $k$ will provide an estimate that is too rough for our purpose.
We are therefore concerned with finding finer estimates for $\psi^c(x_k) - \psi^c(y_k)$ based on the (symbolic) hitting depth of $k$.

If $\breve{c} \neq 0$ is eventually constant, it can be written in the form $\breve{c} = u10^\infty = u01^\infty$, for some word $u$ (possibly the empty word). As before, we choose $v$ as in Lemma~\ref{LEM:closing-extension-II}, implying in particular that $w_{[k,n]}v \notin \mc G$ for all $1\leqslant k \leqslant n$. Further, we use the identity in \eqref{EQ:psi-variation-decompose} and consider different cases for $k$. Fix some $n > |u|$. Then, $\mc G_n = \{ u01^r, u10^r \}$  with $r = n-|u|-1$. If $2 \leqslant k \leqslant n - |u| - 1$, we argue that the hitting depth of $k$ is at most $k+|u|$. Indeed, in this case $w_{[k,k+|u| + 1]}$ ends with $00$ or $11$, and it can therefore not be contained in $\mc G_{|u| + 2} = \{ u01, u10\}$. 
In particular, due to Lemma~\ref{LEM:psi-on-hitting},
\[
\psi^c(x_k) - \psi^c(y_k) \leqslant 2^{|u| + 2} 2^{k-n}.
\]
Evaluating a geometric sum for such $k$, and using the general bound in \eqref{EQ:psi-variation-general} for $k=1$ and $n-|u| \leqslant k \leqslant n$, we obtain
\[
\sum_{k=1}^n \psi^c(x_k) - \psi^c(y_k)
\leqslant (|u| + 2) 2^{|v| + 1} + 4,
\]
and, due to the estimate $|v| \leqslant 2 \log_2 \log_{3/2} n$, this expression is certainly smaller than $\varepsilon n$ for large enough $n$.

Assume now that $\breve{c}$ is non-periodic and not eventually constant.
We fix four large integer constants $m, p \ll n_p \ll n'$ in a hierarchical order, with precise relations to be determined later. Since $\breve{c}$ is not eventually constant, there are infinitely many choices of words $w_1$ such that $w_1 01$ is a prefix of $\breve{c}$. We choose one such $w_1$ with $|w_1| \geqslant n'$. Let $w_2'$ be the unique word of length $m$ such that $w' = w_1 01 w_2'$ is a prefix of $\breve{c}$ and set $n = |w'|$. 
Note that every $w \in \mc G_n$ is of the form $w = w_1 01 w_2''$, $w = w_1 10 0^m$, or $w = w_1 00 1^m$. In any case, $w = w_1 u w_2$,
with $u \in \{01,10\}$ and $|w_2| \in \{m-1,m\}$, redefining $w_1 0$ as $w_1$ in the last case.
For such $w$, we choose $s=s(w)$ and $v$ as in Lemma~\ref{LEM:closing-extension-II} and use the splitting in \eqref{EQ:psi-variation-decompose}. In particular $w_{[k,n]}v \notin \mc G$ for all $1 \leqslant k \leqslant n$.
If $k \in \widetilde{H}_{j}(w)$ for some $j<|w_1|$ 
we obtain from Lemma~\ref{LEM:hitting-depths-relations} that the hitting depth $j'$ of $k$ is no more than $|w_1| + 1$,
because $w_1$ is followed by $u \in \{01,10\}$.
Hence, recalling the splitting in \eqref{EQ:psi-variation-decompose} and using that $x_{k},y_k$ are in $\langle w_{[k,n]}\rangle$, we obtain via Lemma~\ref{LEM:psi-on-hitting},
 \[
\psi^c(x_k) - \psi^c(y_k) \leqslant 2^{j'-n+2} \leqslant 2^{-m+2} \leqslant \varepsilon/4,
\]
for large enough $m$. In particular,
\[
\sum_{j < |w_1|} \sum_{k \in \widetilde{\mc H}_j(w)} \psi^c(x_k) - \psi^c(y_k) \leqslant n \varepsilon/4.
\]
Given $p$, let $n_p$ be large enough to ensure that $\breve{c}_{[1,n_p]}$ is not $q$-periodic for any $1\leqslant q \leqslant p$. 
Assume $k_i \in \widetilde{\mc H}_{j_i}(w)$, with $|w_1| \leqslant j_i < |w|$ for $i =1,2$.
If $k_1 < k_2 \leqslant |w_1|- n_p$, this requires that $w_{[k_1,|w_1|]}$ and $w_{[k_2,|w_1|]}$ are both prefixes of $\breve{c}$, and hence $w_{[k_1,|w_1|]}$ is a rational power of $w_{[k_1,k_2-1]}$, with period length $k_2 - k_1$; compare Lemma~\ref{LEM:word-power}. 
Since $w_{[k_1,|w_1|]}$ is a prefix of $\breve{c}$ of length exceeding $n_p$, this is only possible if $k_2 - k_1 > p$. Due to this minimal distance, there are at most $n/p + n_p + m  + 2$ positions $k$ of symbolic hitting depth $j > |w_1|$. Indeed, there are at most $n_p + m + 2$ choices for $|w_1| - n_p < k \leqslant n$ and for $k \leqslant |w_1| - n_p$ there are no more than $n/p$ elements due to the separation of $p$ established above.

Note that for every $\delta'>0$ we can choose first $p$ and then $n' > n_p,m$ large enough that $n>n'$ guarantees
\[
n/p + n_p + m + 2\leqslant \delta' n.
\]
For $k \in \widetilde{\mc H}_j(w)$ and $j < |w|$, we get $\psi^c(x_k) - \psi^c(y_k) \leqslant 2$, again via Lemma~\ref{LEM:psi-on-hitting}. Hence,
\[
\sum_{|w_1| \leqslant j < |w|} \sum_{k \in \widetilde{\mc H}_j(w)} \psi^c(x_k) - \psi^c(y_k) \leqslant 2 (n/p + n_p + m + 2) \leqslant n \varepsilon/4,
\]
for large enough $p$ and $n'$. 
Finally, we deal with $k \in \widetilde{\mc H}(w)$, meaning $w_{[k,n]} \triangleleft_p \breve{c}$. By the same argument as above, we obtain that $\# \widetilde{\mc H}(w) \leqslant n/p + n_p + m + 2 \leqslant \delta_0 n$, for arbitrarily small $\delta_0>0$, to be chosen later. 
Using the general bound in \eqref{EQ:psi-variation-general}, we obtain
\[
\sum_{k \in \widetilde{\mc H}(w)} \psi^c(x_k) - \psi^c(y_k) \leqslant 2^{|v| +1} \# \widetilde{\mc H}(w).
\]
Recall from Lemma~\ref{LEM:closing-extension-II} that $2^{|v|} \leqslant \max\left\{2^C, s^2\right\}$, where $s$ is the number of independent hitting times in $\widetilde{\mc H}(w)$.
If $s^2 \leqslant 2^C$, we can choose $\delta_0$ such that $\delta_0 2^C \leqslant \varepsilon /3$ and the proof if complete. If $s^2 > 2^C$, let $\delta$ be such that $\delta n = \# \widetilde{\mc H}(w) - s < \delta_0 n$. Then,
\[
2^{|v|} \# \widetilde{\mc H}(w)
\leqslant s^2 (s+ \delta n) = s^3 + s^2 \delta n.
\]
Note that again by Lemma~\ref{LEM:closing-extension-II} $s^3 \leqslant (2 \log_2 \log_{3/2} n)^3$ which is smaller than $n \varepsilon/8$ for large enough $n$. If $\delta \leqslant 0$, we bound the second term with $0$. 
If $\delta>0$, Lemma~\ref{LEM:s-hitting-relation} yields $s \leqslant 2 - \log_{3/2}(\delta/2)$ 
and therefore
\[
s^2 \delta n \leqslant  (2- \log_{3/2}(\delta/2) )^2 \delta n \leqslant n \varepsilon/8,
\]
by choosing $\delta_0$ small enough and recalling $\delta < \delta_0$. In any case,
\[
\sum_{k \in \widetilde{\mc H}(w)} \psi^c(x_k) - \psi^c(y_k) \leqslant n \varepsilon/2,
\]
and the proof is complete in case that $\breve{c}$ is not periodic.

In the last step, we consider the case that the binary representation of $\breve{c}$ is periodic, of the form $\breve{c} = u^\infty$ for some $u \in \Sigma^p$ and $p \in \N$.
Without loss of generality, the length of $u$ is minimal with this property. Consider $n = mp$ for some $m \in \N$ and note that, by assumption, $x \in \langle w v' \rangle$ with $v' \in \Sigma^2$ and $wv'$ a prefix of $\breve{c}$, hence a power of $u$. 
In particular, $w=u^m$ and $v'$ is a rational power of $u$. We prove that the claimed estimate holds for some $v$ that starts with some fixed, arbitrary word in $\Sigma^2 \setminus \mc G_2$. 
Note that 
\[
\psi^c_n(x) - \psi^c_n(y) = \sum_{k=1}^n \psi^c(x_k) - \psi^c(y_k)
\]
for some $x_k \in \langle w_{[k,n]} v' \rangle$ and $y_k \in \langle w_{[k,n]} v \rangle$.
For $k$ of the form $k = rp+1$ with $0\leqslant r < m$, we have that $w_{[k,n]} v' = u^q$ (for some $q \in \Q$) is a prefix of $\breve{c}$. Hence, for such $k$, we have with $\ell_k = |w_{[k,n]}| + 2$ that $\rho(x_k,\breve{c}) \leqslant 2^{-\ell_k}$.
If $\breve{c} = 0^{\infty} = 1^{\infty}$, assuming $v_{[1,2]} \notin \{00,11\}$ implies that $w_{[k,n]} v_{[1,2]} \notin \mc G$.
If $\breve{c} \neq 0$, we choose $v_{[1,2]}$  not to be a neighbor of $v'$, and hence $w_{[k,n]} v_{[1,2]}$ is not a neighbor of $w_{[k,n]}v'$, again implying $w_{[k,n]}v_{[1,2]} \notin \mc G$. Thus, $\rho(y_k,\breve{c}) \geqslant 2^{-\ell_k} \geqslant \rho(x_k,\breve{c})$, and we conclude that
\begin{equation}
\label{EQ:psi-variation-0}
\psi^c(x_k) - \psi^c(y_k) \leqslant 0
\end{equation}
holds for all $k=rp + 1$ with $0 \leqslant r < m$.
If $p=1$, this holds for all possible $k$ and we are done. Otherwise, we proceed with $k$ of the form $k = rp+ t$ with $0 \leqslant r < m-2$ and $2\leqslant t \leqslant p$. 
In this case, the minimality of the period $p$ implies that $w_{[k,n]}$ does not start with $u$ and therefore $w_{[k,k+p-1]}$ is not a prefix of $\breve{c}$. By Lemma~\ref{LEM:periodic-hitting-depth}, this means that the hitting depth in $wv$ can be at most $k+2p-2$. 
This means that by Lemma~\ref{LEM:psi-on-hitting} we obtain
\[
\psi^c(x_k) - \psi^c(y_k) \leqslant 2^{k+2p -n},
\]
and combining this with \eqref{EQ:psi-variation-0}
gives
\[
\sum_{k=1}^{(m-2)p - 1} \psi^c(x_k) - \psi^c(y_k)
\leqslant \sum_{k=1}^{mp - 2p - 1} 2^{k+2p -mp} \leqslant 1.
\]
Finally, we consider $(m-2)p \leqslant k \leqslant mp$. Applying Lemma~\ref{LEM:closing-extension-II} to the word $w_{[(m-2)p,n]} v_{[1,2]}$ we can choose $v$ in such a manner that $w_{[k,n]} v \notin \mc G$ for all such $k$ and $|v|$ is bounded by $\ell_0 = \max \{ C ,2\log_2 \log_{3/2} (2p + 3) \} + 2$. 
For all such $k$, we use the general bound in \eqref{EQ:psi-variation-general} to obtain
\[
\sum_{k = (m-2)p}^{mp} \psi^c(x_k) - \psi^c(y_k)
\leqslant (2p + 1) 2^{\ell_0 + 1}.
\]
The sum of all contributions can be therefore bounded in terms of an expression in $p$ which is smaller than $\varepsilon n$ for large enough $n$.
With $n$ large enough, the bound $\ell_0$ for $|v|$ is smaller than $2 \log_2 \log_{3/2} n$ and by the properties established above, we see that $w_{[k,n]}v \notin \mc G$ for all $1\leqslant k \leqslant n$.
\end{proof}

\begin{prop}
\label{PROP:word-modification-control}
Let $\breve{c} \neq 1/2$ have non-periodic binary expansion and let $\varepsilon > 0$. Then there are infinitely many $n \in \N$ and a constant $\ell \in \N$ with the following property. For each $w \in \mc G_n$, there is a length $r \in \N$ with $r \leqslant \varepsilon n$, such that for all $v\in \Sigma^\ell$, there is a word $u = u(v,w) \in \Sigma^r$ such that the following holds. If $y \in \langle wuv \rangle$ and $x \in \langle wu'v \rangle$ for some $u' \in \Sigma^r$, we have
\[
\psi_{n+r}(x) - \psi_{n+r}(y) \leqslant \varepsilon n\quad \text{ and } \quad (wuv)_{[k,|wuv|]} \notin \mc G \text{ for all } 1\leqslant k \leqslant |wu|.
\]
The same conclusion holds if $\breve{c}$ is periodic and $ wu'_{[1,2]}$ is a prefix of $\breve{c}$.
\end{prop}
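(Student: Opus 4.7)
The plan is to construct $u$ as the concatenation of three segments: a shielding block $v_1$ (from Lemma~\ref{LEM:block-for-variation-bound}) that protects $w$, a short closing extension $\widetilde{v}$ (from Lemma~\ref{LEM:closing-extension-II}) that mops up interior positions of $v_1$, and a bridge $u^\ast v_2$ (from Lemma~\ref{LEM:sliding-out-block-II}) that splices smoothly into the arbitrary word $v$.

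First I would apply Lemma~\ref{LEM:block-for-variation-bound} with parameter $\varepsilon/3$. For infinitely many $n$ and every $w \in \mc G_n$, this produces $v_1$ with $|v_1| \leqslant 2 \log_2 \log_{3/2} n$, the shielding $w_{[k,n]} v_1 \notin \mc G$ for all $1 \leqslant k \leqslant n$, and the variation bound $\psi^c_n(x^\ast) - \psi^c_n(y^\ast) \leqslant \varepsilon n / 3$ for $x^\ast \in \langle w\rangle$ and $y^\ast \in \langle wv_1\rangle$. Then I would apply Lemma~\ref{LEM:closing-extension-II} to $w v_1$ to obtain a short extension $\widetilde{v}$ with $(wv_1)_{[k,n+|v_1|]}\widetilde{v} \notin \mc G$ for $k \leqslant n+|v_1|$; if necessary, this is iterated a bounded number of times to close interior positions of earlier closings, with each iteration contributing an $O(\log\log n)$ increment that decays under iterated logarithms. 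Finally, with $ab$ the last two letters of the word so far, Lemma~\ref{LEM:sliding-out-block-II} supplies $u^\ast \in \{01,10\}$ and $v_2 \in \Sigma^{\ell'}$ such that every suffix of $abu^\ast v_2$, followed by the given $v \in \Sigma^\ell$, lies outside $\mc G$. Setting $u = v_1 \widetilde{v} u^\ast v_2$ yields $r = |u| = O(\log\log n) \leqslant \varepsilon n$ for $n$ large.

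The avoidance claim then splits into three cases: for $k \leqslant n$ the shielding from the first step gives $(wv_1)_{[k, n+|v_1|]} \notin \mc G$, and heredity of $\mc G$ on prefixes extends this to $(wuv)_{[k,|wuv|]}$; for $n < k \leqslant n + |v_1|$ the closing $\widetilde{v}$ handles the claim; and for $k$ lying in the bridge, Lemma~\ref{LEM:sliding-out-block-II} applies directly. For the variation bound, decompose
\[
\psi^c_{n+r}(x) - \psi^c_{n+r}(y) = \bigl(\psi^c_n(x) - \psi^c_n(y)\bigr) + \sum_{k=n+1}^{n+r}\bigl(\psi^c(T^{k-1}x) - \psi^c(T^{k-1}y)\bigr).
\]
Since $y \in \langle wv_1\rangle$, the first bracket is at most $\varepsilon n/3$ by Lemma~\ref{LEM:block-for-variation-bound}. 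Since $\psi^c \leqslant 0$, the sum is bounded by $\sum_k |\psi^c(T^{k-1}y)|$; the avoidance property ensures $\rho(T^{k-1}y, \breve{c}) \geqslant 2^{-(r+\ell)}$, hence each summand is $O(r)$ by Lemma~\ref{LEM:psi-distance-bound}, and the total is $O(r^2) = O((\log\log n)^2) = o(n)$. Together the two contributions stay below $\varepsilon n$ for large $n$.

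The periodic case goes through identically once the extra hypothesis $wu'_{[1,2]} \triangleleft_p \breve{c}$ is used to put $x$ in the regime where the periodic variant of Lemma~\ref{LEM:block-for-variation-bound} applies. The main obstacle will be the closing step: after $v_1$ is inserted, new prefix matches of $\breve{c}$ may arise in the interior of $v_1$, and the cascade of closing extensions must be shown to terminate after only a constant number of iterations without spoiling the last-two-letter hypothesis needed to launch Lemma~\ref{LEM:sliding-out-block-II}. The rapid decay of the nested hitting structure furnished by Lemma~\ref{LEM:hitting-word-structure} is what makes this bookkeeping feasible, keeping $r$ of order $\log\log n$.
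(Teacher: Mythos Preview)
Your overall architecture matches the paper's: first apply Lemma~\ref{LEM:block-for-variation-bound} for the shield $v_1$, then a closing step on the interior of $v_1$, then the bridge from Lemma~\ref{LEM:sliding-out-block-II}. The difference lies in the middle step, and this is where your argument has a genuine gap.

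You propose iterating Lemma~\ref{LEM:closing-extension-II}: apply it to $wv_1$ to get $\widetilde{v}^{(1)}$, then to $wv_1\widetilde{v}^{(1)}$ to get $\widetilde{v}^{(2)}$, and so on. Each application covers positions up to the end of the previous block, but leaves positions inside the \emph{newest} block uncovered. You claim this cascade ``terminates after only a constant number of iterations'' because the block lengths decay under iterated logarithms. They do decay, but only down to the constant $C$ in the bound $|v|\leqslant\max\{C,2\log_2 s\}$ of Lemma~\ref{LEM:closing-extension-II}; once $|\widetilde{v}^{(j)}|\leqslant C$, the iteration stalls at length $C$ forever. When you then launch Lemma~\ref{LEM:sliding-out-block-II} with $ab$ equal to the last two letters, the first $C-2$ positions of the final block remain uncovered. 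For those positions the suffix $(wuv)_{[k,|wuv|]}$ may lie in $\mc G$, so there exist $y\in\langle wuv\rangle$ with $T^{k-1}y$ arbitrarily close to $\breve{c}$, and both the avoidance claim and your bound on $\sum_{k>n}|\psi^c(T^{k-1}y)|$ fail.

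The paper sidesteps this by using Lemma~\ref{LEM:sliding-out-block-I} in place of your iteration. That lemma is applied once to $u_1=v_1$ and produces $u_2$ together with a specific two-letter tail $ab$ such that $(u_1u_2)_{[k,|u_1u_2|]}ab$ is \emph{not a prefix of $\breve{c}$} for every $k$ up to $|u_1u_2|$---including positions inside $u_2$. The bridge $u_3=ab\widetilde{u}v'$ from Lemma~\ref{LEM:sliding-out-block-II} then starts with $ab$ followed by $\widetilde{u}\in\{01,10\}$, and Lemma~\ref{LEM:hitting-depths-relations} converts ``not a prefix'' into ``$\notin\mc G$'' across the join. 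The key point is that Lemma~\ref{LEM:sliding-out-block-I} already iterates internally (with block lengths $n_{i+1}>\log_2 n_i$) and is engineered to terminate exactly at $n_r=2$, delivering the two-letter interface that Lemma~\ref{LEM:sliding-out-block-II} needs. Replace your iterated closing step with a single call to Lemma~\ref{LEM:sliding-out-block-I} and the rest of your argument goes through.
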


\begin{proof}
We construct $r$ and $u$ in several steps. First, if $\breve{c}$ is non-periodic, and if we choose $n\in \N$ as in Lemma~\ref{LEM:block-for-variation-bound}, we can find $r_1 \leqslant 2\log_2 \log_{3/2} n$ and a word $u_1 \in \Sigma^{r_1}$ such that
\begin{equation}
\label{EQ:big-block-variation}
\psi^c_n(x) - \psi^c_n(y) \leqslant n \varepsilon /2,
\end{equation}
for $x \in \langle w \rangle$ and $y \in \langle wu_1 \rangle$, as well as $w_{[k,n]}u_1 \notin \mc G$ for all $1 \leqslant k \leqslant n$.
The same holds for $x \in \langle wu'_{[1,2]} \rangle$ if $\breve{c}$ is periodic and $wu'_{[1,2]}$ is a prefix of $\breve{c}$. 

Due to Lemma~\ref{LEM:sliding-out-block-I}, we can further choose for $u_1$ a word $u_2$ of length $r_2 < 2\log_2 \log_{3/2} n$
(provided $n$ is large enough) and $ab \in \Sigma^2$ such that $(u_1 u_2)_{[k,|u_1 u_2|]}ab$ is not a prefix of $\breve{c}$ for all $1\leqslant k \leqslant |u_1 u_2|$. 
If $\widetilde{u} \in \{01,10\}$, this implies
\begin{equation}
\label{EQ:u-1-u-2-stop}
(u_1 u_2)_{[k,|u_1 u_2|]}ab \widetilde{u} \notin \mc G
\end{equation} 
for all $1 \leqslant k \leqslant |u_1 u_2|$; compare Lemma~\ref{LEM:hitting-depths-relations}.
Finally, let $\ell, \ell'$ be as in Lemma~\ref{LEM:sliding-out-block-II} and set $r = r_1 + r_2 + \ell' + 4$. Given $v \in \Sigma^{\ell}$, we choose words $\widetilde{u} \in \{01,10\}$ and $v' \in \Sigma^{\ell'}$, as provided by Lemma~\ref{LEM:sliding-out-block-II}, such that $u_3 = ab \widetilde{u} v'$ satisfies 
\begin{equation}
\label{EQ:u-3-stop}
(u_3)_{[k,|u_3| ]} v \notin \mc G
\end{equation}
for all $1\leqslant k \leqslant |u_3|$. For $n$ sufficiently large, we may choose $u = u_1 u_2 u_3$, satisfying $|u| = r$. 
 Since $u_3$ starts with $ab \widetilde{u}$ for some $\widetilde{u} \in \{01,10\}$, we obtain from \eqref{EQ:u-1-u-2-stop} and \eqref{EQ:u-3-stop} that no $k$ with $1\leqslant k \leqslant r$ has full hitting depth in $uv$. Since we already observed that $w_{[k,n]} u_1 \notin \mc G$ for all $1\leqslant k \leqslant n$, this also implies that
\[
(wuv)_{[k,|wuv|]} \notin \mc G,
\]
for all $1 \leqslant k \leqslant |wu|$.
If the hitting depth of $1\leqslant k \leqslant r$ in $uv$ is $j < |uv|$, we obtain for $y' \in \langle (uv)_{[k,|uv| ]}\rangle$ with the help of Lemma~\ref{LEM:psi-on-hitting} that
\[
\psi^c(y') \geqslant (k-j-1) 2\log 2 \geqslant - |uv| 2 \log 2.
\]
Recall that $|uv| \leqslant 4 \log_2 \log_{3/2} n + \ell' + \ell + 4$
for large enough $n$. We therefore get for $y'\in \langle uv \rangle$ that
\begin{equation}
\label{EQ:small-block-bound}
- \psi^c_r(y') \leqslant |uv|^2 2 \log 2 \leqslant n \varepsilon/2,
\end{equation}
provided that $n$ is large enough. 
Finally, note that for $x \in \langle wu'v \rangle$ and $y \in \langle wuv \rangle$, and $r=|u| = |u'|$ we can write
\[
\psi^c_{n+r}(x) - \psi^c_{n+r}(y)
= \psi^c_n(x) - \psi^c_n(y) + \psi^c_r(x') - \psi^c_r(y'),
\]
for some $x' \in \langle u'v \rangle$ and $y' \in \langle uv \rangle$. Using the trivial bound $\psi^c_r(x') \leqslant 0$, together with \eqref{EQ:big-block-variation} and \eqref{EQ:small-block-bound}, we get the desired estimate $\psi_{n+r}(x) - \psi_{n+r}(y) \leqslant \varepsilon n$.
\end{proof}

\subsection{Proof of Proposition \ref{PROP:word-regularisation}}\label{subsec: proof word regularisation}

\begin{proof}[Proof of Proposition \ref{PROP:word-regularisation}]
The basic idea of this proof is the following: Whenever an element of $\Sigma^n$ has  a subword  that lies in $\mc G_{\iota(m)}$, where $\iota$ is a function to be defined later with $\iota(m)\sim m$,  the letters following the subword are replaced with a word as in  Proposition ~\ref{PROP:word-modification-control} for a given $\varepsilon>0$. This changes the Birkhoff sum by no more than $\varepsilon m$.
 Summing over all possible replacements gives at a correction  to the Birkhoff sum of at most $\varepsilon n$. Each of the replacing words has length at most $\varepsilon m$, so the total amount of replaced letters is no more than $\varepsilon n$. This bounds the number of fibers to be at most $2^{\varepsilon n}< \me^{\varepsilon n}$. The details follow.

Let $\varepsilon > 0$ and $\breve{c} \neq 1/2$. For infinitely many numbers $m'$, Proposition~\ref{PROP:word-modification-control} guarantees the existence of a constant $\ell \in \N$ such that for all $w' \in \mc G_{m'}$ we can find a length $r = r(w')$ with $r \leqslant \varepsilon m'$
such that for all $v \in \Sigma^\ell$, we can find $u = u(v,w') \in \Sigma^r$ with the property that $(w'uv)_{[k,|w'uv|]} \notin \mc G$ for all $1 \leqslant k \leqslant |w'u|$ and
\begin{equation}
\label{EQ:variation_estimate_after_modification}
\psi_{m'+r}(x) - \psi_{m'+r}(y) \leqslant \varepsilon m',
\end{equation}
whenever $y \in \langle w' u v\rangle$ and $x \in \langle w' u' v \rangle$ for some $u' \in \Sigma^r$ (assuming that $w'u'_{[1,2]} \in \mc G$ if $\breve{c}$ is periodic). 
For such a number $m'$, let us set $r_{\max} = \max_{w' \in \mc G_{m'}} r(w')$ and $m = m' + r_{\max} + \ell$.

We define $\theta_m$ on $\Sigma^n$ via an appropriate splitting of $w \in \Sigma^n$. 
For a moment, let us assume that $\breve{c}$ is non-periodic.
First, let $w = p_1 w_1 s_1$ be such that $w_1$ is the first occurrence of a word from $\mc G_{m'}$ in $w$ and such that $|w_1 s_1| \geqslant m$. If no such splitting exists, we note that $w \in \Sigma^n_m$ and we simply set $\theta_m(w) = w$. With $r_1 = r(w_1)$ as above, we let $u_1'$ be the word in $\Sigma^{r_1}$ such that $w$ starts with $p_1 w_1 u_1'$. Next, we define $p_2 w_2$ such that $w_2$ is the first occurrence of a word from $\mc G_{m'}$ in $w = p_1 w_1 u_1' p_2 w_2 s_2$ that appears after $u_1'$ and satisfies $|w_2 s_2| \geqslant m$. Iterating this construction, we obtain a splitting
\[
w = p_1 w_1 u_1' p_2 w_2 u_2' \cdots p_J w_J u_J' s,
\]
with the properties that $w_q \in \mc G_{m'}$, any word of length $m'$ starting in $p_q$ is not in $\mc G_{m'}$, as well as $|u'_q| = r(w_q)$, for all $1 \leqslant q \leqslant J$, and such that $|s| \geqslant \ell$ and $J$ is maximal with this property. Further, for each $1\leqslant q \leqslant J$ let $v_q$ be the word of length $\ell$ that follows $u_q'$ in the splitting above. 
For all $q$, we choose $u_q = u(v_q,w_q)$ with properties as in Proposition~\ref{PROP:word-modification-control} and define $\theta_m(w)$ by replacing each $u_q'$ by $u_q$ in the splitting above, that is,
\[
\theta_m(w) = p_1 w_1 u_1 p_2 w_2 u_2 \cdots p_J w_J u_J s.
\]
We inherit from the structure of $w$ that every word of length $m'$ starting in $p_q$ in the above splitting of $\theta_m(w)$ is not contained in $\mc G$. Additionally, if a word of length $m$ starts in some $w_q$ or $u_q$ it certainly also completely contains $v_q$ because $|w_q u_q v_q| \leqslant m$, and therefore it is not in $\mc G$, due to the defining properties of $u_q$. Finally, there is no word from $\mc G_m$ in $s$ due to the maximality of $J$. It follows that $\theta_m(w)$ contains no prefix of $\breve{c}$ of length $m$, proving the first claim.

If $\breve{c}$ is periodic, we define $\theta_m$ in the same manner up to a slight modification. Instead of $w_1$ being the first instance of a word in $\mc G_{m'}$, we define it as the prefix of length $m'$ of the first word in $\mc G_{m'+2}$ in $w$ (such that $|w_1 s_1| \geqslant m$). Corresponding modifications are applied to the definition of $w_i$ for all $1\leqslant i \leqslant J$. Formally, we obtain the same kind of splitting of $w$ as before and again we define $\theta_m$ as the map replacing $u_i'$ by the word $u_i$ from Proposition~\ref{PROP:word-modification-control} for all $1\leqslant i \leqslant J$. The observation that $\theta_m(w)$ contains no word from $\mc G_m$ follows in a similar manner as before. The only point where we need to adapt the argument concerns words of length $m'+2$ that start one letter before a word of type $w_i$. For large enough $m'$, this word and $w_i$ can only be simultaneously in $\mc G$ if $\breve{c} = 0$, the word $w_i$ is constantly equal to one letter $a$ and the letter preceding $w_i$ is also $a$. But this contradicts the minimality of the position in the definition of $w_i$. Hence, such words cannot be in $\mc G$ and we conclude as before.

In order to estimate the cardinality of preimages of a word $w' = \theta_m(w)$, we first identify the positions that might have been affected by the application of $\theta_m$. Let us call an index $i$ a \emph{suspect} in $w'$ if $w'_{[i+1,i+m']} \in \mc G$. We call a suspect $i$ \emph{essential} if there is no suspect $j$ with $i+r_{\max} + \ell \leqslant j \leqslant i + m'$. This notion is relevant because if $i+1$ is the starting position of some $w_q$ in the splitting of $w$, then $i$ is an essential suspect in $\theta_m(w)$.
Indeed, since $w_q$ is not affected by $\theta_m$, we obtain that $(\theta_m(w))_{[i+1,i+m']} = w_q \in \mc G$. On the other hand, if $i + r_{\max} + \ell \leqslant j \leqslant i + m'$, we conclude that $(\theta_m(w))_{[j+1,j+m']}$ starts with
$(w_q u_q v_q)_{[j-i+1,|w_q u_q v_q|]} \notin \mc G$, implying that $j$ is not a suspect.
Due to the repellent nature in the definition of essential suspects, they come in groups $(\mc E_q)_{q =1}^N$ of diameter at most $r_{\max} + \ell$, separated by gaps of size at least $m' - r_{\max} - \ell$. That is, 
the number of groups is bounded by 
\begin{equation}
\label{EQ:bound_on_essential_suspect_groups}
N \leqslant \lfloor n / (m' - r_{\max} - \ell) \rfloor + 1 \leqslant \lfloor n/((1 - 2\varepsilon)m') \rfloor,
\end{equation}
assuming that $m'$ is large enough to ensure $\ell < \varepsilon m'$ and $n$ sufficiently larger than $m'$. Note that $w$ and $\theta_m(w)$ only differ on intervals of the form 
\[
D_i = [i+m'+1,i+m'+r] \subset i + m' + [1,r_{\max}]
\] 
with $i+1$ the starting position of some $w_q$, implying that $i \in \mc E = \bigcup_{q=1}^N \mc E_q$ is an essential suspect. We conclude that all changes between $w$ and $w' = \theta_m(w)$ appear in
\[
D = \bigcup_{i \in \mc E} D_i = \bigcup_{q=1}^N \bigcup_{i \in \mc E_q} D_i.
\]
Due to the fact that the elements in $\mc E_q$ are no more than $r_{\max}+\ell$ apart we obtain that each $D^{(q)} = \bigcup_{i \in \mc E_q} D_i$ is contained in an interval of length at most $2 r_{\max} + \ell \leqslant 3 \varepsilon m'$ for large enough $m'$. Combining this with \eqref{EQ:bound_on_essential_suspect_groups}, we get that the cardinality of $D$ is bounded by
\[
\# D \leqslant 3 \varepsilon m' N \leqslant \frac{3 \varepsilon}{1-2\varepsilon}n. 
\]
Since $\varepsilon > 0$ was arbitrary, we can also achieve $\# D \leqslant \varepsilon n$ for a fixed given $\varepsilon > 0$ and since all words $w \in \theta_m^{-1}(w')$ can differ only on $D$, their total cardinality is bounded by
\[
\# \theta_m^{-1}(w') \leqslant 2^{\# D} \leqslant 2^{\varepsilon n},
\] 
proving the second claim of the proposition.

Finally, we aim to estimate the difference $\psi_n^c(x)$ and $\psi_n^c(y)$, where $x \in \langle w \rangle$, $y \in \langle \theta_m(w) \rangle \cap \mathbb{X}_{m}^c$ are chosen such that they maximise the corresponding expression. 
Recall the decompositions of $w$ and $\theta_m(w)$ given above.
For each $1\leqslant q \leqslant J$ let $j_q +1$ be the starting index of $p_q$ in these decompositions and set $x_q = S^{j_q}x$ as well as $y_q = S^{j_q}y$. Similarly, we set $x_q' = S^{i_q}x$ and $y_q' = S^{i_q}y$, where $i_q + 1$ is the starting index of $w_q$. And finally, let $j+1$ be the starting index of $s$ and set $\widetilde{x} = S^j x$ and $\widetilde{y} = S^j y$.
We can split up the difference
\[
\psi^c_n(x) - \psi^c_n(y)
= \psi^c_{|s|}(\widetilde{x}) - \psi^c_{|s|}(\widetilde{y}) + \sum_{q=1}^J \bigl( \psi^c_{|p_q|} (x_q) - \psi^c_{|p_q|}(y_q) \bigr)
+ \sum_{q=1}^J \bigl( \psi^c_{|w_q u_q|} (x_q') - \psi^c_{|w_q u_q|} (y_q') \bigr).
\]
First, we rewrite
\[
\psi^c_{|s|}( \widetilde{x}) - \psi^c_{|s|}( \widetilde{y}) \leqslant \sum_{k=1}^{|s|} \psi^c(\widetilde{x}_k) - \psi^c( \widetilde{y}_k),
\]
where both $ \widetilde{y}_k = S^{k-1}\widetilde{y}$ and $\widetilde{x}_k$ start with $s_{[k,|s|]}$ and $\widetilde{x}_k$ is chosen to maximise the distance to $\breve{c}$ with this property.
Note that for $k$ with $1\leqslant k \leqslant |s| - m$, the length of the words $s_{[k,|s|]}$ is at least $m+1$ and hence $s_{[k,k+m]} \notin \mc G$. In particular, $k \in \mc H_{j}$ for some $j < k+m$ and we obtain from Lemma~\ref{LEM:psi-on-hitting} that
\[
\psi^c(\widetilde{x}_k) - \psi^c( \widetilde{y}_k) \leqslant 2^{k+m +1- |s|}.
\]
For $|s|-m < k \leqslant |s|$, it follows from $\widetilde{y}_k \in \mathbb{X}^c_m$ and the maximising property of $\widetilde{x}_k$ that both points have a distance of at least $2^{-m-1}$ to the singularity and at the same time they share a common prefix of length $|s| - k + 1$ implying a bound on their distance given by
\[
\rho(\widetilde{x}_k,\widetilde{y}_k) \leqslant 2^{k - |s| - 1}.
\]
Using Lemma~\ref{LEM:psi-distance-bound} once again,
\[
\psi^c(\widetilde{x}_k) - \psi^c(\widetilde{y}_k) \leqslant 2^{k+m+1- |s|},
\]
so this bound holds for all $1\leqslant k \leqslant |s|$ and we get 
\[
\psi^c_{|s|}( \widetilde{x}) - \psi^c_{|s|}(\widetilde{y}) \leqslant 2^{m+2}.
\]
For the next estimate, note that $x_q$ and $y_q$ both have $p_q w_q$ as a common prefix and we estimate
\[
\psi^c_{|p_q|} (x_q) - \psi^c_{|p_q|}(y_q)
= \sum_{k=1}^{|p_q|} \psi^c(S^{k-1} x_q) - \psi^c(S^{k-1} y_q)
\leqslant \sum_{k=1}^{|p_q|} \psi^c(x_{q,k}) - \psi^c(y_{q,k}),
\] 
where both $y_{q,k} = S^{k-1} y_q$ and $x_{q,k}=S^{k-1} x_q$ start with the word $(p_q w_q)_{[k,|p_q w_q|]}$. By the defining property of $p_q$, the prefix of length $m'+2$ of this word is not in $\mc G$ (length $m'+1$ if $k=|p_1|$), and hence the hitting depth is at most $k + m'$ (respectively $k+m'-1$ if $k = |p_1|$). Hence, by Lemma~\ref{LEM:psi-on-hitting} and recalling $|w_q| = m'$,
\[
\psi^c(x_{q,k}) - \psi^c(y_{q,k}) \leqslant 2^{k - |p_q|  + 2 },
\]
and summing up over all $1 \leqslant k \leqslant |p_q|$ yields
\[
\psi^c_{|p_q|} (x_q) - \psi^c_{|p_q|}(y_q) \leqslant 8.
\]
Finally, we recall that due to the choice of $u_q$ we have for all $1\leqslant q \leqslant J$ that
\[
\psi^c_{|w_q u_q|} (x_q') - \psi^c_{|w_q u_q|} (y_q') \leqslant \varepsilon m',
\]
due to the fact that $x_q'$ starts with $w_q u_q' v_q$ and $y_q'$ starts with $w_q u_q v_q$ (and $w_q (u'_q)_{[1,2]} \in \mc G$ if $\breve{c}$ is periodic), compare \eqref{EQ:variation_estimate_after_modification}. 
Combining all the previous estimates, we obtain,
\[
\psi^c_n(x) - \psi^c_n(y)
\leqslant 2^{m+2}  {+ 8J} + J \varepsilon m' \leqslant  2^{m+2} + 2\varepsilon n,
\]
for sufficiently large $m'$,
using $J \leqslant \lfloor n / m' \rfloor$ in the last step. Provided that $n$ is sufficiently larger than $m$,
this expression can be bound by $3 \varepsilon n$. Since $\varepsilon$ was arbitrary, this yields the claim.
\end{proof}

\subsection{Proofs of the Gibbs type properties}\label{subsec: proofs gibbs type}

\begin{proof}[Proof of Proposition \ref{PROP:measure-decay}]
The assumption $c \neq 0$ or $\breve{c} \neq 1/2$ is clearly necessary since $\mu_{0} = \delta_0$ for $c = 0$. On the other hand, it follows from Corollary \ref{coro: unique g} that $\mu_{c}$ has full support if $c \neq 0$.
Let $w \in \Sigma^n$ with $n \geqslant n_0$ and $v$ as in Lemma~\ref{LEM:closing-extension-II}. Due to Corollary~\ref{COR:mu-on-double-word}, we have
\[
\log \mu_{c}(\langle w \rangle) \geqslant \log \mu_{c}(\langle wv\rangle)
\geqslant \log \mu_{c}(\langle v\rangle) + \inf_{x \in \langle wv\rangle} \psi^c_n(x).
\]
We can estimate
\[
\inf_{x \in \langle wv\rangle} \psi^c_n(x)
\geqslant \sum_{k=1}^n \inf_{x \in \langle w_{[k,n]}v\rangle} \psi^c(x). 
\]
Let $m_n = |wv| = n+|v|$.
By the choice of $v$, we have that $w_{[k,n]}v \notin \mc G$ and hence for $x \in \langle w_{[k,n]}v\rangle$,
\[
\rho(x,\breve{c}) \geqslant 2^{-(m_n -k+1)},
\]
which, by Lemma~\ref{LEM:psi-distance-bound} implies
\[
\psi^c(x) \geqslant 2 \log( 2^{k-m_n}) = (k-m_n) 2\log 2.
\]
Hence,
\[
\inf_{x \in \langle wv\rangle} \psi^c_n(x)
\geqslant \sum_{k=1}^n (k-m_n) 2\log 2
= n \log 2(n+1-2m_n)
\geqslant - (n^2+2n|v|) \log 2.
\]
Let $\varepsilon >0$ and $|v| \leqslant 2 \log_2 \log_{3/2} n =: h(n)$. For large enough $n$ we get
\[
 n^2+2n |v| \leqslant n^2 + 4 n \log_2 \log_{3/2}n
\leqslant (1+\varepsilon) n^2
\]
and therefore
\[
\inf_{x \in \langle wv\rangle} \psi^c_n(x)
\geqslant - (1+\varepsilon)n^2 \log 2.
\]
We thereby get the recursion
\[
\log \mu_{c}(\langle w \rangle) \geqslant - (1+ \varepsilon)n^2 \log 2 + \log \mu_{c}(\langle v\rangle).
\]
If $|v| > n_0$, we can iterate the relation and get
\[
\log \mu_{c}(\langle w \rangle) \geqslant - (1+\varepsilon)n^2 - (1+\varepsilon) (h(n))^2 + \log \mu_{c}(\langle v'\rangle),
\]
for some $v'$ with $|v'| \leqslant h^2(n) = h( h(n))$. The iteration stops after $r_n$ steps, where $r_n$ is the minimal integer such that $h^{r_n}(n) \leqslant n_0$. Since the iterates of $h$ are decaying very fast, it is straightforward to see that $r_n < n$. We obtain
\[
\inf_{w \in \Sigma^n} \log \mu_{c}(\langle w \rangle)
 \geqslant -(1+\varepsilon)(n^2 + n (h(n))^2 ) + \inf_{u \in \Sigma^{n_0}} \log \mu_{c}(\langle u\rangle).
\]
Note that the last term is finite because $\mu_{c}$ has full support.
Dividing by $n^2$, taking the liminf and sending $\varepsilon \to 0$ yields the assertion.
\end{proof}

\begin{proof}[Proof of Proposition \ref{PROP:Gibbs-like}]
The upper bound
\[
\log \mu_{c}(\langle w \rangle) \leqslant \sup_{x \in \langle w \rangle} \psi^c_n(x) 
\]
holds in full generally.
For the lower bound, let $w \in \Sigma^n$, with $n \geqslant n_0$ and $v$ as in Lemma~\ref{LEM:closing-extension-II}. By Corollary~\ref{COR:mu-on-double-word} and Proposition~\ref{PROP:measure-decay}, we have
\begin{align*}
\log \mu_c(\langle w \rangle) & 
\geqslant \log \mu_c(\langle v\rangle) + \inf_{x \in \langle wv\rangle} \psi^c_n(x)
\geqslant - 2 |v|^2 + \inf_{x \in \langle wv\rangle}  \psi^c_n(x)
\\ & \geqslant - 8(\log_2 \log_{3/2} n)^2 + \inf_{x \in \langle wv\rangle} \psi^c_n(x),
\end{align*}
provided $n$ is large enough. Due to Lemma~\ref{LEM:inf-sup-relation}, we have the estimate
\[
\inf_{x \in \langle wv\rangle} \psi^c_n(x)
\geqslant \sup_{x \in \langle w \rangle} \psi^c_n(x) - 2^{|v| +2} \kappa_{n + |v|}
\geqslant \sup_{x \in \langle w \rangle} \psi^c_n(x) - 4 \kappa_{2n} (\log_{3/2} n)^2.
\]
Overall, this yields
\[
\log \mu_c(\langle w \rangle) \geqslant \sup_{x \in \langle w \rangle} \psi^c_n(x) - \iota_n
\]
with
\[
\iota_n = 8 (\log_2 \log_{3/2} n)^2 + 4 \kappa_{2n} (\log_{3/2} n)^2.
\]
Since $\mc P_{\operatorname{top}}(t \psi^c) < \infty$, we know from Lemma~\ref{LEM:fn-bound} that $\kappa_n \in O(\sqrt{n})$ and hence $\iota_n \in o(n)$. The claim follows.
\end{proof}

\begin{proof}[Proof of Lemma \ref{LEM:mu-on-Kdelta}]
We start with the upper bound. Given $n \in \N$ we have by Lemma~\ref{LEM:mu-on-interval} that
\[
\mu_c(I) \leqslant \exp \Big( \sup_{y \in I} \psi_n^c(y) \Big).
\]
We define a regularized version of $\psi^{c}$ via
\[
\psi^{c,\delta} (y)
= \begin{cases}
\psi^c(y), & \mbox{if } \rho(y,\breve{c}) \geqslant \delta,
\\ \psi^c(\breve{c} - \delta), & \mbox{if } \rho(y,\breve{c}) < \delta.
\end{cases}
\]
Since $\psi^c(\breve{c}-\delta) = \psi^c(\breve{c} + \delta)$ this function is continuous and, in fact, H\"{o}lder continuous. Also note that $\psi^c(y) \leqslant \psi^{c,\delta}(y)$ for all $y \in \TT$.
For $x \in I \cap K_\delta$ we have $\psi^c(T^k x) = \psi^{c,\delta}(T^k x)$ for all $k \in \N_0$. For every $y \in I$, we obtain
\[
\psi^c_n(y) - \psi^c_n(x) = \sum_{k=0}^{n-1} \psi^c(T^k y) - \psi^c(T^k x)
\leqslant \sum_{k=0}^{n-1} \psi^{c,\delta}(T^k y) - \psi^{c,\delta}(T^k x) \leqslant \log(R),
\]
for an appropriate number $R>1$. This yields the upper bound.

For the lower bound, let $n \in \N$ and fix some large number $N \in \N$ to be determined later. Decompose $I$ into $N$ intervals $I_i$ with $1\leqslant i \leqslant N$, each occupying a fraction $r = 1/N$ of the original length $2^{-n}$. By Lemma~\ref{LEM:mu-on-interval}, we have
\begin{equation}
\label{EQ:subinterval-bound}
\mu_c(I_i) \geqslant \mu_c(T^n I_i)\,  \exp \left( \inf_{y \in I_i} \psi_n^c(y) \right).
\end{equation}
We define a subset of indices with good variation control
\[
\mc V = \mc V_{c, \delta, r}=\left\{ 1 \leqslant i \leqslant N: \rho(\breve{c},T^k I_i) \geqslant r \delta, \, \forall 0\leqslant k \leqslant n-1 \right\},
\]
and its complement $\mc B = ([1,N] \cap \N) \setminus \mc V$. Recall $x \in I \cap K_{\delta}$. If $i \in \mc V$, we get for all $y \in I_i$ that both $T^k y$ and $T^k x$ are bounded away from $\breve{c}$ by at least $r \delta$ for all $0\leqslant k \leqslant n-1$. Hence, we can use the H\"{o}lder continuity of $\psi^c$ restricted to the complement of $B_{r\delta}(\breve{c})$ to conclude that 
\[
\psi_n^c(x) - \psi_n^c(y) = \sum_{k=0}^{n-1} \psi^c(T^k x) - \psi^c(T^k y) \leqslant \log(R')
\]
for some $R' = R'(c,\delta,r) > 1$.
Combining this with the estimate in \eqref{EQ:subinterval-bound}, we obtain for $i \in \mc V$,
\[
\mu_c(I_i) \geqslant \mu_c(T^n I_i)\, \frac{1}{R'}\, \exp\left( \psi_n^c(x) \right).
\] 
Hence,
\begin{equation}
\label{EQ:mu-interval-lower}
\mu_c(I) \geqslant \sum_{i \in \mc V} \mu_c(I_i) 
\geqslant \frac{1}{R'} \exp\left(\psi_n^c(x) \right)\, \sum_{i \in \mc V} \mu_c(T^n I_i).
\end{equation}
Note that the union over all $T^n I_i$ with $1\leqslant i \leqslant N$ is given by $T^n I = \TT$, implying
\begin{equation}
\label{EQ:bad-vs-good-indices}
\sum_{i \in \mc V} \mu_c(T^n I_i) = 1 - \sum_{i \in \mc B} \mu_c(T^n I_i).
\end{equation}
We argue that the cardinality of $\mc B$ cannot be too large. By definition $i \in \mc B$ requires that $i \in \mc B_k$ for some $0 \leqslant k \leqslant n-1$, where
\[
\mc B_k = \left\{ 1 \leqslant i \leqslant N: \rho(\breve{c} , T^k I_i) < r\delta \right\}.
\]
Let $y \in I_i$ for some $i \in \mc B_k$ and recall $x \in I \cap K_{\delta}$.
Since $\rho(x,y) \leqslant 2^{-n}$, we have, if $2^{k-n} \leqslant \delta/2$,
\[
\rho(T^k y, \breve{c}) \geqslant \rho(T^k x, \breve{c}) - \rho(T^k x, T^k y)
\geqslant \delta - 2^{k-n} \geqslant \delta/2 \geqslant r \delta.
\]
That is, $\mc B_k = \emptyset$ as long as $k \leqslant n-1 + \ceil{\log_2 \delta}$. 
It therefore suffices to consider $k = n-m$ with $m = 1, \ldots, -\ceil{\log_2 \delta}$. For such $k = n-m$, each interval $T^k I_i$ has length
\[
|T^k I_i| = 2^{n-m}  |I_i| = r 2^{-m} \geqslant r \delta,
\]
and therefore there can be at most two indices $i$ such that $\rho(\breve{c},T^k I_i) < r \delta$, implying that $\# \mc B_k \leqslant 2$ for such $k$. Overall, we obtain
\[
\# \mc B \leqslant \sum_{k=n+ \floor{\log_2 \delta}}^{n-1} \# \mc B_k 
\leqslant -2 \floor{\log_2 \delta} =: C.
\]
In particular, this bound is independent of $r$. Note that each interval $T^n I_i$ has length $r$. Since $\mu_c$ is a continuous measure for $c \neq 0$, we can find an $r = 1/N$ such that each interval $J$ of length $r$ is bounded in measure by $\mu_c(J) \leqslant 1/(2C)$. With this choice of $r = r(c,\delta)$, we obtain 
\[
\sum_{i \in \mc B} \mu_c(T^n I_i) \leqslant \frac{C}{2C} = \frac{1}{2}
\]
and, combining this with \eqref{EQ:mu-interval-lower} and \eqref{EQ:bad-vs-good-indices}, we get the required lower bound with $R = 2 R'$.
\end{proof}

\section*{Acknowledgements} 
~
It is our pleasure to thank Michael Baake and J\"org Schmeling for helpful discussions.

PG acknowledges
support from the German Research Foundation (DFG) through Project 509427705 and Project Z-72-00539-00-16100116. Furthermore, he thanks the Universities of Bielefeld, Bremen and Lund for their hospitality. 

The research of TS has been supported by the
European Union’s Horizon Europe
research and innovation programme under the Marie Sklodowska-Curie grant agreement ErgodicHyperbolic - 101151185 and by a grant from the Priority Research Area SciMat under the Strategic Programme Excellence Initiative at Jagiellonian University. 
Moreover, she was supported by the Austrian Science Fund FWF: P 33943-N
and acknowledges the support of UniCredit Bank R\&D group for financial support through the ``Dynamics and Information Theory Institute'' at
the Scuola Normale Superiore.
She furthermore thanks the Universities Bielefeld, Bremen and Lund for their hospitality. 

MK and TS would like to thank the Institut Mittag-Leffler for their hospitality.

\end{document}